\title{The restricted quantum double of the Yangian}
\author[C. Wendlandt]{Curtis Wendlandt}
\address{Department of Mathematics and Statistics, University of Saskatchewan.}
\email{wendlandt@math.usask.ca}
\subjclass[2020]{Primary 17B37; Secondary 81R10} 
\newtheorem{theorem}{Theorem}[section]
\newtheorem{proposition}[theorem]{Proposition}
\newtheorem{corollary}[theorem]{Corollary}
\newtheorem{lemma}[theorem]{Lemma}
\newtheorem{theoremintro}{Theorem}
\theoremstyle{definition}
\newtheorem{definition}[theorem]{Definition}
\newtheorem{remark}[theorem]{Remark}
\newcommand{\wt}{\widetilde}
\newcommand{\wh}{\widehat}
\newcommand{\End}{\mathrm{End}}
\newcommand{\Hom}{\mathrm{Hom}}
\newcommand{\Ker}{\mathrm{Ker}}
\newcommand{\iso}{\xrightarrow{\,\smash{\raisebox{-0.5ex}{\ensuremath{\scriptstyle\sim}}}\,}}
\newcommand{\into}{\hookrightarrow}
\newcommand{\onto}{\twoheadrightarrow}
\newcommand{\sbullet}{%
  \hbox{\fontfamily{lmr}\fontsize{.4\dimexpr(\f@size pt)}{0}\selectfont\textbullet}}
\newcommand{\mfb}{\mathfrak{b}}
\newcommand{\mfg}{\mathfrak{g}}
\newcommand{\mfh}{\mathfrak{h}}
\newcommand{\mfm}{\mathfrak{m}}
\newcommand{\mfn}{\mathfrak{n}}
\newcommand{\mft}{\mathfrak{t}}
\newcommand{\mfsl}{\mathfrak{s}\mathfrak{l}}
\newcommand{\mcA}{\mathcal{A}}
\newcommand{\mcH}{\mathcal{H}}
\newcommand{\mcJ}{\mathcal{J}}
\newcommand{\mcL}{\mathcal{L}}
\newcommand{\mcR}{\mathcal{R}}
\newcommand{\mcS}{\mathcal{S}}
\newcommand{\mcX}{\mathcal{X}}
\newcommand{\mbA}{\mathbf{A}}
\newcommand{\mbB}{\mathbf{B}}
\newcommand{\mbC}{\mathbf{C}}
\newcommand{\mbF}{\mathbf{F}}
\newcommand{\mbI}{\mathbf{I}}
\newcommand{\mbJ}{\mathbf{J}}
\newcommand{\mbT}{\mathbf{T}}
\newcommand{\mbU}{\mathbf{U}}
\newcommand{\C}{\mathbb{C}}
\newcommand{\Q}{\mathbb{Q}}
\newcommand{\Z}{\mathbb{Z}}
\newcommand{\msA}{\mathsf{A}}
\newcommand{\msB}{\mathsf{B}}
\newcommand{\msD}{\mathsf{D}}
\newcommand{\msh}{\mathsf{h}}
\newcommand{\msH}{\mathsf{H}}
\newcommand{\msJ}{\mathsf{J}}
\newcommand{\msK}{\mathsf{K}}
\newcommand{\msm}{\mathsf{m}}
\newcommand{\msM}{\mathsf{M}}
\newcommand{\msN}{\mathsf{N}}
\newcommand{\msp}{\mathsf{p}}
\newcommand{\msq}{\mathsf{q}}
\newcommand{\msr}{\mathsf{r}}
\newcommand{\msR}{\mathsf{R}}
\newcommand{\msS}{\mathsf{S}}
\newcommand{\msV}{\mathsf{V}}
\newcommand{\msW}{\mathsf{W}}
\newcommand{\msX}{\mathsf{X}}
\newcommand{\msY}{\mathsf{Y}}
\newcommand{\veps}{\varepsilon}
\newenvironment{NB}{\noindent
\color{blue}{\bf Note}. \footnotesize
}{}
\numberwithin{equation}{section}
\newcommand{\N}{\mathbb{N}}
\newcommand{\Yhg}{Y_\hbar(\mfg)}
\newcommand{\hYhg}{Y_\hbar\mfg}
\newcommand{\cYhg}{\widehat{Y_\hbar\mfg}}
\newcommand{\Yhgz}{\cYhg_z}
\newcommand{\LzYhg}{\mathds{L}\cYhg_z}
\newcommand{\LzhYhg}{\mathrm{L}\cYhg_z}
\newcommand{\dYh}[1]{\dot{\mathsf{Y}}_\hbar^{#1}(\mfg)}
\newcommand{\dhYh}[1]{\dot{\mathsf{Y}}_\hbar^{#1}\mfg}
\newcommand{\QFYhg}{\dot{\mathsf{Y}}_\hbar(\mfg)}
\newcommand{\QFhYhg}{\dot{\mathsf{Y}}_\hbar\mfg}
\newcommand{\QFJYhg}{\hYhg^\prime}
\newcommand{\Yhgstar}{\QFhYhg{\vphantom{)}}^\star}
\newcommand{\YTDstar}[1]{\dot{\msY}_\hbar^{#1}\mfg{\vphantom{)}}^\star}
\newcommand{\DYhg}{\mathrm{D}Y_\hbar\mfg}
\newcommand{\DYg}{\mathds{D}Y_{\hbar}\mfg}
\newcommand{\cDYhg}{\widehat{\mathrm{D}Y_\hbar\mfg}}
\newcommand{\tplus}{\mft_{\scriptscriptstyle{+}}}
\newcommand{\tminus}{\mft_{\scriptscriptstyle{-}}}
\newcommand{\Symt}{\mathsf{S}(\hbar\mft_{\scriptscriptstyle{+}})}
\newcommand{\hSymt}[1]{\mathsf{S}_{#1}(\hbar\mft_{\scriptscriptstyle{+}})}
\newcommand{\sSymt}[1]{\mathsf{S}^{#1}(\hbar\mft_{\scriptscriptstyle{+}})}
\newcommand{\Symtstar}{\Symt{\vphantom{)}}^\star}
\newcommand{\Root}{\Delta}
\newcommand{\id}{\mathbf{1}} 
\newcommand{\rcoad}{\blacktriangle} 
\newcommand{\lad}{\blacktriangledown} 
\def\chk#1{#1^{\smash{\scalebox{.7}[1.4]{\rotatebox{90}{\textnormal{\guilsinglleft}}}}}}
\newcommand{\lAngle}{\langle\hspace{-.27em}\langle}
\newcommand{\rAngle}{\rangle\hspace{-.27em}\rangle}
\newcommand{\op}[1]{{#1}{\vphantom{)}}^{\!\scriptscriptstyle{\mathrm{op}}}}
\DeclareMathSymbol{\shortminus}{\mathbin}{AMSa}{"39}
\DeclareMathAlphabet\EuScript{U}{eus}{m}{n}
\SetMathAlphabet\EuScript{bold}{U}{eus}{b}{n}
\newcommand{\boldR}{{\boldsymbol{\EuScript{R}}}}
\newcommand{\scriptE}{\EuScript{E}}
\begin{document}

\begin{abstract}
Let $\mfg$ be a complex semisimple Lie algebra with associated Yangian $\hYhg$. In the mid-1990s, Khoroshkin and Tolstoy formulated a conjecture which asserts that the algebra $\DYhg$ obtained by doubling the generators of $\hYhg$, called the Yangian double, provides a realization of the quantum double of the Yangian. We provide a uniform proof of this conjecture over $\C[\![\hbar]\!]$ which is compatible with the theory of quantized enveloping algebras. As a byproduct, we identify the universal $R$-matrix of the Yangian with the canonical element defined by the pairing between the Yangian and its restricted dual. 
\end{abstract}

\maketitle

{\setlength{\parskip}{0pt}
\setcounter{tocdepth}{1} 
\tableofcontents
}

\section{Introduction}\label{sec:Intro}

%
\subsection{}\label{ssec:I-background} 
This article is a continuation of \cite{WDYhg}, which studied the Yangian double $\DYhg$ associated to an arbitrary symmetrizable Kac--Moody algebra $\mfg$ through the lens of a $\Z$-graded algebra homomorphism 
\begin{equation*}
\Phi_z:\DYhg\to \LzhYhg \subset \hYhg[\![z^{\pm 1}]\!].
\end{equation*}
Here $\LzhYhg$ is a naturally defined $\Z$-graded $\C[\![\hbar]\!]$-algebra, described explicitly in Lemma \ref{L:DYhg-LzYhg}, and  $\hYhg$ is the Yangian of $\mfg$, defined over $\C[\![\hbar]\!]$. This homomorphism, called the \textit{formal shift operator}, naturally extends the so-called shift homomorphism $\tau_z$ on the Yangian, and has a number of remarkable properties. For instance, it induces a family of isomorphisms between completions of $\DYhg$ and $\hYhg$, realizes $\hYhg$ as a degeneration of $\DYhg$, and is injective provided $\mfg$ is of finite type or of simply laced affine type. In addition, it was applied in \cite{GWPoles} to characterize the category of finite-dimensional representations of $\DYhg$, for $\hbar\in \C^\times$ and $\mfg$ of finite type, as the tensor-closed Serre subcategory of that of the Yangian consisting of those representations which have no \textit{poles} at zero. 

In this article, we narrow our focus to the case where $\mfg$ is a finite-dimensional simple Lie algebra, and apply these results in conjunction with those of the recent paper \cite{GTLW19} to prove one of the main conjectures from the work \cite{KT96} of Khoroshkin and Tolstoy. Namely, we establish that $\DYhg$, which is defined by doubling the generators of $\hYhg$ (see Definition \ref{D:DYhg}), is isomorphic to the \textit{restricted} quantum double of the Yangian $\hYhg$, where the prefix ``restricted'' indicates that all duality operations are taken so as to respect the underlying gradings. 
As a consequence of this result and its proof, we find that $\Phi_z$ identifies the universal $R$-matrix of $\DYhg$, which arises from the quantum double construction, with Drinfeld's universal $R$-matrix $\mcR(z)\in (\hYhg\otimes \hYhg)[\![z^{-1}]\!]$. Our argument makes essential use of the constructive  proof of the existence of $\mcR(z)$ given in \cite{GTLW19}, which is independent from Drinfeld's cohomological construction of $\mcR(z)$ from the foundational paper \cite{Dr}.

%
\subsection{Main results}\label{ssec:I-results} Let us now sketch our main results in detail. The two results alluded to above form Parts \eqref{Intro:1} and \eqref{Intro:2} of the following theorem.
\begin{theoremintro}\label{T:Intro}
There is a unique $\Z$-graded Hopf algebra structure on $\DYhg$ over $\C[\![\hbar]\!]$ such that the formal shift operator 
\begin{equation*}
\Phi_z:\DYhg\into\LzhYhg\subset \hYhg[\![z^{\pm 1}]\!] 
\end{equation*}
intertwines the Hopf structures on $\DYhg$ and $\hYhg$. Moreover: 
\begin{enumerate}[font=\upshape]
\item\label{Intro:1} $\DYhg$ is isomorphic, as a $\Z$-graded Hopf algebra, to the restricted quantum double of the Yangian $\hYhg$. 
\item\label{Intro:2} Under the above identifications, the universal $R$-matrix $\boldR$ of $\DYhg$ satisfies
\begin{equation*}
(\Phi_w\otimes \Phi_z)(\boldR)=\mcR(w-z)\in \hYhg^{\otimes 2}[w][\![z^{-1}]\!]
\end{equation*}
\end{enumerate}
\end{theoremintro}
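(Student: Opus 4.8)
The uniqueness assertion is a formal consequence of the injectivity of $\Phi_z$, which holds here because $\mfg$ is of finite type (cf.\ \S\ref{ssec:I-background}): any two $\Z$-graded Hopf structures on $\DYhg$ that are intertwined by $\Phi_z$ with the topological Hopf structure of $\hYhg$ have coproducts, counits, and antipodes that agree after composing with the injective maps $\Phi_z\otimes\Phi_z$, $\Phi_z$, and $\Phi_z$ respectively, hence agree on the nose. Existence of such a structure will be obtained as part of the identification in \eqref{Intro:1}, since the restricted quantum double is visibly a $\Z$-graded Hopf algebra; the one remaining point -- that the transported structure is genuinely intertwined by $\Phi_z$ -- is dealt with at the end of the argument.

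To prove \eqref{Intro:1}, I would first equip $\hYhg$ with its $\Z$-grading and form the graded restricted dual $(\hYhg)^{\star}$, which is again a $\Z$-graded Hopf algebra over $\C[\![\hbar]\!]$ and is paired nondegenerately with $\hYhg$ by the canonical graded Hopf pairing; the restricted quantum double $\mathrm{D}(\hYhg)=\hYhg\otimes(\hYhg)^{\star}$ is then defined with the usual Drinfeld double multiplication and its canonical Hopf structure. The core of \eqref{Intro:1} is an isomorphism of $\Z$-graded algebras $\DYhg\iso\mathrm{D}(\hYhg)$. I would build the map using the presentation of $\DYhg$ from Definition \ref{D:DYhg}: its ``non-negative'' half is sent identically onto $\hYhg\subset\mathrm{D}(\hYhg)$, while its ``negative'' half is sent to the functionals on $\hYhg$ dual to the positive generators; verifying the defining relations of $\DYhg$ then amounts to pairing computations for low-degree generators, read off from the coproduct of $\hYhg$, together with the straightening relations of the double for the cross terms. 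That the map is bijective would follow from a triangular-decomposition argument: both sides factor, compatibly with the map, through their negative, Cartan, and positive parts; on the positive and Cartan parts the map is the identity, on the negative part one invokes the structural description of $(\hYhg)^{\star}$ -- in particular its generation by the dual functionals -- supplied by \cite{GTLW19}, and any kernel is excluded by the injectivity of $\Phi_z$. Transporting the Hopf structure of $\mathrm{D}(\hYhg)$ along this isomorphism equips $\DYhg$ with the desired $\Z$-graded Hopf structure.

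For \eqref{Intro:2}, recall that $\mathrm{D}(\hYhg)$ carries the canonical universal $R$-matrix $\boldR=\sum_{\la}a_{\la}\otimes a^{\la}$, where $\{a_{\la}\}$ and $\{a^{\la}\}$ are dual homogeneous bases of $\hYhg$ and $(\hYhg)^{\star}$; under the isomorphism of \eqref{Intro:1} this becomes an element $\boldR\in\cDYhg\mathbin{\widehat{\otimes}}\cDYhg$. I would then apply $\Phi_w\otimes\Phi_z$, show that the result lies in $\hYhg^{\otimes 2}[w][\![z^{-1}]\!]$, and verify that it satisfies the list of properties characterizing Drinfeld's $R$-matrix: the normalization $\id+O(z^{-1})$, the shift-twisted intertwining relation with the coproduct (using that $\Phi_w$ extends the shift homomorphism $\tau_w$), and the two cabling identities. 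Since \cite{GTLW19} provides a construction of $\mcR(u)$ together with a uniqueness statement for precisely this list of properties, one concludes $(\Phi_w\otimes\Phi_z)(\boldR)=\mcR(w-z)$. Finally, because $\boldR$ and $\mcR(w-z)$ implement the same relation between $\Delta$ and $\Delta^{\mathrm{op}}$ on their respective algebras and $\Phi_z$ is injective, it follows that $\Phi_z$ intertwines the coproducts (compatibility with counits and antipodes being then automatic), which completes the proof of \eqref{Intro:1} and of the uniqueness clause.

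The step I expect to be the main obstacle is the bijectivity in the isomorphism $\DYhg\iso\mathrm{D}(\hYhg)$: showing that the ``negative half'' of the Yangian double is \emph{faithfully and surjectively} the restricted dual of $\hYhg$. This is where it is essential that duality be taken in the \emph{restricted} sense -- the full linear dual is far too large to be generated by the dual functionals -- and where compatibility with the quantized-enveloping-algebra formalism must be exploited, in that the Hopf pairing and the relevant PBW-type bases have to be defined integrally over $\C[\![\hbar]\!]$ and reduce correctly modulo $\hbar$; it also requires combining the structural results of \cite{GTLW19} on the dual Yangian with those of \cite{WDYhg} on $\Phi_z$. A secondary technical hurdle is the careful handling of the various completed tensor products, so that $\boldR$ and its image under $\Phi_w\otimes\Phi_z$ are genuinely well defined and the uniqueness theorem for $\mcR(u)$ applies to the latter.
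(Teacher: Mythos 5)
Your proposal takes essentially the \emph{opposite} direction from the paper, and the place you flag as the main obstacle is precisely where your route, as sketched, would get stuck. You propose to define a map $\DYhg\to D(\hYhg)$ directly on Drinfeld generators, sending the negative half of $\DYhg$ to dual functionals of $\hYhg$, and to check the defining relations of $\DYhg$ ``via pairing computations for low-degree generators together with the straightening relations of the double.'' This is in fact the original strategy outlined in \cite{KT96}, which was carried to completion only for $\mfg=\mfsl_2$; verifying all the relations of Definition~\ref{D:DYhg} (in particular the $q$-Serre relations and the mixed $h$-$x$ relations at all degrees, not just low ones) by hand against the double's cross relations is the genuine obstruction that kept this conjecture open, and your sketch does not supply a way around it. The paper instead runs the construction in the reverse direction: it takes the already-constructed $\mcR(z)$ as input, defines $\chk{\Phi_z}(f)=(f\otimes\id)\mcR(-z)$, shows its image lands in $\mathrm{Im}(\Phi_z)$, sets $\check{\imath}=\Phi_z^{-1}\circ\chk{\Phi_z}$, and only then checks that $\Upsilon=m\circ(\check{\imath}\otimes\imath)$ is an isomorphism. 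Because $\mcR(z)$ already encodes the coproduct compatibility, there is nothing left to verify generator-by-generator on the $\DYhg$ side; all the defining relations are inherited for free from the injectivity of $\Phi_z$.

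Two further points. First, you attribute the structural description of the dual Yangian $\Yhgstar$ (its generators, its triangular decomposition, its identification with a quantization of $t^{-1}\mfg[t^{-1}]$) to \cite{GTLW19}, but that paper constructs $\mcR(z)$ and says nothing about $\Yhgstar$; that structural theory is developed here (Sections~\ref{sec:QFYhg}--\ref{sec:Yhg*}), and in fact the triangular decomposition of $\Yhgstar$ (Proposition~\ref{P:Yhg*-TD}) is itself \emph{deduced} from $\mcR(z)$ via Corollary~\ref{C:Yhg*-TD-2} rather than established independently. Second, you anticipate that bijectivity is the hard part and propose a structural/triangular-decomposition argument plus an appeal to injectivity of $\Phi_z$; the paper's bijectivity proof (Theorem~\ref{T:DYhg-DD}) is actually a one-line application of Lemma~\ref{L:scl-map}: pass to the semiclassical limit, where $\bar\Upsilon:U(\tminus)\otimes U(\tplus)\to U(\mfg[t^{\pm1}])$ is the PBW isomorphism. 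For~\eqref{Intro:2}, your plan to apply $\Phi_w\otimes\Phi_z$ to the canonical tensor and invoke the uniqueness characterization of $\mcR$ from \cite{GTLW19} is workable in principle, but it forces you to first verify that $(\Phi_w\otimes\Phi_z)(\boldR)$ satisfies the intertwiner equation and cabling identities in the relevant completions; the paper's argument avoids this entirely, because once $\check{\imath}$ is defined via $\mcR(-z)$, the identity $(f\otimes\chk{\Phi_z})(R)=\chk{\Phi_z}(f)=(f\otimes\id)\mcR(-z)$ is essentially a tautology (Theorem~\ref{T:R}), and uniqueness of $\mcR$ is never needed at this stage.
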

This is a combination of the three main results of this article: Theorems \ref{T:dual}, \ref{T:DYhg-DD} and \ref{T:R}. Part \eqref{Intro:1} is the statement of our second main result ---  Theorem \ref{T:DYhg-DD} --- and is precisely the variant of the conjecture from \cite{KT96}*{\S2} which we establish in the present paper. Our approach to proving it is, in a certain sense, dual to the strategy outlined in \cite{KT96} which was brought to fruition for $\mfg=\mfsl_2$. In more detail, our argument hinges on the fact, proven in Proposition \ref{P:dual-z}, that the universal $R$-matrix $\mcR(z)$ of the Yangian gives rise to a $\C[\![\hbar]\!]$-algebra homomorphism 
\begin{equation*}
\chk{\Phi_z}: \Yhgstar\to \hYhg[\![z^{-1}]\!]
\end{equation*}
which is compatible with the Hopf algebra structure on $\hYhg$ and the co-opposite Hopf structure on the dual $\Yhgstar$ of the Yangian $\hYhg$ taken in the category of $\Z$-graded quantized enveloping algebras. That is, $\Yhgstar$ is the restricted (or graded) dual of the Drinfeld--Gavarini \cites{DrQG,Gav02}  subalgebra $\QFhYhg\subset \hYhg$, defined in Section \ref{sec:QFYhg}, and provides a homogeneous quantization of the restricted dual $t^{-1}\mfg[t^{-1}]$ to the $\N$-graded Lie bialgebra $\mfg[t]$, as we prove in detail in Section \ref{sec:Yhg*}; see Theorem \ref{T:Yhg*-quant}. 

Using the construction of $\mcR(z)$ given in \cite{GTLW19} and properties of $\Phi_z$ established in \cite{WDYhg}, we deduce that the image of $\Yhgstar$ under $\chk{\Phi_z}$ is contained in the image of $\Phi_z$. We may thus compose $\chk{\Phi_z}$ with $\Phi_z^{-1}$ to obtain a $\C[\![\hbar]\!]$-algebra homomorphism 
\begin{equation*}
\check{\imath}:=\Phi_z^{-1}\circ \chk{\Phi_z}:\chk{\hYhg}\to \DYhg,
\end{equation*}
where $\chk{\hYhg}:=(\Yhgstar){\vphantom{)}}^{\scriptscriptstyle\mathrm{cop}}$.
In our first main result --- Theorem \ref{T:dual} ---  we show that there is a unique $\Z$-graded Hopf algebra structure on $\DYhg$ for which both $\check{\imath}$ and the natural inclusion $\imath:\hYhg\to \DYhg$
are injective homomorphisms of graded Hopf algebras. This is exactly the Hopf structure alluded to in the statement of Theorem \ref{T:Intro}, and is such that $\DYhg$ provides a homogeneous quantization of the restricted Drinfeld double $\mfg[t^{\pm 1}]$ of $\mfg[t]$. Using Theorem \ref{T:dual}, it is then not difficult to establish Part \eqref{Intro:1} above (\textit{i.e.,} Theorem \ref{T:DYhg-DD}) using the double cross product realization of the restricted quantum double (see Section \ref{ssec:D(Yhg)}).

Our third and final main result, Theorem \ref{T:R}, is a strengthening of Part \eqref{Intro:2} above. Indeed, it outputs the Gauss decomposition for $\boldR$ while identifying each  factor appearing in this decomposition with the factors $\mcR^\pm(z)$ and $\mcR^0(z)$ of $\mcR(z)$, which were studied in detail in \cites{GTLW19, GTL3}.

%
\subsection{Motivation}\label{ssec:I-motivation} Part \eqref{Intro:2} of Theorem \ref{T:Intro} implies that $\mcR(z)$ can be recovered from the canonical element defined by the pairing between $\hYhg$ and its restricted dual $\hYhg{\vphantom{)}}^\star\subset \DYhg$ by applying the injection $\Phi_{\shortminus{z}}$ to its second tensor factor:
\begin{equation*}
(\id\otimes \Phi_{\shortminus{z}})(\boldR)=\mcR(z)\in \hYhg^{\otimes 2}[\![z^{-1}]\!].
\end{equation*}
  Here we refer the reader to Theorem \ref{T:R} for further details, which takes into account the topological subtleties surrounding this statement. 
Obtaining this interpretation of $\mcR(z)$ is in fact our original motivation for addressing the conjecture of \cite{KT96}*{\S2}, and brings the theory surrounding the universal $R$-matrix of the Yangian to a more equal footing with that of the (extended, untwisted) quantum affine algebra $U_q(\wh{\mfg})$ and the quantum loop algebra $U_q(L\mfg)$. The differences are, however, still quite pronounced. Indeed, $U_q(\wh{\mfg})$ is itself nearly the quantum double of its (quantum Kac--Moody) Borel subalgebra $U_q(\mfb^+)$, and its universal $R$-matrix $\mathscr{R}$ lies in a completion of $U_q(\mfb^+)\otimes U_q(\mfb^-)$. One then recovers the universal $R$-matrix $\mathscr{R}_{L\mfg}$ of $U_q(L\mfg)$ as a truncation of $\mathscr{R}$, and its $z$-dependent analogue is
\begin{equation*}
\mathscr{R}_{L\mfg}(z):=(\id\otimes D_{z^{-1}})(\mathscr{R}_{L\mfg})\in U_q(L\mfg)^{\otimes 2}[\![z]\!],
\end{equation*}
where $D_z$ is given by the $\Z$-grading on $U_q(L\mfg)$; see \cite{EFK-Book}*{\S9.4} or \cite{FR92}*{\S4}, for instance. Crucially, $\mathscr{R}$ can be constructed by computing dual bases with respect to the pairing between $U_q(\mfb^+)$ and $U_q(\mfb^-)$, and was done explicitly by Damiani in \cite{Damiani98}. In contrast, the Yangian $\hYhg$ is not of Kac--Moody type and does not arise as a Hopf algebra from the quantum double construction applied to any analogue of $U_q(\mfb^+)$\footnote{We refer the reader to \cite{YaGu3}*{\S4} for a related construction of $\hYhg$ with respect to its deformed Drinfeld coproduct, which does not endow $\hYhg$ with the structure of a Hopf algebra.}. In addition, $\mcR(z)$ and $\mathscr{R}_{L\mfg}(z)$ exhibit significantly different analytic behaviour when evaluated on finite-dimensional representations \cite{GTLW19}. 
Nonetheless, the results of this article further cement that there are very strong parallels to be drawn between the two pictures. Indeed, one obtains the $\hYhg$-analogue of the above story by replacing $U_q(\mfb^+)$ by $\hYhg$, $U_q(L\mfg)$ by the Yangian double $\DYhg$, and $D_z$ by the formal shift operator $\Phi_z$. 

It should be noted that it appears that this realization of $\mcR(z)$ has been anticipated for some time in the mathematical physics community; see for instance \cite{Stukopin07}*{\S5}, which considers its super-analogue. The direction taken therein is, however, based on both the conjecture from \cite{KT96}*{\S2} at the heart of the present article, and on the  infinite product formulas for the factors $\boldR^\pm$ of $\boldR$ given \cite{KT96}*{\S5}, which remain conjectural. Some more discussion on this point is given in Section \ref{ssec:R-compute}. 

%
\subsection{Remarks}\label{ssec:I-remarks}

Let us now give a few brief remarks.  Firstly, it is essential that the Yangian double $\DYhg$ is defined as a topological algebra over $\C[\![\hbar]\!]$ for the above results to hold true. 
To expand on this, $\DYhg$ can be realized as the $\hbar$-adic completion of a $\Z$-graded $\C[\hbar]$-algebra $\DYg^\jmath$ defined by generators and relations; see Remark \ref{R:DYg-j}. One can further specialize $\hbar$ to any nonzero complex number $\zeta$ to obtain a $\C$-algebra $\mathds{D}Y_\zeta\mfg=\DYg^\jmath/(\hbar-\zeta)\DYg^\jmath$, whose category of finite-dimensional representations was characterized in terms of that of the corresponding Yangian $Y_\zeta(\mfg)$ in \cite{GWPoles}. Though this category has a tensor structure which corresponds to the Hopf structure on $\DYhg$, it is important to note that $\mathds{D}Y_\zeta\mfg$ is \textit{not} a Hopf algebra over $\C$, and in particular it does not coincide with the (restricted) quantum double of $Y_\zeta(\mfg)$ defined in any reasonable sense. 

That being said, $\mathds{D}Y_\zeta\mfg$ admits a natural $\Z$-filtration corresponding to the $\Z$-grading on $\DYhg$, and the expectation is that the formal completion of $\mathds{D}Y_\zeta\mfg$ with respect to this filtration coincides with the (restricted) quantum double of $Y_\zeta(\mfg)$ taken in the appropriate category of $\Z$-filtered, complete topological Hopf algebras. This is in fact the version of Part \eqref{Intro:2} of Theorem \ref{T:Intro} conjectured in \cite{KT96}, and is consistent with the situation that transpires in type A for the $R$-matrix realization of the Yangian, which has been developed in great detail in the recent paper \cite{Naz20}. For our purposes, it is more natural to work over $\C[\![\hbar]\!]$ within the framework of quantized enveloping algebras first developed by Drinfeld \cite{DrQG}, where we may study $\DYhg$ from the point of view of quantization of Lie bialgebras. At the same time, many of our results are ``global'' (in the sense of \cite{Gav07}) and admit an interpretation over both $\C[\hbar]$ and $\C$, including the realization of $\mcR(z)$ provided by Theorem \ref{T:R}; see Appendix \ref{A:R-matrix}.

%
\subsection{Outline}\label{ssec:I-outline} 
The paper is written so as to provide a complete picture, accessible to non-experts, where possible. For this reason, we take great care to lay the foundation needed to state and prove the results outlined in Section \ref{ssec:I-results}. The first three sections --- Sections  \ref{sec:Pr}, \ref{sec:Yhg} and \ref{sec:DYhg} -- are intended to serve a preliminary role: Section \ref{sec:Pr} surveys the theory of $\Z$-graded topological $\C[\![\hbar]\!]$-modules, algebras and Hopf algebras, including homogeneous quantizations of graded Lie bialgebras. This theory plays a prominent role throughout the article. In
Section \ref{sec:Yhg}, we review the definition and main properties of the Yangian $\hYhg$, defined both over $\C[\hbar]$ and $\C[\![\hbar]\!]$. Notably, this includes a review of the construction of the universal $R$-matrix $\mcR(z)$ carried out in \cite{GTLW19}. Section \ref{sec:DYhg} is focused on the Yangian double $\DYhg$ and, in particular, on reviewing the main results of \cite{WDYhg}; see Theorems \ref{T:Phiz} and \ref{T:Phi}.

In Sections \ref{sec:QFYhg} and \ref{sec:Yhg*}, we study the Drinfeld--Gavarini subalgebra $\QFhYhg$ of the Yangian, its $\C[\hbar]$-form, and its restricted dual $\Yhgstar$ in detail. This includes a detailed proof that $\Yhgstar$ provides a homogeneous quantization of $t^{-1}\mfg[t^{-1}]$, equipped with the Yangian Lie bialgebra structure; see Definition \ref{D:Yhg*} and Theorem \ref{T:Yhg*-quant}.

The last three sections of the article contain its three main results: Theorems \ref{T:dual}, \ref{T:DYhg-DD} and \ref{T:R}. We refer the reader to Section \ref{ssec:I-results} above, where these are outlined in detail. Finally, in Appendix \ref{A:R-matrix} we explain how to translate the construction of the universal $R$-matrix given in \cite{GTLW19} for $\hbar\in \C^\times$ to the setting of the present paper, in which $\hbar$ is a formal variable; see Proposition \ref{P:Yhg-R}, which appears in Section \ref{ssec:Yhg-R} as Theorem \ref{T:Yhg-R}.


\subsection{Acknowledgments}
I would like to thank Alex Weekes, Andrea Appel, Sachin Gautam, and Valerio Toledano Laredo for the many insightful discussions and helpful comments they have been the source of over the last few years. These have played a significant role in shaping this article.

\section{Homogeneous quantizations}\label{sec:Pr}

%
\subsection{Topological modules}\label{ssec:Pr-Top}

Recall that a $\C[\![\hbar]\!]$-module $\msM$ is \textit{separated} if the intersection of the family of submodules $\hbar^n\msM$ is trivial, and it is \textit{complete} if the natural $\C[\![\hbar]\!]$-linear map 
\begin{equation*}
\msM\to \varprojlim_n( \msM/\hbar^n \msM)
\end{equation*}
is surjective, where the inverse limit is taken over the set $\N$ of non-negative integers. In particular, $\msM$ is both separated and complete if and only if the above map is an isomorphism. If $\msM$ is separated, complete and torsion free as a $\C[\![\hbar]\!]$-module, then it is said to be \textit{topologically free}. This is equivalent to the existence of a $\C[\![\hbar]\!]$-module isomorphism $\msM\cong \msV[\![\hbar]\!]$ for a complex vector space $\msV$. Such an isomorphism is specified by a choice of complement $\msV\subset \msM$ to $\hbar\msM$:
\begin{equation*}
\msM= \msV \oplus \hbar \msM. 
\end{equation*}
More generally, if $\msM$ is any $\C[\![\hbar]\!]$-module, then the space $\msV=\msM/\hbar\msM$ is called the \textit{semiclassical limit} of $\msM$. Similarly, the semiclassical limit of a $\C[\![\hbar]\!]$-linear map $\uptau: \msM\to \msN$ is the $\C$-linear map $\bar\uptau: \msM/\hbar\msM\to \msN/\hbar\msN$ uniquely determined by the commutativity of the diagram 
\begin{equation*}
\begin{tikzcd}[column sep=13ex]
 \msM \arrow[two heads]{d}  \arrow{r}{\uptau} &  \msN \arrow[two heads]{d}\\
 \msM/\hbar\msM \arrow{r}{\bar\uptau}       & \msN/\hbar\msN
\end{tikzcd}
\end{equation*}
As the following elementary result illustrates, the semiclassical limit of a $\C[\![\hbar]\!]$-module homomorphism encodes important information about the original map. 
\begin{lemma}\label{L:scl-map}
Let $\msM$, $\msN$, $\uptau$ and $\bar\uptau$ be as above. 
\begin{enumerate}[font=\upshape]
\item Suppose that $\msM$ is separated, $\msN$ is torsion free and $\bar\uptau$ is injective. Then $\uptau$ is injective. 
\item Suppose that $\msM$ is complete, $\msN$ is separated and $\bar{\uptau}$ is surjective. Then $\uptau$ is surjective. 
\end{enumerate}
\end{lemma}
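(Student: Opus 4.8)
The plan is to prove both statements by the standard "$\hbar$-adic" bootstrapping argument, leveraging completeness/separatedness to pass from the semiclassical limit back to the original map. Write $\mathrm{pr}_{\msM}:\msM\to\msM/\hbar\msM$ and $\mathrm{pr}_{\msN}:\msN\to\msN/\hbar\msN$ for the canonical projections, so that $\mathrm{pr}_{\msN}\circ\uptau=\bar\uptau\circ\mathrm{pr}_{\msM}$ by definition of $\bar\uptau$.

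For part (1), I would show by induction on $n$ that $\uptau(x)\in\hbar^n\msN$ implies $x\in\hbar^n\msM$, for all $x\in\msM$ and all $n\ge 0$. The base case $n=0$ is vacuous. For the inductive step, suppose $\uptau(x)\in\hbar^{n+1}\msN\subseteq\hbar^n\msN$; by the inductive hypothesis $x=\hbar^n y$ for some $y\in\msM$, so $\hbar^n\uptau(y)=\uptau(x)\in\hbar^{n+1}\msN$, whence $\hbar^n\bigl(\uptau(y)-\hbar z\bigr)=0$ for some $z\in\msN$. Since $\msN$ is torsion free, $\uptau(y)=\hbar z\in\hbar\msN$, so $\bar\uptau(\mathrm{pr}_{\msM}(y))=\mathrm{pr}_{\msN}(\uptau(y))=0$; injectivity of $\bar\uptau$ gives $\mathrm{pr}_{\msM}(y)=0$, i.e. $y\in\hbar\msM$, hence $x=\hbar^n y\in\hbar^{n+1}\msM$. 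This completes the induction. Now if $\uptau(x)=0$ then $x\in\bigcap_{n}\hbar^n\msM$, which is trivial since $\msM$ is separated; thus $\uptau$ is injective.

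For part (2), I would construct a preimage of a given $y\in\msN$ as a convergent series. Since $\bar\uptau$ is surjective and $\msN$ is separated, I first note it suffices to show $\uptau(\msM)+\hbar^n\msN=\msN$ for every $n$: given such $y$, one builds a sequence $x_n\in\msM$ with $y-\uptau(x_n)\in\hbar^n\msN$ and $x_{n+1}-x_n\in\hbar^n\msM$, so $(x_n)$ is Cauchy in the $\hbar$-adic topology; completeness of $\msM$ yields a limit $x$, and separatedness of $\msN$ forces $\uptau(x)=y$. To prove $\uptau(\msM)+\hbar^n\msN=\msN$ I induct on $n$: for $n=1$ this is precisely surjectivity of $\bar\uptau$; assuming it for $n$, take $y\in\msN$, write $y=\uptau(x_0)+\hbar^n w$ with $w\in\msN$, apply the $n=1$ case to $w$ to get $w=\uptau(x')+\hbar w'$, and then $y=\uptau(x_0+\hbar^n x')+\hbar^{n+1}w'\in\uptau(\msM)+\hbar^{n+1}\msN$.

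The arguments are routine, so there is no serious obstacle; the only point requiring care is the torsion-freeness hypothesis in part (1), which is exactly what is needed to cancel $\hbar^n$ in the inductive step (the hypothesis is genuinely used and cannot be dropped), and in part (2) the bookkeeping to ensure the partial sums $x_n$ form a Cauchy sequence rather than merely satisfying $y-\uptau(x_n)\in\hbar^n\msN$ — this is handled by choosing the increments $x_{n+1}-x_n$ to lie in $\hbar^n\msM$, which is possible precisely because at each stage one solves the congruence modulo one extra power of $\hbar$.
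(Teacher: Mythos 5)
Your proof is correct, and since the paper does not supply a proof of this lemma (it is described there as elementary), the only thing to judge is internal correctness. Both parts follow exactly the standard $\hbar$-adic bootstrapping argument: for (1) an induction showing $\uptau(x)\in\hbar^n\msN\Rightarrow x\in\hbar^n\msM$, where torsion-freeness of $\msN$ lets you cancel $\hbar^n$ and injectivity of $\bar\uptau$ handles the inductive step, with separatedness finishing; for (2) the successive-approximation construction of a Cauchy sequence $(x_n)$ with $x_{n+1}-x_n\in\hbar^n\msM$, using completeness to produce a preimage compatible with all $x_n$ modulo $\hbar^n\msM$ and separatedness of $\msN$ to conclude equality. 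You have correctly noted the one subtlety in (2) — that the sequence must be built so the increments lie in $\hbar^n\msM$, not merely so that $y-\uptau(x_n)\in\hbar^n\msN$ — and your construction does achieve this. No gaps.
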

The topological tensor product $\msM \,{\widehat{\otimes}}\, \msN$ of two $\C[\![\hbar]\!]$-modules $\msM$ and $\msN$ is the $\hbar$-adic completion of the algebraic tensor product $\msM\otimes_{\C[\![\hbar]\!]}\msN$: 
\begin{equation*}
\msM \,{\widehat{\otimes}}\, \msN= \varprojlim_{n}(\msM\otimes_{\C[\![\hbar]\!]}\msN) /\hbar^n (\msM\otimes_{\C[\![\hbar]\!]}\msN).
\end{equation*}
If $\msM$ and $\msN$ are topologically free with $\msM\cong \msV[\![\hbar]\!]$ and $\msN\cong \msW[\![\hbar]\!]$, then $\msM \,{\widehat{\otimes}}\, \msN$ is topologically free and isomorphic to $(\msV\otimes_\C \msW)[\![\hbar]\!]$. 

In this article, we shall say that $\msM$ is a \textit{topological module} over $\C[\![\hbar]\!]$ if $\msM$ is a $\C[\![\hbar]\!]$-module which is both separated and complete. For any such module, we have 
\begin{equation*}
\C[\![\hbar]\!]\,\wh{\otimes}\, \msM \cong \msM \cong \msM\,\wh{\otimes}\, \C[\![\hbar]\!].
\end{equation*}
Similarly, by a \textit{topological algebra} $\msA$ over $\C[\![\hbar]\!]$ we shall always mean that $\msA$ is a $\C[\![\hbar]\!]$-algebra which is both separated and complete as a module over $\C[\![\hbar]\!]$. In particular, the multiplication $m$ can be viewed as a $\C[\![\hbar]\!]$-linear map 
\begin{equation*}
m: \msA\,{\widehat{\otimes}}\,\msA\to \msA.
\end{equation*}
A \textit{topological Hopf algebra} $\msH$ over $\C[\![\hbar]\!]$ is a topological $\C[\![\hbar]\!]$-algebra equipped with a counit $\veps:\msH\to \C[\![\hbar]\!]$, a coproduct $\Delta:\msH\to \msH \, \widehat{\otimes}\, \msH$ and an antipode $S:\msH\to \msH$ which collectively satisfy the axioms of a Hopf algebra with all tensor products given by the topological tensor product $\wh{\otimes}$. By modifying these definitions in the expected way, one obtains the notion of a topological coalgebra and bialgebra over $\C[\![\hbar]\!]$.

If $\msM$ and $\msN$ are topological $\C[\![\hbar]\!]$-modules and $\msN\cong \msW[\![\hbar]\!]$ is topologically free, then the space of $\C[\![\hbar]\!]$-module homomorphisms $\Hom_{\C[\![\hbar]\!]}(\msM,\msN)$ is separated, complete and torsion free. If in addition $\msM$ is topologically free with $\msM\cong \msV[\![\hbar]\!]$, then one has 
\begin{equation*}
\Hom_{\C[\![\hbar]\!]}(\msM,\msN)\cong \Hom_\C(\msV,\msW)[\![\hbar]\!].
\end{equation*}
In particular, the $\C[\![\hbar]\!]$-linear dual $\msM^\ast:=\Hom_{\C[\![\hbar]\!]}(\msM,\C[\![\hbar]\!])$ of a topologically free $\C[\![\hbar]\!]$-module $\msM\cong \msV[\![\hbar]\!]$ satisfies $\msM^\ast \cong \msV^\ast[\![\hbar]\!]$.
%

%
\subsection{Graded topological modules}\label{ssec:Pr-grTop}
Let us now turn towards the $\Z$-graded analogues of the above definitions. Henceforth, we view $\C[\hbar]=\bigoplus_{k\in \N}\C\hbar^k$ as an $\N$-graded ring. For brevity, we shall denote its $\N$-graded quotient $\C[\hbar]/\hbar^n \C[\hbar]$ by $\msK_n$, for each $n\in \N$. 
\begin{definition}\label{D:M-graded}
We say that a topological $\C[\![\hbar]\!]$-module $\msM$ is $\Z$-\textit{graded} if, for each $n\in \N$, $\msM/\hbar^n \msM=\bigoplus_{k\in \Z}(\msM/\hbar^n\msM)_k$ is a $\Z$-graded $\msK_n$-module and the natural homomorphism
\begin{equation*}
 \msM/\hbar^{n+1} \msM\to \msM/\hbar^{n} \msM
\end{equation*} 
is $\Z$-graded. If $(\msM/\hbar^n\msM)_k$ is trivial for $k<0$, we say that $\msM$ is $\N$-graded.

\end{definition}
A $\C[\![\hbar]\!]$-module homomorphism $\msM\to \msN$ between $\Z$-graded topological $\C[\![\hbar]\!]$-modules $\msM$ and $\msN$  is said to be  $\Z$-graded if the induced morphisms
\begin{equation*}
\msM/\hbar^n\msM \to \msN/\hbar^n\msN 
\end{equation*}
are all $\Z$-graded. More generally, it is $\Z$-graded of degree $a\in \Z$ if each of these induced morphisms is homogeneous of degree $a$. 

The category of $\Z$-graded topological modules is closed under the tensor product $\wh{\otimes}$. Indeed, this follows from the elementary observation that, given two $\C[\![\hbar]\!]$-modules $\msM$ and $\msN$, one has
\begin{equation*}
(\msM\, \wh{\otimes}\, \msN)/ \hbar^n (\msM\,\wh{\otimes}\, \msN) \cong  (\msM \otimes_{\C[\![\hbar]\!]} \msN)/ \hbar^n (\msM \otimes_{\C[\![\hbar]\!]}  \msN) \cong  \msM/\hbar^n \msM \otimes_{\msK_n} \msN/\hbar^n \msN,
\end{equation*}
which can be equipped with the standard tensor product grading, provided $\msM/\hbar^n\msM$ and $\msN/\hbar^n\msN$ are both $\Z$-graded.
\begin{definition}\label{D:A-graded}
A topological algebra $\msA$ is said to be $\Z$-graded if it is graded as a topological $\C[\![\hbar]\!]$-module and the multiplication map 
\begin{equation*}
m:\msA\,\wh{\otimes}\,\msA\to \msA
\end{equation*}
 is a $\Z$-graded homomorphism. Similarly, a topological Hopf algebra $\msH$ is $\Z$-graded if it is $\Z$-graded as a topological algebra and the structure maps 
\begin{equation*}
 \Delta:\msH\to \msH\,\wh{\otimes}\,\msH,\quad \veps:\msH\to \C[\![\hbar]\!], \quad S:\msH\to \msH
 \end{equation*}
are all $\Z$-graded $\C[\![\hbar]\!]$-module homomorphisms. Equivalently, a topological algebra or Hopf algebra $\msH$ is $\Z$-graded if the conditions of Definition \ref{D:M-graded} hold and each $\msH/\hbar^n\msH$ is a $\Z$-graded  algebra or Hopf algebra over $\msK_n$, respectively. 
\end{definition}
Of course, one also has the notion of a $\Z$-graded topological coalgebra and bialgebra, which are defined by making the obvious modifications to the above definition. 

The prototypical example of a $\Z$-graded topological module over $\C[\![\hbar]\!]$ is $\msM=\msV[\![\hbar]\!]$ where $\msV=\bigoplus_{k\in \Z}\msV_k$ is a $\Z$-graded complex vector space. In this case one has
\begin{equation*}
\msM/\hbar^n\msM \cong  \msV[\hbar]/\hbar^n \msV[\hbar] 
\end{equation*}
which is naturally graded, as $\msV[\hbar]$ is graded with $\msV[\hbar]_k= \bigoplus_{n\geq 0} \hbar^n \msV_{k-n}$, and $\hbar^n\msV[\hbar]$ is a graded submodule. The assertion that $\msM$ is $\Z$-graded may be recaptured as follows. For each $k\in \Z$, set 
\begin{equation*}
\msM_k:=\varprojlim_n \msV[\hbar]_k/\hbar^n \msV[\hbar]_{k-n} \cong \prod_{n\in \N} \hbar^n \msV_{k-n} \subset \msV[\![\hbar]\!].
\end{equation*}
Then each $\msM_k$ is a closed subspace of $\msM$ satisfying $\msM_k/\hbar^n \msM_{k-n}\cong (\msM/\hbar^n\msM)_k$, and $\msM$ contains the $\Z$-graded $\C[\hbar]$-module $\bigoplus_{k\in \Z}\msM_k$ as a dense $\C[\hbar]$-submodule. Moreover, the $\hbar$-adic topology on this submodule coincides with the subspace topology, so $\msM$ is the $\hbar$-adic completion of $\bigoplus_{k\in \Z}\msM_k$. If in addition $\msV$ is $\N$-graded, then $\bigoplus_{k\in \Z}\msM_k$ coincides with the polynomial space $\msV[\hbar]\subset \msV[\![\hbar]\!]$.

The below lemma provides an equivalent characterization of the definition of a $\Z$-graded topological module and algebra which generalizes this picture. 
\begin{lemma}\label{L:grad-top}
A topological $\C[\![\hbar]\!]$-module $\msM$ is $\Z$-graded if and only if it admits a dense, $\Z$-graded $\C[\hbar]$-submodule 
\begin{equation*}
\msM_\Z=\bigoplus_{k\in \Z}\msM_k \subset \msM
\end{equation*}
with each $\msM_k$ a closed subspace of $\msM$ and $\hbar^n \msM\cap \msM_\Z=\hbar^n\msM_\Z$ for all $n\in \N$. 

\noindent If in addition $\msM$ has the structure of a topological algebra, then it is $\Z$-graded if and only if the above conditions hold and $\msM_\Z$ is a $\Z$-graded $\C[\hbar]$-subalgebra of $\msM$. 
\end{lemma}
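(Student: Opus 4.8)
The plan is to prove the two equivalences by producing the candidate submodule $\msM_\Z$ explicitly from the grading data and, conversely, recovering the grading data from a submodule with the stated properties. For the forward direction, I would first note that a $\Z$-graded topological module $\msM$ comes with a compatible system of $\Z$-gradings on the quotients $\msM/\hbar^n\msM$, and that the natural maps $\msM/\hbar^{n+1}\msM\to\msM/\hbar^n\msM$ are homogeneous. For each $k\in\Z$ I would set $\msM_k:=\varprojlim_n(\msM/\hbar^n\msM)_k$, viewed inside $\msM=\varprojlim_n\msM/\hbar^n\msM$; this is closed in $\msM$ since it is an inverse limit of (the images of) closed pieces, and the inclusion $\bigoplus_k\msM_k\hookrightarrow\msM$ has dense image because each element of $\msM/\hbar^n\msM$ decomposes as a finite sum of homogeneous components and these components lift compatibly. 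The identity $\hbar^n\msM\cap\msM_\Z=\hbar^n\msM_\Z$ I would check degree by degree, using that multiplication by $\hbar$ is injective-after-passing-to-associated-graded type reasoning — more precisely, that $\hbar^n(\msM/\hbar^{m}\msM)_{k-n}=(\hbar^n\msM/\hbar^{m}\msM)_k$ for $m>n$, which follows from the compatibility of the gradings. For the reverse direction, given such an $\msM_\Z$, the condition $\hbar^n\msM\cap\msM_\Z=\hbar^n\msM_\Z$ together with density shows that $\msM/\hbar^n\msM\cong\msM_\Z/\hbar^n\msM_\Z$, and the right-hand side is visibly $\Z$-graded over $\msK_n$ since $\msM_\Z$ is a graded $\C[\hbar]$-module; the compatibility of the quotient maps is then immediate. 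This establishes that $\msM$ satisfies Definition \ref{D:M-graded}.

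For the algebra statement, I would argue that once the module-level equivalence is in place, the only thing to track is how multiplication interacts with $\msM_\Z$. If $\msM$ is $\Z$-graded as a topological algebra, then each $\msM/\hbar^n\msM$ is a $\Z$-graded $\msK_n$-algebra, so the multiplication restricted to the dense submodule $\msM_\Z$ lands in $\msM_\Z$ and is homogeneous — i.e. $\msM_\Z$ is a $\Z$-graded $\C[\hbar]$-subalgebra; one should remark that $\msM_\Z\,\wh\otimes\,\msM_\Z$-type completions are not needed here because $\msM_\Z$ is an honest (non-completed) subalgebra and $m(\msM_k\cdot\msM_l)\subseteq\msM_{k+l}$ is a finitary statement. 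Conversely, if $\msM_\Z$ is a graded subalgebra, then $\msM_\Z/\hbar^n\msM_\Z\cong\msM/\hbar^n\msM$ inherits the structure of a $\Z$-graded $\msK_n$-algebra, and by the last sentence of Definition \ref{D:A-graded} this is exactly what it means for $\msM$ to be a $\Z$-graded topological algebra.

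The main obstacle I anticipate is the bookkeeping around the condition $\hbar^n\msM\cap\msM_\Z=\hbar^n\msM_\Z$ and its role in showing $\msM/\hbar^n\msM\cong\msM_\Z/\hbar^n\msM_\Z$ as \emph{graded} modules — one must be careful that density of $\msM_\Z$ in $\msM$ plus this intersection identity really does force the quotient map $\msM_\Z/\hbar^n\msM_\Z\to\msM/\hbar^n\msM$ to be an isomorphism (surjectivity from density together with completeness, injectivity from the intersection identity), and that the grading transported across this isomorphism is the one making the tower compatible. The discussion immediately preceding the lemma — where this is carried out for the prototypical example $\msM=\msV[\![\hbar]\!]$ with $\msM_k=\prod_{n}\hbar^n\msV_{k-n}$ — is essentially the template, and the proof amounts to observing that nothing in that argument used more than the abstract properties now being assumed. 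Everything else is formal manipulation of inverse limits and finite direct sum decompositions.
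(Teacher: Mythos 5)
Your proposal is correct and follows essentially the same route as the paper's own (suppressed) proof: defining $\msM_k=\varprojlim_n(\msM/\hbar^n\msM)_k$, using the projections $\pi_{n,k}$, establishing $\msM_k/\hbar^n\msM_{k-n}\cong(\msM/\hbar^n\msM)_k$, and deducing density and the intersection identity for the converse; the algebra statement is handled exactly as you indicate. Two places where your sketch is lighter than what the paper carries out: (i) the directness of $\sum_k\msM_k$ is not automatic and the paper derives it from the existence of the limit projections $\pi_k=\varprojlim_n\pi_{n,k}$, which you use implicitly but never invoke for this purpose; and (ii) the closedness of $\msM_k$ is not an ``inverse limit of closed pieces'' in any naive sense (the $(\msM/\hbar^n\msM)_k$ live in quotients, not in $\msM$) --- the paper proves it by showing that a Cauchy sequence in $\msM_k$ stays in $\msM_k$, again via the continuous projections $\pi_m$; your phrasing should be replaced by the observation that $\msM_k=\bigcap_{m\neq k}\ker\pi_m$ is an intersection of kernels of continuous maps. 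Neither of these affects the overall structure of your argument.
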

If $\msM$ is a $\Z$-graded topological module then the $k$-th component $\msM_k$ of $\msM_\Z$ from Lemma \ref{L:grad-top} is uniquely determined and recovered as the inverse limit
\begin{equation}\label{M_k}
\msM_k:=\varprojlim_n (\msM/\hbar^n \msM)_k \subset \varprojlim_{n} \msM/\hbar^n \msM =\msM.
\end{equation}
Moreover, one has $(\msM/\hbar^n \msM)_k\cong \msM_k/\hbar^n \msM_{k-n}$ for all $n\in \N$ and $k\in \Z$.

 We further observe that, for each $k\in \Z$, the system of linear projections $\pi_{n,k}:\msM/\hbar^n\msM\to (\msM/\hbar^n\msM)_k$ gives rise to a projection
\begin{equation*}
\pi_k:=\varprojlim_n \pi_{n,k}: \msM\to \msM_k
\end{equation*}
which restricts to the projection of $\msM_\Z$ onto its $k$-th homogeneous component. In particular, a $\C[\![\hbar]\!]$-linear map $\uptau:\msM\to \msN$ between $\Z$-graded topological modules is graded if and only if $\pi_k^\msN\circ \uptau=\uptau\circ \pi_k^\msM$ for each $k\in \Z$. 

We conclude this preliminary subsection with two corollaries of the above discussion. The first shows that any topologically free $\Z$-graded $\C[\![\hbar]\!]$-module is  of the form described above Lemma \ref{L:grad-top}. 
\begin{corollary}\label{C:Top-Z=Free}
Suppose $\msM$ is a  $\Z$-graded topologically free module over $\C[\![\hbar]\!]$, and let $\msV$ denote the $\Z$-graded complex vector space $\msM/\hbar\msM=\bigoplus_{k\in \Z} \msM_k/\hbar \msM_{k-1}$. Then
\begin{equation*}
\msM\cong \msV[\![\hbar]\!]
\end{equation*}
as a $\Z$-graded topological $\C[\![\hbar]\!]$-module. In particular, one has 
\begin{equation*}
\msM_\Z=\bigoplus_{k\in \Z} \msM_k \cong \bigoplus_{k\in \Z} \msV[\![\hbar]\!]_k \subset \msV[\![\hbar]\!], \quad \text{ where }\;\msV[\![\hbar]\!]_k=\prod_{n\in \N} \hbar^n \msV_{k-n}
\end{equation*}
\end{corollary}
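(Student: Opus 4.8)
The plan is to verify the two asserted structural claims --- the existence of a $\Z$-graded $\C[\![\hbar]\!]$-module isomorphism $\msM\cong\msV[\![\hbar]\!]$, and the resulting description of $\msM_\Z$ --- by combining the semiclassical picture for topologically free modules recalled before Definition~\ref{D:M-graded} with the characterization of the grading via the closed submodules $\msM_k$ from Lemma~\ref{L:grad-top} and equation~\eqref{M_k}. The starting point is that, $\msM$ being topologically free, any choice of $\C$-linear splitting $\msV\hookrightarrow\msM$ of the projection $\msM\twoheadrightarrow\msM/\hbar\msM=\msV$ induces a $\C[\![\hbar]\!]$-module isomorphism $\msV[\![\hbar]\!]\iso\msM$; the only content is to arrange this splitting to be \emph{graded}, so that the isomorphism respects the gradings in the sense of Definition~\ref{D:M-graded}.

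First I would establish that $\msM/\hbar\msM$ is genuinely $\Z$-graded with homogeneous components $\msM_k/\hbar\msM_{k-1}$. This is immediate from the last sentence of the discussion following Lemma~\ref{L:grad-top}, namely the isomorphism $(\msM/\hbar^n\msM)_k\cong\msM_k/\hbar^n\msM_{k-n}$ specialized to $n=1$, together with the fact that $\bigoplus_{k\in\Z}\msM_k=\msM_\Z$ is dense and $\hbar\msM\cap\msM_\Z=\hbar\msM_\Z$, so that $\msM/\hbar\msM\cong\msM_\Z/\hbar\msM_\Z=\bigoplus_k\msM_k/\hbar\msM_{k-1}$. Next, for each $k$ choose a $\C$-linear complement $\msV_k\subset\msM_k$ to $\hbar\msM_{k-1}$ inside $\msM_k$ (possible since $\msM_k$ is a $\C$-vector space and $\hbar\msM_{k-1}\subset\msM_k$); set $\msV=\bigoplus_k\msV_k$, regarded as a $\Z$-graded subspace of $\msM_\Z\subset\msM$. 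By construction $\msV$ maps isomorphically onto $\msM/\hbar\msM$ as a graded space. The induced $\C[\![\hbar]\!]$-linear map $\msV[\![\hbar]\!]\to\msM$, $\sum_n\hbar^n v_n\mapsto\sum_n\hbar^n v_n$, is then an isomorphism of $\C[\![\hbar]\!]$-modules by topological freeness; I would then check it is $\Z$-graded by verifying, modulo each $\hbar^n$, that it intertwines the projections $\pi_{n,k}$ --- this follows because $\hbar^m\msV_{k-m}\subset\msM_k$ for every $m$, so the image of the degree-$k$ component $\msV[\![\hbar]\!]_k=\prod_m\hbar^m\msV_{k-m}$ lands in $\msM_k=\varprojlim_n(\msM/\hbar^n\msM)_k$, and conversely $\pi_k$ pulls back to the projection onto $\msV[\![\hbar]\!]_k$. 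Granting that, the displayed description $\msM_\Z=\bigoplus_k\msM_k\cong\bigoplus_k\msV[\![\hbar]\!]_k$ with $\msV[\![\hbar]\!]_k=\prod_{n}\hbar^n\msV_{k-n}$ is exactly the image of the graded submodule $\bigoplus_k\msV[\![\hbar]\!]_k\subset\msV[\![\hbar]\!]$ under this isomorphism, as spelled out in the prototypical example preceding Lemma~\ref{L:grad-top}.

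The step I expect to require the most care is not any single computation but the bookkeeping that the purely algebraic splitting $\msV[\![\hbar]\!]\to\msM$ is compatible with the \emph{topological} grading data of Definition~\ref{D:M-graded}: one must confirm that the complements $\msV_k$ chosen componentwise assemble to a complement of $\hbar\msM$ in $\msM$ (using $\msM=\varprojlim\msM/\hbar^n\msM$ and $\msM/\hbar\msM=\bigoplus_k\msV_k$), and that the resulting isomorphism carries the $\hbar$-adic topology on $\msV[\![\hbar]\!]$ to that on $\msM$ while sending graded pieces to graded pieces. Since all of this is a direct unwinding of \eqref{M_k}, Lemma~\ref{L:grad-top}, and the isomorphisms $(\msM/\hbar^n\msM)_k\cong\msM_k/\hbar^n\msM_{k-n}$ already in hand, the proof is essentially a verification, and no genuine obstacle is anticipated.
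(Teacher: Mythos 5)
Your proposal is correct and follows essentially the same route as the paper: both proofs hinge on choosing, for each $k\in\Z$, a complement $\msV_k\subset\msM_k$ of $\hbar\msM_{k-1}$, setting $\msV=\bigoplus_k\msV_k$, and then assembling the resulting decompositions into the graded isomorphism $\msM\cong\msV[\![\hbar]\!]$. The only cosmetic difference is ordering --- the paper establishes the identifications $\msM_k/\hbar^n\msM_{k-n}\cong\msV[\hbar]_k/\hbar^n\msV[\hbar]_{k-n}$ at each finite level and passes to the inverse limit, whereas you first invoke topological freeness to get the $\C[\![\hbar]\!]$-module isomorphism and then verify gradedness --- but the underlying computation (which, as the paper notes, quietly uses torsion-freeness to keep the summands $\hbar^j\msV_{k-j}$ independent) is the same.
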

\begin{proof}
This is a refinement of the elementary result, alluded to at the beginning of the section, that $\msM\cong \msV[\![\hbar]\!]$ as a $\C[\![\hbar]\!]$-module; see \cite{KasBook95}*{Prop.~XVI.2.4}, for instance. In more detail, an isomorphism of $\Z$-graded topological modules $\msM\cong \msV[\![\hbar]\!]$ is specified by choosing, for each $k\in \Z$, a complement $\msV_k\subset \msM_k$ to $\hbar\msM_{k-1}$ in $\msM_k$:  
\begin{equation*}
\msM_k=\msV_k\oplus \hbar \msM_{k-1}.
\end{equation*}
Setting $\msV:=\bigoplus_{k\in \Z}\msV_k\subset \msM_\Z$, we then have 
\begin{gather*}
\msV\cong \msM_\Z/\hbar\msM_\Z=\msM/\hbar\msM\\
\msM_k/\hbar^n \msM_{k-n} \cong \msV_k\oplus \hbar \msV_{k-1} \oplus \cdots \oplus \hbar^{n-1}\msV_{k-n+1}\cong \msV[\hbar]_k/\hbar^n \msV[\hbar]_{k-n}\\ 
\msM/\hbar^n \msM \cong \bigoplus_{k\in \Z} \msM_k/\hbar^n \msM_{k-n} \cong \bigoplus_{k\in \Z}\msV[\hbar]_k/\hbar^n \msV[\hbar]_{k-n} = \msV[\![\hbar]\!]/\hbar^n \msV[\![\hbar]\!]
\end{gather*}
where the third line is an identification of $\Z$-graded modules. Here we note that the second line follows from the definition of $\msV_k$ and that $\msM$ is a torsion free $\C[\![\hbar]\!]$-module. 
Taking inverse limits, one finds that $\msM\cong \msV[\![\hbar]\!]$ as $\Z$-graded topological $\C[\![\hbar]\!]$-modules. 
\end{proof}

Let us now shift our attention to the case where $\dot\msM=\bigoplus_{k\in \N} \dot\msM_k$ is an $\N$-graded $\C[\hbar]$-module. Any such module is automatically separated, and so embeds into its $\hbar$-adic completion 
\begin{equation*}
\msM=\varprojlim_n (\dot\msM/\hbar^n \dot\msM),
\end{equation*}
which is an $\N$-graded topological $\C[\![\hbar]\!]$-module.
Moreover, if $\dot\msM$ is a torsion free $\C[\hbar]$-module, then $\msM$ is topologically free as a  $\C[\![\hbar]\!]$-module. Since $\dot\msM_{k-n}$ is trivial for $n>k$, the submodule $\msM_k$ of $\msM$ (see \eqref{M_k}) coincides with $\dot\msM_k$ and so, in the notation of Lemma \ref{L:grad-top}, one has $\dot\msM=\msM_\N$. These observations, coupled with Corollary \ref{C:Top-Z=Free} and that $\msV[\![\hbar]\!]_k=\msV[\hbar]_k$ when $\msV$ is $\N$-graded, yield the following. 
\begin{corollary}\label{C:Top-Ngraded}
Let $\dot\msM$ be an $\N$-graded, torsion free $\C[\hbar]$-module. Then $\msM$ is a 
 topologically free $\N$-graded $\C[\![\hbar]\!]$-module. Moreover, we have 
\begin{equation*}
\dot\msM_k=\varprojlim_{n}(\dot\msM_k/\hbar^n \dot\msM_{k-n})=\msM_k \quad \text{ for all }\; k\in \N.
\end{equation*}
Consequently, $\dot\msM$ coincides with $\msM_\N$ and there is an isomorphism of $\N$-graded $\C[\hbar]$-modules
\begin{equation*}
\dot\msM\cong \msV[\hbar]=\bigoplus_{k\in \N}\msV[\hbar]_k, \quad \text{ where }\; \msV:=\dot\msM/\hbar\dot\msM.
\end{equation*}
\end{corollary}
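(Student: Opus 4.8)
The plan is to deduce Corollary \ref{C:Top-Ngraded} directly from Corollary \ref{C:Top-Z=Free} together with the elementary observations recorded in the paragraph immediately preceding the statement, so the proof should be short. First I would note that since $\dot\msM=\bigoplus_{k\in\N}\dot\msM_k$ is $\N$-graded, each $\hbar^n\dot\msM$ is a graded submodule, and the natural maps $\dot\msM/\hbar^{n+1}\dot\msM\to\dot\msM/\hbar^n\dot\msM$ are graded; hence $\msM=\varprojlim_n(\dot\msM/\hbar^n\dot\msM)$ is an $\N$-graded topological $\C[\![\hbar]\!]$-module in the sense of Definition \ref{D:M-graded}. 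Torsion-freeness of $\dot\msM$ gives that $\dot\msM\into\dot\msM/\hbar^n\dot\msM$ has trivial kernel in the appropriate sense, and the standard fact (e.g.\ \cite{KasBook95}*{Ch.~XVI}) that the $\hbar$-adic completion of a torsion-free $\C[\hbar]$-module is topologically free over $\C[\![\hbar]\!]$ yields that $\msM$ is topologically free.

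Next I would verify the displayed identity $\dot\msM_k=\varprojlim_n(\dot\msM_k/\hbar^n\dot\msM_{k-n})=\msM_k$. The key point is that $\dot\msM_{k-n}=0$ for $n>k$, so the tower $(\dot\msM_k/\hbar^n\dot\msM_{k-n})_n$ stabilizes: for $n>k$ every term equals $\dot\msM_k$ and every transition map is the identity, whence the inverse limit is $\dot\msM_k$ itself. On the other hand, by \eqref{M_k}, $\msM_k=\varprojlim_n(\msM/\hbar^n\msM)_k$, and since $\msM/\hbar^n\msM\cong\dot\msM/\hbar^n\dot\msM$ as graded $\msK_n$-modules, its degree-$k$ part is $\dot\msM_k/(\hbar^n\dot\msM\cap\dot\msM_k)=\dot\msM_k/\hbar^n\dot\msM_{k-n}$ — here using that $\dot\msM$ is torsion free so that $\hbar^n\dot\msM\cap\dot\msM_k=\hbar^n\dot\msM_{k-n}$. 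Taking inverse limits identifies $\msM_k$ with $\dot\msM_k$. Summing over $k$ and invoking density gives $\dot\msM=\bigoplus_k\dot\msM_k=\bigoplus_k\msM_k=\msM_\N$ in the notation of Lemma \ref{L:grad-top}.

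Finally, to obtain the isomorphism $\dot\msM\cong\msV[\hbar]$ with $\msV=\dot\msM/\hbar\dot\msM$, I would apply Corollary \ref{C:Top-Z=Free} to the $\N$-graded topologically free module $\msM$: it gives $\msM\cong\msV[\![\hbar]\!]$ as a $\Z$-graded (hence $\N$-graded) topological $\C[\![\hbar]\!]$-module, compatibly with the decomposition $\msM_\Z=\bigoplus_k\msM_k\cong\bigoplus_k\msV[\![\hbar]\!]_k$. Since $\msV$ is $\N$-graded, $\msV[\![\hbar]\!]_k=\prod_{n\in\N}\hbar^n\msV_{k-n}=\bigoplus_{n=0}^{k}\hbar^n\msV_{k-n}=\msV[\hbar]_k$, the product being finite. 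Therefore $\msM_\N=\bigoplus_k\msM_k\cong\bigoplus_k\msV[\hbar]_k=\msV[\hbar]$, and combining with $\dot\msM=\msM_\N$ from the previous step yields $\dot\msM\cong\msV[\hbar]$ as $\N$-graded $\C[\hbar]$-modules.

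I do not anticipate a genuine obstacle here: the statement is a packaging of Corollary \ref{C:Top-Z=Free} in the $\N$-graded setting, and the only mildly delicate points are the bookkeeping identities $\hbar^n\dot\msM\cap\dot\msM_k=\hbar^n\dot\msM_{k-n}$ (which relies on torsion-freeness) and the stabilization of the tower defining $\msM_k$, both of which are routine. The one thing to be careful about is ensuring that all identifications are made as \emph{graded} $\C[\hbar]$-modules rather than merely as $\C[\hbar]$-modules, which is handled automatically because every map in sight — the transition maps, the completion map, and the isomorphism from Corollary \ref{C:Top-Z=Free} — is graded by construction.
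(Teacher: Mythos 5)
Your proposal is correct and follows the same route the paper takes: the statement is a direct repackaging of Corollary \ref{C:Top-Z=Free} using the observations $\dot\msM_k=\msM_k$ (from stabilization of the tower, since $\dot\msM_{k-n}=0$ for $n>k$) and $\msV[\![\hbar]\!]_k=\msV[\hbar]_k$ for $\N$-graded $\msV$. One small quibble: the identity $\hbar^n\dot\msM\cap\dot\msM_k=\hbar^n\dot\msM_{k-n}$ does not rely on torsion-freeness — it follows purely from $\hbar^n\dot\msM$ being a graded submodule with $(\hbar^n\dot\msM)_k=\hbar^n\dot\msM_{k-n}$ since $\hbar$ has degree $1$; torsion-freeness is genuinely used only to conclude that $\msM$ is topologically free.
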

Note that if $\dot\msM$ is a $\N$-graded $\C[\hbar]$-algebra or Hopf algebra, then $\msM$ is automatically an $\N$-graded topological algebra or Hopf algebra, respectively.

%
\subsection{The restricted dual}\label{ssec:Pr-gr*}
For a given $\Z$-graded complex vector space $\msV=\bigoplus_n \msV_n$, we let $\msV^\star=\bigoplus_n (\msV^\star)_n\subset \msV^\ast$ denote the restricted, or graded, dual of $\msV$, where 
\begin{equation*}
(\msV^\star)_n=\{f\in \msV^\ast: f(\msV_{m})\subset \C_{m+n}\}\cong (\msV_{-n})^\ast
\end{equation*}
and $\C$ is given the trivial grading with $\C_0=\C$ and $\C_m=\{0\}$ for $m\neq 0$.  One can similarly define the restricted dual $\msM^\star\subset \msM^\ast$ in the category of $\Z$-graded topological $\C[\![\hbar]\!]$-modules. In this subsection we will 
recall some properties of this duality operation in the $\N$-graded setting which will be applied to construct the dual Yangian in Section \ref{sec:Yhg*}. 

Suppose that $\msM$ is an $\N$-graded, topological $\C[\![\hbar]\!]$-module with $\N$-graded $\C[\hbar]$-submodule $\msM_\N=\bigoplus_{k\in \N}\msM_k$ as in Lemma \ref{L:grad-top}. For each $n\in \N$, let $\mathsf{J}_k$ denote the $\hbar$-adic completion of the ideal $\bigoplus_{k\geq n}\msM_k$ of $\msM_\N$. The gradation topology on $\msM$ is the topology associated to the descending filtration 
\begin{equation*}
\msM=\msJ_0\supset \msJ_1 \supset \cdots \supset \msJ_n \supset \cdots
\end{equation*} 
Equipped with this terminology, we may make the following definition. 
\begin{definition}\label{D:M-star}
The restricted dual $\msM^\star$ is defined to be the $\C[\![\hbar]\!]$-submodule of $\msM^\ast$ consisting of those $f$ which are continuous with respect to the gradation topology:
\begin{equation*}
\msM^\star:=\{f\in \msM^\ast: f(\msJ_k)\subset \hbar^n\C[\![\hbar]\!] \quad \forall\; n\in \N \;\text{ and }\; k\gg0\}.
\end{equation*}
\end{definition}
The restricted dual of any $\N$-graded topological $\C[\![\hbar]\!]$-module $\msM$ is easily seen to be separated, complete and torsion free. Let us now see that is admits a $\Z$-graded structure. 

For each $a\in \Z$, let $\Hom_{\C[\![\hbar]\!]}^a(\msM,\C[\![\hbar]\!])\subset \msM^\star$ denote the (closed) subspace consisting of $\Z$-graded $\C[\![\hbar]\!]$-module homomorphisms $f:\msM\to \C[\![\hbar]\!]$ of degree $a$.  Equivalently: 
\begin{equation*}
\Hom_{\C[\![\hbar]\!]}^a(\msM,\C[\![\hbar]\!])=\{f\in \msM^\ast: f(\msM_k)\subset \C[\hbar]_{k+a} \quad \forall\; k\in \N\}. 
\end{equation*}
Then the sum $\sum_{a\in \Z}\Hom_{\C[\![\hbar]\!]}^a(\msM,\C[\![\hbar]\!])$ is direct and the space
\begin{equation*}
(\msM^\star)_\Z:=\bigoplus_{a\in \Z}\Hom_{\C[\![\hbar]\!]}^a(\msM,\C[\![\hbar]\!]) \subset \msM^\star
\end{equation*}
is a $\Z$-graded $\C[\hbar]$-submodule of $\msM^\star$. 
\begin{remark}\label{R:M_N*}
Under the natural identification of $\msM^\ast$ with $\Hom_{\C[\hbar]}(\msM_\N,\C[\![\hbar]\!])$ one has  $\Hom_{\C[\![\hbar]\!]}^a(\msM,\C[\![\hbar]\!])\cong \Hom_{\C[\hbar]}^a(\msM_\N,\C[\hbar])$ and $(\msM^\star)_\Z$ coincides with the graded dual $(\msM_\N)^\star\subset \Hom_{\C[\hbar]}(\msM_\N,\C[\hbar])$ of $\msM_\N$ taken in the category of $\Z$-graded $\C[\hbar]$-modules. 
\end{remark}
It is not difficult to prove that, for each $n\in \N$, one has 
\begin{equation*}
\hbar^n\msM^\star \cap (\msM^\star)_\Z=\hbar^n(\msM^\star)_\Z \quad \text{ and }\quad (\msM^\star)_\Z/\hbar^n (\msM^\star)_\Z \cong \msM^\star/\hbar^n \msM^\star. 
\end{equation*}
Consequently, $\msM^\star$ coincides with the $\hbar$-adic completion of $(\msM^\star)_\Z$ and, by Lemma \ref{L:grad-top}, is a $\Z$-graded topological $\C[\![\hbar]\!]$-module. We note that, although Definition \ref{D:M-star} is strictly for an $\N$-graded $\C[\![\hbar]\!]$-module $\msM$, one can define the restricted dual in the $\Z$-graded setting precisely as the $\hbar$-adic completion of the space $(\msM^\star)_\Z$. 

If $\msM$ is itself topologically free with $\msM\cong \msV[\![\hbar]\!]$ for a graded vector space $\msV=\bigoplus_{k\in \N}\msV_n$, then the natural homomorphism $\msM^\star/\hbar \msM^\star\to\msV^\star$ is an isomorphism of graded vector spaces. As $\msM^\star$ is topologically free, Corollary \ref{C:Top-Z=Free} yields the following. 
\begin{corollary}\label{C:Vstar[[h]]}
Suppose that $\msM$ is a topologically free $\N$-graded $\C[\![\hbar]\!]$-module with $\msM\cong \msV[\![\hbar]\!]$ for a graded vector space $\msV=\bigoplus_{k\in \N}\msV_k$. Then $\msM^\star$ is isomorphic to $\msV^\star[\![\hbar]\!]$ as a $\Z$-graded topological $\C[\![\hbar]\!]$-module.
\end{corollary}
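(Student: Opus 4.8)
The plan is to reduce the statement to the previously developed structural results on $\Z$-graded topological $\C[\![\hbar]\!]$-modules, since Corollary \ref{C:Vstar[[h]]} is essentially a bookkeeping consequence of the material in Section \ref{ssec:Pr-gr*}. First I would recall the setup: $\msM$ is topologically free with a fixed isomorphism $\msM\cong \msV[\![\hbar]\!]$, where $\msV=\bigoplus_{k\in \N}\msV_k$ is an $\N$-graded vector space; by Corollary \ref{C:Top-Z=Free} (or rather its $\N$-graded refinement) this isomorphism may be taken to be $\Z$-graded, so that $\msM_k\cong \msV[\![\hbar]\!]_k=\msV[\hbar]_k=\bigoplus_{j=0}^k \hbar^j\msV_{k-j}$ and $\msM_\N=\bigoplus_{k\in\N}\msM_k\cong \msV[\hbar]$ as $\N$-graded $\C[\hbar]$-modules. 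The goal is then to show $\msM^\star\cong \msV^\star[\![\hbar]\!]$ as a $\Z$-graded topological $\C[\![\hbar]\!]$-module.

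The key steps are as follows. Since $\msM^\star$ has already been shown (in the discussion following Remark \ref{R:M_N*}) to be a $\Z$-graded topological $\C[\![\hbar]\!]$-module which is topologically free and which is the $\hbar$-adic completion of its graded part $(\msM^\star)_\Z=(\msM_\N)^\star$, the content reduces, by Corollary \ref{C:Top-Z=Free}, to identifying the semiclassical limit $\msM^\star/\hbar\msM^\star$ with the graded vector space $\msV^\star$. I would do this in two stages. First, invoke Remark \ref{R:M_N*} to replace $\msM^\star$ by $(\msM_\N)^\star=\Hom_{\C[\hbar]}(\msV[\hbar],\C[\hbar])^{\mathrm{gr}}$, the graded dual in the category of $\Z$-graded $\C[\hbar]$-modules. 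Second, compute this graded dual degree-by-degree: for each $a\in\Z$, the degree-$a$ component $\Hom^a_{\C[\hbar]}(\msV[\hbar],\C[\hbar])$ consists of graded $\C[\hbar]$-linear maps $f$ with $f(\msV[\hbar]_k)\subset \C[\hbar]_{k+a}$ for all $k$; since $\msV[\hbar]$ is a free $\C[\hbar]$-module on the homogeneous basis of $\msV$, such an $f$ is determined freely by its values on $\msV$, and restricting $f$ to $\msV$ gives a $\C$-linear map $\msV\to\C$ that lands in degree $a$, i.e. an element of $(\msV^\star)_a$. This sets up an isomorphism $(\msM_\N)^\star\cong \msV^\star[\hbar]$ of $\Z$-graded $\C[\hbar]$-modules, and taking $\hbar$-adic completions (using that $\msM^\star$ is the completion of $(\msM^\star)_\Z$ and that $\msV^\star[\hbar]$ completes to $\msV^\star[\![\hbar]\!]$, the latter because $\msV^\star$ is $\Z$-graded rather than $\N$-graded — note $(\msV^\star)_a\cong(\msV_{-a})^\ast$ vanishes for $a>0$ but not necessarily for $a<0$) yields $\msM^\star\cong \msV^\star[\![\hbar]\!]$.

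Alternatively, and perhaps more cleanly, I would bypass the graded-dual computation by working directly with the semiclassical limit: show that the natural reduction map $\msM^\star/\hbar\msM^\star\to (\msM/\hbar\msM)^\star=\msV^\star$ — which sends a continuous functional $f\colon\msM\to\C[\![\hbar]\!]$ to the induced functional $\bar f\colon \msV\to\C$ — is an isomorphism of graded vector spaces. Surjectivity is immediate because any graded functional on $\msV$ lifts via the fixed splitting $\msM\cong\msV[\![\hbar]\!]$ to a $\C[\![\hbar]\!]$-linear functional which is automatically continuous for the gradation topology (it is supported in finitely many degrees after reduction, and the continuity condition in Definition \ref{D:M-star} involves only finitely many $\msM_k$ in each $\hbar$-adic approximation); injectivity follows from the standard fact that a $\C[\![\hbar]\!]$-linear map between topologically free modules vanishing modulo $\hbar$ is divisible by $\hbar$, together with the observation that $\msM^\star$ is torsion free. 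Then Corollary \ref{C:Top-Z=Free} applied to the topologically free $\Z$-graded module $\msM^\star$ with semiclassical limit $\msV^\star$ gives the claimed isomorphism $\msM^\star\cong\msV^\star[\![\hbar]\!]$, and the accompanying identification of graded pieces $(\msM^\star)_a\cong\prod_{n\in\N}\hbar^n(\msV^\star)_{a-n}$ follows from the same corollary.

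The main obstacle is not any deep point but rather the care needed around the two different topologies on $\msM^\star$: the $\hbar$-adic one (used to say $\msM^\star$ is topologically free and is the completion of $(\msM^\star)_\Z$) and the gradation topology on $\msM$ itself (built into the \emph{definition} of which functionals belong to $\msM^\star$). One must check that the continuity constraint in Definition \ref{D:M-star} is exactly what makes the functionals arising from $\msV^\star[\![\hbar]\!]$ land in $\msM^\star$ and nothing more — i.e. that the completion of $(\msM_\N)^\star$ in the $\hbar$-adic sense does not overshoot. This is handled by the already-established identities $\hbar^n\msM^\star\cap(\msM^\star)_\Z=\hbar^n(\msM^\star)_\Z$ and $(\msM^\star)_\Z/\hbar^n(\msM^\star)_\Z\cong\msM^\star/\hbar^n\msM^\star$ from the text preceding the corollary, so in fact all the hard work has been front-loaded and the proof of Corollary \ref{C:Vstar[[h]]} is a short assembly of Remark \ref{R:M_N*}, the preceding identities, and Corollary \ref{C:Top-Z=Free}.
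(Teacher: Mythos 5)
Your second argument (the one beginning ``Alternatively, and perhaps more cleanly'') is exactly the paper's proof: show the reduction map $\msM^\star/\hbar\msM^\star\to(\msM/\hbar\msM)^\star=\msV^\star$ is a graded isomorphism and invoke Corollary \ref{C:Top-Z=Free}. That argument is correct as laid out; the surjectivity and injectivity checks you supply match what the paper leaves implicit.

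Your first argument, however, contains a real error. You claim that a degree-$a$ element $f\in\Hom_{\C[\hbar]}^a(\msV[\hbar],\C[\hbar])$ restricts on $\msV$ to an element of $(\msV^\star)_a$, and conclude $(\msM_\N)^\star\cong\msV^\star[\hbar]$. But $(\msV^\star)_a$ consists of functionals vanishing on $\msV_k$ for every $k\neq -a$, whereas $f|_{\msV}$ sends $\msV_k$ into $\C[\hbar]_{k+a}=\C\hbar^{k+a}$, which is nonzero for every $k\geq -a$; so $f$ need not vanish on any $\msV_k$ with $k\geq -a$. The correct computation is
\begin{equation*}
\Hom_{\C[\hbar]}^a(\msV[\hbar],\C[\hbar]) \cong \prod_{k\geq\max(0,-a)}\hbar^{k+a}(\msV_k)^\ast \cong \prod_{n\in\N}\hbar^n(\msV^\star)_{a-n} = \msV^\star[\![\hbar]\!]_a,
\end{equation*}
so that $(\msM_\N)^\star\cong(\msV^\star[\![\hbar]\!])_\Z$, which strictly contains $\msV^\star[\hbar]$ whenever $\msV$ has infinitely many nonzero graded components (as it does for $\msV=\Symt$, the case to which the corollary is applied). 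Since $(\msV^\star[\![\hbar]\!])_\Z$ and $\msV^\star[\hbar]$ are both dense graded $\C[\hbar]$-submodules of $\msV^\star[\![\hbar]\!]$ with the same $\hbar$-adic completion, your first route lands on the right answer by coincidence, but the false intermediate identity obscures precisely the infinite-product phenomenon that Corollary~\ref{C:Top-Z=Free} and the present corollary are meant to make explicit. Stick with your second argument.
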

We shall say that a topologically free $\N$-graded $\C[\![\hbar]\!]$-module is of \textit{finite type} if the graded components $\msV_k$ of $\msV\cong\msM/\hbar\msM$ from the above corollary are all finite-dimensional complex vector spaces. 

Now suppose that $\msH$ is an $\N$-graded topological Hopf algebra with coproduct $\Delta$, counit $\veps$, antipode $S$, product $m$ and unit $\iota$. Since these are all $\N$-graded $\C[\![\hbar]\!]$-module homomorphisms and $\C[\![\hbar]\!]^\star \cong \C[\![\hbar]\!]$, taking transposes yields $\Z$-graded maps
\begin{gather*}
\Delta^t: (\msH\, \wh{\otimes}\,\msH)^\star\to \msH^\star,\quad  \veps^t: \C[\![\hbar]\!]\to \msH^\star, \quad S^t:\msH^\star\to \msH^\star \\ 
m^t:\msH^\star\to(\msH\, \wh{\otimes}\,\msH)^\star, \quad \iota^t: \msH^\star \to \C[\![\hbar]\!]
\end{gather*} 
which formally satisfy the axioms of a Hopf algebra. In particular, $\msH^\star$ is a topological $\C[\![\hbar]\!]$-algebra with unit $\veps^t$ and product given by restricting $\Delta^t$. It is not in general a topological coalgebra (or Hopf algebra)
as $m^t$ does not necessarily have image in $\msH^\star\, \wh{\otimes}\,\msH^\star$. However, this is the case when $\msH$ is of finite type, as we now explain. 

In general, for any two $\N$-graded topological $\C[\![\hbar]\!]$-modules $\msM$ and $\msN$, there is a canonical injective homomorphism of $\Z$-graded topological $\C[\![\hbar]\!]$-modules 
\begin{equation*}
\upgamma:\msM^\star\, \wh{\otimes}\,\msN^\star \into (\msM\, \wh{\otimes}\,\msN)^\star.
\end{equation*}
If $\msM$ and $\msN$ are topologically free with $\msM\cong \msV[\![\hbar]\!]$ and $\msN\cong \msW[\![\hbar]\!]$, then the semiclassical limit of $\upgamma$ is the natural inclusion $\msV^\star\otimes_\C \msW^\star\into (\msV\otimes_\C \msW)^\star$ which is an isomorphism provided the graded components of $\msV$ or $\msW$ are all finite-dimensional. This observation, together with Lemma \ref{L:scl-map}, implies the following proposition. 
\begin{proposition}\label{P:graded-tensor}
Let $\msM$ and $\msN$ be topologically free, $\N$-graded $\C[\![\hbar]\!]$-modules and suppose that either $\msM$ or $\msN$ is of finite type. Then $\upgamma$ is an isomorphism of $\Z$-graded topological $\C[\![\hbar]\!]$-modules
\begin{equation*}
\upgamma:\msM^\star\, \wh{\otimes}\, \msN^\star \iso (\msM\,\wh{\otimes}\, \msN)^\star
\end{equation*}
Consequently, if $\msH$ is a topologically free $\N$-graded Hopf algebra of finite type, then $\msH^\star$ is a $\Z$-graded topological Hopf algebra over $\C[\![\hbar]\!]$.
\end{proposition}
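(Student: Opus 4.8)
The plan is to reduce everything to the map $\upgamma$ and the semiclassical limit machinery from Lemma \ref{L:scl-map}. First I would address the construction of $\upgamma$ in general: for $\N$-graded topological modules $\msM,\msN$, start from the obvious pairing-type map $\msM^\ast\otimes_{\C[\![\hbar]\!]}\msN^\ast\to(\msM\otimes_{\C[\![\hbar]\!]}\msN)^\ast$ sending $f\otimes g$ to the functional $x\otimes y\mapsto f(x)g(y)$. One checks this sends $(\msM^\star)_\Z\otimes(\msN^\star)_\Z$ into $(\msM\,\wh\otimes\,\msN)^\star$, using the defining continuity condition of Definition \ref{D:M-star}: if $f$ kills $\msJ_k^{\msM}$ modulo $\hbar^n$ and $g$ kills $\msJ_\ell^{\msN}$ modulo $\hbar^n$, then $f\otimes g$ kills $\msJ_{k+\ell}^{\msM\,\wh\otimes\,\msN}$ modulo $\hbar^n$ because $\msJ_{k+\ell}^{\msM\,\wh\otimes\,\msN}$ is contained in $\msJ_k^\msM\,\wh\otimes\,\msN + \msM\,\wh\otimes\,\msJ_\ell^\msN$ plus a high power of $\hbar$. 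Passing to $\hbar$-adic completions and invoking Lemma \ref{L:grad-top} then gives $\upgamma:\msM^\star\,\wh\otimes\,\msN^\star\to(\msM\,\wh\otimes\,\msN)^\star$, which is $\Z$-graded by construction. Injectivity in general follows from Lemma \ref{L:scl-map}(i) once we compute the semiclassical limit, so I would not argue it separately.

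Next, the core step: identify the semiclassical limit $\bar\upgamma$. When $\msM\cong\msV[\![\hbar]\!]$ and $\msN\cong\msW[\![\hbar]\!]$ are topologically free with $\msV=\bigoplus_k\msV_k$, $\msW=\bigoplus_k\msW_k$ graded vector spaces, Corollary \ref{C:Vstar[[h]]} gives $\msM^\star\cong\msV^\star[\![\hbar]\!]$, $\msN^\star\cong\msW^\star[\![\hbar]\!]$, and $(\msM\,\wh\otimes\,\msN)^\star\cong(\msV\otimes_\C\msW)^\star[\![\hbar]\!]$, so $\bar\upgamma$ is a map $\msV^\star\otimes_\C\msW^\star\to(\msV\otimes_\C\msW)^\star$; tracing the construction shows it is the standard inclusion $f\otimes g\mapsto(v\otimes w\mapsto f(v)g(w))$. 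This classical map is always injective, and it is surjective — hence an isomorphism — exactly when the graded pieces of $\msV$ or of $\msW$ are finite-dimensional, which is the finite-type hypothesis: any graded functional on $\msV\otimes_\C\msW$ of degree $a$ decomposes over the finitely many pairs $(k,\ell)$ with, say, $\dim\msV_k<\infty$, and on each $\msV_k\otimes\msW_\ell$ a functional is a finite sum of decomposables. Then Lemma \ref{L:scl-map}(i) (for injectivity of $\upgamma$, using $\msM^\star\,\wh\otimes\,\msN^\star$ separated and $(\msM\,\wh\otimes\,\msN)^\star$ torsion free) together with Lemma \ref{L:scl-map}(ii) (for surjectivity, using $\msM^\star\,\wh\otimes\,\msN^\star$ complete and $(\msM\,\wh\otimes\,\msN)^\star$ separated) upgrades $\bar\upgamma$ being an isomorphism to $\upgamma$ being an isomorphism of $\Z$-graded topological $\C[\![\hbar]\!]$-modules.

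Finally, for the Hopf algebra consequence: with $\msH$ topologically free of finite type, the transpose maps $\Delta^t, \veps^t, S^t, m^t, \iota^t$ listed before the proposition are already defined as $\Z$-graded $\C[\![\hbar]\!]$-module maps, and $\msH^\star$ is automatically a topological algebra with product the restriction of $\Delta^t$. The only missing structure is that the coproduct $m^t$ lands in $\msH^\star\,\wh\otimes\,\msH^\star$ rather than merely in $(\msH\,\wh\otimes\,\msH)^\star$; but by the isomorphism $\upgamma$ just established (applied with $\msM=\msN=\msH$, which is of finite type so $\upgamma^{-1}$ exists), we may set $\Delta_{\msH^\star}:=\upgamma^{-1}\circ m^t:\msH^\star\to\msH^\star\,\wh\otimes\,\msH^\star$, and similarly reinterpret any triple-tensor transposes using two applications of $\upgamma$ and the fact that finite type is inherited by $\msH\,\wh\otimes\,\msH$. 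Coassociativity, the counit axioms, and the antipode axioms for $\msH^\star$ then follow by transposing the corresponding commutative diagrams for $\msH$ — this is formal once $\upgamma$ is an isomorphism and identified with the canonical map, since all the required identifications among iterated topological tensor products are natural. I expect the main obstacle to be the bookkeeping in the general construction of $\upgamma$ and the verification that it is the canonical inclusion on semiclassical limits, in particular the inclusion $\msJ^{\msM\,\wh\otimes\,\msN}_{k+\ell}\subset\msJ^\msM_k\,\wh\otimes\,\msN+\msM\,\wh\otimes\,\msJ^\msN_\ell+(\text{higher }\hbar)$ needed to keep transposes continuous; the finite-type surjectivity argument and the Hopf-axiom transposition are comparatively routine.
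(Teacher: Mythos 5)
Your proposal is correct and follows essentially the same route as the paper: construct the canonical comparison map $\upgamma$, identify its semiclassical limit with the classical inclusion $\msV^\star\otimes_\C\msW^\star\into(\msV\otimes_\C\msW)^\star$, observe that the finite-type hypothesis makes this an isomorphism, and then apply Lemma \ref{L:scl-map} (both parts) to upgrade to an isomorphism of topologically free modules, after which the Hopf-algebra claim follows by transposition. The paper simply asserts the existence and injectivity of $\upgamma$ as ``canonical'' without unwinding the continuity estimate you spell out (your containment $\msJ_{k+\ell}^{\msM\,\wh\otimes\,\msN}\subset\msJ_k^\msM\,\wh{\otimes}\,\msN+\msM\,\wh{\otimes}\,\msJ_\ell^\msN$ is the right mechanism, and the ``higher $\hbar$'' correction is in fact unnecessary since for $a+b\geq k+\ell$ with $a,b\geq 0$ one already has $a\geq k$ or $b\geq\ell$), so the extra detail you provide fills in the implicit work rather than taking a different approach.
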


\begin{remark}\label{R:otimes}
Henceforth, we shall simply write $\otimes$ for the topological tensor product $\wh{\otimes}$. More generally, the use of the symbol $\otimes$ will always be clear from context and will be clarified should any ambiguity arise.
\end{remark}
%
%
%
\subsection{Homogeneous quantizations}\label{ssec:Pr-QUE}

Let us now recall some basic constructions from the theory of quantum groups, adapted to the graded setting. 

A topological Hopf algebra $\msH$ over $\C[\![\hbar]\!]$ is called a \textit{quantized enveloping algebra} if it is a flat deformation of the universal enveloping algebra $U(\mfb)$ of a complex Lie algebra $\mfb$ as a Hopf algebra.  Equivalently: 
\begin{itemize}
 \item The semiclassical limit $\msH/\hbar\msH$ of $\msH$ is isomorphic to $U(\mfb)$ as a Hopf algebra.
 \item $\msH$ is topologically free, and thus isomorphic to $U(\mfb)[\![\hbar]\!]$ as a $\C[\![\hbar]\!]$-module. 
 \end{itemize}
If $\msH=U_\hbar\mfb$ is a quantized enveloping algebra with semiclassical limit $U(\mfb)$, then $\mfb$ inherits from $U_\hbar\mfb$ the structure of a Lie bialgebra with cocommutator $\delta_\mfb:\mfb\to \mfb\wedge \mfb\subset U(\mfb)^{\otimes 2}$ given by the formula 
\begin{equation}\label{quant-comm}
\delta_\mfb(x):= \frac{\Delta(\dot{x})-\op{\Delta}(\dot{x})}{\hbar} \mod \hbar U_\hbar\mfb\otimes U_\hbar\mfb \quad \forall \; x\in \mfb,
\end{equation}
where $\dot{x}\in U_\hbar\mfb$ is any lift of $x$. We refer the reader to Propositions 6.2.3 and 6.2.7 of \cite{CPBook} for a detailed discussion of this point.

Conversely, if $(\mfb,\delta_\mfb)$ is a Lie bialgebra, then a \textit{quantization} of $(\mfb,\delta_\mfb)$ is a quantized enveloping algebra $U_\hbar\mfb$ with semiclassical limit $U(\mfb)$, such that $\delta_\mfb$ coincides with the cocommutator \eqref{quant-comm}. 

Now let us shift our attention to the graded setting. In what follows, we will say that a Lie bialgebra $(\mfb,\delta_\mfb)$ is $\Z$-graded if $\mfb=\bigoplus_{k\in \Z}\mfb_k$ is $\Z$-graded as a Lie algebra, and the cocommutator $\delta_\mfb$ is a graded linear map of degree $d$, for some $d\in \Z$. That is, $\delta_\mfb\in \Hom_{\C}^d(\mfb,\mfb^{\otimes 2})$. 
\begin{definition}\label{D:Hom-quant}
Let $\mfb$ be a $\Z$-graded complex Lie bialgebra with cocommutator $\delta_\mfb$. Then a \textit{homogeneous quantization} of $(\mfb,\delta_\mfb)$ is a topological Hopf algebra $U_\hbar\mfb$ satisfying: 
\begin{enumerate}
\item $U_\hbar\mfb$ is a quantization of $(\mfb,\delta_\mfb)$.
\item $U_\hbar\mfb$ is $\Z$-graded as a topological Hopf algebra, and the natural inclusion
\begin{equation*}
\mfb\into U(\mfb)\cong U_\hbar\mfb/\hbar U_\hbar\mfb
\end{equation*}
is a $\Z$-graded linear map. 
\end{enumerate}
\end{definition}
Note that the last condition guarantees that the grading on $U(\mfb)$ inherited from $U_\hbar\mfb$ coincides with that induced by the Lie algebra grading on $\mfb$. In addition, since the coproduct $\Delta$ is homogeneous of degree zero, \eqref{quant-comm} implies that the cocommutator $\delta_\mfb$ must be of degree $d=-1$. 

We shall employ similar terminology to the above in the $\N$-graded setting over $\C[\hbar]$. Namely, if $(\mfb,\delta_\mfb)$ is an $\N$-graded Lie bialgebra, then a \textit{homogeneous quantization} of $(\mfb,\delta_\mfb)$ over $\C[\hbar]$ is a space $\mathds{U}_\hbar\mfb$ such that
\begin{enumerate}
\item $\mathds{U}_\hbar\mfb$ is an $\N$-graded  torsion free Hopf algebra over $\C[\hbar]$. 
\item The semiclassical limit $\mathds{U}_\hbar\mfb/\hbar \mathds{U}_\hbar\mfb$ is isomorphic to $U(\mfb)$ as a graded Hopf algebra, with the cocommutator $\delta_\mfb$ given by \eqref{quant-comm}. 
\end{enumerate}
Note that, by Corollary \ref{C:Top-Ngraded}, such a quantization $\mathds{U}_\hbar\mfb$ is isomorphic to $U(\mfb)[\hbar]$ as an $\N$-graded $\C[\hbar]$-module, and its $\hbar$-adic completion is a homogeneous quantization of $\mfb$ over $\C[\![\hbar]\!]$.

%
\subsection{The Yangian Manin triple}\label{ssec:Pr-Manin}

The most well-known, non-trivial, example of a homogeneous quantization is the Yangian $\hYhg$ associated to an arbitrary simple Lie algebra $\mfg$ over the complex numbers. In this article we shall encounter two other, closely related, examples: the dual Yangian $\Yhgstar$ and the Yangian double $\DYhg$. Collectively, these three quantum groups arise as a quantization of a restricted Manin triple structure on $(\mft,\tplus,\tminus)$, where 
\begin{equation*}
\mft:=\mfg[t^{\pm 1}],\quad \tplus:=\mfg[t]\quad \text{ and }\quad \tminus:=t^{-1}\mfg[t^{-1}].
\end{equation*}
In this section we briefly recall how this structure is defined. 

The Lie algebra $\mft=\mfg[t^{\pm 1}]$ comes equipped with a nondegenerate, invariant bilinear form  $\langle\,,\,\rangle:\mft\otimes \mft\to \C$ given by
\begin{equation}\label{Res-t}
\langle f(t), g(s)\rangle:= -\mathrm{Res}_t(f(t),g(t)),
\end{equation}
where  $(\,,\,)$ is a fixed symmetric, invariant and non-degenerate bilinear form on $\mfg$, which has been extended to a $\C[t^{\pm 1}]$-valued bilinear form on $\mft$ by $\C[t^{\pm 1}]$-linearity.
The above form is a degree $1$ element of the restricted dual $(\mft\otimes \mft)^\star$, as defined in the beginning of Section \ref{ssec:Pr-gr*}. Namely, it vanishes on
\begin{equation*}
(\mft\otimes \mft)_k=\bigoplus_{a+b=k}\mft_a\otimes \mft_b
\end{equation*}
for any $k\neq -1$, and restricts to a nondegenerate pairing $\mft_a\otimes \mft_{-a-1}\to \C$ for any $a\in \Z$. Moreover, one has the polarization
\begin{equation*}
\mft=\tplus\oplus \tminus  \quad \text{ for }\quad  \tplus=\mfg[t],\quad  \tminus=t^{-1}\mfg[t^{-1}]
\end{equation*}
with $\tplus$ and $\tminus$ isotropic, graded Lie subalgebras of $\mft$, with gradings concentrated in non-negative and non-positive degrees, respectively. Said in fewer words, the above data gives rise to a \textit{restricted Manin triple} $(\mft,\tplus,\tminus)$; see \S5.2--5.3 of \cite{Andrea-Valerio-19}.

Since each homogeneous component $\mft_k$ of $\mft$ is finite-dimensional, this data gives rise to dual Lie bialgebra structures on $\tplus$ and $\tminus$, obtained as follows. The residue form \eqref{Res-t} yields isomorphisms of graded vector spaces
\begin{equation*}
\mathsf{Res}_{\pm}:\mft_{\scriptscriptstyle\pm}\iso \mft_{\scriptscriptstyle\mp}^\star
\end{equation*}  
which are homogeneous of degree $1$: $\mathsf{Res}_{\pm}(\mft_{{\scriptscriptstyle\pm},n})=(\mft^\star_{\scriptscriptstyle\mp})_{n+1}$ for all $n\in \Z$. 
Dualizing Lie brackets then gives rise to honest, degree $-1$, Lie bialgebra cobrackets 
\begin{equation*}
\delta_{\scriptscriptstyle\pm}=[,]_{\mft_{\mp}}^t:\mft_{\scriptscriptstyle\pm}\to \mft_{\scriptscriptstyle\pm}\wedge \mft_{\scriptscriptstyle\pm}.
\end{equation*}
Since the Casimir tensor $\Omega_\mfg\in (\mfg\otimes \mfg)^\mfg$ satisfies
\begin{equation*}
([x\otimes 1,\Omega_\mfg],y\otimes z)_{\mfg\otimes \mfg}=-(x,[y,z]) \quad \forall\; x,y,z\in \mfg,
\end{equation*}
where $(,)_{\mfg\otimes \mfg}=(,)\otimes (,)\circ (2\,3):\mfg\otimes \mfg\otimes\mfg\otimes \mfg\to \C$. It follows readily from this observation and the definition of $\mathsf{Res}_\pm$ that 
 $\delta_{\scriptscriptstyle +}$ and $\delta_{\scriptscriptstyle -}$ are given explicitly on each graded component by
\begin{gather*}
\delta_{\scriptscriptstyle +}(xt^k)=\sum_{a+b=k-1}\![x\otimes 1,\Omega_\mfg]t^as^{b}\in  \mfg[t]\otimes \mfg[s]=\mft_+\otimes \mft_+,\\
\delta_{\scriptscriptstyle -}(xt^{-k-1})=\sum_{a+b=k}[x\otimes 1,\Omega_\mfg]t^{-a-1}s^{-b-1}\in t^{-1}\mfg[t^{-1}]\otimes s^{-1}\mfg[s^{-1}]=\mft_-\otimes \mft_-
\end{gather*}
where we have used the natural identification of $(\mfg\otimes \mfg)[t^{\pm 1},s^{\pm 1}]$ with $\mfg[t^{\pm 1}]\otimes \mfg[s^{\pm 1}]$, and $a,b$ take values in $\N$. Since 
\begin{equation*}
(z-w)\sum_{a=0}^kz^aw^{k-a}=z^{k+1}-w^{k+1},
\end{equation*}
the linear map $\delta:=\delta_{\scriptscriptstyle+}\oplus(-\delta_{\scriptscriptstyle-}):\mft\to \mft \wedge \mft$ is given by the formula 
\begin{equation*}
\delta(f)(t,s)=\left[f(t)\otimes 1+ 1\otimes f(s), \frac{\Omega_\mfg}{t-s}\right]\in \mfg[t^{\pm 1}]\otimes \mfg[s^{\pm 1}] \quad \forall\; f(t)\in \mfg[t^{\pm 1}]
\end{equation*}
and defines a Lie bialgebra structure on the Lie algebra $\mft$ such that $(\tplus,\delta_{\scriptscriptstyle +})$ and $(\tminus, -\delta_{\scriptscriptstyle -})$ are Lie sub-bialgebras. This construction identifies $\mft$ with the \textit{restricted Drinfeld double} $D(\tplus)$ of the $\N$-graded Lie bialgebra $\tplus$, as defined in \cite{Andrea-Valerio-19}*{\S5.4}, for instance. 

\section{The Yangian revisited}\label{sec:Yhg}

%
\subsection{The Lie algebra \texorpdfstring{$\mfg$}{g}}\label{ssec:Yhg-g}
We henceforth fix $\mfg$ to be a finite-dimensional simple Lie algebra over the complex numbers, with invariant form $(\,,\,)$ as in Section \ref{ssec:Pr-Manin}. Let $\mfh\subset \mfg$ be a Cartan subalgebra, $\{\alpha_i\}_{i\in \mbI}\subset \mfh^\ast$ a basis of simple roots, and $\{\alpha_i^\vee\}_{i\in \mbI}$ the set of simple coroots, so that $\alpha_j(\alpha_i^\vee)=a_{ij}=2(\alpha_i,\alpha_j)/(\alpha_i,\alpha_i)$ are the entries of the Cartan matrix $\mbA=(a_{ij})_{i,j\in \mbI}$ of $\mfg$. 
Let $\Root^+\subset \mfh^\ast$ be the associated set of positive roots, and let $Q=\bigoplus_{i \in \mbI}\Z\alpha_i$ and $Q_+=\bigoplus_{i\in \mbI} \N\alpha_i$ denote the root lattice and its positive cone, respectively, where we recall that $\N$ denotes the set of non-negative integers. Set 
\begin{equation*}
d_{ij}=\frac{(\alpha_i,\alpha_j)}{2} \; \text { and }\; d_i=d_{ii} \quad \forall\; i,j\in \mbI. 
\end{equation*}
We normalize $(\,,\,)$, if necessary, so that the square length of a short root is $2$. In particular, we then have $\{d_i\}_{i\in \mbI}\subset \{1,2,3\}$. Let $\{e_i,f_i\}_{i\in \mbI}$ denote the Chevalley generators of $\mfg$, as in \cite{KacBook90}*{\S1.3}, and set 
\begin{equation*}
h_i=d_i\alpha_i^\vee, \quad x_i^+=\sqrt{d_i}e_i, \quad x_i^-=\sqrt{d_i}f_i \quad \forall \; i\in \mbI. 
\end{equation*}
These normalized generators satisfy $(x_i^+,x_i^-)=1$ and $h_i=[x_i^+,x_i^-]$ for all $i\in \mbI$.
%
\subsection{The Yangian}\label{ssec:Yhg-def}

We now recall the definition of the Yangian $\Yhg$. Let $S_m$ denote the symmetric group on $\{1,\ldots,m\}$. 
\begin{definition}\label{D:Yhg} The Yangian
 $\Yhg$ is the unital associative $\C[\hbar]$-algebra generated by  $\{x_{ir}^\pm, h_{ir}\}_{i\in \mbI,r\in \N}$, subject to the following relations for $i,j\in \mbI$ and $r,s\in \N$:
\begin{gather}
 [h_{ir},h_{js}]=0\label{Y:hh}\\
 [h_{i0},x_{js}^\pm]=\pm 2d_{ij}x_{js}^\pm, \label{Y:h0x}\\
 [x_{ir}^+,x_{js}^-]=\delta_{ij} h_{i,r+s}, \label{Y:xxh}\\
 [h_{i,r+1},x_{js}^\pm]-[h_{ir},x_{j,s+1}^\pm]=\pm \hbar d_{ij}(h_{ir}x_{js}^\pm+x_{js}^\pm h_{ir}),\label{Y:xh}\\
 [x_{i,r+1}^\pm,x_{js}^\pm]-[x_{ir}^\pm,x_{j,s+1}^\pm]=\pm \hbar d_{ij}(x_{ir}^\pm x_{js}^\pm+x_{js}^\pm x_{ir}^\pm ), \label{Y:xx}\\
 \sum_{\pi \in S_{m}} \left[x_{i,r_{\pi(1)}}^{\pm}, \left[x_{i,r_{\pi(2)}}^{\pm}, \cdots, \left[x_{i,r_{\pi(m)}}^{\pm},x_{js}^{\pm}\right] \cdots\right]\right] = 0, \label{Y:Serre}
\end{gather}
where in the last relation $i\neq j$, $m=1-a_{ij}$ and $r_1,\ldots,r_m\in \N$. 
\end{definition}
The Yangian $\Yhg$ is an $\N$-graded $\C[\hbar]$-algebra, with grading 
\begin{equation*}
\Yhg=\bigoplus_{k\in \N}\Yhg_k
\end{equation*}
 determined by $\deg x_{ir}^\pm=\deg h_{ir}=r$ for all $i\in \mbI$ and $r\in \N$. Moreover, Definition \ref{D:Yhg} is such that $\Yhg$ provides an $\N$-graded $\C[\hbar]$-algebra deformation of the enveloping algebra $U(\tplus)$, where we recall that $\tplus=\mfg[t]$. Indeed, the identification $\Yhg/\hbar\Yhg \cong U(\tplus)$ is induced by the graded algebra epimorphism $q:\Yhg\onto U(\tplus)$  given on generators by 
 \begin{equation*}
q:\;x_{ir}^\pm \mapsto x_i^\pm t^r, \quad  h_{ir}\mapsto h_it^r \quad  \forall\; i\in \mbI\;\text{ and }\; r\in \N. 
 \end{equation*}
In addition, the relations \eqref{Y:hh}--\eqref{Y:Serre} imply that the assignment
\begin{equation*}
 x_i^\pm \mapsto x_{i0}^\pm, \quad h_i\mapsto h_{i0} \quad \forall \; i\in \mbI
\end{equation*}
determines a $\C$-algebra homomorphism $U(\mfg)\to \Yhg$, which is injective as its composition with $q$ is the identity map $\id_{U(\mfg)}$ on $U(\mfg)$. Henceforth, we shall identify $\mfg$ with its image in $\Yhg$ without further comment.

To specify the standard Hopf algebra structure on $\Yhg$, we first note that $\Yhg$ is generated as a $\C[\hbar]$-algebra by the set  $\mfg\cup\{t_{i1}\}_{i\in \mbI}\subset \Yhg$, where 
\begin{equation*}
t_{i1}:=h_{i1}-\frac{\hbar}{2}h_{i0}^2 \quad \forall\; i\in \mbI.
\end{equation*}
 More precisely,  for each $s>0$, $x_{is}^\pm$ and $h_{i,s+1}$ are determined by the recursive formulas
\begin{equation*}
 x_{is}^\pm= \pm\frac{1}{2d_i}\left[t_{i1},x_{i,s-1}^\pm\right]\quad \text{ and }\quad
 h_{i,s+1}=[x_{is}^+,x_{i1}^-]. 
\end{equation*}
Now let $\msr\in \mfn_-\otimes \mfn_+$ denote the canonical tensor associated to the pairing $(\,,\,)|_{\mfn_-\times \mfn_+}$, where $\mfn_\pm=\bigoplus_{\alpha\in \Root^+}\mfg_{\pm\alpha}$ is the Lie subalgebra of $\mfg$ generated by $\{x_i^\pm\}_{i\in \mbI}$. Equivalently, $\msr$ is the unique preimage of the identity  map under the natural isomorphism $\mfn_- \otimes \mfn_+ \iso \End_\C(\mfn_-)$, determined by $(\,,\,)|_{\mfn_-\times \mfn_+}$. In addition, we set 
\begin{equation*}
\msr_i:=[h_i\otimes 1,\msr_i]\quad \forall\quad i\in \mbI.
\end{equation*}
If $x_\alpha^\pm \in \mfg_{\pm\alpha}$ are root vectors satisfying $(x_\alpha^+,x_\alpha^-)=1$, then one has the formulae
\begin{equation*}
\msr=\sum_{\alpha\in \Root^+} x_\alpha^- \otimes x_\alpha^+ \quad \text{ and }\quad \msr_i= - \sum_{\alpha\in \Root^+}\alpha(h_i)x_\alpha^- \otimes x_\alpha^+.
\end{equation*}
The following proposition describes the Hopf algebra structure on $\Yhg$, where $m:\Yhg^{\otimes 2}\to \Yhg$ denote the multiplication map.
\begin{proposition}\label{P:Yhg-Hopf}
The Yangian $\Yhg$ is an $\N$-graded Hopf algebra with  counit $\veps$, coproduct $\Delta$ and antipode $S$ uniquely determined by the requirement that $\mfg$ is primitive and that, for each $i\in \mbI$, one has 
\begin{gather*}
\veps(t_{i1})=0,\quad  \Delta(t_{i1})=t_{i1}\otimes 1+1\otimes t_{i1}+\hbar \msr_i,\quad S(t_{i1})=-t_{i1}+m(\hbar \msr_i).
\end{gather*}
In particular, $\Yhg$ is an $\N$-graded Hopf algebra deformation of $U(\tplus)$ over $\C[\hbar]$.
\end{proposition}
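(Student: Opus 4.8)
The plan is to verify that the proposed structure maps $(\veps,\Delta,S)$ are well-defined $\C[\hbar]$-algebra (resp.\ anti-algebra) homomorphisms satisfying the Hopf axioms, and then to check that the resulting structure is $\N$-graded and degenerates to the standard Hopf structure on $U(\tplus)$. Since $\Yhg$ is generated by $\mfg\cup\{t_{i1}\}_{i\in\mbI}$, it suffices to define the maps on these generators, but the delicate point is that $\Yhg$ is \emph{not} free on these generators: there are many relations (including the hidden consequences of \eqref{Y:hh}--\eqref{Y:Serre} governing the interaction of $\mfg$ with the $t_{i1}$), so one must confirm that the assignments respect all of them. The cleanest route is to reduce to the filtered/graded picture and to known results: the classical limit, and then a deformation-theoretic rigidity or direct generator-relation check.

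First I would record that, because $\Yhg$ is topologically/$\hbar$-adically the $\C[\hbar]$-form of the $\hbar$-adic Yangian $\hYhg$, it is enough to construct the Hopf structure on $\hYhg$ and check it preserves the $\C[\hbar]$-subalgebra $\Yhg$ (equivalently, that $\Delta(\Yhg)\subset \Yhg\otimes_{\C[\hbar]}\Yhg$ before completion, and similarly for $S$). For $\hYhg$ one can invoke Drinfeld's original result (or its detailed treatments in, e.g., \cite{GTLW19, GNW}) that the current Yangian carries a Hopf structure with $\mfg$ primitive and $\Delta(t_{i1})=t_{i1}\otimes 1+1\otimes t_{i1}+\hbar\msr_i$; this already pins down $\Delta$ on all generators via the recursive formulas for $x_{is}^\pm$ and $h_{i,s+1}$, hence on all of $\Yhg$, and simultaneously shows $\Delta$, $\veps$, $S$ are uniquely determined. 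So the existence half reduces to a citation plus the observation that the explicit formulas visibly land in $\Yhg\otimes \Yhg$ (since $\msr_i\in\mfg\otimes\mfg\subset \Yhg\otimes\Yhg$ and the bracket recursions stay in $\Yhg$).

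Next I would establish the $\N$-grading compatibility. Recall $\deg t_{i1}=1$ and $\deg(\text{everything in }\mfg)=0$. On $\Yhg\otimes_{\C[\hbar]}\Yhg$ with the tensor-product grading (and $\deg\hbar=1$), the element $\hbar\msr_i$ is homogeneous of degree $1$: indeed $\msr_i\in\mfg\otimes\mfg$ has degree $0$ and $\hbar$ contributes $1$. Hence $\Delta(t_{i1})$ is homogeneous of degree $1=\deg t_{i1}$, $\Delta$ is the identity in degree $0$ (where it restricts to the cocommutative coproduct on $U(\mfg)$), and $\veps(t_{i1})=0\in\C[\hbar]_1$ while $S(t_{i1})=-t_{i1}+m(\hbar\msr_i)$ is again degree $1$. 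Since the grading is multiplicative and $\Delta$, $S$, $\veps$ agree with graded maps on a generating set, they are graded on all of $\Yhg$; this is exactly the requirement in Definition~\ref{D:A-graded} (or its $\C[\hbar]$-analogue). Finally, setting $\hbar=0$ kills the $\hbar\msr_i$ terms, so $\bar\Delta(t_{i1})=t_{i1}\otimes1+1\otimes t_{i1}$, i.e.\ $h_it\mapsto h_it\otimes1+1\otimes h_it$, and together with $\mfg$ primitive this is precisely the standard cocommutative Hopf structure on $U(\mfg[t])=U(\tplus)$; combined with Corollary~\ref{C:Top-Ngraded} this gives that $\Yhg$ is an $\N$-graded Hopf algebra deformation of $U(\tplus)$ over $\C[\hbar]$.

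The main obstacle is the \textbf{well-definedness} step: confirming that the generator assignments extend to algebra homomorphisms $\Delta:\Yhg\to\Yhg^{\otimes2}$ and an anti-homomorphism $S$. A direct check against relations \eqref{Y:hh}--\eqref{Y:Serre} is notoriously painful (especially the Serre relations \eqref{Y:Serre}), so I would instead lean on the fact that this is established in the literature for the Drinfeld presentation --- either cite Drinfeld's theorem as refined in \cite{GNW} or \cite{GTLW19}, or else use the coalgebra-anti-automorphism/rigidity arguments there --- and restrict scalars to $\C[\hbar]$. If a self-contained argument is wanted, the efficient version is: the coassociativity and counit axioms only need checking on the generating set $\mfg\cup\{t_{i1}\}$ (as both sides are algebra maps into $\Yhg^{\otimes2}$ or $\Yhg^{\otimes3}$), and for $\mfg$ these are classical; for $t_{i1}$ one computes $(\Delta\otimes\id)\Delta(t_{i1})=(\id\otimes\Delta)\Delta(t_{i1})$ using $(\Delta\otimes\id)(\msr_i)=\msr_i^{13}+\msr_i^{23}+[\,\cdot\,]$-type identities that follow from the $\mfg$-invariance of the canonical tensor $\msr$, and similarly the antipode identity $m(S\otimes\id)\Delta=m(\id\otimes S)\Delta=\veps$ on $t_{i1}$ reduces to the definition $S(t_{i1})=-t_{i1}+m(\hbar\msr_i)$ plus $m(\msr_i)\in\Yhg$. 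The uniqueness is immediate since the listed data determines the maps on a generating set.
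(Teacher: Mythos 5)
Your proposal is correct and takes essentially the same route as the paper: the paper gives no argument beyond identifying the well-definedness of $\Delta$ as an algebra homomorphism as the crux and citing \cite{Dr}, \cite{DrNew} and \cite{GNW}*{Thm.~4.9} for it, which is exactly what you do, and your added verifications of the $\N$-grading compatibility and the semiclassical limit are the straightforward bookkeeping the paper leaves implicit.
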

The crux of the proof of this proposition lies in showing that $\Delta$ is an algebra homomorphism. Though this is a consequence of \cite{Dr}*{Thm.~2} and \cite{DrNew}*{Thm.~1} (see also \cite{GRWEquiv}*{Thm.~2.6}), a complete proof has only recently appeared in \cite{GNW}; see Theorem 4.9 therein. 

The Yangian $\Yhg$ also admits a $Q$-grading compatible with the above $\N$-grading; that is to say, it is $\N\times Q$-graded as a Hopf algebra. This $Q$-grading arises from the adjoint action of the Cartan subalgebra $\mfh\subset \mfg$ on $\Yhg$. Namely, one has 
$\Yhg=\bigoplus_{\beta\in Q}\Yhg_\beta$, where $\Yhg_\beta$ is just the $\beta$-weight space 
\begin{equation*}
\Yhg_\beta=\{x\in \Yhg: \; [h,x]=\beta(h)x \quad \forall\quad h\in \mfh\} \quad \forall\; \beta\in Q. 
\end{equation*}
%

%
\subsection{Automorphisms}\label{ssec:Yhg-aut}
There are two families of (anti)automorphisms of $\Yhg$ which will play an especially pronounced role in this article: the \textit{shift} automorphisms and the \textit{Chevalley involution}. 
The former are a family $\{\tau_c\}_{c\in \C}\subset \mathrm{Aut}(\Yhg)$ which give rise to an action of the additive group $\C$ on $\Yhg$ by Hopf algebra automorphisms. In more detail, $\tau_c$ is defined explicitly by 
\begin{equation}\label{shift-c}
\tau_c(x_i^\pm(u))=x_i^\pm(u-c)\quad \text{ and } \quad \tau_c(h_i(u))=h_i(u-c) \quad \forall \; i\in \mbI,
\end{equation}
where  we have introduced the generating series $x_i^\pm(u)$ and $h_i(u)$ in $\Yhg[\![u^{-1}]\!]$ by  
\begin{equation*}
x_i^\pm(u)=\sum_{r\in \N} x_{ir}^\pm u^{-r-1} \quad \text{ and }\quad h_i(u)=\sum_{r\in \N} h_{ir} u^{-r-1}.
\end{equation*}
%
Replacing $c$ by a formal variable $z$ in \eqref{shift-c}, one obtains an $\N$-graded embedding
\begin{equation}\label{shift-z}
 \tau_z:\Yhg\into \Yhg[z]
\end{equation}
called the \textit{formal shift homomorphism}, where $\deg z=1$. Let us now turn to defining the Chevalley involution, beginning with the following lemma. 

\begin{lemma}\label{L:Chev}
The assignments $\omega$ and $\varsigma$ defined by 
\begin{gather*}
 \omega(x_i^\pm(u))=x_{i}^\mp(u), \quad \omega(h_i(u))=h_i(u)\\
 \varsigma(x_i^\pm(u))=x_i^\pm(-u), \quad \varsigma(h_i(u))=h_i(-u)
\end{gather*}
 extend to commuting anti-involutions $\omega$ and $\varsigma$  of $\Yhg$. Moreover, $\omega$ and $\varsigma$ satisfy 
 \begin{gather*}
 \tau_c \circ \omega = \omega \circ \tau_c, \quad \tau_{-c} \circ \varsigma = \varsigma \circ \tau_c \quad \forall \; c\in \C,\\
 \veps \circ \omega= \veps, \quad (\omega \otimes \omega) \circ \Delta=\op{\Delta}\circ \omega,\quad \omega\circ S = S \circ \omega,\\
  \veps \circ \varsigma= \veps, \quad (\varsigma\otimes \varsigma) \circ \Delta=\Delta\circ \varsigma, \quad \varsigma\circ S^{-1}= S\circ \varsigma.
 \end{gather*}
\end{lemma}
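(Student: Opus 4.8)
The plan is to verify that $\omega$ and $\varsigma$ are well-defined anti-homomorphisms by checking compatibility with the defining relations, and then to deduce the remaining identities either by a direct computation on generators or — more efficiently — by an argument on primitive elements and uniqueness of the Hopf structure.

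\textbf{Well-definedness of $\omega$ and $\varsigma$.} First I would observe that $\omega$ and $\varsigma$ are candidates for anti-automorphisms, i.e. $\C[\hbar]$-algebra homomorphisms $\Yhg\to\op{(\Yhg)}$, so I must check that the images of the generators under each map satisfy the \emph{opposite} of relations \eqref{Y:hh}--\eqref{Y:Serre}. For $\varsigma$ this amounts to replacing $u,v$ by $-u,-v$ throughout; since each relation \eqref{Y:hh'}--\eqref{Y:Serre'} (in the generating-series form) is built symmetrically from the factors $(u-v\mp\hbar d_{ij})$ and since passing to the opposite algebra swaps the order of products, the substitution $u\mapsto -u$, $v\mapsto -v$ together with the reversal of multiplication maps the relation set to itself; here it is cleanest to work with the series form, so I would first state the series-form relations (as in the commented-out Proposition in the excerpt, which I may assume) and then note the substitution is an involution on that set. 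For $\omega$, swapping $x^+_i(u)\leftrightarrow x^-_i(u)$ and reversing products sends \eqref{Y:xxh'} to itself up to the sign flip in $h_i$, sends the Cartan relations \eqref{Y:hh'} to themselves, and interchanges the $+$ and $-$ versions of \eqref{Y:xh'}, \eqref{Y:xx'}, \eqref{Y:Serre'}; the key point is that the right-hand sides are anti-symmetric in exactly the way that reversing multiplication compensates for. Since $\omega^2$ and $\varsigma^2$ fix all generators, each is an involution; commutativity $\omega\varsigma=\varsigma\omega$ is immediate on generators.

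\textbf{Interaction with $\tau_c$.} The identities $\tau_c\circ\omega=\omega\circ\tau_c$ and $\tau_{-c}\circ\varsigma=\varsigma\circ\tau_c$ are checked on the generating series: $\omega\tau_c(x^\pm_i(u))=\omega(x^\pm_i(u-c))=x^\mp_i(u-c)=\tau_c(x^\mp_i(u))=\tau_c\omega(x^\pm_i(u))$, and similarly $\varsigma\tau_c(x^\pm_i(u))=\varsigma(x^\pm_i(u-c))=x^\pm_i(-u+c)=\tau_{-c}(x^\pm_i(-u))=\tau_{-c}\varsigma(x^\pm_i(u))$, with the $h_i(u)$ case identical; since both sides are algebra (anti-)homomorphisms agreeing on generators, they agree.

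\textbf{Compatibility with the Hopf structure.} For the coalgebra identities I would use that, by Proposition \ref{P:Yhg-Hopf}, the Hopf structure is determined by its values on $\mfg$ (primitive) and on $t_{i1}$. Since $\omega$ and $\varsigma$ preserve $\mfg\subset\Yhg$ (indeed $\omega(x^\pm_i)=x^\mp_i$, $\omega(h_i)=h_i$, $\varsigma$ fixes $\mfg$ pointwise), the maps $(\omega\otimes\omega)\circ\Delta$ and $\op{\Delta}\circ\omega$ are both algebra anti-homomorphisms $\Yhg\to(\Yhg\otimes\Yhg)$ sending $\mfg$ to primitive elements in the appropriate sense; it then suffices to check equality on $t_{i1}$. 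Here the main computational obstacle is tracking $\omega$ and $\varsigma$ on $t_{i1}=h_{i1}-\tfrac\hbar2 h_{i0}^2$: one has $\omega(t_{i1})=t_{i1}$ and, since $\varsigma(h_i(u))=h_i(-u)$ flips the sign of $h_{i1}$ while fixing $h_{i0}$, one gets $\varsigma(t_{i1})=-h_{i1}-\tfrac\hbar2 h_{i0}^2=-t_{i1}-\hbar h_{i0}^2$ (equivalently $-\tau_0$-type correction), so I would record this explicitly. Then $\Delta(t_{i1})=t_{i1}\otimes1+1\otimes t_{i1}+\hbar\msr_i$ with $\msr_i=-\sum_{\alpha\in\Root^+}\alpha(h_i)\,x^-_\alpha\otimes x^+_\alpha$; applying $\omega\otimes\omega$ swaps $x^-_\alpha\otimes x^+_\alpha$ to $x^+_\alpha\otimes x^-_\alpha$, i.e. flips $\msr_i$ into its opposite $(\msr_i)_{21}$ up to sign bookkeeping on $\alpha(h_i)$, which is exactly what $\op{\Delta}$ produces; the $\varsigma$ case is easier since $\varsigma\otimes\varsigma$ fixes $\msr_i$ and one checks $\Delta(\varsigma(t_{i1}))=\Delta(-t_{i1}-\hbar h_{i0}^2)$ matches $(\varsigma\otimes\varsigma)\Delta(t_{i1})$ using $\Delta(h_{i0}^2)$ and primitivity of $h_{i0}$. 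The counit identities $\veps\circ\omega=\veps=\veps\circ\varsigma$ follow since $\veps$ kills $\mfg$ and $t_{i1}$, which are (anti-)permuted among themselves up to elements in $\ker\veps$. Finally the antipode identities $\omega\circ S=S\circ\omega$ and $\varsigma\circ S^{-1}=S\circ\varsigma$ follow formally: for any Hopf algebra, an algebra anti-automorphism $\theta$ with $(\theta\otimes\theta)\Delta=\op\Delta\,\theta$ satisfies $\theta S=S\theta$, while one with $(\theta\otimes\theta)\Delta=\Delta\,\theta$ satisfies $\theta S=S^{-1}\theta$ (equivalently $\varsigma S^{-1}=S\varsigma$), by the uniqueness of the (anti-)convolution inverse of the identity; I would cite this standard fact and note $S$ is invertible here since $\varsigma S\varsigma$ furnishes a two-sided inverse.

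\textbf{Expected main obstacle.} The genuinely non-formal part is the verification that $\omega$ and $\varsigma$ respect the Yangian relations — particularly the Serre relations \eqref{Y:Serre} and the relation \eqref{Y:xx} — in the opposite algebra; doing this cleanly really wants the generating-series form \eqref{Y:hh'}--\eqref{Y:Serre'}, so if that Proposition is not to be assumed I would need to include it. Everything about the Hopf compatibilities then reduces, via Proposition \ref{P:Yhg-Hopf}, to a short check on $\mfg$ and on the single element $t_{i1}$, the only mild subtlety being the non-invariance $\varsigma(t_{i1})\neq-t_{i1}$, which must be carried consistently through the coproduct and antipode computations.
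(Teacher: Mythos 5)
Your overall strategy --- verify the defining relations are respected in the opposite algebra, then reduce the Hopf compatibilities via Proposition \ref{P:Yhg-Hopf} to a check on $\mfg$ and on $t_{i1}$, and finally derive the antipode identities from the coproduct identities by the standard uniqueness-of-convolution-inverse argument --- is exactly the direct verification the paper alludes to when it says the lemma ``is readily established using Definition \ref{D:Yhg} and the relations of Proposition \ref{P:Yhg-Hopf}.'' However, there is a concrete computational error that propagates through the $\varsigma$ part of your Hopf-compatibility check and would cause the verification you outline to \emph{fail} rather than succeed.

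You read off the coefficients of $h_i(-u)$ incorrectly. Since $h_i(u)=\sum_{r\geq 0}h_{ir}u^{-r-1}$, one has
\begin{equation*}
h_i(-u)=\sum_{r\geq 0}h_{ir}(-u)^{-r-1}=\sum_{r\geq 0}(-1)^{r+1}h_{ir}u^{-r-1},
\end{equation*}
so $\varsigma(h_{ir})=(-1)^{r+1}h_{ir}$, and likewise $\varsigma(x_{ir}^\pm)=(-1)^{r+1}x_{ir}^\pm$. In particular $\varsigma(h_{i0})=-h_{i0}$ and $\varsigma(h_{i1})=+h_{i1}$ --- the opposite of what you assert. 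Two of your downstream claims are then wrong: $\varsigma$ does \emph{not} fix $\mfg$ pointwise (rather $\varsigma|_\mfg=-\mathrm{id}_\mfg$, the principal anti-involution of $U(\mfg)$), and
\begin{equation*}
\varsigma(t_{i1})=\varsigma(h_{i1})-\tfrac{\hbar}{2}\varsigma(h_{i0})^2=h_{i1}-\tfrac{\hbar}{2}h_{i0}^2=t_{i1},
\end{equation*}
not $-t_{i1}-\hbar h_{i0}^2$. If you carry your stated values into the coproduct check, the two sides of $(\varsigma\otimes\varsigma)\Delta(t_{i1})=\Delta(\varsigma(t_{i1}))$ differ by $-2\hbar\msr_i-2\hbar\,h_{i0}\otimes h_{i0}$, which is nonzero; the verification you claim ``one checks'' does not go through. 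With the correct values it is immediate: $\varsigma(t_{i1})=t_{i1}$, and $(\varsigma\otimes\varsigma)(\msr_i)=\msr_i$ because each tensor factor of $\msr_i$ picks up a sign $-1$ (not because $\varsigma$ fixes $\mfg$), so both sides equal $\Delta(t_{i1})$. The rest of your argument --- the $\omega$ coproduct check via the comparison of $(\omega\otimes\omega)\msr_i$ with $\msr_i^{21}$, the $\tau_c$ identities, and the formal deduction of the antipode relations from the coproduct relations --- is sound once this sign pattern is corrected.
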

This result, which has appeared in various forms in the literature (for instance, \cite{ChPr1}*{Prop.~2.9}), is readily established using Definition \ref{D:Yhg} and the relations of Proposition \ref{P:Yhg-Hopf}. 
By the lemma, $\omega$ is an involutive Hopf algebra anti-automorphism of $\Yhg$, which we call the \textit{Chevalley involution} of $\Yhg$.  On $\mfg\subset Y_\hbar(\mfg)$, this recovers the standard Chevalley involution, given by
\begin{equation*}
\omega(x_i^\pm)=x_i^\mp \quad \text{ and }\quad \omega(h_i)=h_i \quad \forall\; i \in \mbI. 
\end{equation*}
Similarly,  under the identification $\Yhg/\hbar\Yhg\cong U(\tplus)$, the semiclassical limit $\bar\omega: U(\tplus)\to U(\tplus)$ of $\omega$ coincides with the anti-involution of $U(\tplus)$ uniquely extending the Lie algebra anti-automorphism 
\begin{equation}\label{bar-omega}
\bar{\omega}(x t^r)=\omega(x)t^r \quad \forall\; x\in \mfg\; \text{ and }\; r\in \N.
\end{equation}

 In addition, we note that the composite $\upkappa:=\omega\circ \varsigma$ is an involutive algebra automorphism of $\Yhg$, given explicitly by
\begin{equation}\label{Chev-aut}
 \upkappa(x_i^\pm(u))=x_i^\mp(-u) \quad \text{ and }\quad \upkappa(h_i(u))=h_i(-u) \quad \forall\; i\in \mbI.
\end{equation}
This automorphism is itself often called the Chevalley or Cartan involution of $\Yhg$, though here we shall reserve the former terminology for $\omega$.

%
\subsection{Poincar\'{e}--Birkhoff--Witt Theorem}\label{ssec:Yhg-coord}

An important foundational result in the theory of Yangians is the Poincar\'{e}--Birkhoff--Witt Theorem, which asserts the flatness of $\Yhg$ as an $\N$-graded Hopf algebra deformation of $U(\tplus)$. It can be stated concisely as follows. 
\begin{theorem}\label{T:Yhg-PBW}
The Yangian $\Yhg$ is a torsion free $\C[\hbar]$-module, and thus provides a flat deformation of $\Yhg/\hbar\Yhg\cong U(\tplus)$ as a graded Hopf algebra over $\C[\hbar]$. In particular, $\Yhg$ is isomorphic to $U(\tplus)[\hbar]$ as an $\N$-graded $\C[\hbar]$-module. 
\end{theorem}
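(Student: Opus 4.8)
The plan is to prove the Poincaré--Birkhoff--Witt Theorem by constructing an explicit spanning set of ordered monomials over $\C[\hbar]$ and then showing this set is $\C[\hbar]$-linearly independent, so that it in fact forms a basis. Since $\Yhg$ is $\N$-graded with finite-dimensional graded pieces over $\C[\hbar]$ after reduction mod $\hbar$, torsion-freeness will follow once we know that the natural surjection from the free $\C[\hbar]$-module on this monomial set is injective.

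First I would fix a total order on the set of generators $\{x_{ir}^\pm, h_{ir}\}_{i\in\mbI, r\in\N}$ and consider the corresponding set $\mathcal{B}$ of ordered monomials. The defining relations \eqref{Y:hh}--\eqref{Y:xx} are of ``straightening'' type: each commutator $[g, g']$ of two generators is expressed (modulo $\hbar$) as a lower or equal term plus $\hbar$ times a correction, and a careful induction on degree in the $\N$-grading, combined with the usual diamond-lemma-style reasoning, shows that $\mathcal{B}$ spans $\Yhg$ as a $\C[\hbar]$-module. The Serre relations \eqref{Y:Serre} do not obstruct this since they only further cut down the span. This gives the surjection $\C[\hbar]\langle\mathcal{B}\rangle \onto \Yhg$.

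Next I would establish linear independence. The cleanest route is to use the graded algebra epimorphism $q:\Yhg \onto U(\tplus)$ from Section \ref{ssec:Yhg-def}: modulo $\hbar$, the image of $\mathcal{B}$ in $U(\tplus) = U(\mfg[t])$ is the classical PBW basis (after choosing the generators to project onto a basis of $\mfg[t]$), hence is $\C$-linearly independent. By a standard argument --- if a $\C[\hbar]$-linear combination of elements of $\mathcal{B}$ vanishes, divide by the largest power of $\hbar$ dividing all coefficients, then reduce mod $\hbar$ to get a nontrivial dependence in $U(\tplus)$, a contradiction --- this lifts to $\C[\hbar]$-linear independence, \emph{provided} we already know $\Yhg$ is torsion free, or we argue directly within the free module. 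In practice one proves independence of $\mathcal{B}$ abstractly by exhibiting a faithful action (e.g.\ on a Verma-type or loop-type module, or via the vector/evaluation representations in type $A$ and general constructions otherwise), which is precisely the content established in the literature cited; alternatively one invokes the associated graded argument with respect to the $\hbar$-adic filtration. Once $\mathcal{B}$ is a $\C[\hbar]$-basis, $\Yhg$ is free, in particular torsion free, and the isomorphism $\Yhg \cong U(\tplus)[\hbar]$ of $\N$-graded $\C[\hbar]$-modules follows by matching $\mathcal{B}$ with the classical PBW basis of $U(\tplus)$ tensored with $\C[\hbar]$.

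The main obstacle is the linear independence half: the spanning statement is a routine (if tedious) straightening argument, but proving that the ordered monomials are genuinely independent over $\C[\hbar]$ --- equivalently, that no new relations are forced by the interaction of \eqref{Y:xh} and \eqref{Y:xx} at higher $\hbar$-order --- requires either a careful realization of $\Yhg$ acting faithfully on a large enough module or an appeal to the known PBW results (Levendorskii, Grantcharov--Nazarov--Wendlandt, etc.) via the new Drinfeld presentation. I would structure the proof to isolate this as a single lemma (``$\mathcal{B}$ is $\C[\hbar]$-linearly independent'') and either cite the established construction or reproduce the faithful-representation argument, with everything else being bookkeeping around the $\N$-grading and the reduction mod $\hbar$.
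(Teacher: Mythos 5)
The paper does not actually prove Theorem~\ref{T:Yhg-PBW}: it states it as a known foundational result and proceeds directly to use it (specifying the lift $\mathds{G}$, the ordered monomials $B(\mathds{G})$, and the isomorphism $\upnu_\mathds{G}$). So there is no ``paper proof'' to compare against in detail; your proposal is, in effect, an account of how the result is established in the literature, and the paper implicitly relies on exactly that.

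Your sketch is structurally sound and correctly identifies where the real work lies, but you should be aware that the argument you give for linear independence --- ``if a $\C[\hbar]$-linear combination vanishes, divide by the largest power of $\hbar$ dividing all coefficients and reduce mod $\hbar$'' --- is circular as phrased: extracting and cancelling the factor $\hbar^N$ inside $\Yhg$ is only legitimate if $\Yhg$ is already known to be $\hbar$-torsion free, which is precisely the content of the theorem. You flag this yourself, but the phrase ``or we argue directly within the free module'' does not actually resolve it: the free module $\C[\hbar]\langle\mathcal{B}\rangle$ is torsion free, but the question is whether the kernel of $\C[\hbar]\langle\mathcal{B}\rangle \onto \Yhg$ is trivial, and one cannot cancel $\hbar$ across that surjection without knowing the target is torsion free. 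The genuine fix is exactly your alternative: construct a faithful representation (or a suitable filtered/associated-graded argument that does not presuppose flatness), which is the content of the known PBW proofs for $\Yhg$. Since you isolate that as a single lemma and defer to the literature, your structure is consistent with the paper's treatment, which simply cites the result as established. The only substantive critique is that the ``divide by $\hbar$'' route should not be presented as a standalone alternative to the faithful-module argument; it only becomes available \emph{after} torsion-freeness is secured by other means, at which point it is anyway unnecessary for what you are proving.
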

As $\Yhg$ is an $\N$-graded algebra deformation of $U(\tplus)$, an isomorphism $\Yhg\cong U(\tplus)[\hbar]$ can be obtained by specifying an ordered, homogeneous, lift $\mathds{G}\subset \Yhg$ of any fixed homogeneous basis of the Lie algebra $\tplus$. For our purposes, it will be useful to specify a class of isomorphisms of this type with a number of useful properties. 

For each $\beta\in \Delta^+$, we may choose $i(\beta)\in \mbI$ and $\mathbf{X}^\beta\in U(\mfn_{+})_{\beta-\alpha_{i(\beta)}}\subset U(\mfg)$ such that 
\begin{equation}\label{X-beta}
x_\beta^+:=\mathbf{X}^\beta\cdot x_{i(\beta)}^+\in \mfg_{\beta} \quad \text{ and }\quad x_\beta^-:=\omega(x_\beta^+)\in \mfg_{-\beta}
\end{equation}
satisfy the duality condition $(x_\beta^+,x_\beta^-)=1$, where $\mathbf{X}^\beta$ acts on $x_{i(\beta)}^+$ via the adjoint action of $\mfg$ on $U(\mfg)$. In particular, we can (and shall) take $\mathbf{X}^{\alpha_i}=1$ for all $i\in \mbI$, so that $x_{\alpha_i}^\pm=x_i^\pm$. For each $k\in \N$, we then set 
\begin{equation*}
x_{\beta,k}^+:=\mathbf{X}^\beta\cdot x_{i(\beta),k}^+\in \Yhg_\beta\quad \text{ and }\quad x_{\beta,k}^-:=\omega(x_{\beta,k}^+)\in \Yhg_{-\beta},
\end{equation*}
where $\mfg$ now operates on $\Yhg$ via the adjoint action. This definition is such that $q(x_{\beta,k}^\pm)=x_\beta^\pm t^k$ for all $\beta\in \Root^+$ and $k\in \N$, 
and hence the set of elements 
\begin{equation*}
\mathds{G}:=\bigcup_{k\in \N}\{h_{ik},x_{\beta,k}^\pm\}_{i\in \mbI,\beta\in\Delta^+}
\end{equation*}
 reduces modulo $\hbar$ to the basis of $\tplus$ consisting of all Cartan elements $h_i t^k$ and root vector $x_\beta^\pm t^k$. For each choice of total order $\preceq$ on $\mathds{G}$, the corresponding set of ordered monomials 
\begin{equation*}
B(\mathds{G})=\{x_{1}x_2\cdots x_n: n\in \N,\, x_i\in \mathds{G} \;\text{ and }\; x_i\preceq x_j \; \forall \; i<j\}
\end{equation*}
is therefore a homogeneous basis of the $\C[\hbar]$-module $\Yhg$, and so defines an isomorphism $\N$-graded modules
\begin{equation}\label{nu_G}
\upnu_{\mathds{G}}:\Yhg\iso U(\tplus)[\hbar]
\end{equation}
uniquely determined by the property that $\upnu_{\mathds{G}}|_{B(\mathds{G})}$ coincides with the restriction of the quotient map $q$ to $B(\mathds{G})$. We note that $\upnu_\mathds{G}$ is automatically an isomorphism of $\mfh$-modules, and is thus $Q$-graded. 

We shall single out a subclass of isomorphisms of this type which are compatible with Chevalley involutions and satisfies a triangularity condition. To make this precise, we must first recall that $\Yhg$ admits a triangular decomposition, compatible with the decomposition 
\begin{equation*}
\mfg=\mfn_+\oplus \mfh\oplus \mfn_-.
\end{equation*}
Let us define  $Y_\hbar^0(\mfg)$ and $Y_\hbar^\pm(\mfg)$ to be the unital associative subalgebras of $\Yhg$ generated by 
$\{h_{ir}\}_{i\in \mbI,r\in \N}$ and $\{x_{ir}^\pm\}_{i\in \mbI,r\in \N}$, respectively.  These are $\N\times Q$-graded subalgebras of $\Yhg$. The triangular decomposition of $\Yhg$ is then encoded by the following proposition, which is a well-known consequence of Theorem \ref{T:Yhg-PBW}. 
\begin{proposition}\label{P:TriDec} 
\leavevmode 
\begin{enumerate}[font=\upshape]
\item\label{TriDec:1} $Y_\hbar^\pm(\mfg)$ is isomorphic to the unital, associative $\C[\hbar]$-algebra generated by the set $\{x_{ir}^\pm\}_{i\in \mbI,r\in \N}$, subject to relations
\eqref{Y:xx} and \eqref{Y:Serre} of Definition \ref{D:Yhg}:
%
\begin{gather*}
 [x_{i,r+1}^\pm,x_{js}^\pm]-[x_{ir}^\pm,x_{j,s+1}^\pm]=\pm \hbar d_{ij}(x_{ir}^\pm x_{js}^\pm+x_{js}^\pm x_{ir}^\pm )\\
 \sum_{\pi \in S_{m}} \left[x_{i,r_{\pi(1)}}^{\pm}, \left[x_{i,r_{\pi(2)}}^{\pm}, \cdots, \left[x_{i,r_{\pi(m)}}^{\pm},x_{js}^{\pm}\right] \cdots\right]\right] = 0,
\end{gather*}
where all indices are constrained as in Definition \ref{D:Yhg}. In particular, $Y_\hbar^\pm(\mfg)$ is an $\N$-graded, torsion free $\C[\hbar]$-algebra deformation of $U(\mfn_{\pm}[t])$.
%
%
\item\label{TriDec:3}  The assignment $h_{ik}\mapsto h_it^k$, for all $i\in \mbI$ and $k\in \N$, extends to an isomorphism of $\N$-graded, commutative $\C[\hbar]$-algebras 
\begin{equation*}
\upxi:Y_\hbar^0(\mfg)\iso U(\mfh[t])[\hbar]=\msS(\mfh[t])[\hbar].
\end{equation*}
\item\label{TriDec:4}  The multiplication map 
\begin{equation*}
m:Y_\hbar^+(\mfg)\otimes Y_\hbar^0(\mfg)\otimes Y_\hbar^-(\mfg)\to \Yhg 
\end{equation*}
is an isomorphism of graded $\C[\hbar]$-modules. 
\end{enumerate}
\end{proposition}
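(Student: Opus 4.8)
The plan is to obtain all three assertions as consequences of the Poincar\'e--Birkhoff--Witt Theorem \ref{T:Yhg-PBW}, via standard deformation-theoretic reasoning carried out degree by degree. Two elementary observations underpin everything. First, as $\C[\hbar]$-submodules of the torsion free module $\Yhg$, the subalgebras $Y_\hbar^\pm(\mfg)$ and $Y_\hbar^0(\mfg)$ are themselves torsion free, and hence so is the $\C[\hbar]$-module $Y_\hbar^+(\mfg)\otimes Y_\hbar^0(\mfg)\otimes Y_\hbar^-(\mfg)$, since tensor products of torsion free modules over the principal ideal domain $\C[\hbar]$ are torsion free. Second, every $\N$-graded $\C[\hbar]$-module (with $\hbar$ placed in degree $1$) is $\hbar$-adically separated; consequently, if $f\colon A\to B$ is a surjective $\C[\hbar]$-module homomorphism with $A$ an $\N$-graded module, $B$ torsion free, and with injective semiclassical limit $\bar f$, then $f$ is an isomorphism --- indeed $\bar f$ injective gives $\Ker f\subseteq \hbar A$, torsion freeness of $B$ gives $\hbar a\in \Ker f\Rightarrow a\in \Ker f$, so $\Ker f=\hbar \Ker f\subseteq \bigcap_n\hbar^nA=0$.

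I would begin by identifying the semiclassical limits of the three subalgebras. For $Y_\hbar^0(\mfg)$ this is straightforward: relation \eqref{Y:hh} makes it commutative and generated by the $h_{ir}$, so $h_it^r\mapsto h_{ir}$ defines a surjection $\msS(\mfh[t])[\hbar]\onto Y_\hbar^0(\mfg)$ whose reduction modulo $\hbar$, composed with the natural map $Y_\hbar^0(\mfg)/\hbar Y_\hbar^0(\mfg)\to\Yhg/\hbar\Yhg\cong U(\tplus)$, is the identity of $\msS(\mfh[t])\subset U(\tplus)$; hence $Y_\hbar^0(\mfg)/\hbar Y_\hbar^0(\mfg)\cong\msS(\mfh[t])$ and, by the principle above, the surjection is the isomorphism $\upxi^{-1}$ of Part \eqref{TriDec:3}. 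For $Y_\hbar^\pm(\mfg)$, introduce the $\N$-graded $\C[\hbar]$-algebra $\widetilde{Y}_\hbar^\pm$ presented by generators $\{x_{ir}^\pm\}_{i\in\mbI,r\in\N}$ and relations \eqref{Y:xx} and \eqref{Y:Serre}; since these relations hold in $\Yhg$, there is a natural graded surjection $\pi\colon\widetilde{Y}_\hbar^\pm\onto Y_\hbar^\pm(\mfg)$. Modulo $\hbar$, relation \eqref{Y:xx} says that $[x_i^\pm t^a,x_j^\pm t^b]$ depends only on $a+b$ and \eqref{Y:Serre} becomes the loop Serre relation; \emph{granting the Lie-theoretic fact} that these present the current algebra $\mfn_\pm[t]$ --- so that $\widetilde{Y}_\hbar^\pm/\hbar\widetilde{Y}_\hbar^\pm\cong U(\mfn_\pm[t])$ with $\bar x_{ir}^\pm\leftrightarrow x_i^\pm t^r$ --- the composite of $\bar\pi$ with the natural map $Y_\hbar^\pm(\mfg)/\hbar Y_\hbar^\pm(\mfg)\to U(\tplus)$ is the identity automorphism of $U(\mfn_\pm[t])$. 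Thus $\bar\pi$ is injective, hence an isomorphism, so $Y_\hbar^\pm(\mfg)/\hbar Y_\hbar^\pm(\mfg)\cong U(\mfn_\pm[t])$; applying the principle to $\pi$ (torsion freeness of $Y_\hbar^\pm(\mfg)$, separatedness of $\widetilde{Y}_\hbar^\pm$) shows $\pi$ is an isomorphism. This proves Part \eqref{TriDec:1} and realizes $Y_\hbar^\pm(\mfg)$ as a flat $\N$-graded deformation of $U(\mfn_\pm[t])$.

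It remains to establish Part \eqref{TriDec:4}. Using relations \eqref{Y:h0x}, \eqref{Y:xxh} and \eqref{Y:xh} one shows $Y_\hbar^+(\mfg)Y_\hbar^0(\mfg)Y_\hbar^-(\mfg)$ is a subalgebra of $\Yhg$: relation \eqref{Y:xxh} moves each $x^-$ to the right of each $x^+$ at the cost of an element of $Y_\hbar^0(\mfg)$, while an induction on $r$ --- with base case \eqref{Y:h0x} and inductive step \eqref{Y:xh} --- gives $[h_{ir},x_{js}^+]\in Y_\hbar^+(\mfg)Y_\hbar^0(\mfg)$ and $[h_{ir},x_{js}^-]\in Y_\hbar^0(\mfg)Y_\hbar^-(\mfg)$, so that the Cartan generators can be collected in the middle. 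Hence $m$ is surjective. Its semiclassical limit is the multiplication map
\begin{equation*}
U(\mfn_+[t])\otimes\msS(\mfh[t])\otimes U(\mfn_-[t])\longrightarrow U(\tplus),
\end{equation*}
an isomorphism by the ordinary PBW theorem applied to the polarization $\tplus=\mfn_+[t]\oplus\mfh[t]\oplus\mfn_-[t]$, where we use the semiclassical identifications obtained above. Since the source of $m$ is torsion free and $\hbar$-adically separated and $\Yhg$ is torsion free, the principle forces $m$ to be an isomorphism.

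The step I expect to be the main obstacle is the Lie-theoretic input used above: that relation \eqref{Y:xx} together with the loop Serre relations \eqref{Y:Serre} present the nilpotent current algebra $\mfn_\pm[t]$ with no hidden relations. One way to prove it is to note that ``$[x_i^\pm t^a,x_j^\pm t^b]$ depends only on $a+b$'' transports every bracket to level zero in the variable $t$, reducing the presentation to the defining relations of $\mfn_\pm\otimes\C[t]$. Alternatively, one can bypass it by arguing directly with $\widetilde{Y}_\hbar^\pm$: a straightening procedure based on \eqref{Y:xx} and \eqref{Y:Serre} yields a spanning set of $\widetilde{Y}_\hbar^\pm$ consisting of ordered monomials in the root vectors $x_{\beta,k}^\pm$ ($\beta\in\Root^+$, $k\in\N$) whose images in $\Yhg$ lie in the PBW basis $B(\mathds{G})$ of \eqref{nu_G} and are therefore $\C[\hbar]$-linearly independent, which forces $\pi$ to be injective.
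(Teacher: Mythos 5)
The paper does not actually prove this proposition; it states it as ``a well-known consequence of Theorem~\ref{T:Yhg-PBW}'' and proceeds to use it in the construction of $\upnu$. So your proposal is filling in an argument the author chose to omit, and it should be judged on its own terms rather than against a reference proof.

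The overall scheme is sound and is the expected one. Your ``principle'' --- that a surjection $f\colon A\to B$ of $\C[\hbar]$-modules with $A$ an $\N$-graded (hence $\hbar$-adically separated) module, $B$ torsion free, and $\bar f$ injective is forced to be an isomorphism --- is correct, and its applications in Parts~\eqref{TriDec:3} and~\eqref{TriDec:4} are carried out properly: the commutativity of $Y_\hbar^0(\mfg)$ from \eqref{Y:hh} reduces Part~\eqref{TriDec:3} to the injectivity of $\msS(\mfh[t])\to U(\tplus)$, and the classical PBW isomorphism $U(\mfn_+[t])\otimes\msS(\mfh[t])\otimes U(\mfn_-[t])\iso U(\tplus)$ together with the surjectivity argument (straightening via \eqref{Y:xxh}, then \eqref{Y:h0x} and \eqref{Y:xh} by induction on $r$) handles Part~\eqref{TriDec:4}, \emph{granting} Part~\eqref{TriDec:1}.

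The genuine gap is the one you flag yourself: the assertion that the Lie algebra presented by $\{x_{ir}^\pm\}$ with relations (i) $[x_{i,r+1}^\pm,x_{js}^\pm]=[x_{ir}^\pm,x_{j,s+1}^\pm]$ and (ii) Serre is exactly $\mfn_\pm[t]$. This is where all the actual content of Part~\eqref{TriDec:1} lives, and neither of your two suggested remedies is carried out. The first remedy (``transports every bracket to level zero'') is not obviously rigorous as stated, because relation~(i) holds on pairs of \emph{generators}, not on arbitrary nested brackets, and showing that the level can be systematically relocated inside an iterated bracket requires an induction that amounts to the very straightening one is trying to avoid. The second remedy (show that ordered monomials in the root vectors $x_{\beta,k}^\pm$ span $\widetilde{Y}_\hbar^\pm$, and then invoke the linear independence of their images supplied by the basis $B(\mathds{G})$ from Theorem~\ref{T:Yhg-PBW}) is the cleaner and more standard route, and it simultaneously handles the semiclassical identification. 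But it requires defining $x_{\beta,k}^\pm$ internally to $\widetilde{Y}_\hbar^\pm$ and establishing the commutation relations needed for a rewriting system, none of which is written. As it stands, Part~\eqref{TriDec:1} --- and hence the semiclassical identification on which Parts~\eqref{TriDec:3} and~\eqref{TriDec:4} depend --- is asserted rather than proved. Everything else is correct, so this is the single point you would need to fill in.
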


As a consequence of Part \eqref{TriDec:1} of Proposition \ref{P:TriDec} and Corollary \ref{C:Top-Ngraded}, one has $Y_\hbar^\pm(\mfg)\cong U(\mfn_\pm[t])[\hbar]$ as $\N$-graded $\C[\hbar]$-modules. Following the procedure outlined at the beginning of the section, let us fix an arbitrary total order $\preceq_{\scriptscriptstyle +}$ on the union
\begin{equation*}
\mathds{G}_+=\bigcup_{k\in \N}\{x_{\beta,k}^+\}_{\beta\in\Delta^+} = \mathds{G}\cap Y_\hbar^+(\mfg). 
\end{equation*}
The set of ordered monomials $B(\mathds{G}_+)$ in $\mathds{G}_+$ is a basis of $Y_\hbar^+(\mfg)$, and thus gives rise to an isomorphism of $\N\times Q$-graded $\C[\hbar]$-modules  
\begin{equation*}
\upnu_+:Y_\hbar^+(\mfg)\iso U(\mfn_+[t])[\hbar],
\end{equation*}
sending each ordered monomial in $\mathds{G}_+$ to its image in $Y_\hbar^+(\mfg)/\hbar Y_\hbar^+(\mfg)\cong U(\mfn_+[t])$. 
Using the Chevalley involution $\omega$ and its semiclassical limit $\bar\omega$ (see \eqref{bar-omega}), we then obtain an isomorphism
\begin{equation*}
\upnu_-:=\bar\omega\circ \upnu_+\circ \omega:Y_\hbar^-(\mfg)\iso U(\mfn_-[t])[\hbar]
\end{equation*}
Combining $\upnu_\pm$ with $\upxi$ from Part \eqref{TriDec:3} of Proposition \ref{P:TriDec} outputs an isomorphism of $\N\times Q$-graded $\C[\hbar]$-modules 
\begin{equation}\label{nu-Yhg}
\upnu:=\bar{m}\circ (\upnu_+\otimes \upxi \otimes \upnu_-) \circ m^{-1}: \Yhg\iso U(\tplus)[\hbar]
\end{equation}
where $\bar{m}:U(\mfn_+[t])\otimes_\C  U(\mfh[t])\otimes_\C U(\mfn_-[t])\iso U(\tplus)$ is the multiplication map, which we extend trivially by $\C[\hbar]$-linearity. By construction, $\upnu$ is compatible with the underlying triangular decompositions on $\Yhg$ and $U(\tplus)$ and satisfies 
\begin{equation*}
\upnu\circ \omega =\bar\omega\circ \upnu.
\end{equation*}
The definition \eqref{nu-Yhg} is such that $\upnu=\upnu_{\mathds{G}}$ for any total order $\preceq$ on $\mathds{G}$ which restricts to $\preceq_{\scriptscriptstyle{+}}$, satisfies $x^+\preceq h \preceq x^-$ for all $x^{\pm}\in \mathds{G} \cap Y_\hbar^\pm(\mfg)$ and $h\in Y_\hbar^0(\mfg)$, and for which $\omega$ is a decreasing function on $\mathds{G}$. We will denote the inverse of $\upnu$ by $\mu$: 
\begin{equation*}
\mu:=\upnu^{-1}: U(\tplus)[\hbar]\to \Yhg. 
\end{equation*}
Note that, for any total order on $\mathds{G}$, one has $\mu(x)=\upnu_{\mathds{G}}^{-1}(x)$ for all $x\in \mathds{G}$.

%
\subsection{Quantization}\label{ssec:Yhg-quant} 

As a consequence of Proposition \ref{P:Yhg-Hopf} and Theorem \ref{T:Yhg-PBW}, the Yangian $\Yhg$ provides a homogeneous quantization of an $\N$-graded Lie bialgebra structure on the Lie algebra $\tplus$ over $\C[\hbar]$, with cocommutator $\delta$ determined by the formula \eqref{quant-comm}.  By Proposition \ref{P:Yhg-Hopf}, $\delta$ is uniquely determined by $\delta(\mfg)=0$ and 
\begin{equation*}
\delta(h_it)=\msr_i-\msr_i^{21}=[h_i\otimes 1, \Omega_\mfg]=\left[h_it\otimes 1 + 1\otimes h_i s,\frac{\Omega_\mfg}{t-s}\right]=\delta_{\scriptscriptstyle +}(h_i t) \quad \forall\; i\in \mbI,
\end{equation*}
and thus coincides with $\delta_{\scriptscriptstyle +}$ from Section \ref{ssec:Pr-Manin}. This recovers the following well-known result, originally due to Drinfeld \cite{Dr}*{Thm.~2}:
\begin{theorem}\label{T:Yhg-quant}
$\Yhg$ is a homogeneous quantization of $(\tplus,\delta_{\scriptscriptstyle +})$ over $\C[\hbar]$. 
\end{theorem}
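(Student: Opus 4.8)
The plan is to unwind the statement: everything needed has essentially been assembled in the preceding subsections, so the proof is mostly a matter of citing Proposition \ref{P:Yhg-Hopf}, Theorem \ref{T:Yhg-PBW}, and the explicit computation of the cocommutator already carried out in the text. First I would recall that by Definition \ref{D:Hom-quant} of a homogeneous quantization over $\C[\hbar]$, I must verify three things: that $\Yhg$ is an $\N$-graded torsion free Hopf algebra over $\C[\hbar]$, that its semiclassical limit is $U(\tplus)$ as a graded Hopf algebra, and that the induced cocommutator \eqref{quant-comm} on $\tplus$ equals $\delta_{\scriptscriptstyle +}$.

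The first two points are immediate from the results already quoted. Proposition \ref{P:Yhg-Hopf} states that $\Yhg$ is an $\N$-graded Hopf algebra over $\C[\hbar]$ and an $\N$-graded Hopf algebra deformation of $U(\tplus)$, so in particular the semiclassical limit is $U(\tplus)$ as a graded Hopf algebra, with the inclusion $\tplus \into U(\tplus)$ being the graded map coming from $q$. Theorem \ref{T:Yhg-PBW} gives that $\Yhg$ is torsion free over $\C[\hbar]$, hence topologically free in the appropriate sense, so $\Yhg$ is a flat (indeed homogeneous) deformation. This already verifies condition (1) and the graded-Hopf part of condition (2) of Definition \ref{D:Hom-quant}.

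It remains to identify the cocommutator, and this is where the one genuine computation lies, though it too is already spelled out in Section \ref{ssec:Yhg-quant}. Using that $\mfg\subset \Yhg$ is primitive, formula \eqref{quant-comm} gives $\delta(\mfg)=0$. For the degree-one generators, I take the lift $\dot{x}=t_{i1}$ of $h_i t$ (note $t_{i1}\equiv h_{i0}^2/\text{(something)}$... more precisely $t_{i1}=h_{i1}-\tfrac{\hbar}{2}h_{i0}^2$ reduces mod $\hbar$ to $h_i t$), and compute from Proposition \ref{P:Yhg-Hopf} that $\Delta(t_{i1})-\op{\Delta}(t_{i1})=\hbar(\msr_i-\msr_i^{21})$, so that $\delta(h_i t)=\msr_i-\msr_i^{21}=[h_i\otimes 1,\Omega_\mfg]$. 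Comparing with the explicit formula for $\delta_{\scriptscriptstyle +}$ from Section \ref{ssec:Pr-Manin} — namely $\delta_{\scriptscriptstyle +}(xt^k)=\sum_{a+b=k-1}[x\otimes 1,\Omega_\mfg]t^a s^b$, which at $x=h_i$, $k=1$ gives exactly $[h_i\otimes 1,\Omega_\mfg]$ — shows $\delta$ and $\delta_{\scriptscriptstyle +}$ agree on the generating set $\mfg\cup\{h_it\}_{i\in\mbI}$ of $\tplus$ as a Lie algebra. Since both are cocommutators of Lie bialgebra structures, hence $1$-cocycles (coderivations), and the Lie algebra $\tplus=\mfg[t]$ is generated by $\mfg$ together with $\{h_i t\}_{i\in \mbI}$ (equivalently by $\mfg$ and $\mfh t$, using that $\mfg$ is simple so $[\mfg,\mfh t]$ together with $\mfh t$ spans $\mfg t$, and then iterating), they agree everywhere. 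This gives condition (2) in full, completing the proof. The only mild subtlety — and the step I would be most careful about — is confirming that $\mfg$ and $\{h_i t\}_{i\in\mbI}$ really do generate $\mfg[t]$ as a Lie algebra and that the cocycle property forces agreement from agreement on generators; both are standard, the former because $[x_i^\pm, h_j t]$ produces $x_i^\pm t$ up to nonzero scalar when $a_{ij}\neq 0$ (and connectedness of the Dynkin diagram handles the rest), the latter because the difference $\delta-\delta_{\scriptscriptstyle +}$ is a $1$-cocycle vanishing on generators.
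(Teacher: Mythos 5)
Your proposal is correct and follows essentially the same route as the paper: cite Proposition \ref{P:Yhg-Hopf} and Theorem \ref{T:Yhg-PBW} for the graded, torsion-free Hopf deformation structure, then compute $\delta(\mfg)=0$ and $\delta(h_it)=\msr_i-\msr_i^{21}=[h_i\otimes 1,\Omega_\mfg]=\delta_{\scriptscriptstyle+}(h_it)$ from the coproduct formula for $t_{i1}$, and conclude that the cocommutator is $\delta_{\scriptscriptstyle+}$. The only genuine difference is one of exposition: where the paper simply asserts that $\delta$ is ``uniquely determined'' by its values on $\mfg\cup\{h_it\}_{i\in\mbI}$, you spell out the justification (generation of $\tplus=\mfg[t]$ by $\mfg$ and $\mfh t$, plus the $1$-cocycle property of $\delta-\delta_{\scriptscriptstyle+}$), which is a useful bit of care though not a different strategy.
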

As explained in Section \ref{ssec:Pr-QUE}, it follows immediately that the $\hbar$-adic completion 
\begin{equation}\label{hYhg-def}
\hYhg:=\varprojlim_{n} ( \Yhg/\hbar^n \Yhg )
\end{equation}
is a homogeneous quantization of $(\tplus,\delta_{\scriptscriptstyle +})$ over $\C[\![\hbar]\!]$. We refer the reader to Definition \ref{D:Hom-quant} and Corollary \ref{C:Top-Ngraded} for a detailed discussion of this point. 
\begin{remark}\label{R:hYhg-TD}
Let $Y_\hbar^{\pm\!}\mfg$ and $Y_\hbar^0\mfg$ denote the topological $\C[\![\hbar]\!]$-algebras
\begin{equation*}
Y_\hbar^{\pm\!}\mfg:=\varprojlim_{n} Y_\hbar^\pm(\mfg)/\hbar^n Y_\hbar^\pm(\mfg) \quad \text{ and }\quad
Y_\hbar^0\mfg:=\varprojlim_{n} Y_\hbar^0(\mfg)/\hbar^n Y_\hbar^0(\mfg)
\end{equation*}
It follows from Corollary \ref{C:Top-Ngraded} and Proposition \ref{P:TriDec} that these are subalgebras of $\hYhg$, with $Y_\hbar^0\mfg$ isomorphic to $U(\mfh[t])[\![\hbar]\!]\cong \msS(\mfh[t])[\![\hbar]\!]$ as an $\N$-graded topological $\C[\![\hbar]\!]$-algebra, and $Y_\hbar^{\pm\!}\mfg$ a topologically free $\N$-graded $\C[\![\hbar]\!]$-algebra with semiclassical limit equal to $U(\mfn_{\pm}[t])$. By Part \eqref{TriDec:4} of Proposition \ref{P:TriDec}, the product $m$ on $\hYhg$ gives rise to an isomorphism of $\N$-graded topological $\C[\![\hbar]\!]$-modules 
\begin{equation}\label{m:hYhg}
m:Y_\hbar^{+\!}\mfg\otimes Y_\hbar^0\mfg \otimes Y_\hbar^{-\!}\mfg\iso \hYhg,
\end{equation} 
where, following Remark \ref{R:otimes}, $\otimes$ should now be understood to be the topological tensor product $\wh{\otimes}$ of $\C[\![\hbar]\!]$-modules. For later purposes, we note that the product on $\hYhg$ also gives rise to an isomorphism 
\begin{equation*}
Y_\hbar^{-\!}\mfg\otimes Y_\hbar^0\mfg \otimes Y_\hbar^{+\!}\mfg\iso \hYhg
\end{equation*}
which can be realized as $\upkappa\circ m\circ (\upkappa_-\otimes \upkappa_0\otimes \upkappa_+)$, where $m$ is the isomorphism \eqref{m:hYhg}, $\upkappa$ is the involutive automorphism of $\hYhg$ defined in \eqref{Chev-aut} (extended by continuity), and $\upkappa_\chi:=\upkappa|_{Y_\hbar^{\chi}\mfg}: Y_\hbar^{\chi}\mfg \iso Y_\hbar^{\shortminus\chi}\mfg$. 
\end{remark}
%
%

%
\subsection{The Universal \texorpdfstring{$R$}{R}-matrix}\label{ssec:Yhg-R}

We complete our survey of $\Yhg$ by reviewing the construction of the universal $R$-matrix $\mcR(z)$ of the Yangian, whose existence and uniqueness was first established by Drinfeld in \cite{Dr}*{Thm.~3}. We shall, however, need a refined version of Drinfeld's theorem only recently proven in \cite{GTLW19}*{\S7.4}, which reconstructs $\mcR(z)$ from the factors in its Gauss decomposition
\begin{equation*}
\mcR(z)=\mcR^+(z)\mcR^0(z)\mcR^-(z).
\end{equation*}

Let us begin with a few preliminaries. For each positive integer $n$, let 
\begin{equation*}
\Yhg^{\otimes n}[z;z^{-1}]\!]=\bigcup_{k\in \N} z^k \Yhg^{\otimes n}[\![z^{-1}]\!]\subset \Yhg^{\otimes n}[\![z^{\pm 1}]\!]
\end{equation*}
 denote the algebra of formal Laurent series in $z^{-1}$ with coefficients in $\Yhg^{\otimes n}$. Following \cite{WDYhg}*{\S4.2}, we then introduce the subspace 
\begin{equation*}
\cYhg{\vphantom{\hYhg}}^{\scriptscriptstyle{(n)}}_z:= \prod_{k\in \N} (\Yhg^{\otimes n})_k z^{-k}\subset \Yhg^{\otimes n}[\![z^{-1}]\!],
\end{equation*}
where $(\Yhg^{\otimes n})_k$ is the $k$-th graded component of the $\N$-graded algebra $\Yhg^{\otimes n}$. This is a $\C$-algebra isomorphic to the completion of $\Yhg^{\otimes n}$ with respect to its grading.  The $\C[z,z^{-1}]$-submodule of $\Yhg^{\otimes n}[z;z^{-1}]\!]$ that it generates is a $\Z$-graded $\C[\hbar]$-algebra
\begin{equation}\label{LzYhg-n}
\mathds{L}(\cYhg{\vphantom{\hYhg}}^{\scriptscriptstyle{(n)}}_z):=\bigoplus_{k\in \Z} z^k \cYhg{\vphantom{\hYhg}}^{\scriptscriptstyle{(n)}}_z \subset \Yhg^{\otimes n}[z;z^{-1}]\!]
\end{equation}
Though for the moment we shall only be interested in the case where $n=2$, such formal series spaces will reappear in later sections.  In addition to the above, we shall make use of two functions $Q_+\to \N$. First, we have the standard additive height function $\mathrm{ht}$ given by 
\begin{equation*}
\mathrm{ht}(\beta)=\sum_{i\in \mbI}n_i \quad \text{ for each }\; \beta=\sum_{i\in \mbI}n_i\alpha_i \in Q_+.
\end{equation*}
Secondly, we have an auxiliary function $\nu:Q_+\to \N$ defined by
\begin{equation}\label{nu-beta}
\nu(\beta)=\min\{k\in \N: \exists\, \beta_1,\ldots,\beta_k\in \Root^+\; \text{ with }\; \beta=\beta_1+\cdots +\beta_k\},
\end{equation}
where it is understood that $\nu(0)=0$. 

Let us now recall the construction of the factor $\mcR^-(z)$. Fix a Cartan element 
\begin{equation*}
\zeta\in \mfh\setminus \bigcup_{\beta\neq 0}\Ker(\beta),
\end{equation*}
where the union runs over all nonzero $\beta\in Q_+$. 
We then introduce
\begin{equation*}
\mcR^-_\beta(z)\in (Y_\hbar^-(\mfg)_{-\beta}\otimes Y_\hbar^+(\mfg)_\beta)[\![z^{-1}]\!] \quad \forall \; \beta \in Q_+
\end{equation*}
by setting $\mcR^-_0(z)=1$ and defining $\mcR^-_\beta(z)$ inductively in $\mathrm{ht}(\beta)$ using the formula
\begin{equation}\label{R-recur}
\mcR^-_\beta(z)=
\hbar\sum_{p\geq 0} \frac{\mathbf{T}(\zeta)^p}{(z\beta(\zeta))^{p+1}}
\sum_{\alpha\in \Root^+}\alpha(\zeta)\mcR_{\beta-\alpha}(z) (x_{\alpha}^-
\otimes x_{\alpha}^+ ),
\end{equation}
where $\mcR_{\gamma}(z)=0$ whenever $\gamma\notin Q_+$ and 
$
\mathbf{T}(\zeta)=\mathrm{ad}(\mathrm{T}(\zeta)\otimes 1 + 1\otimes \mathrm{T}(\zeta)),
$
with $\mathrm{T}:\mfh\to Y_\hbar^0(\mfg)$ the embedding determined by 
\begin{equation*}
\mathrm{T}(h_i)=t_{i1} \quad \forall \; i\in \mbI.
\end{equation*}

Using the fact that, for each $p\in \N$, $\mbT(\zeta)^p z^{-p-1}$ is a homogeneous operator on $\mathds{L}(\cYhg{\vphantom{\hYhg}}^{\scriptscriptstyle{(2)}}_z)$ of degree $-1$, one deduces from the recursive formula \eqref{R-recur} that 
\begin{equation*}
\mcR^-_\beta(z)\in \hbar^{\nu(\beta)}\mathds{L}(\cYhg{\vphantom{\hYhg}}^{\scriptscriptstyle{(2)}}_z)_{-\nu(\beta)}
\subset z^{-\nu(\beta)}\Yhg^{\otimes 2}[\![z^{-1}]\!] \quad \forall \; \beta \in Q_+.
\end{equation*}
As the set $\{\beta\in Q_+: \nu(\beta)\leq k\}$ is finite for any $k\in \N$, we obtain a well defined formal series 
\begin{equation*}
\mcR^-(z)=\sum_{\beta\in Q_+}\mcR^-_\beta(z) \in(Y_\hbar^-(\mfg)\otimes Y_\hbar^+(\mfg))[\![z^{-1}]\!]
\end{equation*} 
which by construction satisfies
$
\mcR^-(z)\in 1+\hbar \mathds{L}(\cYhg{\vphantom{\hYhg}}^{\scriptscriptstyle{(2)}}_z)_{-1}\subset \cYhg{\vphantom{\hYhg}}^{\scriptscriptstyle{(2)}}_z.
$

By Theorem 4.1 of \cite{GTLW19}, $\mcR^-(z)$ is independent of the choice of $\zeta\in \mfh$ made above and satisfies a number of remarkable properties. Notably, it intertwines $\tau_z\otimes \id \circ \Delta$ and the formal, deformed Drinfeld coproduct $\Delta_z^{\scriptscriptstyle{\operatorname{D}}}$ on  $\Yhg$, as defined in \cite{GTLW19}*{\S3.4}. We will not make direct use of these properties here, and refer the reader to \cite{GTLW19} for a detailed treatment of $\mcR^-(z)$. 

Let us now recall the definition of the abelian $R$-matrix $\mcR^0(z)\in Y_\hbar^0(\mfg)^{\otimes 2}[\![z^{-1}]\!]$ from \cite{GTLW19}*{\S6}.
 Let $\mbB=(d_i a_{ij})$ denote the symmetrization of the Cartan matrix $\mbA$. Given an indeterminate $v$, we let $\mbB(v)=([d_i a_{ij}]_v)\subset \mathrm{GL}_\mbI(\Q(v))$ be the associated matrix of $v$-numbers, where 
\begin{equation*}
[m]_v=\frac{v^m-v^{-m}}{v-v^{-1}}.
\end{equation*}   
Then it is is known \cite{GTL3}*{Thm.~A.1} that the auxiliary matrix 
\begin{equation*}
\mbC(v)=(c_{ij}(v))=[2\kappa]_v \mbB(v)^{-1}
\end{equation*}
has entries $c_{ij}(v)$ in $\N[v,v^{-1}]$, where $4\kappa$ is the eigenvalue of the Casimir element of $\mfg$ in the adjoint representation.  

Next, for each index $i\in \mbI$, we introduce the series $t_i(u)\in  \hbar  Y_\hbar^0(\mfg)[\![u^{-1}]\!]$ and its inverse Borel transform $B_i(u)\in \hbar Y_\hbar^0(\mfg)[\![u]\!]$ by 
\begin{equation*}
t_i(u)=\hbar \sum_{r\geq 0}t_{ir} u^{-r-1}=\log(1+\hbar h_i(u)) \quad \;\text{ and }\; \quad 
B_i(u)=\hbar\sum_{r\geq 0}\frac{t_{ir}}{r!}u^r.
\end{equation*}
Note that $t_{i1}$ coincides with the element of the same name introduced in Section \ref{ssec:Yhg-def}.
From this data, we obtain an element
$\mcL(z)\in (\hbar/z)^2 Y_\hbar^0(\mfg)^{\otimes 2}[\![z^{-1}]\!]$ defined by
\begin{equation*}
\mcL(z)=T^{2\kappa}\sum_{i,j\in \mbI}c_{ij}(T)B_i(\partial_z)\otimes B_j(-\partial_z) (-z^{-2}),
\end{equation*} 
where $T$ is the shift operator $T(f(z))=f(z+\frac{\hbar}{2})$ on $\Yhg^{\otimes 2}[\![z^{-1}]\!]$. Equivalently: 
\begin{equation*}
T=\exp\!\left(\frac{\hbar}{2}\partial_z\right):\Yhg^{\otimes 2}[\![z^{-1}]\!]\to \Yhg^{\otimes 2}[\![z^{-1}]\!].
\end{equation*}

As $\mcL(z)\in \hbar z^{-2}Y_\hbar^0(\mfg)^{\otimes 2}[\![z^{-1}]\!]$ and $Y_\hbar^0(\mfg)^{\otimes 2}$ is torsion free, there is a unique solution 
$\mcS(z)$ to the formal difference equation 
\begin{equation*}
\mcL(z)=\mcS(z+2\kappa\hbar)-\mcS(z) \quad \text{ with }\; \mcS(z)\in z^{-1}Y_\hbar^0(\mfg)^{\otimes 2}[\![z^{-1}]\!].
\end{equation*}
If $g(z)\in z^{-1}\C[\![z^{-1}]\!]$ is the unique solution of $-z^{-2}=g(z+1)-g(z)$, then by Proposition 6.6 of \cite{GTLW19}, we have 
\begin{equation}\label{logR0}
\mcS(z)=\frac{T^{2\kappa}}{(2\kappa\hbar)^2}\sum_{i,j\in \mbI}c_{ij}(T)B_i(\partial_z)\otimes B_j(-\partial_z)  \left(g\!\left(\frac{z}{2\kappa\hbar}\right)\right),
\end{equation}
where $g(z/2\kappa\hbar)$ is viewed as an element of $\mathds{L}(\cYhg{\vphantom{\hYhg}}^{\scriptscriptstyle{(2)}}_z)$.
The abelian $R$-matrix $\mcR^0(z)$ is defined to be the formal series exponential of this solution:
\begin{equation*}
\mcR^0(z)=\exp(\mcS(z))\in 1+z^{-1}Y_\hbar^0(\mfg)^{\otimes 2}[\![z^{-1}]\!].
\end{equation*}
Equivalently, it is the unique formal solution in $1+z^{-1}Y_\hbar^0(\mfg)^{\otimes 2}[\![z^{-1}]\!]$ of the equation
\begin{equation*}
\mcR^0(z+2\kappa\hbar)=\mcA(z)\mcR^0(z),  
\end{equation*}
 where $\mcA(z)=\exp(\mcL(z))$. As $T$ and $\hbar^{-2}B_i(\partial_z)\otimes B_j(-\partial_z)$ are homogeneous operators of degree zero on $\mathds{L}(\cYhg{\vphantom{\hYhg}}^{\scriptscriptstyle{(2)}}_z)$, it follows from \eqref{logR0} that 
\begin{equation*}
\mcR^0(z)\in 1+\hbar \mathds{L}(\cYhg{\vphantom{\hYhg}}^{\scriptscriptstyle{(2)}}_z)_{-1}\subset \cYhg{\vphantom{\hYhg}}^{\scriptscriptstyle{(2)}}_z. 
\end{equation*} 

We are now in a position to introduce the universal $R$-matrix of the Yangian. 
Set $\mcR^+(z)=\mcR_{21}(-z)^{-1}$ and define
\begin{equation*}
\mcR(z):=\mcR^+(z)\mcR^0(z)\mcR^-(z)\in 1+z^{-1}\Yhg^{\otimes 2}[\![z^{-1}]\!]. 
\end{equation*}
The following result is the content of Theorem 7.4 of \cite{GTLW19}.
\begin{theorem}\label{T:Yhg-R}
$\mcR(z)$ is the unique formal series in
$
1+z^{-1}\Yhg^{\otimes 2}[\![z^{-1}]\!]
$
satisfying the intertwiner equation 
\begin{equation}\label{R-inter}
\tau_z\otimes \id \circ \op{\Delta}(x)= \mcR(z) \cdot \tau_z\otimes \id \circ \Delta(x) \cdot \mcR(z)^{-1} \quad \forall\; x\in \Yhg
\end{equation}
in $\Yhg^{\otimes 2}[z;z^{-1}]\!]$, in addition to the cabling identities 
\begin{align*}
\Delta\otimes \id (\mcR(z))&= \mcR_{13}(z)\mcR_{23}(z)\\
\id\otimes \Delta (\mcR(z))&= \mcR_{13}(z)\mcR_{12}(z)
\end{align*}
in $\Yhg^{\otimes 3}[\![z^{-1}]\!]$. Moreover, $\mcR(z)$ has the following properties:
\begin{enumerate}[font=\upshape]
\item\label{Yhg-R:1} It is unitary: $\mcR(z)^{-1}=\mcR_{21}(-z)$. 
\item\label{Yhg-R:2} For any $a,b\in \C$, one has 
\begin{equation*}
(\tau_a\otimes \tau_b)\mcR(z)=\mcR(z+a-b).
\end{equation*}
\item\label{Yhg-R:3} $\mcR(z)$ is a homogeneous, degree zero, element of $\mathds{L}(\cYhg{\vphantom{\hYhg}}^{\scriptscriptstyle{(2)}}_z)$, with 
\begin{equation*}
\mcR(z)-1\in \hbar \mathds{L}(\cYhg{\vphantom{\hYhg}}^{\scriptscriptstyle{(2)}}_z)_{-1}= \hbar z^{-1}\cYhg{\vphantom{\hYhg}}^{\scriptscriptstyle{(2)}}_z
\end{equation*}
and semiclassical limit given by 
\begin{equation*}
q^{\otimes 2} \hbar^{-1}({\mcR(z)-1})=\frac{\Omega_\mfg}{z+t-w}\in (U(\mfg[t])\otimes U(\mfg[w]))[\![z^{-1}]\!]
\end{equation*}
\end{enumerate}
\end{theorem}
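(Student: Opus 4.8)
The plan is to reduce the statement to its counterpart over the complex numbers --- namely Theorem~7.4 of \cite{GTLW19}, which treats the specialized Yangian $Y_\zeta(\mfg):=\Yhg/(\hbar-\zeta)\Yhg$ for an arbitrary $\zeta\in\C^\times$ --- by a specialization and density argument, and then to establish the semiclassical limit in part~\eqref{Yhg-R:3} by a direct computation with the three Gauss factors; this is the subject of Appendix~\ref{A:R-matrix} (see Proposition~\ref{P:Yhg-R}). As a first, purely formal, step one observes that $\mcR^\pm(z)$ and $\mcR^0(z)$ are already constructed over $\C[\hbar]$ in \S\ref{ssec:Yhg-R} and that each of them, hence also their product $\mcR(z)$, lies in $1+\hbar\,\mathds{L}(\cYhg^{(2)}_z)_{-1}\subset 1+z^{-1}\Yhg^{\otimes 2}[\![z^{-1}]\!]$; in particular $\mcR(z)$ is an invertible, homogeneous degree-zero element of $\mathds{L}(\cYhg^{(2)}_z)$, which already gives the first assertion of part~\eqref{Yhg-R:3}.

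The transfer goes as follows. For each $\zeta\in\C^\times$ the specialization homomorphism $\mathrm{ev}_\zeta\colon\Yhg\to Y_\zeta(\mfg)$ is surjective and intertwines $\tau_z,\Delta,\veps,S,\Delta\otimes\id$ and $\id\otimes\Delta$ with their $Y_\zeta(\mfg)$-analogues, while carrying the recursion \eqref{R-recur} and the formula \eqref{logR0} to their specializations; hence $(\mathrm{ev}_\zeta\otimes\mathrm{ev}_\zeta)(\mcR(z))$ is exactly the universal $R$-matrix of $Y_\zeta(\mfg)$ constructed in \cite{GTLW19}. By the Poincar\'e--Birkhoff--Witt Theorem~\ref{T:Yhg-PBW} one has $\Yhg^{\otimes n}\cong U(\tplus)^{\otimes n}[\hbar]$ as $\C[\hbar]$-modules, compatibly with the specialization maps, under which $\mathrm{ev}_\zeta^{\otimes n}$ becomes evaluation $\hbar\mapsto\zeta$; therefore an element of $\Yhg^{\otimes n}$ --- and so any coefficient of a series in $\cYhg^{(n)}_z$, $\Yhg^{\otimes n}[z;z^{-1}]\!]$ or $\Yhg^{\otimes n}[\![z^{-1}]\!]$ --- that is annihilated by $\mathrm{ev}_\zeta^{\otimes n}$ for all $\zeta\in\C^\times$ must vanish, by invertibility of Vandermonde matrices. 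Feeding the conclusions of \cite{GTLW19}*{Thm.~7.4} over each $Y_\zeta(\mfg)$ into this principle shows at once that $\mcR(z)$ satisfies the intertwiner equation \eqref{R-inter}, the two cabling identities, and the unitarity and shift-covariance properties~\eqref{Yhg-R:1}--\eqref{Yhg-R:2}. Uniqueness follows in the same manner: any $\mcR'(z)\in 1+z^{-1}\Yhg^{\otimes 2}[\![z^{-1}]\!]$ obeying \eqref{R-inter} and the cabling identities specializes, for each $\zeta$, to a solution over $Y_\zeta(\mfg)$, hence to $(\mathrm{ev}_\zeta\otimes\mathrm{ev}_\zeta)(\mcR(z))$, forcing $\mcR'(z)=\mcR(z)$.

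Finally I would pin down the semiclassical limit. Since $\Yhg^{\otimes 2}$ is torsion free and $\mcR(z)-1\in\hbar\,\mathds{L}(\cYhg^{(2)}_z)_{-1}$, the element $r(z):=q^{\otimes 2}\big(\hbar^{-1}(\mcR(z)-1)\big)$ is well defined, and I would compute it factor by factor. Iterating \eqref{R-recur} modulo $\hbar^2$ leaves only the summands with $\nu(\beta)\le 1$ and gives $q^{\otimes 2}\hbar^{-1}(\mcR^-(z)-1)=\msr/(z+t-w)$; the analogous computation from \eqref{logR0}, using $t_{ir}\equiv h_{ir}\ (\mathrm{mod}\ \hbar)$, $T\equiv\id\ (\mathrm{mod}\ \hbar)$, $[2\kappa]_1=2\kappa$ and $\mbB=\big((h_i,h_j)\big)_{i,j\in\mbI}$, gives $q^{\otimes 2}\hbar^{-1}(\mcR^0(z)-1)=\Omega_\mfh/(z+t-w)$ with $\Omega_\mfh$ the Cartan part of $\Omega_\mfg$; and $\mcR^+(z)=\mcR^-_{21}(-z)^{-1}$ gives $q^{\otimes 2}\hbar^{-1}(\mcR^+(z)-1)=\msr^{21}/(z+t-w)$. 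Adding the three (the products of the order-$\hbar$ terms being of order $\hbar^2$) yields $r(z)=(\msr^{21}+\Omega_\mfh+\msr)/(z+t-w)=\Omega_\mfg/(z+t-w)$, which is part~\eqref{Yhg-R:3}. As a consistency check one can reduce \eqref{R-inter} modulo $\hbar$: using \eqref{quant-comm} and the notation of \S\ref{ssec:Pr-Manin}, $r(z)$ solves the classical intertwiner relation $\mathrm{ad}\big(\bar\tau_z(y)\otimes 1+1\otimes y\big)\,r(z)=(\bar\tau_z\otimes\id)\,\delta_{\scriptscriptstyle+}(y)$ for all $y\in\tplus$ (where $\bar\tau_z$ is the semiclassical limit of $\tau_z$), which $\Omega_\mfg/(z+t-w)$ manifestly satisfies.

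The one genuinely external input is \cite{GTLW19}*{Thm.~7.4}; granting it, the only delicate point in the translation is making the specialization/density argument airtight --- checking that every construction involved (the recursions, the formula \eqref{logR0}, the completed series spaces $\cYhg^{(n)}_z$ and $\mathds{L}(\cYhg^{(n)}_z)$, and the Hopf structure maps) commutes with $\hbar\mapsto\zeta$, so that ``vanishing after all specializations'' genuinely forces ``vanishing over $\C[\hbar]$''. I expect this bookkeeping, together with the multi-step leading-order computation of the semiclassical limit, to be the most laborious portion of the argument; conceptually nothing is needed beyond what \cite{GTLW19} already supplies.
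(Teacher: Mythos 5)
Your proposal is correct and rests on the same essential idea as the paper's Appendix~\ref{A:R-matrix}: import Theorem~7.4 of \cite{GTLW19}, proved for the Yangian at numerical $\hbar\in\C^\times$, and pass back to the formal parameter. The packaging, however, differs in two respects. The paper introduces the graded Hopf isomorphism $\upvarphi_\hbar\colon\Yhg\iso\msR_\hbar(Y(\mfg))$ onto the Rees algebra of the filtered Yangian $Y(\mfg)$, observes the single clean identity $\upvarphi_\hbar^{\otimes 2}(\mcR(z))=\mathds{R}(z/\hbar)$ (and likewise for each Gauss factor), and transfers the unitarity, shift-covariance and semiclassical-limit statements of \cite{GTLW19}*{Thm.~7.4}~(3)--(5) through this isomorphism in one stroke; it only falls back to evaluating $\hbar\mapsto\zeta$ for all $\zeta\in\C^\times$ to verify the intertwiner equation \eqref{R-inter} and uniqueness. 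You instead specialize from the outset and conclude via a density/Vandermonde argument, which is equivalent (a coefficient of $z^{-k}$ is a genuine polynomial in $\hbar$ by PBW, so vanishing at every $\zeta\in\C^\times$ forces it to vanish), but it makes you recompute the semiclassical limit of \eqref{Yhg-R:3} by hand from the three Gauss factors rather than read it off from \cite{GTLW19}*{Thm.~7.4}~(5). Your factor-by-factor computation is sound --- the $\mcR^-$ and $\mcR^+$ contributions $\msr/(z+t-w)$ and $\msr^{21}/(z+t-w)$ come out exactly as you describe, and the abelian factor does give $\Omega_\mfh/(z+t-w)$ once one uses $T\equiv\id$, $c_{ij}(1)=2\kappa(\mbB^{-1})_{ij}$ and the leading term $g(z/2\kappa\hbar)=2\kappa\hbar z^{-1}+O(\hbar^2)$ --- but it is the more laborious of the two routes. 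The Rees-algebra formulation also has the advantage of making the claim that the construction of $\mcR(z)$ over $\C[\hbar]$ matches \cite{GTLW19}'s over $\C$ into a one-line equality of series rather than an infinite family of specialization checks; you rightly flag this bookkeeping as the part that needs to be made airtight.
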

The series $\mcR(z)$ is the \textit{universal $R$-matrix} of the Yangian, and is related to the element $\mcR^D(z)\in \Yhg^{\otimes 2}[\![z^{-1}]\!]$ introduced by Drinfeld in Theorem 3 of \cite{Dr} by $\mcR(z)=\mcR^D(-z)^{-1}$; see \S1.1 and Corollary 7.4 of \cite{GTLW19}. 
\begin{remark}\label{R:strict}
Strictly speaking, the results of \cite{GTLW19} are stated with $\hbar$ replaced by an arbitrary nonzero complex number. However, it is easy to translate between the numerical and formal $\hbar$ settings via a homogenization procedure, and for the sake of completeness we make this rigorous in Appendix \ref{A:R-matrix}; see Proposition \ref{P:Yhg-R}. 
\end{remark}
The final result of this section shows that, in particular, $\mcR(z)$ is invariant under the Chevalley involution $\omega$ of Section \ref{ssec:Yhg-aut}. 
\begin{corollary}\label{C:Yhg-R-Chev}
The universal $R$-matrix $\mcR(z)$ satisfies 
\begin{equation*}
(\omega\otimes \omega)\mcR(z)=\mcR(z)\quad \text{ and } \quad (\varsigma\otimes \varsigma)\mcR(z)=\mcR_{21}(z)=(\upkappa\otimes \upkappa)\mcR(z).
\end{equation*}
\end{corollary}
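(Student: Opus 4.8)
The plan is to derive each of the three identities from the uniqueness statement in Theorem~\ref{T:Yhg-R}, invoking only the compatibilities between $\omega$, $\varsigma$, $\upkappa$ and the Hopf structure collected in Lemma~\ref{L:Chev}, together with the unitarity property~\eqref{Yhg-R:1}. The structural point underlying everything is that $\omega\otimes\omega$ and $\varsigma\otimes\varsigma$, as well as their threefold versions $\omega^{\otimes 3}$ and $\varsigma^{\otimes 3}$, are \emph{anti}-automorphisms of $\Yhg^{\otimes 2}$ and $\Yhg^{\otimes 3}$ (extended $\C[z^{\pm 1}]$-linearly to the ambient formal series rings); hence they reverse products and send inverses to inverses.

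For the identity $(\omega\otimes\omega)\mcR(z)=\mcR(z)$, put $\mcR'(z):=(\omega\otimes\omega)\mcR(z)$, which lies in $1+z^{-1}\Yhg^{\otimes 2}[\![z^{-1}]\!]$ because $\omega(1)=1$. Applying $\omega\otimes\omega$ to the intertwiner equation~\eqref{R-inter} and using that $\tau_z$ commutes with $\omega$ while $(\omega\otimes\omega)\circ\op{\Delta}=\Delta\circ\omega$ and $(\omega\otimes\omega)\circ\Delta=\op{\Delta}\circ\omega$ (Lemma~\ref{L:Chev}), one obtains, after substituting $y=\omega(x)$, precisely equation~\eqref{R-inter} with $\mcR(z)$ replaced by $\mcR'(z)$. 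Applying $\omega^{\otimes 3}$ to the two cabling identities yields the ``co-opposite'' relations $(\op{\Delta}\otimes\id)\mcR'(z)=\mcR'_{23}(z)\mcR'_{13}(z)$ and $(\id\otimes\op{\Delta})\mcR'(z)=\mcR'_{12}(z)\mcR'_{13}(z)$; composing each with the algebra automorphism of $\Yhg^{\otimes 3}$ that transposes the first two (respectively, the last two) tensor legs simultaneously turns $\op{\Delta}$ back into $\Delta$ and reverses the relevant product, producing the genuine cabling identities for $\mcR'(z)$. The uniqueness clause of Theorem~\ref{T:Yhg-R} then forces $\mcR'(z)=\mcR(z)$.

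For $(\varsigma\otimes\varsigma)\mcR(z)=\mcR_{21}(z)$, set $\mcR''(z):=(\varsigma\otimes\varsigma)\mcR(z)\in 1+z^{-1}\Yhg^{\otimes 2}[\![z^{-1}]\!]$. Here $\varsigma$ preserves the coproduct, $(\varsigma\otimes\varsigma)\circ\Delta=\Delta\circ\varsigma$, but conjugates $\tau_z$ to $\tau_{-z}$; applying $\varsigma\otimes\varsigma$ to~\eqref{R-inter} and rearranging shows that $\mcR''(z)^{-1}$ satisfies the intertwiner equation at parameter $-z$, and applying $\varsigma^{\otimes 3}$ to the cabling identities and then inverting (legitimate since $\Delta$ is an algebra map) gives the cabling identities for $\mcR''(z)^{-1}$. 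Since the substitution $z\mapsto -z$ is a symmetry of the whole characterization in Theorem~\ref{T:Yhg-R}, we conclude $\mcR''(z)^{-1}=\mcR(-z)$, whence $\mcR''(z)=\mcR(-z)^{-1}=\mcR_{21}(z)$ by~\eqref{Yhg-R:1}. Finally, as $\upkappa=\omega\circ\varsigma$, the third identity follows from the first two: $(\upkappa\otimes\upkappa)\mcR(z)=(\omega\otimes\omega)\big((\varsigma\otimes\varsigma)\mcR(z)\big)=(\omega\otimes\omega)\mcR_{21}(z)=\big((\omega\otimes\omega)\mcR(z)\big)_{21}=\mcR_{21}(z)$, the last equality being $(\omega\otimes\omega)\mcR(z)=\mcR(z)$.

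I do not anticipate a genuine obstacle: the essential content—that $\mcR(z)$ is pinned down by the intertwiner equation, the cabling identities, and its constant term $1$—is exactly Theorem~\ref{T:Yhg-R}. The remaining work is purely bookkeeping: keeping track of the exchange $\Delta\leftrightarrow\op{\Delta}$ under $\omega$, the shift $\tau_z\leftrightarrow\tau_{-z}$ under $\varsigma$, the leg permutations $\mcR_{ij}$ when transporting the cabling identities, and verifying that $\omega$ and $\varsigma$ extend to $\Yhg^{\otimes n}[z;z^{-1}]\!]$ compatibly with the filtrations implicit in the uniqueness statement—routine, as both maps are $\C[\hbar]$-linear and fix $z$.
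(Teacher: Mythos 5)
Your proposal is correct and follows essentially the same strategy as the paper's proof: apply $\omega\otimes\omega$ (resp. $\varsigma\otimes\varsigma$) to the intertwiner equation and cabling identities, observe that the images are again solutions of the characterizing equations after appropriate bookkeeping, and conclude by the uniqueness clause of Theorem~\ref{T:Yhg-R}, with the third identity following by composition via $\upkappa=\omega\circ\varsigma$. The only cosmetic difference is that the paper directly identifies $\mcR^\varsigma(-z)^{-1}$ as a solution at parameter $z$, whereas you phrase it as $\mcR''(z)^{-1}$ satisfying the equations at parameter $-z$ and then invoke the $z\mapsto -z$ reparametrization --- the two are equivalent.
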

\begin{proof}
Set $\mcR^\omega(z):=(\omega\otimes \omega)\mcR(z)$ and $\mcR^\varsigma(z):=(\varsigma\otimes \varsigma)\mcR(z)$. Applying the anti-automorphisms  $\omega\otimes \omega$ and $\varsigma\otimes \varsigma$ to the intertwiner equation \eqref{inter}, while making use of the relations of Lemma \ref{L:Chev}, we find that 
\begin{gather*}
\tau_z\otimes \id \circ \Delta(x)= \mcR^{\omega}(z)^{-1} \cdot \tau_z\otimes \id \circ \op{\Delta}(x) \cdot \mcR^\omega(z)\\
\tau_{-z}\otimes \id \circ \op{\Delta}(x)= \mcR^\varsigma(z)^{-1} \cdot \tau_{-z}\otimes \id \circ \Delta(x) \cdot \mcR^\varsigma(z)
\end{gather*}
for all $x\in \Yhg$. Hence, $\mcR^\omega(z)$ and $\mcR^\varsigma(-z)^{-1}=\mcR_{21}^{\varsigma}(z)$ are both solutions of \eqref{inter}. One verifies similarly that these both satisfy the cabling identities, and hence coincide with $\mcR(z)$ by the uniqueness statement of Theorem \ref{T:Yhg-R}. Since $\upkappa=\varsigma\circ \omega$, this completes the proof of the proposition. \qedhere
\end{proof}

\section{The Yangian double}\label{sec:DYhg}

We now recall the definition and main properties of the Yangian double $\DYhg$, including a review of some of the results of \cite{WDYhg}. These results, summarized in Theorems \ref{T:Phiz} and \ref{T:Phi}, will play an integral role in establishing in Sections \ref{sec:DYhg-quant} and \ref{sec:DYhg-double} that $\DYhg$ is a homogeneous quantization of the Lie bialgebra $\mft=\mfg[t^{\pm 1}]$ isomorphic to the restricted quantum double of the Yangian.

%
\subsection{The Yangian double}\label{ssec:DYhg-def}
The definition of the Yangian double $\DYhg$ is obtained by allowing the second index of the generators in Definition \ref{D:Yhg} to take values in $\Z$, while working in the category of topological $\C[\![\hbar]\!]$-algebras:
\begin{definition}\label{D:DYhg} The Yangian double
 $\DYhg$ is the unital, associative $\C[\![\hbar]\!]$-algebra topologically generated by  $\{x_{ir}^\pm, h_{ir}\}_{i\in \mbI,r\in \Z}$, subject to the relations \eqref{Y:hh} -- \eqref{Y:Serre} of Definition \ref{D:Yhg}. In terms of generating series
 \begin{equation*}
 \mcX_i^\pm(u)=\sum_{r\in \Z}x_{ir}^\pm u^{-r-1} \quad \text{ and }\quad \mcH_i(u)=\sum_{r\in \Z}h_{ir}u^{-r-1},
\end{equation*}
these defining relations can be expressed as follows, for $i,j\in \mbI$:
 \begin{gather*}
[\mcH_{i}(u),\mcH_j(v)] =0,\label{DY:hh}\\
[h_{i0},\mcX_j^\pm(u)]=\pm 2d_{ij}\mcX_j^\pm(u), \label{DY:h0x}\\
\left(u-v\mp\hbar d_{ij}\right)\mcH_i(u)\mcX_j^{\pm}(v)=\left(u-v\pm\hbar d_{ij}\right)\mcX_j^{\pm}(v)\mcH_i(u), \label{DY:xh}\\
\left(u-v\mp\hbar d_{ij}\right)\mcX_{i}^{\pm}(u)\mcX_{j}^{\pm}(v) =\left(u-v\pm\hbar d_{ij}\right) \mcX_{j}^{\pm}(v)\mcX_{i}^{\pm}(u) ,  \label{DY:xx}\\
[\mcX_i^+(u),\mcX_j^-(v)] =\delta_{ij} u^{-1}\delta(v/u)\mcH_i(v), \label{DY:xxh}\\
\sum_{\pi \in S_{m}} \left[\mcX_i^{\pm}(u_{\pi(1)}), \left[\mcX_i^{\pm}(u_{\pi(2)}), \cdots, \left[\mcX_i^{\pm}(u_{\pi(m)}),\mcX_j^{\pm}(v)\right] \cdots\right]\right] = 0, \label{DY:Serre}
\end{gather*}
where $\delta(u)=\sum_{r\in \Z}u^r\in \C[\![u^{\pm 1}]\!]$ is the formal delta function and in the last relation $i\neq j$ and $m=1-a_{ij}$.  
 \end{definition}
 $\DYhg$ is $\Z$-graded as a topological $\C[\![\hbar]\!]$-algebra, with grading induced by the degree assignment $\deg x_{ir}^\pm=\deg h_{ir}=r$ for all $i\in \mbI$ and $r\in \Z$. That is, if $\DYhg_k$ denotes the closure of the subspace of $\DYhg$ spanned over the complex numbers by monomials in $x_{ir}^\pm$, $h_{ir}$ and $\hbar$ of total degree $k$, then 
 \begin{equation*}
 \DYg:=\bigoplus_{k\in \Z}\DYhg_k
 \end{equation*}
 is a dense, $\Z$-graded $\C[\hbar]$-subalgebra of $\DYhg$ satisfying the conditions of Lemma \ref{L:grad-top}. In particular, in the notation of Section \ref{ssec:Pr-grTop}, one has $\DYg=\DYhg_\Z$. 
 \begin{remark}\label{R:DYg-j}
  Let $\DYg^\jmath$ denote the $\C[\hbar]$-algebra generated by $\{x_{ir}^\pm,h_{ir}\}_{i\in \mbI,r\in\Z}$, subject to the defining relations \eqref{Y:hh}--\eqref{Y:Serre}. Then $\DYg^\jmath$ is a $\Z$-graded $\C[\hbar]$-algebra, and there is a natural algebra homomorphism 
  \begin{equation*}
  \jmath: \DYg^\jmath\to \DYg\subset\DYhg. 
  \end{equation*}
  The kernel of $\jmath$ is the graded ideal $\cap_{n\in \N}\hbar^n \DYg^\jmath$, and $\DYhg$ can be recovered as the $\hbar$-adic completion of $\DYg^\jmath$; see \cite{WDYhg}*{Prop.~2.7}. Thus $\jmath(\DYg^\jmath)$ is a dense,  $\Z$-graded $\C[\hbar]$-subalgebra of $\DYhg$. It is, however, a proper subalgebra of $\DYg$ as the graded components $\jmath(\DYg^\jmath_k)$ are not closed in $\DYhg$. Rather, one has 
  \begin{equation*}
  \DYhg_k=\varprojlim_n(\DYg_k^\jmath/\hbar^n \DYg_{k-n}^\jmath) \quad \forall\; k\in \Z. 
  \end{equation*}
 \end{remark}
The above definition implies that $\DYhg$ is a $\Z$-graded $\C[\![\hbar]\!]$-algebra deformation of the enveloping algebra $U(\mft)$, where we recall that $\mft=\mfg[t^{\pm 1}]$. Analogously to the Yangian case recalled in Section \ref{ssec:Yhg-def}, the identification $\DYhg/\hbar\DYhg \cong U(\mft)$ is induced by the graded $\C[\![\hbar]\!]$-algebra epimorphism $\DYhg\onto U(\mft)$  given by 
 \begin{equation*}
x_{ir}^\pm \mapsto x_i^\pm t^r, \quad  h_{ir}\mapsto h_it^r \quad  \forall\; i\in \mbI\;\text{ and }\; r\in \Z. 
 \end{equation*}
The Poincar\'{e}--Birkhoff--Witt Theorem for $\DYhg$, established in Theorem 6.2 of \cite{WDYhg}, asserts that $\DYhg$ is a topologically free $\C[\![\hbar]\!]$-module, and thus a flat deformation of $U(\mft)$: 
 \begin{theorem}\label{T:DYhg-PBW}
 $\DYhg$ is a flat deformation of the $\Z$-graded algebra $\DYhg/\hbar\DYhg\cong U(\mft)$ over $\C[\![\hbar]\!]$. 
 In particular, $\DYhg\cong U(\mft)[\![\hbar]\!]$ as a $\Z$-graded topological $\C[\![\hbar]\!]$-module. 
 \end{theorem}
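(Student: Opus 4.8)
The plan is to show $\DYhg$ is topologically free over $\C[\![\hbar]\!]$ with semiclassical limit $U(\mft)$, and then to invoke the structure theory of graded topological modules from Section \ref{sec:Pr}. Since $\DYhg$ is the $\hbar$-adic completion of the $\Z$-graded $\C[\hbar]$-algebra $\DYg^\jmath$ (Remark \ref{R:DYg-j}), it is already separated, complete, and $\Z$-graded as a topological algebra, as noted after Definition \ref{D:DYhg}. By the description of topologically free modules in Section \ref{ssec:Pr-Top}, it therefore suffices to establish: \emph{(i)} the natural map $\DYhg/\hbar\DYhg \to U(\mft)$ is an isomorphism of $\Z$-graded algebras; and \emph{(ii)} $\DYhg$ is torsion free as a $\C[\![\hbar]\!]$-module. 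Granting \emph{(i)} and \emph{(ii)}, $\DYhg$ is topologically free with semiclassical limit $U(\mft)$, and the $\Z$-graded isomorphism $\DYhg\cong U(\mft)[\![\hbar]\!]$ follows from Corollary \ref{C:Top-Z=Free}.

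For \emph{(i)}, reducing the defining relations \eqref{Y:hh}--\eqref{Y:Serre} modulo $\hbar$ shows that the graded algebra epimorphism $\DYhg\onto U(\mft)$ of the paragraph after Definition \ref{D:DYhg} descends to a surjective graded map $\psi:\DYhg/\hbar\DYhg\to U(\mft)$. To see $\psi$ is injective it suffices to check that $\DYhg/\hbar\DYhg$ is spanned over $\C$ by the images of the ordered monomials in a distinguished generating set $\{h_{ir},x_{\beta,r}^\pm\}_{i\in\mbI,\,\beta\in\Root^+,\,r\in\Z}$, formed from the Chevalley-type generators exactly as the elements $x_{\beta,k}^\pm$ of Section \ref{ssec:Yhg-coord} but with $r$ now ranging over $\Z$; indeed, $\psi$ sends these ordered monomials to the elements of the Poincar\'e--Birkhoff--Witt basis of $U(\mft)$, which are linearly independent. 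The spanning claim is a standard straightening argument: modulo $\hbar$ the relations \eqref{Y:hh}--\eqref{Y:Serre} become the current-algebra relations, and each reordering move preserves the total $\Z$-degree while strictly decreasing a suitable complexity statistic, so the process terminates. This step is routine, if mildly technical.

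The substance of the proof is \emph{(ii)}, which I would deduce from the formal shift operator. Define a $\C[\![\hbar]\!]$-algebra homomorphism $\Phi_z:\DYhg\to\hYhg[\![z^{\pm 1}]\!]$ on generators by sending the generating series $\mcX_i^\pm(u)$, $\mcH_i(u)$ to the expansions of $x_i^\pm(u-z)$, $h_i(u-z)$ compatible with the $\Z$-grading, so that $x_{ir}^\pm$ with $r\geq 0$ maps to $\tau_z(x_{ir}^\pm)\in\hYhg[z]$ and $x_{ir}^\pm$ with $r<0$ maps to the degree $r$ element of $\hYhg[\![z^{-1}]\!]$ obtained by expanding in powers of $z^{-1}$, and similarly for the $h_{ir}$. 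One verifies that $\Phi_z$ respects all of \eqref{Y:hh}--\eqref{Y:Serre}, the only delicate case being relation \eqref{Y:xxh}, whose generating-series form in Definition \ref{D:DYhg} involves $\delta(v/u)$ and is matched to the Yangian via the standard identity for $u^{-1}\delta(v/u)$. Now $\hYhg$ is topologically free by Theorem \ref{T:Yhg-PBW} and the remarks on its $\hbar$-adic completion, so $\hYhg[\![z^{\pm 1}]\!]$ is separated and torsion free over $\C[\![\hbar]\!]$, with semiclassical limit $U(\tplus)[\![z^{\pm 1}]\!]$; and the semiclassical limit $\bar\Phi_z:U(\mft)\to U(\tplus)[\![z^{\pm 1}]\!]$ is induced by the Lie algebra homomorphism $\mft\to\mfg[t][\![z^{\pm 1}]\!]$, $f(t)\mapsto f(t+z)$ (same expansion convention), which is injective by a leading-term argument in the $z$-grading. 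Since $\DYhg$ is separated and $\hYhg[\![z^{\pm 1}]\!]$ is torsion free, Lemma \ref{L:scl-map}(1) shows $\Phi_z$ is injective, hence $\DYhg$ is torsion free. Combined with \emph{(i)} and the first paragraph, this proves the theorem.

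The principal obstacle is setting up \emph{(ii)}: one must fix the precise expansions of $x_i^\pm(u-z)$ and $h_i(u-z)$ so that the negative-index generators are assigned consistently and \emph{every} defining relation of $\DYhg$ --- in particular the $\delta(v/u)$ relation --- is preserved by $\Phi_z$. Once $\Phi_z$ is available, the chain ``$\Phi_z$ injective $\Rightarrow$ $\DYhg$ torsion free $\Rightarrow$ $\DYhg$ topologically free'' is formal, and the only other care needed is the routine combinatorial bookkeeping in the straightening argument of \emph{(i)}.
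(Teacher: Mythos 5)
The paper does not prove this theorem in the text — it is cited directly from Theorem 6.2 of \cite{WDYhg}, and from the paper's own description of that reference (Theorems 4.3, 5.5, 6.2 there correspond roughly to Theorem \ref{T:Phiz}, Theorem \ref{T:Phi}, and the present statement) your overall strategy is essentially the one used there: construct the formal shift operator, verify that it is a well-defined algebra homomorphism, and bootstrap torsion-freeness from the injectivity of its semiclassical limit via Lemma \ref{L:scl-map}. Your step \emph{(i)}, including the acknowledgment that the straightening argument must be carried out, is fine as a plan.

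The serious gap is in step \emph{(ii)}, at the injectivity of the semiclassical limit. You assert that $\bar\Phi_z$ (equivalently, after using \emph{(i)} to identify $\DYhg/\hbar\DYhg$ with $U(\mft)$, the induced map $\sigma: U(\mft)\to U(\tplus)[\![z^{\pm 1}]\!]$) ``is induced by the Lie algebra homomorphism $\mft\to\mfg[t][\![z^{\pm 1}]\!]$, $f(t)\mapsto f(t+z)$, which is injective by a leading-term argument.'' Injectivity of a Lie algebra map $\rho:\mfa\to A$ into an associative algebra $A$ does not imply injectivity of the induced map $U(\mfa)\to A$ unless $A$ has enveloping-algebra-type structure, and the target here does not. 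Moreover, the naive leading-$z$-term argument fails at the level of $U(\mft)$: the top $z$-coefficient of $\sigma(\prod_k x_{\alpha_k}t^{r_k})$ is $\prod_k x_{\alpha_k}\cdot z^{\sum r_k}\in U(\mfg)$, so PBW monomials with the same $U(\mfg)$-content and the same total degree $\sum r_k$ but different individual exponents $r_k$ are not distinguished by it. This is not a routine detail; it is the crux of the theorem (and why the paper labels it a Poincar\'e--Birkhoff--Witt theorem). In \cite{WDYhg}, this injectivity is established not by a direct leading-term estimate but by first producing an inverse to $\wh\Phi$ on $\mcJ$-adic completions (the content cited here as Theorem \ref{T:Phi}) and leveraging that isomorphism. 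Your proposal needs to replace the leading-term claim with a genuine argument of that sort.

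Two further points to tighten. First, $\hYhg[\![z^{\pm 1}]\!]$ is not an algebra (coefficients of a product involve divergent sums), so $\Phi_z$ cannot literally be an algebra homomorphism into it; you must land in the subalgebra $\LzhYhg$ as in Lemma \ref{L:DYhg-LzYhg}, and the verification of the relations (especially \eqref{Y:xxh}) must be checked to produce series in this subspace. Second, to conclude injectivity of $\bar\Phi_z$ you implicitly use that $\bar\Phi_z$ factors as $\sigma\circ\psi$ with $\psi$ an isomorphism from \emph{(i)}; this factorization should be stated explicitly since you are treating \emph{(i)} and \emph{(ii)} as separate steps.
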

The notation for the generators of $\DYhg$ may seem, on the surface, to conflict with the notation used for generators in the Yangian associated to $\mfg$. However, there is a natural $\Z$-graded $\C[\![\hbar]\!]$-algebra homomorphism 
\begin{equation}
\imath:\hYhg \to \DYhg
\end{equation}
sending each generator of $\Yhg\subset \hYhg$ to the corresponding element of $\DYhg$, denoted with the same symbol. By Corollary 4.4 of \cite{WDYhg} $\imath$ is injective, and we shall henceforth identify $\hYhg$ with $\imath(\hYhg)$.

%
\subsection{Automorphisms and root vectors}\label{ssec:DYhg-root}
To each $i\in \mbI$, we may associate series $\dot{x}_i^\pm(u)$ and $\dot{h}_i(u)$ in $\DYhg[\![u]\!]$ by setting  
\begin{equation*}
\dot{x}_i^\pm(u):= x_i^\pm(u)-\mcX_i^\pm(u) \quad \text{ and }\quad  \dot{h}_i^\pm(u):= h_i^\pm(u)-\mcH_i^\pm(u). 
\end{equation*}
The following lemma is then a straightforward consequence of the defining relations of $\DYhg$, where $\omega$ and $\varsigma$ are as in Lemma \ref{L:Chev}.  
\begin{lemma}\label{L:DYhg-aut}
There are unique extensions of the anti-automorphisms $\omega$ and $\varsigma$ of $\Yhg$ to anti-automorphisms of the $\C[\![\hbar]\!]$-algebra $\DYhg$ such that, for each $i\in \mbI$, 
\begin{gather*}
 \omega(\dot{x}_i^\pm(u))=\dot{x}_{i}^\mp(u), \quad \omega(\dot{h}_i(u))=\dot{h}_i(u),\\
 \varsigma(\dot{x}_i^\pm(u))=\dot{x}_i^\pm(-u), \quad \varsigma(\dot{h}_i(u))=\dot{h}_i(-u).
\end{gather*}
\end{lemma}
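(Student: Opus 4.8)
The plan is to build $\omega$ and $\varsigma$ directly on the generators of $\DYhg$, verify compatibility with the defining relations, and obtain uniqueness from a topological generation argument. For uniqueness: $\DYhg$ is topologically generated over $\C[\![\hbar]\!]$ by the coefficients of $x_i^\pm(u)$, $h_i(u)$ --- which topologically generate $\imath(\hYhg)=\hYhg$ --- together with the coefficients of $\dot x_i^\pm(u)$, $\dot h_i(u)$, since $\mcX_i^\pm(u)=x_i^\pm(u)-\dot x_i^\pm(u)$ and $\mcH_i(u)=h_i(u)-\dot h_i(u)$. Any continuous algebra anti-endomorphism of $\DYhg$ that restricts to $\omega$ (resp.\ $\varsigma$) on $\hYhg$ and obeys the prescribed formulas on the dotted series is therefore determined on a topologically generating set, hence unique.

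For existence I would work with the $\Z$-graded $\C[\hbar]$-algebra $\DYg^\jmath$ of Remark \ref{R:DYg-j}, whose $\hbar$-adic completion is $\DYhg$, and consider the degree-preserving assignments
\[
\omega:\ x_{ir}^\pm\mapsto x_{ir}^\mp,\quad h_{ir}\mapsto h_{ir},\qquad \varsigma:\ x_{ir}^\pm\mapsto (-1)^{r+1}x_{ir}^\pm,\quad h_{ir}\mapsto(-1)^{r+1}h_{ir}\qquad (i\in\mbI,\ r\in\Z),
\]
which, in terms of the generating series of Definition \ref{D:DYhg}, read $\omega(\mcX_i^\pm(u))=\mcX_i^\mp(u)$, $\omega(\mcH_i(u))=\mcH_i(u)$ and $\varsigma(\mcX_i^\pm(u))=\mcX_i^\pm(-u)$, $\varsigma(\mcH_i(u))=\mcH_i(-u)$. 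Extending each to an anti-homomorphism of the free $\C[\hbar]$-algebra on the generators, I would check that the defining relations of $\DYhg$ --- i.e.\ \eqref{Y:hh}--\eqref{Y:Serre} read with all mode indices in $\Z$, equivalently their current presentation in Definition \ref{D:DYhg} --- are respected. Each non-Serre relation has the shape $P(u,v)\,A(u)B(v)=\widetilde P(u,v)\,B(v)A(u)$, where $\widetilde P$ is obtained from $P$ by sending $\pm\hbar d_{ij}$ to $\mp\hbar d_{ij}$; since an anti-homomorphism reverses products and $d_{ij}=d_{ji}$, applying $\omega$ yields a relation of the same family with $\pm$ interchanged, while applying $\varsigma$ yields the same relation with $u,v$ replaced by $-u,-v$ (using $u-v\mp\hbar d_{ij}\mapsto-(u-v\pm\hbar d_{ij})$ under this substitution, the overall sign cancelling). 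For the relation $[\mcX_i^+(u),\mcX_j^-(v)]=\delta_{ij}u^{-1}\delta(v/u)\mcH_i(v)$ one additionally invokes the delta-function symmetries $u^{-1}\delta(v/u)=v^{-1}\delta(u/v)$ and $\delta(v/u)\mcH_i(u)=\delta(v/u)\mcH_i(v)$; and for the Serre relations, an anti-homomorphism sends a length-$m$ iterated commutator to $(-1)^m$ times the iterated commutator of the images \emph{in the same order}, so the symmetrized sum over $S_m$ is again a scalar multiple of a Serre relation (for $\mcX^{\mp}$ under $\omega$, and for $\mcX^\pm$ evaluated at $-u_1,\dots,-u_m,-v$ under $\varsigma$). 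Since these assignments preserve the $\Z$-grading, they are $\hbar$-adically continuous and extend to anti-endomorphisms of $\DYhg$; and as each squares to the identity on generators, both are involutive, hence anti-automorphisms.

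It then remains only to note that $\omega$ and $\varsigma$ carry the subalgebra topologically generated by the nonnegative modes --- namely $\hYhg=\imath(\hYhg)$ --- to itself, with the prescribed action on $x_i^\pm(u),h_i(u)$, so they genuinely extend the anti-automorphisms of Lemma \ref{L:Chev}; and the asserted formulas on the dotted series are then immediate, e.g.\ $\omega(\dot x_i^\pm(u))=\omega(x_i^\pm(u))-\omega(\mcX_i^\pm(u))=x_i^\mp(u)-\mcX_i^\mp(u)=\dot x_i^\mp(u)$, and likewise for $\dot h_i(u)$ and for $\varsigma$. The only mildly delicate point in the whole argument is the bookkeeping of the upper/lower signs and of the substitution $u\mapsto-u$ when passing the quadratic and Serre relations through the anti-homomorphisms; conceptually there is no obstacle, the content being simply that the defining relations of $\DYhg$ are stable under $(\text{product reversal})\circ\omega$ and under $(\text{product reversal})\circ\varsigma$.
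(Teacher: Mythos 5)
Your proposal is correct and takes the same route the paper implicitly suggests: the paper gives no proof, stating only that the lemma is "a straightforward consequence of the defining relations of $\DYhg$," and your argument is precisely a careful write-out of that consequence — define $\omega$ and $\varsigma$ on the mode generators $x_{ir}^\pm$, $h_{ir}$ ($r\in\Z$), check that the relations of Definition~\ref{D:DYhg} are respected by the corresponding anti-endomorphism of the free $\C[\hbar]$-algebra, pass to $\DYg^\jmath$, complete $\hbar$-adically, and deduce involutivity and uniqueness from the generating set. The sign bookkeeping in the quadratic relations, the delta-function identities for \eqref{Y:xxh}, and the $(-1)^m$ factor in the Serre relations are all handled correctly.
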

Following the terminology from Section \ref{ssec:Yhg-aut}, we shall refer to the involution $\omega$ as the \textit{Chevalley involution} of $\DYhg$.

The adjoint action of $\mfh\subset \DYhg$ on $\DYhg$  gives rise to a topological $Q$-grading on  the $\C[\![\hbar]\!]$-algebra $\DYhg$ (\textit{cf.} Corollary \ref{C:Yhg*-Qgrad} and \cite{WDYhg}*{\S3.1}) with graded components given by the weight spaces 
\begin{equation*}
\DYhg_\beta:=\{ x\in \DYhg: [h,x]=\beta(h)x \quad \forall\; h\in \mfh\} \quad \forall \; \beta \in Q.
\end{equation*} 
That is to say, each of these subspaces is a closed $\C[\![\hbar]\!]$-submodule of $\DYhg$, and the direct sum
\begin{equation*}
\DYhg_Q:=\bigoplus_{\beta\in Q} \DYhg_\beta
\end{equation*}
is a $Q$-graded dense $\C[\![\hbar]\!]$-subalgebra of $\DYhg$ whose subspace topology coincides with its $\hbar$-adic topology. Here we have borrowed, and modified appropriately, the terminology of Section \ref{ssec:Pr-grTop}. It should be emphasized that the word \textit{topological} is key in this statement, as the $Q$-graded algebra $\DYhg_Q$ is a proper subset of $\DYhg$. 

We now introduce root vectors in $\DYhg$ of arbitrary degree, following the procedure used in Section \ref{ssec:Yhg-coord}. 
Recall from \eqref{X-beta} that to each positive root $\beta\in \Root^+$ we attached an index $i(\beta)\in \mbI$ and an element $\mathbf{X}^\beta\in U(\mfn_{+})_{\beta-\alpha_{i(\beta)}}$. For each $k\in \Z$, we then set 
\begin{equation*}
x_{\beta,k}^+:=\mathbf{X}^\beta\cdot x_{i(\beta),k}^+\in \DYhg_\beta\quad \text{ and }\quad x_{\beta,k}^-:=\omega(x_{\beta,k}^+)\in \DYhg_{-\beta},
\end{equation*}
where $\mfg$ operates on $\DYhg$ via the adjoint action. For $k\in \N$, these elements are identical to those introduced below \eqref{X-beta}. Moreover, we have 
\begin{equation*}
x_\beta^\pm t^r =x_{\beta,r}^\pm\mod \hbar \quad  \forall\; \beta\in \Root^+,\, r\in \Z.
\end{equation*}
It shall be convenient for us to organize the above elements into generating series $x_\beta^\pm(u)\in  \DYhg_{\pm\beta}[\![u^{-1}]\!]$ and $\dot{x}_\beta^\pm(u)\in \DYhg_{\pm\beta}[\![u]\!]$  by setting
\begin{equation*}
x_\beta^\pm(u):=\sum_{r\in \N} x_{\beta,r}^\pm u^{-r-1}\quad \text{ and }\quad  \dot{x}_\beta^\pm(u):=-\sum_{r\in \N} x_{\beta,-r-1}^\pm u^r \quad \forall\quad \beta \in \Root^+. 
\end{equation*}
%
%
\subsection{The formal shift operator}\label{ssec:DYhg-shift}

We now shift our attention to recalling some of the main constructions of \cite{WDYhg}, subject to our standing assumption that $\mfg$ is a finite-dimensional simple Lie algebra. To begin, we introduce a number of relevant spaces built from the Yangian $\Yhg$, following \S4.1--4.2 of \cite{WDYhg} and Section \ref{ssec:Yhg-R} above.  Firstly, let 
\begin{equation*}
\cYhg=\prod_{k\in \N} \Yhg_k 
\end{equation*}
denote the formal completion of $\Yhg$ with respect to its $\N$-grading. This is a topologically free $\C[\![\hbar]\!]$-algebra containing $\hYhg$ as a subalgebra; see \cite{GTL1}*{Prop.~6.3} or \cite{WDYhg}*{Lem.~4.1}. 

Next, let $\LzYhg$ and $\Yhgz$ denote the subspaces $\mathds{L}(\cYhg{\vphantom{\hYhg}}^{\scriptscriptstyle{(1)}}_z)$ and $\cYhg{\vphantom{\hYhg}}^{\scriptscriptstyle{(1)}}_z$ of the space of Laurent series $\Yhg[z;z^{-1}]\!]$ introduced in Section \ref{ssec:Yhg-R}. That is, 
\begin{equation*}
\Yhgz=\prod_{k\in \N}\Yhg_k z^{-k} \subset \Yhg[\![z^{-1}]\!], 
\end{equation*}
and $\LzYhg$ is the $\Z$-graded subalgebra of $\Yhg[z;z^{-1}]\!]$ over $\C[\hbar]$ defined by
\begin{equation*}
\LzYhg=\bigoplus_{n\in \Z} z^n \Yhgz. 
\end{equation*}
The following lemma, established in \cite{WDYhg}*{Prop.~4.2}, provides a characterization of the $\hbar$-adic completion of $\LzYhg$. 
\begin{lemma}\label{L:DYhg-LzYhg}
The $\hbar$-adic completion $\LzhYhg$ of $\LzYhg$ is the subspace of $\hYhg[\![z^{\pm 1}]\!]$ consisting of formal series 
\begin{equation*}
\sum_{k\in \Z} z^k f_k(z), \quad f_k(z)\in \Yhgz
\end{equation*}
with the property that, for each $n\in \N$, $\exists$ $N_n\in \N$ such that 
\begin{equation*}
f_k(z)\in (\hbar/z)^n \Yhgz \quad \forall \quad |k|\geq N_n.
\end{equation*}
Moreover, $\LzhYhg$ is a topologically free $\Z$-graded $\C[\![\hbar]\!]$-algebra with 
\begin{equation*}
(\LzhYhg)_\Z=\LzYhg. 
\end{equation*}
\end{lemma}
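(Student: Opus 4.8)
This is \cite{WDYhg}*{Prop.~4.2}; the argument I would give runs as follows.

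\smallskip

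\noindent\textbf{Strategy and the quotients.} The plan is to compute $\LzhYhg=\varprojlim_n\LzYhg/\hbar^n\LzYhg$ directly from the $\Z$-grading $\LzYhg=\bigoplus_{k\in\Z}z^k\Yhgz$. The crucial point is that multiplication by $\hbar$ is a homogeneous operator of degree $+1$ on $\LzYhg$, since $\hbar\in\Yhg_1$ while $z^{-1}$ has degree $-1$; equivalently $(\hbar/z)\Yhgz\subseteq\Yhgz$, so that $(\hbar/z)^n\Yhgz$ is a genuine subspace of $\Yhgz$ and $\hbar^n\LzYhg$ is the $\Z$-graded $\C[\hbar]$-submodule of $\LzYhg$ whose $k$-th graded component is $z^k(\hbar/z)^n\Yhgz$. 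Consequently, for each $n$ there is an isomorphism of $\Z$-graded $\msK_n$-modules
\begin{equation*}
\LzYhg/\hbar^n\LzYhg \;\cong\; \bigoplus_{k\in\Z}z^k\, W_n,\qquad W_n:=\Yhgz/(\hbar/z)^n\Yhgz,
\end{equation*}
with the transition maps acting diagonally in $k$.

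\smallskip

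\noindent\textbf{Identifying $W_n$ and passing to the limit.} Writing $\Yhgz=\prod_{m\in\N}\Yhg_m z^{-m}$, one checks that $(\hbar/z)^n\Yhgz=\prod_{m\geq n}(\hbar^n\Yhg_{m-n})z^{-m}$, and since $\hbar^n\Yhg_{m-n}=\Yhg_m\cap\hbar^n\Yhg$ is precisely the degree-$m$ part of $\hbar^n\Yhg$, this gives $W_n\cong\prod_m(\Yhg/\hbar^n\Yhg)_m z^{-m}$ with $W_{n+1}\onto W_n$ induced degreewise by $\Yhg/\hbar^{n+1}\Yhg\onto\Yhg/\hbar^n\Yhg$. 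As the tower $\big((\Yhg/\hbar^n\Yhg)_m\big)_n$ stabilizes at $\Yhg_m$ once $n>m$, one gets $\varprojlim_n W_n=\Yhgz$; here I use only that $\Yhg$ is $\hbar$-adically separated, which is contained in Theorem~\ref{T:Yhg-PBW}. Taking $\varprojlim_n$ over $k$ of the displayed isomorphisms, a compatible family $\big(\sum_k z^k w_k^{(n)}\big)_n$ determines for each $k$ an element $f_k:=\varprojlim_n w_k^{(n)}\in\varprojlim_n W_n=\Yhgz$, and conversely a tuple $(f_k)_{k\in\Z}$ of elements of $\Yhgz$ arises in this way exactly when, for every $n$, only finitely many images of the $f_k$ in $W_n$ are nonzero --- i.e. when, for every $n$, there is an $N_n$ with $f_k\in(\hbar/z)^n\Yhgz$ for all $|k|\geq N_n$. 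This is the asserted description, provided one checks that such a series $\sum_{k\in\Z}z^k f_k(z)$ really is an element of $\hYhg[\![z^{\pm1}]\!]$ (the coefficient of each power $z^p$ is a sum of terms drawn from the $f_k(z)$ with $k\geq p$, and the condition pushes all but finitely many of these into $\hbar^n\hYhg$ for every $n$, so it converges $\hbar$-adically) and that the assignment $(f_k)\mapsto\sum_k z^k f_k(z)$ is injective (each $f_k$ being recovered from the $\N$-graded components of the $z$-coefficients).

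\smallskip

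\noindent\textbf{The remaining properties.} These are formal. The completion $\LzhYhg$ is separated and complete by construction, and $\LzYhg$ is itself $\hbar$-adically separated because $\bigcap_n(\hbar/z)^n\Yhgz=0$; moreover $\LzhYhg$ is torsion free, since it embeds into $\hYhg[\![z^{\pm1}]\!]$, which is torsion free by Theorem~\ref{T:Yhg-PBW}. Hence $\LzhYhg$ is topologically free. As the $z$-grading is multiplicative on $\LzYhg$, each quotient $\LzYhg/\hbar^n\LzYhg=\bigoplus_k z^k W_n$ is a $\Z$-graded $\msK_n$-algebra with graded transition maps, so $\LzhYhg$ is $\Z$-graded as a topological $\C[\![\hbar]\!]$-algebra; and by Lemma~\ref{L:grad-top} its $\Z$-graded $\C[\hbar]$-subalgebra is $(\LzhYhg)_\Z=\bigoplus_k\varprojlim_n(\LzYhg/\hbar^n\LzYhg)_k=\bigoplus_k z^k\Yhgz=\LzYhg$. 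The one part of all this that genuinely needs care is the bookkeeping forced by the interaction between the internal $\N$-grading of $\Yhg$ (which makes $\hbar$ homogeneous of degree $+1$) and the external $z$-grading, and above all the failure of $\varprojlim_n$ to commute with $\bigoplus_k$: this is exactly what produces the ``for every $n$ there is $N_n$'' condition in the statement. Everything else is routine, granted the Poincar\'{e}--Birkhoff--Witt theorem for the Yangian.
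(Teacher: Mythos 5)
Your proof is correct and self-contained. The paper does not actually prove the explicit characterization in the body — it cites \cite{WDYhg}*{Prop.~4.2} for it and gives only a one-sentence justification of the last assertion about $(\LzhYhg)_\Z$ — so there is no in-text argument to compare against; but the inverse-limit computation you give is the natural one, and the essential structural observation is correctly isolated: because $\hbar$ is homogeneous of degree $+1$ in $\Yhg$, one has $(\hbar/z)\Yhgz\subseteq\Yhgz$, so $\hbar^n\LzYhg$ is the graded $\C[\hbar]$-submodule with $k$-th component $z^k(\hbar/z)^n\Yhgz$, and the ``for every $n$ there is $N_n$'' condition is precisely the obstruction to $\varprojlim_n$ commuting with $\bigoplus_k$. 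For the final statement, the paper checks the hypotheses of Lemma~\ref{L:grad-top} (density, closedness of the $z^k\Yhgz$, the intersection condition), whereas you read off $(\LzhYhg)_k$ from the formula \eqref{M_k}; once the quotient identification $\LzYhg/\hbar^n\LzYhg\cong\bigoplus_k z^k W_n$ is in hand, these are interchangeable. One small gloss worth noting: ``separated and complete by construction'' for the $\hbar$-adic completion of an arbitrary $\C[\hbar]$-module is not automatic (the kernel of $\wh{M}\to M/\hbar^n M$ can fail to equal $\hbar^n\wh{M}$ in general), but it does hold here because $\LzYhg$ is $\hbar$-torsion free; alternatively, the embedding you construct into the separated, complete, torsion-free module $\hYhg[\![z^{\pm 1}]\!]$, together with the explicit description of the image, settles the point.
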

The last statement of the lemma employs the notation from Lemma \ref{L:grad-top} and follows from the fact that $\LzYhg$ is a torsion free $\Z$-graded $\C[\hbar]$-algebra and that each subspace $z^k\Yhgz$ is closed in $\hYhg[\![z^{\pm 1}]\!]$, equipped with the $\hbar$-adic topology, and thus in $\LzhYhg$.

Next, recall that $\tau_z$ is the formal shift homomorphism of the Yangian introduced in \eqref{shift-z}, which we may view as a $\C[\![\hbar]\!]$-algebra homomorphism 
\begin{equation*}
\tau_z:\hYhg\into \hYhg[\![z]\!].
\end{equation*}
In addition, we set $\partial_z^{(n)}=\frac{1}{n!}\partial_z^n$ for each $n\in \N$, where $\partial_z$ is the partial derivative operator with respect to $z$. 
The following theorem, which is a combination of a special case of Theorems 4.3 and 6.2 of \cite{WDYhg},
introduces the so-called \textit{formal shift operator} $\Phi_z$ on $\DYhg$. 
\begin{theorem}\label{T:Phiz}
There is a unique homomorphism of $\C[\![\hbar]\!]$-algebras 
\begin{equation*}
\Phi_z:\DYhg\to \LzhYhg
\end{equation*}
with the property that $\Phi_z\circ \imath=\tau_z$. Moreover:
\begin{enumerate}[font=\upshape]
\item\label{Phiz:1} $\Phi_z$ is injective, and satisfies
\begin{align*}\label{Phi_z}
\Phi_z(\dot{x}_\beta^\pm(u))&=\sum_{n\in \N}(-1)^{n}u^n \partial_z^{(n)} x_\beta^\pm(-z) \quad \forall \;\beta\in \Root^+,\\
\Phi_z(\dot{h}_i(u))&=\sum_{n\in \N}(-1)^{n}u^n \partial_z^{(n)} h_i(-z) \quad \forall \; i\in \mbI.
 \end{align*}
\item\label{Phiz:2} The restriction of $\Phi_z$ to $\DYg$ is a $\Z$-graded $\C[\hbar]$-algebra homomorphism
 \begin{equation*}
 \Phi_z|_{\DYg}: \DYg\to \LzYhg=\bigoplus_{n\in \Z}z^n \Yhgz
 \subset \Yhg[z;z^{-1}]\!]. 
 \end{equation*}
\end{enumerate}
\end{theorem}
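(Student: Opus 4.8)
The plan is to build $\Phi_z$ by prescribing its effect on the generating series $\mcX_i^\pm(u)$, $\mcH_i(u)$ of $\DYhg$, to check directly that this prescription is consistent with the defining relations of Definition \ref{D:DYhg} --- where essentially all of the work lies --- and then to extract injectivity and the graded refinement from the Poincar\'{e}--Birkhoff--Witt theorem for $\DYhg$ (Theorem \ref{T:DYhg-PBW}) together with a bookkeeping of degrees. I use the presentation of $\DYhg$ from Remark \ref{R:DYg-j} as the $\hbar$-adic completion of the $\Z$-graded algebra $\DYg^\jmath$ given by generators and relations, so it suffices to produce a $\C[\hbar]$-algebra homomorphism $\DYg^\jmath \to \LzhYhg$ and then pass to completions; the resulting $\Phi_z$ is automatically continuous.

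For the construction, the constraint $\Phi_z \circ \imath = \tau_z$ forces $\Phi_z(x_i^\pm(u)) = x_i^\pm(u-z)$ and $\Phi_z(h_i(u)) = h_i(u-z)$, the $z$-adic expansions of these Yangian currents. On the negative halves $\dot x_i^\pm(u), \dot h_i(u)$ I would impose the formulas displayed in \eqref{Phiz:1} with $\beta = \alpha_i$; a short rearrangement identifies them with the $u$-adic expansions of the same currents, so that, as elements of $\LzhYhg[\![u^{\pm 1}]\!]$,
\begin{equation*}
\Phi_z(\mcX_i^\pm(u)) = \sum_{r\geq 0} x_{ir}^\pm\, \partial_z^{(r)}\!\left(u^{-1}\delta(z/u)\right), \qquad \Phi_z(\mcH_i(u)) = \sum_{r\geq 0} h_{ir}\, \partial_z^{(r)}\!\left(u^{-1}\delta(z/u)\right),
\end{equation*}
i.e. the ``doublings along $u = z$'' of the Yangian currents. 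One first verifies that every coefficient of a power of $u$ lands in $\LzhYhg$: it is homogeneous of the expected $\Z$-degree and lies in the $\C[\hbar]$-form $\LzYhg$, because $\deg x_{ir}^\pm = \deg h_{ir} = r$ forces $\partial_z^{(r)} x_i^\pm(-z), \partial_z^{(r)} h_i(-z) \in z^{-r-1}\Yhgz$ (while the $\tau_z$-half lies in $\Yhg[z]$), after which the decay condition of Lemma \ref{L:DYhg-LzYhg} is immediate. The formulas of \eqref{Phiz:1} for general $\beta \in \Root^+$ then follow by applying $\mathrm{ad}\,\mfg$, which $\Phi_z$ intertwines because it fixes $\mfg = \imath(\mfg)$, together with the identity $\Phi_z \circ \omega = \omega \circ \Phi_z$ (checked on generators via Lemmas \ref{L:Chev} and \ref{L:DYhg-aut}) for the $x_\beta^-$.

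The crux is to check that this assignment respects the relations of Definition \ref{D:DYhg} in generating-series form. For every relation \emph{except} the one involving the formal delta function this is bookkeeping with no real content: those relations are polynomial in $u,v$ --- none involves $(u-v)^{-1}$ --- so, since $\tau_z$ is a $\C[\![\hbar]\!]$-algebra automorphism, the currents $x_i^\pm(u-z), h_i(u-z)$ satisfy exactly the functional Yangian relations \emph{regardless of how they are expanded}; decomposing each $\Phi_z(\mcX_i^\pm(u)), \Phi_z(\mcH_i(u))$ into its two expansion pieces, every resulting sub-identity (including the mixed ones) reduces to an instance of a Yangian relation in $\LzhYhg[\![u^{\pm 1}, v^{\pm 1}]\!]$. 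The genuinely new relation is $[\mcX_i^+(u), \mcX_j^-(v)] = \delta_{ij}\, u^{-1}\delta(v/u)\, \mcH_i(v)$, in which the delta function must be produced. It arises exactly as the mismatch between expanding $(u-v)^{-1}$ in powers of $v/u$ (used on the ``$x^\pm$'' halves) versus powers of $u/v$ (used on the ``$\dot x^\pm$'' halves), whose difference is $u^{-1}\delta(v/u)$: one computes $[\Phi_z(\mcX_i^+(u)), \Phi_z(\mcX_j^-(v))]$ using the mode relation $[x_{ir}^+, x_{js}^-] = \delta_{ij} h_{i,r+s}$ of \eqref{Y:xxh} (equivalently its current form $(u-v)[x_i^+(u), x_j^-(v)] = \delta_{ij}(h_i(v)-h_i(u))$) and checks that the two expansion conventions collapse the answer to $\delta_{ij}\,u^{-1}\delta(v/u)\,\mcH_i(v)$. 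I expect this formal-distribution bookkeeping --- together with the attendant verification that all completed products stay inside $\LzhYhg$, which is what the decay condition of Lemma \ref{L:DYhg-LzYhg} is designed to guarantee --- to be the main obstacle in the whole proof. Granting it, one has a homomorphism $\DYg^\jmath \to \LzYhg$ extending to $\Phi_z\colon\DYhg \to \LzhYhg$; uniqueness under $\Phi_z\circ\imath = \tau_z$ follows by running the same computation in reverse, the relations (principally the delta-function relation) forcing the images of the negative-index generators once the non-negative ones are fixed.

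Finally, injectivity and \eqref{Phiz:2}. Since $\DYhg$ is topologically free over $\C[\![\hbar]\!]$ (Theorem \ref{T:DYhg-PBW}), hence separated, and $\LzhYhg$ is torsion free (Lemma \ref{L:DYhg-LzYhg}), Lemma \ref{L:scl-map}(1) reduces injectivity of $\Phi_z$ to injectivity of its semiclassical limit $\bar\Phi_z\colon U(\mft) \to \LzhYhg/\hbar\LzhYhg$. But $\bar\Phi_z$ is induced by the graded Lie algebra homomorphism $\mft = \mfg[t^{\pm1}] \to \LzhYhg/\hbar\LzhYhg$ sending $xt^s$ to $x$ times the $z$-adic (for $s\geq0$) or $z^{-1}$-adic (for $s<0$) expansion of $(t+z)^s$; this map is injective --- compare leading powers of $z$ --- so $\bar\Phi_z$ is injective by a Poincar\'{e}--Birkhoff--Witt argument. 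For \eqref{Phiz:2}: by \eqref{Phiz:1} each $\Phi_z(x_{\beta,k}^\pm)$ and $\Phi_z(h_{ik})$ is a homogeneous element of $\LzhYhg$ of degree $k$, lying in $\Yhg[z] \subset \LzYhg$ for $k\geq0$ and in $z^k\Yhgz \subset \LzYhg$ for $k<0$; hence $\Phi_z$ carries every degree-$k$ monomial in the generators into $z^k\Yhgz$, and since $z^k\Yhgz$ is closed in $\LzhYhg$ while $\DYhg_k$ is the closure of the span of such monomials, $\Phi_z(\DYhg_k) \subseteq z^k\Yhgz$. Summing over $k$ yields $\Phi_z(\DYg) \subseteq \bigoplus_k z^k\Yhgz = \LzYhg$ and shows $\Phi_z|_{\DYg}$ is a $\Z$-graded $\C[\hbar]$-algebra homomorphism of degree $0$.
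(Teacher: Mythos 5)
The paper supplies no proof of this theorem: it is imported wholesale from \cite{WDYhg}, cited there as a combination of Theorems~4.3 and~6.2, so there is no in-paper argument to compare against. Assessing your attempt on its own merits, the construction half is on the right track. The compact rewriting $\Phi_z(\mcX_i^\pm(u)) = \sum_{r\geq 0}x_{ir}^\pm\,\partial_z^{(r)}\!\big(u^{-1}\delta(z/u)\big)$ is a useful reformulation, the degree bookkeeping for part~\eqref{Phiz:2} is correct, and your instinct that the substance of the relation-checking lies in the delta-function relation (the others being consequences of the functional Yangian relations applied to the two expansion halves, subject to the convergence control built into Lemma~\ref{L:DYhg-LzYhg}) is the right way to think about it.

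The genuine gap is in the injectivity claim. The reduction via Lemma~\ref{L:scl-map}(1) to the semiclassical limit $\bar\Phi_z\colon U(\mft)\to \LzhYhg/\hbar\LzhYhg$ is legitimate, but the step ``this map is injective on $\mft$, \ldots so $\bar\Phi_z$ is injective by a Poincar\'e--Birkhoff--Witt argument'' is not a proof. Injectivity of a Lie algebra morphism $\mft\to A$, with $A$ an associative algebra that is \emph{not} itself a universal enveloping algebra, does not imply injectivity of the induced map $U(\mft)\to A$: the PBW functoriality you seem to invoke applies only between enveloping algebras. Here the target $\LzhYhg/\hbar\LzhYhg\cong\bigoplus_n z^n\prod_k U(\tplus)_k z^{-k}$ is a grading completion, and the ``compare leading powers of $z$'' test that works on $\mft$ does not pass to $U(\mft)$: distinct ordered monomials of the same loop degree (for instance $x_1 t\cdot x_1 t^{-1}$ versus $x_1^2$) are sent to elements with the same leading $z$-coefficient, so any correct argument must interlock the enveloping-algebra length filtration with the $z$-grading and descend to lower-order terms. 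This is precisely the content of \cite{WDYhg}*{Thm.~6.2} (equivalently, the injectivity of the specialization $\bar\Phi$ at $z=1$, recalled below Theorem~\ref{T:Phi}), which is nontrivial; in fact your appeal to Theorem~\ref{T:DYhg-PBW} already borrows most of that theorem, and the residual piece you need is exactly the injectivity statement you've waved at. As written, the proposal does not establish it.
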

\begin{remark}\label{R:Phiz-graded}
In the terminology of Section \ref{ssec:Pr-grTop}, Part \eqref{Phiz:2} is equivalent to the assertion that $\Phi_z$ is a $\Z$-graded $\C[\![\hbar]\!]$-algebra homomorphism. This is implied by Part (2) of Theorem 4.3 in \cite{WDYhg}, which asserts that the composition $\Phi_z\circ \jmath$ is a $\Z$-graded algebra homomorphism, where $\jmath$ is as in Remark \ref{R:DYg-j}. 
\end{remark}
By \cite{WDYhg}*{Prop. 4.2 (4)} the evaluation map
\begin{equation}\label{DYhg-Ev}
\mathscr{Ev}:\LzhYhg\to \cYhg,\quad f(z)\mapsto f(1),
\end{equation}
is an epimorphism of $\C[\![\hbar]\!]$-algebras. We may thus compose $\Phi_z$ with $\mathscr{Ev}$ to obtain a $\C[\![\hbar]\!]$-algebra homomorphism 
\begin{equation*}
\Phi:=\mathscr{Ev}\circ \Phi_z:\DYhg\to \cYhg.
\end{equation*}
By Theorem 6.2 of \cite{WDYhg}, this homomorphism is injective. 
One of the main results of \cite{WDYhg} is that $\Phi$ induces an isomorphism between completions of $\DYhg$ of $\Yhg$. To make this precise, let 
 $\mcJ\subset \DYhg$ denote the kernel of the composition
\begin{equation*}
 \DYhg\xrightarrow{\hbar\mapsto 0} U(\mfg[t^{\pm 1}])\xrightarrow{t\mapsto 1} U(\mfg),
\end{equation*}
and define $\cDYhg$ to be the completion of $\DYhg$ with respect to the $\mcJ$-adic filtration
\begin{equation*}
\DYhg=\mcJ^0\supset \mcJ \supset \mcJ^2\supset \cdots \supset \mcJ^n \supset \cdots 
\end{equation*}
We then have the following analogue of \cite{GTL1}*{Thm.~6.2}, which is a special case of Theorem 5.5 in \cite{WDYhg}.
\begin{theorem}\label{T:Phi}
$\Phi$ is injective and induces an isomorphism of $\C[\![\hbar]\!]$-algebras 
\begin{equation*}
\wh{\Phi}:\cDYhg\iso \cYhg
\end{equation*}
with inverse $\Gamma$ uniquely extending the embedding $\imath\circ\tau_{-1}:\Yhg\to \DYhg$.
\end{theorem}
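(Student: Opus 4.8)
The plan is to reduce everything to the semiclassical level. Since $\hbar$ is homogeneous of degree $1$ one has $\cYhg/\hbar\cYhg\cong\prod_{k\in\N}U(\tplus)_k=:\wh U_+$, the completion of $U(\tplus)$ along its grading — equivalently, since $\mfg$ is perfect, along the ideal $\Ker(U(\tplus)\to U(\mfg))$, $t\mapsto0$. Using the explicit formulas of Theorem~\ref{T:Phiz}\eqref{Phiz:1} and the relation $\Phi\circ\imath=\mathscr{Ev}\circ\Phi_z\circ\imath=\mathscr{Ev}\circ\tau_z=\tau_1$, I would first verify that the semiclassical limit $\bar\Phi\colon U(\mft)\to\wh U_+$ is the algebra homomorphism determined by $x\otimes t^r\mapsto x\otimes(1+t)^r$ for $x\in\mfg$, $r\in\Z$, where for $r<0$ the image is the expansion $x\otimes\sum_{s\geq0}\binom{r}{s}t^s$. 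In other words $\bar\Phi$ is induced by the injective algebra map $\C[t^{\pm1}]\into\C[\![t]\!]$, $p(t)\mapsto p(1+t)$ — a shift of the spectral parameter by $1$.

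\textbf{Completion isomorphism and injectivity.} Let $\wh\mcK\subseteq\cYhg$ be $\Ker(\cYhg\xrightarrow{\hbar\mapsto0}\wh U_+\to U(\mfg))$; using that the degree-$\geq1$ part of $U(\mfg[t])$ has $j$-th power equal to its degree-$\geq j$ part, one finds that $\cYhg$ is separated and complete $\wh\mcK$-adically. By the previous paragraph $\bar\Phi$ carries $\bar\mcJ=\mcJ/\hbar\DYhg=\Ker(U(\mft)\to U(\mfg))$ into $\Ker(\wh U_+\to U(\mfg))$, hence $\Phi(\mcJ^n)\subseteq\wh\mcK^n$ and $\Phi$ extends to a $\C[\![\hbar]\!]$-algebra homomorphism $\wh\Phi\colon\cDYhg\to\cYhg$, whose reduction modulo $\hbar$ is the map $\wh{\bar\Phi}\colon\varprojlim_n U(\mft)/\bar\mcJ^{\,n}\to\wh U_+$ induced by $\bar\Phi$. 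Now $\wh{\bar\Phi}$ is an isomorphism: it is the map on completed enveloping algebras induced by the isomorphism of complete topological Lie algebras $\mfg\otimes\C[\![t-1]\!]\iso\mfg\otimes\C[\![t]\!]$, $t-1\mapsto t$, together with the standard identification of the completed enveloping algebra with the enveloping algebra of the completed Lie algebra on each side. Since $\cDYhg$ is complete and $\cYhg$ is topologically free, Lemma~\ref{L:scl-map}(1)--(2) then forces $\wh\Phi$ to be bijective, i.e.\ an isomorphism of $\C[\![\hbar]\!]$-algebras; and since $\DYhg$ injects into $\cDYhg$ (separatedness of the $\mcJ$-adic topology, which reduces modulo $\hbar$ to $\bigcap_n\bar\mcJ^{\,n}=0$), $\Phi=\wh\Phi|_{\DYhg}$ is injective.

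\textbf{The inverse.} Set $\Gamma:=\wh\Phi^{-1}$. For $y\in\Yhg\subseteq\cYhg$ one computes $\Phi(\imath(\tau_{-1}(y)))=\tau_1(\tau_{-1}(y))=y$ from $\Phi\circ\imath=\tau_1$, so $\Gamma(y)=\imath(\tau_{-1}(y))$. As $\Yhg$ is dense in $\cYhg$ and $\Gamma$ is continuous, $\Gamma$ is the unique $\C[\![\hbar]\!]$-algebra homomorphism extending $\imath\circ\tau_{-1}\colon\Yhg\to\DYhg\subseteq\cDYhg$, which is exactly the assertion of the theorem. (In particular $\imath\circ\tau_{-1}$ is injective.)

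\textbf{Main obstacle.} The real work is hidden in the completion bookkeeping of the middle step: showing $\cYhg$ is $\wh\mcK$-adically complete, that $\cDYhg/\hbar\cDYhg\cong\varprojlim_n U(\mft)/\bar\mcJ^{\,n}$, that the $\mcJ$-adic topology on $\DYhg$ is separated, and — most delicately — that forming $U$ genuinely commutes with the two relevant completions, so that $\wh{\bar\Phi}$ is surjective \emph{onto} all of $\wh U_+$. The efficient way to handle all of these is to reduce to the triangular factors: by Proposition~\ref{P:TriDec} and Remark~\ref{R:hYhg-TD} (and their analogues for $\DYhg$) it suffices to treat the two nilpotent subalgebras $Y_\hbar^{\pm}(\mfg)$ and the abelian $Y_\hbar^0(\mfg)$ separately, where the completions in question are explicit and the requisite freeness statements follow from the PBW theorems (Theorems~\ref{T:Yhg-PBW} and~\ref{T:DYhg-PBW}). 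By comparison, the injectivity of $\Phi$ and the identification of $\Gamma$ are then routine.
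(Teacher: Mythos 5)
The paper does not prove Theorem~\ref{T:Phi} here; it cites it as a special case of Theorem~5.5 of \cite{WDYhg} (an analogue of \cite{GTL1}*{Thm.~6.2}), so there is no in-paper proof to compare against. Your semiclassical computation is right: on generators one checks $\bar\Phi(x_\beta^\pm t^r)=x_\beta^\pm(1+t)^r$ for all $r\in\Z$, using $\Phi\circ\imath=\tau_1$ for $r\geq0$ and the formula of Theorem~\ref{T:Phiz}\eqref{Phiz:1} evaluated at $z=1$ for $r<0$, and this recovers exactly the embedding $\C[t^{\pm1}]\into\C[\![t]\!]$, $t\mapsto 1+t$. The injectivity of $\Phi$ then does follow from Lemma~\ref{L:scl-map}(1) since $\DYhg$ is topologically free (hence separated) and $\cYhg$ is torsion free.

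The gap is in the surjectivity step. Lemma~\ref{L:scl-map}(2) is a statement about the $\hbar$-adic topology, and to invoke it on $\wh\Phi$ you need $\cDYhg$ to be \emph{$\hbar$-adically} complete. You take this as given, and it is not one of the four items you flag as ``completion bookkeeping.'' But it is genuinely unclear: $\cDYhg$ is by construction the $\mcJ$-adic completion, and since $\hbar\in\mcJ$ the $\hbar$-adic filtration on $\cDYhg$ is strictly \emph{finer} than the $\mcJ$-adic one, so $\mcJ$-adic completeness buys $\hbar$-adic separatedness (the composite $\cDYhg\to\varprojlim_m\cDYhg/\hbar^m\cDYhg\to\varprojlim_n\DYhg/\mcJ^n=\cDYhg$ is the identity) but does not give $\hbar$-adic completeness for free. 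Concretely, given a compatible system $(\bar x_m)_m\in\varprojlim_m\cDYhg/\hbar^m\cDYhg$, the induced element $y\in\cDYhg$ satisfies $y\equiv\bar x_m$ only modulo the closure of $\mcJ^m$ in $\cDYhg$, not modulo $\hbar^m\cDYhg$, and those two ideals genuinely differ (the classical $\bar\mcJ^m$ contributes terms of $\hbar$-degree $<m$). Without $\hbar$-adic completeness, the step ``$\bar{\wh\Phi}$ surjective $\Rightarrow$ $\wh\Phi$ surjective'' does not go through.

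The route that avoids this is to work directly with the filtrations $\mcJ^\bullet$ on $\cDYhg$ and $\wh\mcK^\bullet$ on $\cYhg$: $\wh\Phi$ respects them, both targets are complete for their \emph{own} filtration, and it suffices to show that the induced maps on the finite quotients $\DYhg/\mcJ^n\to\cYhg/\wh\mcK^n$ are isomorphisms, which can be checked degree-by-degree (or one simply constructs $\Gamma$ on generators and verifies $\Phi\circ\Gamma=\id$, $\Gamma\circ\Phi=\id$ directly, which is what the cited \cite{GTL1}*{Thm.~6.2}-style argument does). As written, your reduction to Lemma~\ref{L:scl-map}(2) is not yet a proof.
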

\begin{remark}\label{R:DYhg->cDYhg}
One subtle consequence of this result is that the natural homomorphism
\begin{equation*}
\DYhg\to \cDYhg
\end{equation*}
is injective. Indeed, its composition with the isomorphism $\wh{\Phi}$ recovers the injection $\Phi$. Henceforth, we shall freely make use of this fact and view $\DYhg$ as a subalgebra of $\cDYhg$.
We further note that the subspace topology on $\DYhg$, with respect to the $\hbar$-adic topology on $\cDYhg$, coincides with the $\hbar$-adic topology on $\DYhg$. Indeed, as $\cDYhg\cong \cYhg$ is torsion free, to see this it suffices to show that 
\begin{equation*}
\hbar\cDYhg\cap \DYhg=\hbar \DYhg.
\end{equation*}
This, however, follows immediately from the injectivity of the semiclassical limit of $\Phi$, established in \cite{WDYhg}*{Thm.~6.2}. In particular, this discussion implies that $\DYhg$ is a closed subspace of the topological $\C[\![\hbar]\!]$-module $\cDYhg$. Similarly, one deduces that $\Phi_z(\DYhg)$ is a closed subspace of $\LzhYhg$.

%
\end{remark}
To conclude this preliminary section on $\DYhg$, we introduce the auxiliary $\C[\![\hbar]\!]$-algebra homomorphism 
\begin{equation}\label{Gamma_z}
\Gamma_z:= \Gamma\circ \mathscr{Ev}: \LzhYhg\to \cDYhg
\end{equation}
which has the property that $\Gamma_z|_{\mathrm{Im}(\Phi_z)}=\Phi_z^{-1}$. This homomorphism shall play a prominent role in the main result of Section \ref{sec:DYhg-quant} and its proof; see Theorem \ref{T:dual}.

\section{The Drinfeld--Gavarini Yangian}\label{sec:QFYhg}

In this section and Section \ref{sec:Yhg*}, we give a self-contained exposition to the dual Yangian $\Yhgstar$, which provides a homogeneous quantization of the graded dual $\tminus=t^{-1}\mfg[t^{-1}]$ to the Lie bialgebra $\tplus=\mfg[t]$, as will be explained in detail in Section \ref{sec:Yhg*}. The definition of $\Yhgstar$ takes as input the so-called Drinfeld--Gavarini subalgebra of the Yangian. The goal of the present section is to introduce this subalgebra and survey some of its key properties.

%
\subsection{Quantum duality}\label{ssec:QUE-dual} 
 
To provide context, let us first briefly recall the general construction of the dual of a quantized enveloping algebra, following \cite{DrQG}*{\S7} and \cite{Gav02}; see also \cite{Etingof-Kazhdan-I}*{\S4.4} and  \cite{Andrea-Valerio-18}*{\S2.19}, for example. 

Suppose that $U_\hbar\mfb$ is a quantization of a finite-dimensional Lie bialgebra $(\mfb,\delta_\mfb)$, where we follow the terminology and notation from Section \ref{ssec:Pr-QUE}. One would then like to introduce a notion of duality which sends $U_\hbar\mfb$ to a quantization of the Lie bialgebra dual $(\mfb^\ast, \delta_{\mfb^\ast}=[\,,\,]_\mfb^\ast)$ to $(\mfb,\delta_\mfb)$. 
The first crucial observation is that  $\C[\![\hbar]\!]$-linear dual $U_\hbar\mfb^\ast=\Hom_{\C[\![\hbar]\!]}(U_\hbar\mfb,\C[\![\hbar]\!])$ of $U_\hbar\mfb$ is not itself a quantized enveloping algebra; see  Lemma 2.1 of \cite{Gav02}, in addition to \cite{DrQG}*{\S7} and \cite{Andrea-Valerio-18}*{\S2.19}. The correct notion of duality within the category of quantized enveloping algebras was introduced in \cite{DrQG}*{\S7}. One considers the $\C[\![\hbar]\!]$-submodule 
\begin{equation*}
U_\hbar\mfb^\prime:=\{x\in U_\hbar\mfb:(\id-\veps)^{\otimes n}\Delta^n(x)\in \hbar^n U_\hbar\mfb ^{\otimes n}\; \forall\; n\in \N\}\subset U_\hbar\mfb,
\end{equation*}
where $\veps$ and $\Delta$ are the counit and coproduct, respectively, on the topological Hopf algebra $U_\hbar\mfb$, and all notation is as in Section \ref{ssec:Yhg'} below. Then, by \cite{Gav02}*{Prop.~3.6}, $U_\hbar\mfb^\prime$ is a quantized formal series Hopf algebra, with semiclassical limit isomorphic as an algebra to the completion of the symmetric algebra $\mathsf{S}(\mfb)=\bigoplus_{n\in \N} \mathsf{S}^n(\mfb)$ with respect to its standard grading. In particular, this means that although $U_\hbar\mfb^\prime$ is not in general a topological Hopf algebra over $\C[\![\hbar]\!]$ in the sense of Section \ref{ssec:Pr-Top}, it is a topological Hopf algebra with respect to the $\mathbf{J}_\mfb$-adic topology,  where 
\begin{equation*}
\mathbf{J}_\mfb=\hbar U_\hbar\mfb\cap U_\hbar\mfb^\prime=\veps|_{U_\hbar\mfb^\prime}^{-1}(\hbar\C[\![\hbar]\!]).
\end{equation*}
The subspace $ U_\hbar\mfb^{\circ}\subset (U_\hbar\mfb^\prime)^\ast$ consisting of continuous linear forms with respect to this topology is then a quantization of $(\mfb^\ast, \delta_{\mfb^\ast})$. This is the \textit{quantized enveloping algebra dual} of $U_\hbar\mfb$. 
\begin{remark}\label{R:QDP}
Here we note that $U_\hbar\mfb^{\circ}$ can be equivalently defined as the $\hbar$-adic completion of the $\C[\![\hbar]\!]$-module
\begin{equation*}
(U_\hbar\mfb^\ast)^\times=\sum_{n\in \N} \hbar^{-n} \mfm^n_\mfb \subset \C(\!(\hbar)\!)\otimes_{\C[\![\hbar]\!]}U_\hbar\mfb^\ast,
\end{equation*}
where $\mfm_\mfb:=\{f\in U_\hbar\mfb^\ast: f(1)\in \hbar\C[\![\hbar]\!]\}$. That this produces a topological Hopf algebra which can be identified with $U_\hbar\mfb^{\circ}$ is a non-trivial result, which is part of the \textit{quantum duality principle}. This was first announced in \cite{DrQG}*{\S7}, and proven in detail in \cite{Gav02}; see Theorem 1.6 therein. We will not, however, need this equivalent formulation in the present paper. 
\end{remark}
 In our setting, $\mfb=\tplus=\mfg[t]$ is \textit{not} finite-dimensional, but rather an $\N$-graded Lie bialgebra $\mfb=\bigoplus_{n\in \N}\mfb_n$ with finite-dimensional graded components. As $\hYhg=U_\hbar\mfb$ is a homogeneous quantization of $\tplus$, the above construction remains valid, provided the notion of duality is adjusted so as to respect the underlying gradings.  In fact, one can replace $U_\hbar\mfb^\prime$ with an $\N$-graded topological Hopf algebra $\QFhYhg$ over $\C[\![\hbar]\!]$ of finite type, and $U_\hbar\mfb^\circ$ with the restricted dual of $\QFhYhg$, as defined in Section \ref{ssec:Pr-gr*}. The topological Hopf algebra $\QFhYhg$ and its $\C[\hbar]$-form $\QFYhg=(\QFhYhg)_\N$ (see Section \ref{ssec:Pr-grTop}) are the focus of the present section.
%
\subsection{The Drinfeld--Gavarini subalgebra}\label{ssec:Yhg'}

Let us define $\Delta^n$ for any $n\in \N$ by setting $\Delta^0=\veps$, $\Delta^1=\id=\id_{\Yhg}$ and 
\begin{equation*}
\Delta^n:=(\Delta\otimes \id^{\otimes (n-2)})\circ \Delta^{n-1}:\Yhg\to \Yhg^{\otimes n}
\end{equation*}
for all $n\geq 2$. We then define the $\C[\hbar]$-submodule $\QFYhg\subset \Yhg$ by 
\begin{equation*}
\QFYhg:=\{x\in \Yhg:(\id-\veps)^{\otimes n} \Delta^n(x)\in \hbar^n\Yhg^{\otimes n} \; \forall \; n\in \N\}.
\end{equation*}
By Lemma 3.2 and Proposition 3.5 of \cite{KasTu00} (see also \cite{Gav02}*{Prop.~2.6}, \cite{Gav07}*{Thm.~3.5} and \cite{FiTs19}*{Lem.~A.1}), $\QFYhg$ is a subalgebra of $\Yhg$ which is commutative modulo $\hbar$: 
\begin{equation}\label{almost-comm}
[x,y]\in \hbar \QFYhg \quad \forall\; x,y\in \QFYhg.
\end{equation}
As the structure maps $\id$, $\veps$ and $\Delta^n$ are graded, $\QFYhg$ inherits from $\Yhg$ the structure of an $\N$-graded algebra. We shall call $\QFYhg$ the \textit{Drinfeld--Gavarini subalgebra} of the Yangian $\Yhg$. Its algebraic structure has been described in detail by Tsymbaliuk and Weekes in Appendix A of \cite{FiTs19}, following the general results obtained in the works \cites{Gav02, Gav07} of Gavarini. In this subsection we review, and partially extend, this description.

Let $\mathsf{R}_\hbar(U(\tplus))$
denote the Rees algebra associated to the standard enveloping algebra filtration $\mbF_\bullet$ on $U(\tplus)$:
\begin{equation*}
\mathsf{R}_\hbar(U(\tplus))=\bigoplus_{n\in \N}\hbar^n \mathbf{F}_n(U(\tplus))\subset U(\tplus)[\hbar].
\end{equation*}
Consider now the symmetric algebra $\Symt\subset \msS(\tplus)[\hbar]$ on $\hbar\tplus$. Here $\hbar$ can be viewed as a gradation parameter associated to the standard $\N$-grading on the symmetric algebra $\msS(\tplus)$. Namely, $\Symt\cap  \hbar^n\msS(\tplus)$ is precisely the $n$-th symmetric power $\sSymt{n}=\hbar^n\msS^n(\tplus)$, and $\Symt\cong \msS(\tplus)$ as an $\N$-graded algebra. 
As $U(\tplus)$ is a filtered deformation of  $\Symt$ (that is, one has $\mathrm{gr}\,U(\tplus)\cong \Symt$), $\mathsf{R}_\hbar(U(\tplus))$ is a flat  
deformation of $\Symt$ over $\C[\hbar]$. Let 
\begin{equation*}
\msq:\mathsf{R}_\hbar(U(\tplus))\onto \Symt
\end{equation*}
be the natural quotient map, under the identification of $\mathsf{R}_\hbar(U(\tplus))/\hbar\mathsf{R}_\hbar(U(\tplus))$ with $ \mathrm{gr}\,U(\tplus)\cong \Symt$.
In what follows we shall be primarily interested in the \textit{loop gradings} on  $\mathsf{R}_\hbar(U(\tplus))$ and $\Symt$, inherited from the natural grading on $U(\tplus)[\hbar]$ compatible with the $\N$-grading on $\tplus$. Namely, one has 
\begin{equation*}
\deg (\hbar \mft_{{\scriptscriptstyle{+}},k}) = \deg (\hbar \mfg t^k) =k+1 \quad \forall\; k\in \N. 
\end{equation*}
We shall denote the $n$-th graded component of $\Symt$ by $\hSymt{n}$  so that 
\begin{equation*}
\Symt=\bigoplus_{n\in \N}\hSymt{n}. 
\end{equation*}
%
%

Recall from Section \ref{ssec:Yhg-coord} that $\upnu_\mathds{G}$ denotes the graded $\C[\hbar]$-module isomorphism 
\begin{equation*}
\upnu_\mathds{G}:\Yhg\iso U(\tplus)[\hbar]
\end{equation*}
defined in \eqref{nu_G}, which depends on a fixed total order on the set $\mathds{G}$. We equip  $\hbar\mathds{G}$ with the induced ordering, for which multiplication by $\hbar$ defines an isomorphism of ordered sets $\mathds{G}\iso \hbar\mathds{G}$, and we let $B(\hbar\mathds{G})\subset \Yhg$ denote the corresponding set of ordered monomials in $\hbar\mathds{G}$.
We further recall that $\mu:U(\tplus)[\hbar]\to \Yhg$ is the inverse of the specific choice $\upnu=\upnu_\mathds{G}$ defined in \eqref{nu-Yhg}. 

The following Proposition is a consequence of Proposition 3.3 of \cite{Gav07} in addition to Proposition A.2 and Theorem A.7 of \cite{FiTs19}; see also \cite{Gav02}*{\S3.5}. 
\begin{proposition}\label{P:QFSH}  Let $\dot{\upnu}_{\mathds{G}}$ denote the restriction of $\upnu_{\mathds{G}}$ to $\QFYhg$. Then:
\leavevmode 
\begin{enumerate}[font=\upshape]
\item\label{QFSH:1} $\QFYhg$ is an $\N$-graded Hopf subalgebra of $\Yhg$. 
%
 
%
\item\label{QFSH:2} $\dot{\upnu}_{\mathds{G}}$ is an isomorphism of $\N$-graded $\C[\hbar]$-modules 
\begin{equation*}
\dot{\upnu}_{\mathds{G}}:\QFYhg\iso \mathsf{R}_\hbar(U(\tplus))
\end{equation*}
\item\label{QFSH:3} $\QFYhg$ is generated as a $\C[\hbar]$-algebra by $\hbar \mu(\tplus)$ and has basis
$B(\hbar\mathds{G})$. 
\item\label{QFSH:4} The composition $\dot\msq:=\msq\circ \dot{\upnu}_{\mathds{G}}$ is an epimorphism of $\N$-graded algebras which descends to an isomorphism 
\begin{equation*}
\QFYhg/\hbar \QFYhg\iso \Symt.
\end{equation*}
\end{enumerate}
\end{proposition}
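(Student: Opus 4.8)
The plan is to deduce the four assertions from Gavarini's general theory of the quantum duality functor \cites{Gav02,Gav07}, together with the explicit analysis of the Drinfeld--Gavarini subalgebra by Tsymbaliuk and Weekes in \cite{FiTs19}*{Appendix~A}, isolating the one genuinely hard input. First I would write $\eta_n := (\id - \veps)^{\otimes n} \circ \Delta^n$, so that $\QFYhg = \bigcap_n \eta_n^{-1}(\hbar^n \Yhg^{\otimes n})$, and record the only feature of lifts that is used: every lift $\dot{x} \in \Yhg$ of $x \in \tplus$ satisfies $\veps(\dot{x}) = 0$ and is \emph{$\hbar$-primitive}, i.e. $\Delta(\dot{x}) - \dot{x} \otimes 1 - 1 \otimes \dot{x} \in \hbar \Yhg^{\otimes 2}$; both hold because every element of $\tplus$ is primitive with vanishing counit in the semiclassical limit $U(\tplus)$ (cf. \eqref{quant-comm} and \cite{CPBook}*{\S6.2}). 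Feeding $\hbar$-primitivity into the standard coassociativity identities for the $\eta_n$ (\cite{KasTu00}*{\S3}, \cite{Gav07}) gives $\eta_n(\dot{x}) \in \hbar\,\Yhg^{\otimes n}$, hence $\eta_n(\hbar\dot{x}) \in \hbar^n \Yhg^{\otimes n}$, for all $n$. Thus $\hbar\mu(\tplus) \subset \QFYhg$, and the $\C[\hbar]$-subalgebra $A \subseteq \Yhg$ it generates satisfies $A \subseteq \QFYhg$.

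Next I would identify $A$ with the Rees algebra $\mathsf{R}_\hbar(U(\tplus)) = \bigoplus_n \hbar^n \mathbf{F}_n(U(\tplus))$. Since $\mathds{G} \subset \mu(\tplus)$ by construction of $\upnu_{\mathds{G}}$ (see \eqref{nu_G}, \eqref{X-beta}), $A$ contains every ordered monomial in $\hbar\mathds{G}$, which $\upnu_{\mathds{G}}$ sends to $\hbar^{\ell(w)} w$ for the corresponding ordered basis monomial $w$ of $\tplus$ of length $\ell(w)$; as $\{\hbar^{\ell(w)} w\}$ is a $\C[\hbar]$-basis of $\mathsf{R}_\hbar(U(\tplus))$, this yields $\mathsf{R}_\hbar(U(\tplus)) \subseteq \upnu_{\mathds{G}}(A) \subseteq \upnu_{\mathds{G}}(\QFYhg)$. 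The reverse inclusion $\upnu_{\mathds{G}}(\QFYhg) \subseteq \mathsf{R}_\hbar(U(\tplus))$ is the crux: for $x \in \QFYhg$ with $\upnu_{\mathds{G}}(x) = \sum_w c_w(\hbar)\, w$ it says $\mathrm{ord}_\hbar c_w \geq \ell(w)$ for all $w$. I would prove this by downward induction on the largest length $\ell$ occurring in $x$, using $A \subseteq \QFYhg$ to subtract the top-length part at each step. The key input is the leading $\hbar$-behaviour of the $\eta_n$: for $w = g_1 \cdots g_\ell$ of length $\ell$ one has $\eta_\ell(w) \equiv \sum_{\pi \in S_\ell} g_{\pi(1)} \otimes \cdots \otimes g_{\pi(\ell)} \pmod{\hbar}$, while $\eta_n(w) \in \hbar^{\max(0,\, n - \ell)} \Yhg^{\otimes n}$ for every $n$, its $\hbar^{n-\ell}$-leading coefficient (for $n \geq \ell$) being obtained by distributing the $\hbar$-corrections of the $\hbar$-primitive coproducts of the $g_i$ over the $n$ tensor slots. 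The hard part --- which I expect to require genuine work to make self-contained --- is the linear independence, for fixed $\ell$ and $n$ large, of these leading coefficients across the distinct ordered monomials $w$ of length $\ell$; granting it, $\eta_n(x) \in \hbar^n \Yhg^{\otimes n}$ forces $\mathrm{ord}_\hbar c_w \geq \ell$ for the top-length $w$, and the induction descends. In the graded setting at hand this step is essentially the content of \cite{Gav07}*{Prop.~3.3} together with \cite{FiTs19}*{Thm.~A.7}.

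With the two inclusions in hand, $\upnu_{\mathds{G}}(\QFYhg) = \upnu_{\mathds{G}}(A) = \mathsf{R}_\hbar(U(\tplus))$, so $\QFYhg = A$; restricting the graded isomorphism $\upnu_{\mathds{G}}$ gives the isomorphism $\dot{\upnu}_{\mathds{G}} : \QFYhg \iso \mathsf{R}_\hbar(U(\tplus))$ of \eqref{QFSH:2}, and \eqref{QFSH:3} follows since $\QFYhg = A$ is generated by $\hbar\mu(\tplus)$ and $\upnu_{\mathds{G}}$ carries $B(\hbar\mathds{G})$ bijectively onto the basis $\{\hbar^{\ell(w)} w\}$. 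For \eqref{QFSH:1}: the subalgebra property and commutativity modulo $\hbar$ are recorded in \eqref{almost-comm}; $\veps(\QFYhg) \subseteq \C[\hbar]$ is immediate, and $S(\QFYhg) \subseteq \QFYhg$, $\Delta(\QFYhg) \subseteq \QFYhg \otimes_{\C[\hbar]} \QFYhg$ follow from the Hopf axioms and coassociativity as in \cite{Gav02}*{Prop.~2.6}, using that each graded component of $\QFYhg$ is finite-dimensional over $\C$ so $\QFYhg \otimes_{\C[\hbar]} \QFYhg$ embeds gradedly into $\Yhg^{\otimes 2}$; since the structure maps of $\Yhg$ are $\N$-graded, so are their restrictions. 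Finally, for \eqref{QFSH:4}, $\dot\msq = \msq \circ \dot{\upnu}_{\mathds{G}}$ is the composite of a module isomorphism with the quotient $\mathsf{R}_\hbar(U(\tplus)) \onto \mathsf{R}_\hbar(U(\tplus))/\hbar\,\mathsf{R}_\hbar(U(\tplus)) \cong \mathrm{gr}\,U(\tplus) \cong \Symt$, hence a surjection of $\N$-graded $\C[\hbar]$-modules with kernel $\dot{\upnu}_{\mathds{G}}^{-1}(\hbar\,\mathsf{R}_\hbar(U(\tplus))) = \hbar\QFYhg$; since $\dot\msq$ carries the basis $B(\hbar\mathds{G})$ multiplicatively onto the Poincar\'e--Birkhoff--Witt basis of $\Symt$, the induced bijection $\QFYhg/\hbar\QFYhg \iso \Symt$ is an isomorphism of $\N$-graded algebras.
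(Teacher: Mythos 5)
Your step establishing $\hbar\mu(\tplus)\subset\QFYhg$ contains a genuine error. You correctly observe that $\hbar$-primitivity of a lift $\dot{x}$ gives $\eta_n(\dot{x})\in\hbar\,\Yhg^{\otimes n}$, but the subsequent inference ``hence $\eta_n(\hbar\dot{x})\in\hbar^n\Yhg^{\otimes n}$'' does not follow: since $\eta_n$ is $\C[\hbar]$-linear, $\eta_n(\hbar\dot{x})=\hbar\eta_n(\dot{x})\in\hbar^2\Yhg^{\otimes n}$, which falls short of $\hbar^n$ as soon as $n\geq 3$. Nor is the needed stronger estimate $\eta_n(\dot{x})\in\hbar^{n-1}\Yhg^{\otimes n}$ a consequence of $\hbar$-primitivity alone: if $\dot{x}$ is any lift of $x\in\tplus$ then so is $\dot{x}'=\dot{x}+\hbar y$ for arbitrary $y\in\Yhg$, and $\dot{x}'$ is automatically $\hbar$-primitive, yet $\hbar\dot{x}'\in\QFYhg$ would force $\hbar^2 y\in\QFYhg$, which, once Part~\eqref{QFSH:2} is known, means the image of $y$ in $U(\tplus)$ has filtration degree at most $2$ --- false for, say, $y=h_i^3$. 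So the containment you want is a property of the \emph{specific} lifts through $\mathds{G}$, not of $\hbar$-primitivity, and must be proved for them.

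The paper avoids exactly this trap: it first verifies, from the explicit coproduct formulas, that $\hbar\mfg\cup\{\hbar t_{i1}\}_{i\in\mbI}\subset\QFYhg$ --- the key point being that the correction $\hbar\msr_i$ lies in $\mfg\otimes\mfg$, so in $\Delta^n(t_{i1})=\sum_a t_{i1}^{(a)}+\hbar\sum_{a<b}\msr_i^{ab}$ every summand $\msr_i^{ab}$ for $n\geq 3$ has a tensor slot carrying $1$, which is annihilated by $\id-\veps$, making $\eta_n(t_{i1})$ vanish outright rather than merely lie in $\hbar\Yhg^{\otimes n}$. It then propagates to all of $\hbar\mathds{G}$ using that $\QFYhg$ is a $\mfg$-submodule of $\Yhg$ stable under $\mathrm{ad}(t_{i1})$, both of which follow from $\hbar\mfg,\hbar t_{i1}\in\QFYhg$ combined with the almost-commutativity \eqref{almost-comm}. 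Once you replace your primitivity argument with this explicit computation and propagation, the rest of your outline (identifying the generated subalgebra with $\msR_\hbar(U(\tplus))$ via the spanning lemma of \cite{Gav02}*{Lem.~3.3} and \cite{FiTs19}*{Thm.~A.7}, and deducing Parts \eqref{QFSH:1} and \eqref{QFSH:4}) is aligned with the paper's. One small caveat in your Part~\eqref{QFSH:4}: $\dot\msq$ is only a module map, so ``carries the basis multiplicatively'' is not immediate; the paper instead uses that $\QFYhg/\hbar\QFYhg$ is commutative to build the algebra homomorphism $\msp:\Symt\to\QFYhg/\hbar\QFYhg$ extending $\hbar x\mapsto[\hbar\mu(x)]$, and checks $\ddot\msq\circ\msp=\id$.
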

\begin{proof}[Proof of \eqref{QFSH:2} and \eqref{QFSH:3}]
 These statements are a minor modification of the statement of Theorem A.7 of \cite{FiTs19}. For the sake of completeness, let us recall the main ingredients, beginning with the proof that $\hbar\mathds{G}$ (and thus $\hbar \mu(\tplus)$) is contained in $\QFYhg$, given in Lemmas A.5 and A.6 of \cite{FiTs19}. 

For each $n\in \N$, $x\in \mfg$ and $i\in \mbI$, the formulas of Proposition \ref{P:Yhg-Hopf} imply that   
\begin{equation*}
\Delta^n(x)=\sum_{a=1}^n x^{(a)} \quad \text{ and }\quad \Delta^n(t_{i1})=\sum_{a=1}^n t_{i1}^{(a)}+\hbar\sum_{ a<b}\msr_i^{ab},
\end{equation*}
where $y^{(b)}=1^{\otimes (b-1)}\otimes y \otimes 1 ^{\otimes (n-b)}\in \Yhg^{\otimes n}$ for any $y\in \Yhg$ and, for each $a<b$, $B\mapsto B^{ab}$ is the algebra homomorphism $\Yhg^{\otimes 2}\to \Yhg^{\otimes n}$ given on simple tensors by $(x\otimes y)^{ab}=x^{(a)}y^{(b)}$. Since $(\id-\veps)$ projects $\Yhg$ onto $\mathrm{Ker}(\veps)$, it follows readily from these formulas that 
\begin{equation*}
\hbar \mfg \cup \{\hbar t_{i1}\}_{i\in \mbI}\subset \QFYhg.
\end{equation*}
We may now deduce that  $\hbar\mathds{G}\subset \QFYhg$ as follows. By \eqref{almost-comm} and the above, $\QFYhg$ is a $\mfg$-submodule of $\Yhg$ which is preserved by the operators $\{\mathrm{ad}(t_{i1})\}_{i\in \mbI}$. Hence, the elements
\begin{equation*}
\hbar x_{ik}^\pm=(\pm 1)^{k}\mathrm{ad}(\mathrm{T}_i)^k(\hbar x_i^\pm) \quad \text{ and }\quad \hbar h_{ik}=[x_i^+,\hbar x_{ik}^-]
\end{equation*}
necessarily belong to $\QFYhg$ for each $i\in \mbI$ and $k\in \N$, where  $\mathrm{T}_i=(2d_i)^{-1}t_{i1}$. As $\hbar x_{\alpha,k}^\pm$ belongs to the $\mfg$-submodule of $\Yhg$ generated by $\hbar x_{ik}^\pm$, we can conclude that $\hbar \mathds{G}\subset \QFYhg$. 

Since $\msR_\hbar(U(\tplus))$ has basis given by the set of ordered monomials in $\hbar q(\mathds{G})$, to complete the proof of both Parts \eqref{QFSH:2} and \eqref{QFSH:3}, it suffices to see that $B(\hbar \mathds{G})$ spans $\QFYhg$. This follows from the fact that $B(\mathds{G})$ is a basis of $\Yhg$ together with the crucial Lemma 3.3 of \cite{Gav02} (see also \cite{Etingof-Kazhdan-I}*{Lem.~4.12}). We refer the reader to the proof of \cite{FiTs19}*{Prop.~A.2} for complete details. 
\let\qed\relax
\end{proof}
\begin{proof}[Proof of \eqref{QFSH:1}]
We have already seen that $\QFYhg$ is an $\N$-graded subalgebra of $\Yhg$. That it is a Hopf subalgebra of $\Yhg$ is a special case of Proposition 3.3 of \cite{Gav07}, which passes to completions and makes use of a modification of a technical result for quantized formal series Hopf algebras established in \cite{Enriquez-Halbout-03}*{Prop.~2.1}.  It is worth pointing out that, in our specialized setting, it is possible to give a concise direct proof that $\QFYhg$ is a subcoalgebra of $\Yhg$ stable under the antipode $S$.

Indeed, by Proposition \ref{P:Yhg-Hopf}, one has $\Delta(\hbar x)\in \QFYhg^{\otimes 2}$ for $x\in \hbar \mfg\cup\{t_{i1}\}_{i\in \mbI}$. As $\QFYhg^{\otimes 2}$ is a $\mfg$-submodule of $\Yhg^{\otimes 2}$ stable under all operators $\mathrm{ad}(t_{i1}\otimes 1 + 1\otimes t_{i1})$, the inclusion  $\Delta(\hbar \mathds{G})\subset \QFYhg^{\otimes 2}$ will  hold provided  $\QFYhg^{\otimes 2}$ is preserved by the operators 
$\mathrm{ad}(\hbar \msr_i)$. This is itself a consequence of the fact that $\msr_i\in \mfg\otimes \mfg$, the inclusion $\hbar\mfg \subset \QFYhg$, that $\QFYhg^{\otimes 2}$ is a $\mfg$-submodule of $\Yhg^{\otimes 2}$, and the relation 
\begin{equation*}
[\hbar \msr_i,x\otimes y]=[\hbar \msr_i,x^{(1)}]y^{(2)}+x^{(1)}[\hbar \msr_i,y^{(2)}] \quad \forall\; x,y\in \QFYhg.
\end{equation*}
Hence, by Part \eqref{QFSH:3}, we can conclude that $\Delta(\QFYhg)\subset \QFYhg^{\otimes 2}$. A similar argument, using the formulas of Proposition \ref{P:Yhg-Hopf} and that $\QFYhg$ is a $\mfg$-submodule of $\QFYhg$ stable under the operators $\mathrm{ad}(S(t_{i1}))$, implies that $S(\QFYhg)\subset \QFYhg$. \let\qed\relax 
\end{proof}
\begin{proof}[Proof of \eqref{QFSH:4}] By Part \eqref{QFSH:2}, $\dot\msq$ is an $\N$-graded $\C[\hbar]$-linear epimorphism with kernel $\hbar \QFYhg$, and thus gives rise to an isomorphism of graded vector spaces $\ddot\msq:\QFYhg/\hbar\QFYhg\iso \Symt$. To conclude, it suffices to prove that $\ddot\msq$ is an algebra homomorphism. By \eqref{almost-comm}, $\QFYhg/\hbar \QFYhg$ is commutative, and so the linear map $\hbar\tplus\to \QFYhg/\hbar \QFYhg$ sending any $\hbar x\in \hbar \tplus$ to the image of $\hbar\mu(x)$ in $\QFYhg/\hbar \QFYhg$ uniquely extends to an algebra homomorphism $\msp:\Symt\to \QFYhg/\hbar \QFYhg$. Since $\ddot\msq\circ\msp=\id$, we can conclude that $\msq$ is the inverse of $\msp$, and thus an algebra homomorphism. 
\end{proof}

Recall from \eqref{hYhg-def} that $\hYhg$ denotes the $\hbar$-adically complete Yangian associated to $\mfg$, and let $\QFhYhg$ denote the $\hbar$-adic completion of the Hopf algebra $\QFYhg$:
\begin{equation*}
\QFhYhg:= \varprojlim_n(\QFYhg/\hbar^n\QFYhg).
\end{equation*}
As an immediate consequence of the above proposition and Corollary \ref{C:Top-Ngraded}, we obtain the following result.
\begin{corollary}\label{C:QFhYhg}
$\QFhYhg$ is a topologically free, $\N$-graded topological Hopf algebra over $\C[\![\hbar]\!]$ of finite type. Moreover: 
\begin{enumerate}[font=\upshape]
\item $\QFhYhg$ is a flat deformation of the $\N$-graded algebra $\Symt$ over $\C[\![\hbar]\!]$. In particular, $\QFhYhg\cong \Symt[\![\hbar]\!]$ as an $\N$-graded topological $\C[\![\hbar]\!]$-module. 
\item $\QFhYhg$ is a topological Hopf subalgebra of the completed Yangian $\hYhg$. 
\end{enumerate}
\end{corollary}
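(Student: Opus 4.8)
The plan is to deduce Corollary~\ref{C:QFhYhg} almost mechanically from Proposition~\ref{P:QFSH} together with Corollary~\ref{C:Top-Ngraded}. The hypotheses of Corollary~\ref{C:Top-Ngraded} require that $\QFYhg$ be an $\N$-graded, torsion free $\C[\hbar]$-module; the grading is part of Proposition~\ref{P:QFSH}\eqref{QFSH:1}, and torsion freeness is immediate since $\QFYhg$ is a $\C[\hbar]$-submodule of $\Yhg$, which is torsion free by Theorem~\ref{T:Yhg-PBW}. Hence $\QFhYhg$, being the $\hbar$-adic completion of $\QFYhg$, is a topologically free $\N$-graded topological $\C[\![\hbar]\!]$-module, and for each $k\in\N$ its $k$-th homogeneous component coincides with $(\QFYhg)_k$, which is finite-dimensional over $\C$ after reduction mod $\hbar$ because by Proposition~\ref{P:QFSH}\eqref{QFSH:2} it is isomorphic to $\mathsf{R}_\hbar(U(\tplus))_k=\bigoplus_{j\le k}\hbar^j\mathbf{F}_j(U(\tplus))\cap U(\tplus)_{k-j}$, a finite-dimensional space. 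This gives the ``finite type'' assertion.

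Next I would record that $\QFhYhg$ inherits a topological Hopf algebra structure. By Proposition~\ref{P:QFSH}\eqref{QFSH:1}, $\QFYhg$ is an $\N$-graded Hopf subalgebra of $\Yhg$ over $\C[\hbar]$; passing to $\hbar$-adic completions and invoking the remark at the end of Section~\ref{ssec:Pr-grTop} (that the completion of an $\N$-graded $\C[\hbar]$-Hopf algebra is automatically an $\N$-graded topological Hopf algebra over $\C[\![\hbar]\!]$), we conclude $\QFhYhg$ is an $\N$-graded topological Hopf algebra. One small point to spell out: the completion of $\QFYhg^{\otimes 2}$ with respect to $\hbar$ agrees with $\QFhYhg\,\widehat\otimes\,\QFhYhg$, so that the comultiplication $\Delta$ indeed lands in $\QFhYhg\,\widehat\otimes\,\QFhYhg$; this is routine.

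For Part~(1), Proposition~\ref{P:QFSH}\eqref{QFSH:4} gives $\QFYhg/\hbar\QFYhg\cong\Symt$ as $\N$-graded algebras, which upon completion (and using that the semiclassical limit of a completion equals the semiclassical limit) yields $\QFhYhg/\hbar\QFhYhg\cong\Symt$. Combined with topological freeness this is precisely the statement that $\QFhYhg$ is a flat deformation of $\Symt$, and Corollary~\ref{C:Top-Ngraded} then supplies the $\C[\![\hbar]\!]$-module isomorphism $\QFhYhg\cong\Symt[\![\hbar]\!]$ (with $\msV=\QFYhg/\hbar\QFYhg$). For Part~(2), the natural inclusion $\QFYhg\into\Yhg$ is an injective graded $\C[\hbar]$-algebra map; since $\Yhg$ is torsion free one has $\hbar^n\Yhg\cap\QFYhg=\hbar^n\QFYhg$ for all $n$ (this follows from $\QFYhg$ being a $\C[\hbar]$-submodule of the torsion free module $\Yhg$ together with $\QFYhg$ being a direct summand-free submodule — more directly, $x\in\QFYhg$ with $x=\hbar^n y$, $y\in\Yhg$, forces $y\in\QFYhg$ because $\QFYhg$ is saturated, as can be read off from the basis statement Proposition~\ref{P:QFSH}\eqref{QFSH:3}), so the induced map on $\hbar$-adic completions $\QFhYhg\to\hYhg$ is injective and identifies $\QFhYhg$ with a closed topological Hopf subalgebra of $\hYhg$.

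The only genuinely delicate point — and the one I would single out as the main obstacle — is verifying that $\hbar^n\Yhg\cap\QFYhg=\hbar^n\QFYhg$, i.e. that $\QFYhg$ is $\hbar$-saturated in $\Yhg$, so that the $\hbar$-adic topology on $\QFYhg$ coincides with the one induced from $\Yhg$ and the completion map is injective with closed image. This is what makes ``topological Hopf \emph{subalgebra}'' meaningful rather than just ``there is a Hopf algebra map''. Fortunately it is an immediate consequence of Proposition~\ref{P:QFSH}\eqref{QFSH:3}: $B(\hbar\mathds{G})$ is a $\C[\hbar]$-basis of $\QFYhg$ and $B(\mathds{G})$ is a $\C[\hbar]$-basis of $\Yhg$, with $B(\hbar\mathds{G})$ obtained from a subset of the $\C$-span considerations in an explicit triangular fashion; expanding an element of $\hbar^n\Yhg\cap\QFYhg$ in the basis $B(\hbar\mathds{G})$ and comparing with its expansion in $B(\mathds{G})$ forces every coefficient to be divisible by $\hbar^n$. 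Everything else is bookkeeping with the preliminary results of Section~\ref{sec:Pr}.
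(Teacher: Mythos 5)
The main gap is in your claimed identity $\hbar^n\Yhg\cap\QFYhg=\hbar^n\QFYhg$. It is false, and the basis argument you offer for it does not run. Take $\hbar x_{i0}^+$: this lies in $\hbar\mathds{G}\subset\QFYhg$ by Proposition~\ref{P:QFSH}\eqref{QFSH:3} and trivially in $\hbar\Yhg$, so it lies in $\hbar\Yhg\cap\QFYhg$; but $x_{i0}^+\notin\QFYhg$ (already the $n=1$ defining condition fails: $(\id-\veps)(x_{i0}^+)=x_{i0}^+\notin\hbar\Yhg$), so $\hbar x_{i0}^+\notin\hbar\QFYhg$. What is actually true --- and recorded in the paper in \eqref{J-def} and \eqref{J^k-tensor} --- is $\hbar^n\Yhg\cap\QFYhg=\mathbf{J}^n$, which strictly contains $\hbar^n\QFYhg$. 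To see where your basis comparison breaks: write $x\in\hbar^n\Yhg\cap\QFYhg$ as $x=\sum_\gamma a_\gamma\,\hbar^{m(\gamma)}\gamma$ with $\gamma$ running over $B(\mathds{G})$ and $m(\gamma)$ the number of factors of $\gamma$, so that $\hbar^{m(\gamma)}\gamma$ runs over $B(\hbar\mathds{G})$. The hypothesis $x\in\hbar^n\Yhg$ only gives $a_\gamma\hbar^{m(\gamma)}\in\hbar^n\C[\hbar]$, i.e. $a_\gamma\in\hbar^{\max(n-m(\gamma),0)}\C[\hbar]$, which is strictly weaker than $a_\gamma\in\hbar^n\C[\hbar]$ as soon as $m(\gamma)>0$.

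Because of this, the picture behind your reasoning is also wrong: the subspace topology $\QFYhg$ inherits from $\hYhg$ is the $\mathbf{J}$-adic topology, \emph{not} the $\hbar$-adic one, and $\QFhYhg$ is \emph{not} closed in $\hYhg$; its closure is the strictly larger quantized formal series Hopf algebra $\QFJYhg$ of Remark~\ref{R:QFJYhg}. What Part~(2) of the Corollary actually needs is only that the natural map $\QFhYhg\to\hYhg$ be \emph{injective}, and this is true but requires a different argument. The clean route is to work with topological bases: an element of $\QFhYhg$ is a formal sum $\sum_\gamma a_\gamma\,\hbar^{m(\gamma)}\gamma$ over $B(\mathds{G})$ with $a_\gamma\in\C[\![\hbar]\!]$ satisfying the $\hbar$-adic convergence condition (for every $N$, all but finitely many $a_\gamma$ lie in $\hbar^N\C[\![\hbar]\!]$); its image in $\hYhg$ has coefficients $a_\gamma\hbar^{m(\gamma)}$ with respect to the topological basis $B(\mathds{G})$, and these vanish only if each $a_\gamma\hbar^{m(\gamma)}=0$, hence each $a_\gamma=0$ since $\C[\![\hbar]\!]$ is torsion free. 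The rest of your write-up --- the reduction to Proposition~\ref{P:QFSH} and Corollary~\ref{C:Top-Ngraded}, and the finite-type count via finite-dimensionality of the loop-graded pieces $\hSymt{n}$ --- matches the paper.
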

The statement that $\QFhYhg$ is of finite type reduces to the fact that the homogeneous components $\hSymt{n}$ of $\Symt$ are all finite-dimensional complex vector spaces; see below Corollary \ref{C:Vstar[[h]]}.

Henceforth, we will identify $\Symt$ with the semiclassical limit of $\QFYhg$ as an $\N$-graded Hopf algebra. 
We emphasize that this is a non-cocommutative Hopf algebra; in particular, it is not isomorphic to the standard symmetric Hopf algebra on $\hbar\tplus$, which we denote by $\mathbb{S}(\hbar\tplus)$. However, $\QFYhg$ and $\Symt$ are filtered deformations of $\mathsf{R}_\hbar( U(\tplus))$ and $\mathbb{S}(\hbar\tplus)$, respectively, as we now explain.
Consider the Hopf ideal  
\begin{equation}\label{J-def}
\mbJ:=\hbar \Yhg \cap \QFYhg \subset \QFYhg
\end{equation} 
Here we note that it follows from Part \eqref{QFSH:3} of the above proposition that, for each pair of positive integers $k,n\in \N_+$, one has  
\begin{equation}\label{J^k-tensor}
\hbar^k \Yhg^{\otimes n} \cap \QFYhg^{\otimes n} =\sum_{\scriptscriptstyle k_1+\ldots+k_n=k} \!\mathbf{J}^{k_1}\otimes\cdots \otimes \mathbf{J}^{k_n}.
\end{equation}
In particular, $\mbJ^k=\hbar^k \Yhg \cap \QFYhg$ for each $k\in \N$, and so the associated graded Hopf algebra $\mathrm{gr}_\mbJ(\QFYhg)$ with respect to the $\mbJ$-adic filtration embeds inside the associated graded Hopf algebra $\mathrm{gr}_\hbar(\Yhg)$ of $\Yhg$ with respect to the $\hbar$-adic filtration. Since $\Yhg$ is a torsion free Hopf algebra deformation of $U(\tplus)$ over $\C[\hbar]$, we have isomorphisms of graded Hopf algebras 
\begin{gather*}
\mathrm{gr}_\hbar(\Yhg)=\bigoplus_{n\in \N} \hbar^n \Yhg/\hbar^{n+1}\Yhg \cong \bigoplus_{n\in \N}\hbar^n U(\tplus)=U(\tplus)[\hbar] \\
\mathrm{gr}_\mbJ(\QFYhg)=\bigoplus_{n\in \N} \mbJ^n/\mbJ^{n+1}\cong \bigoplus_{n\in \N}\hbar^n \mathbf{F}_n(U(\tplus))=\mathsf{R}_\hbar( U(\tplus))
\end{gather*}
where the second isomorphism follows from the first and Proposition \ref{P:QFSH}. In fact, the module isomorphisms $\upnu_\mathds{G}$ and $\dot{\upnu}_\mathds{G}$ are filtered, and the above identifications can be realized as the associated graded maps $\mathrm{gr}(\upnu_\mathds{G})$ and $\mathrm{gr}(\dot{\upnu}_\mathds{G})$, respectively.

The image of $\mbJ$ under the quotient map $\dot\msq$ is the Hopf ideal 
\begin{equation}\label{Symt-J}
\mathbb{J}:=\bigoplus_{n>0} \sSymt{n} \subset \Symt
\end{equation}
where $\sSymt{n}$ denotes the $n$-th symmetric power of $\hbar\tplus$.  This is also a Hopf ideal in $\mathbb{S}(\hbar\tplus)$, and one has a canonical isomorphism $\mathrm{gr}_\mathbb{J}\mathbb{S}(\hbar\tplus)\cong \mathbb{S}(\hbar\tplus)$ of graded Hopf algebras. Since the elements of $\mu(\tplus)$ are primitive modulo $\hbar\Yhg^{\otimes 2}$, the subspace $\hbar\tplus=\mathbb{J}/\mathbb{J}^2$ of  $\mathrm{gr}_\mathbb{J}\Symt$ consists of primitive elements, and we can conclude that 
\begin{equation*}
\mathrm{gr}_\mathbb{J}\Symt \cong  \mathbb{S}(\hbar\tplus)\cong \mathrm{gr}_\mathbb{J}\mathbb{S}(\hbar\tplus)
\end{equation*} 
as graded Hopf algebras. 
\begin{remark}\label{R:QFJYhg}
Since $\mbJ^k=\hbar^k \Yhg\cap \QFYhg$ for each $k\in \N$, the closure of $\QFYhg$ in the quantized enveloping algebra $\hYhg$ coincides with the $\mbJ$-adic completion 
\begin{equation*}
\QFJYhg:=\varprojlim_n (\QFYhg/\mbJ^n)\subset \hYhg.
\end{equation*}
This is precisely the quantized formal series Hopf algebra $U_\hbar\mfb^\prime$ from Section \ref{ssec:QUE-dual} associated to $U_\hbar\mfb=\hYhg$. As $\mathbf{J}$ surjects onto the ideal $\mathbb{J}$ from \eqref{Symt-J}, the quotient map $\dot\msq$ induces an isomorphism of $\C[\![\hbar]\!]$-algebras
\begin{equation*}
\QFJYhg/\hbar \QFJYhg \iso \msS\widehat{(\hbar\tplus)}:=\prod_{n\in \N} \sSymt{n}\cong \varprojlim_n(\Symt/\mathbb{J}^n).
\end{equation*}
\end{remark}

\subsection{Triangular decomposition}\label{ssec:QFYhg-TD}
Proposition \ref{P:QFSH} implies that the triangular decomposition of $\Yhg$, reviewed in Section \ref{ssec:Yhg-coord}, induces a triangular decomposition on $\QFYhg$, with the $\N$-graded $\C[\hbar]$-algebras 
\begin{equation*}
\dYh{\pm}:=Y_\hbar^\pm(\mfg)\cap \QFYhg \quad \text{ and }\quad \dYh{0}:=Y_\hbar^0(\mfg)\cap \QFYhg. 
\end{equation*}
playing the roles of $Y_\hbar^\pm(\mfg)$ and $Y_\hbar^0(\mfg)$, respectively. In this subsection we spell this out explicitly. Let us set
\begin{equation*}
\tplus^\pm:=\mfn_\mp[t]\subset \tplus \quad \text{ and }\quad \tplus^0:=\mfh[t]\subset \tplus
\end{equation*}
and recall from Section \ref{ssec:Yhg*-TD} that  $\upnu_\pm$ and $\upxi$ are the isomorphisms  
\begin{equation*}
\upnu_\pm: Y_\hbar^\pm(\mfg)\iso U(\tplus^\pm)[\hbar] \quad \text{ and }\quad \upxi:Y_\hbar^0(\mfg)\iso \msS(\tplus^0)[\hbar]
\end{equation*}
defined above \eqref{nu-Yhg} and in the statement of Proposition \ref{P:TriDec}, respectively. We further 
recall from \eqref{nu-beta} that, for each $\beta\in Q_+$, $\nu(\beta)\in \N$ is defined by  
\begin{equation*}
\nu(\beta)=\min\{k\in \N: \exists\, \beta_1,\ldots,\beta_k\in \Root^+\; \text{ with }\; \beta=\beta_1+\cdots +\beta_k\}.
\end{equation*}

The following corollary provides an analogue of Proposition \ref{P:QFSH} for the subalgebras $\dYh{\pm}$ and $\dYh{0}$. 
\begin{corollary}\label{C:QFYhg-chi} 
Let $\dot\upnu_\pm$ and $\dot\upxi$ denote the restrictions of $\upnu_\pm$ and $\upxi$ to $\dYh{\pm}$ and $\dYh{0}$, respectively. Then: 
\begin{enumerate}[font=\upshape]
\item\label{QFYhg-chi:1} $\dot\upnu_\pm$ is an isomorphism of $\N$-graded $\C[\hbar]$-modules 
\begin{equation*}
\dot\upnu_\pm:\dYh{\pm}\iso \msR_\hbar(U(\tplus^{\pm}))\subset U(\tplus^\pm)[\hbar].
\end{equation*}
%
\item\label{QFYhg-chi:2} $\dot\upxi$ is an isomorphism of $\N$-graded $\C[\hbar]$-algebras 
\begin{equation*}
\dot\upxi:\dYh{0}\iso \msS(\hbar\tplus^0)[\hbar]\subset \msS(\tplus^0)[\hbar].
\end{equation*}
\item\label{QFYhg-chi:3} $\dYh{\pm}$ is a torsion free, $\N$-graded $\C[\hbar]$-algebra deformation of $\msS(\hbar\tplus^\pm)$. In particular, there is an isomorphism of graded $\C[\hbar]$-modules
\begin{equation*}
\dYh{\pm}\cong \msS(\hbar\tplus^\pm)[\hbar].
\end{equation*}
\item\label{QFYhg-chi:4}  $\dYh{\pm}$ is a $Q$-graded subalgebra of $\dYh{\pm}$ with  
\begin{equation*}
\dYh{\pm}_{\pm \beta} \subset \mbJ^{\nu(\beta)}\subset \hbar^{\nu(\beta)}Y_\hbar^\pm(\mfg)_{\pm \beta} \quad \forall \; \beta \in Q_+. 
\end{equation*}
\end{enumerate}
\end{corollary}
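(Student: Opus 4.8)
The plan is to deduce each of the four statements from Proposition~\ref{P:QFSH} and its proof, together with the triangular decomposition of $\Yhg$ recorded in Proposition~\ref{P:TriDec}. The key structural input is that the isomorphism $\upnu_\mathds{G}$ of \eqref{nu_G} respects the triangular factorization $m:Y_\hbar^+(\mfg)\otimes Y_\hbar^0(\mfg)\otimes Y_\hbar^-(\mfg)\iso\Yhg$, and that $\QFYhg$ is spanned by the ordered monomial basis $B(\hbar\mathds{G})$ (Proposition~\ref{P:QFSH}\eqref{QFSH:3}). Since $\mathds{G}_\pm=\mathds{G}\cap Y_\hbar^\pm(\mfg)$ and the analogous set for $Y_\hbar^0(\mfg)$ partition $\mathds{G}$, the basis $B(\hbar\mathds{G})$ factors compatibly, so that $\dYh{\pm}=Y_\hbar^\pm(\mfg)\cap\QFYhg$ has basis $B(\hbar\mathds{G}_\pm)$ and $\dYh{0}=Y_\hbar^0(\mfg)\cap\QFYhg$ has basis the ordered monomials in $\hbar\{h_{ik}\}$. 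This immediately upgrades the module isomorphisms: $\dot\upnu_\pm$ carries $B(\hbar\mathds{G}_\pm)$ onto the ordered monomial basis of $\msR_\hbar(U(\tplus^\pm))$, giving \eqref{QFYhg-chi:1}, and likewise $\dot\upxi$ carries $\dYh{0}$ onto $\msR_\hbar(U(\tplus^0))=\msS(\hbar\tplus^0)[\hbar]$ (the Rees algebra of a commutative algebra being the symmetric algebra on the degree-one piece), which is \eqref{QFYhg-chi:2} — here one checks it is an \emph{algebra} isomorphism using that $Y_\hbar^0(\mfg)$ is commutative and $\dot\upxi$ sends $\hbar h_{ik}\mapsto \hbar h_it^k$.

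For \eqref{QFYhg-chi:3}, I would argue exactly as in the proof of Proposition~\ref{P:QFSH}\eqref{QFSH:4}: the quotient map $\dot\msq$ restricted to $\dYh{\pm}$ is an $\N$-graded $\C[\hbar]$-linear epimorphism onto $\msS(\hbar\tplus^\pm)$ with kernel $\hbar\dYh{\pm}$ (this follows from \eqref{QFYhg-chi:1} and the identification $\msR_\hbar(U(\tplus^\pm))/\hbar\cong\mathrm{gr}\,U(\tplus^\pm)=\msS(\hbar\tplus^\pm)$), so $\dYh{\pm}/\hbar\dYh{\pm}\cong\msS(\hbar\tplus^\pm)$; torsion-freeness is inherited from $Y_\hbar^\pm(\mfg)$ (Proposition~\ref{P:TriDec}\eqref{TriDec:1}), and Corollary~\ref{C:Top-Ngraded} then gives $\dYh{\pm}\cong\msS(\hbar\tplus^\pm)[\hbar]$ as $\N$-graded $\C[\hbar]$-modules. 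The $Q$-grading assertion in \eqref{QFYhg-chi:4} holds because $\dYh{\pm}=Y_\hbar^\pm(\mfg)\cap\QFYhg$ is the intersection of two $Q$-graded (resp.\ $\mfh$-stable) subspaces, hence $Q$-graded.

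The genuine content of \eqref{QFYhg-chi:4} is the containment $\dYh{\pm}_{\pm\beta}\subset\mbJ^{\nu(\beta)}$. For this I would revisit the construction in the proof of Proposition~\ref{P:QFSH}\eqref{QFSH:2}--\eqref{QFSH:3}: the generators $\hbar x_{ik}^\pm$ lie in $\mbJ=\hbar\Yhg\cap\QFYhg$, and $\hbar x_{\beta,k}^\pm$ lies in the $\mfg$-submodule generated by a single $\hbar x_{i(\beta),k}^\pm$, hence in $\mbJ$. More precisely, I claim $\dYh{+}$ is generated as a $\C[\hbar]$-algebra by the degree-$\beta$ pieces $\hbar x_{\beta,k}^+$ (each a single power of $\hbar$ times a root vector), so that any element of $\dYh{+}_{\beta}$ is a $\C[\hbar]$-combination of ordered monomials $\hbar^{\ell}X_\gamma$ whose constituent root vectors have weights summing to $\beta$; such a monomial has $\mathbf{J}$-order $\geq\ell$, and since the weights are positive roots, the number of factors is at least $\nu(\beta)$, forcing $\ell\geq\nu(\beta)$. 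Combined with $\mbJ^{\nu(\beta)}\subset\hbar^{\nu(\beta)}\Yhg$ (from \eqref{J-def} and the discussion around \eqref{J^k-tensor}, intersected with the $Q$-graded piece $Y_\hbar^+(\mfg)_\beta$) this gives the claim. The main obstacle is making the bookkeeping on $\mathbf{J}$-order versus monomial length precise — i.e.\ verifying that in the basis $B(\hbar\mathds{G}_+)$ a monomial of weight $\beta$ genuinely involves at least $\nu(\beta)$ of the root-vector generators $\hbar x_{\beta',k}^+$ and correspondingly at least $\nu(\beta)$ factors of $\hbar$ — but this is exactly the finite additivity-of-weights combinatorics underlying the definition \eqref{nu-beta} of $\nu$, so it is routine once set up.
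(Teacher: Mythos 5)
Your proposal is correct and takes essentially the approach the paper intends: the paper itself gives no details for Corollary~\ref{C:QFYhg-chi}, simply noting that the statements follow from Propositions~\ref{P:TriDec} and~\ref{P:QFSH} together with Corollary~\ref{C:Top-Ngraded}, and leaving the verification as an exercise. Your use of the monomial basis $B(\hbar\mathds{G})$, its compatibility with the triangular factorization, and the counting of $\hbar$-factors versus the additivity of positive roots for Part~\eqref{QFYhg-chi:4} is exactly the right way to fill in those details.
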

We note that each of these results follows readily from Propositions \ref{P:TriDec} and \ref{P:QFSH}, in addition to Corollary \ref{C:Top-Ngraded} in the case of Part \eqref{QFYhg-chi:3}. We leave the details as an exercise to the interested reader. As a consequence of this corollary and Proposition \ref{P:QFSH}, we obtain the following analogue of Part \eqref{TriDec:4} from Proposition \ref{P:TriDec}.

\begin{corollary}\label{C:dYhg-TD}
The multiplication map 
\begin{equation*}
\dot{m}:\dYh{+}\otimes \dYh{0}\otimes \dYh{-}\to \QFYhg
\end{equation*}
is an isomorphism of $\N$-graded $\C[\hbar]$-modules.
\end{corollary}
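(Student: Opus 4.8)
The plan is to deduce this directly from the compatibility of the triangular decomposition of the Yangian (Part \eqref{TriDec:4} of Proposition \ref{P:TriDec}) with the Drinfeld--Gavarini construction, in the same spirit as Corollary \ref{C:QFYhg-chi}. Since $\dYh{+}$, $\dYh{0}$ and $\dYh{-}$ are $\N$-graded subalgebras of the $\N$-graded algebra $\QFYhg$, the multiplication of $\Yhg$ restricts to an $\N$-graded $\C[\hbar]$-linear map $\dot{m}\colon\dYh{+}\otimes\dYh{0}\otimes\dYh{-}\to\QFYhg$, so the content of the corollary is precisely that $\dot{m}$ is bijective. First I would reduce this to a statement about $\C[\hbar]$-bases: it suffices to produce a $\C[\hbar]$-basis of the source that $\dot{m}$ maps bijectively onto a $\C[\hbar]$-basis of $\QFYhg$.

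To carry this out, fix the total order $\preceq$ on $\mathds{G}$ used to define $\upnu=\upnu_{\mathds{G}}$ below \eqref{nu-Yhg}; recall that it restricts to $\preceq_{\scriptscriptstyle{+}}$ on $\mathds{G}_+=\mathds{G}\cap Y_\hbar^+(\mfg)$ and satisfies $x^+\preceq h\preceq x^-$ for all $x^\pm\in\mathds{G}\cap Y_\hbar^\pm(\mfg)$ and $h\in\mathds{G}_0:=\mathds{G}\cap Y_\hbar^0(\mfg)=\{h_{ik}\}_{i\in\mbI,\,k\in\N}$. With respect to this order, each ordered monomial in $\hbar\mathds{G}$ factors uniquely as a concatenation $p_+p_0p_-$ with $p_\chi$ an ordered monomial in $\hbar\mathds{G}_\chi$, giving a bijection
\begin{equation*}
B(\hbar\mathds{G}_+)\times B(\hbar\mathds{G}_0)\times B(\hbar\mathds{G}_-)\iso B(\hbar\mathds{G}),\qquad (p_+,p_0,p_-)\mapsto p_+p_0p_-.
\end{equation*}
By Part \eqref{QFSH:3} of Proposition \ref{P:QFSH}, the right-hand side is a $\C[\hbar]$-basis of $\QFYhg$. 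On the other hand, the restrictions $\dot\upnu_\pm$ and $\dot\upxi$ of Corollary \ref{C:QFYhg-chi} are $\C[\hbar]$-module isomorphisms carrying $B(\hbar\mathds{G}_\pm)$ and $B(\hbar\mathds{G}_0)$ onto the sets of ordered monomials in $\hbar q(\mathds{G}_\pm)$ and $\hbar q(\mathds{G}_0)$, which are $\C[\hbar]$-bases of $\msR_\hbar(U(\tplus^{\pm}))$ and $\msS(\hbar\tplus^0)[\hbar]$ respectively by the filtered Poincar\'{e}--Birkhoff--Witt theorem; hence $B(\hbar\mathds{G}_\pm)$ and $B(\hbar\mathds{G}_0)$ are $\C[\hbar]$-bases of $\dYh{\pm}$ and $\dYh{0}$, and so the tensors $p_+\otimes p_0\otimes p_-$ form a $\C[\hbar]$-basis of $\dYh{+}\otimes\dYh{0}\otimes\dYh{-}$ (a tensor product of free $\C[\hbar]$-modules). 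Since $\dot{m}(p_+\otimes p_0\otimes p_-)=p_+p_0p_-$, the displayed bijection shows that $\dot{m}$ sends this basis onto $B(\hbar\mathds{G})$, so it is an isomorphism of $\N$-graded $\C[\hbar]$-modules.

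I expect the only genuinely delicate point to be the identification of $B(\hbar\mathds{G}_\pm)$ and $B(\hbar\mathds{G}_0)$ with $\C[\hbar]$-bases of $\dYh{\pm}$ and $\dYh{0}$ --- equivalently, that ordered monomials in $\hbar q(\mathds{G}_\pm)$, respectively $\hbar q(\mathds{G}_0)$, form $\C[\hbar]$-bases of the Rees-type algebras $\msR_\hbar(U(\tplus^{\pm}))$, respectively $\msS(\hbar\tplus^0)[\hbar]$ --- together with the bookkeeping that makes $\preceq$ compatible with the factorization $B(\hbar\mathds{G})=B(\hbar\mathds{G}_+)\cdot B(\hbar\mathds{G}_0)\cdot B(\hbar\mathds{G}_-)$; both are already encoded in the constructions of Sections \ref{ssec:Yhg-coord} and \ref{ssec:QFYhg-TD} and require no new input. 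As a cross-check avoiding bases altogether, one may instead restrict the identity $\upnu\circ m=\bar m\circ(\upnu_+\otimes\upxi\otimes\upnu_-)$ of \eqref{nu-Yhg} to $\dYh{+}\otimes\dYh{0}\otimes\dYh{-}$: using Part \eqref{QFSH:2} of Proposition \ref{P:QFSH} and Corollary \ref{C:QFYhg-chi}, the left side becomes $\dot\upnu_{\mathds{G}}\circ\dot{m}$ with $\dot\upnu_{\mathds{G}}$ an isomorphism onto $\msR_\hbar(U(\tplus))$, while the right side becomes $\bar m\circ(\dot\upnu_+\otimes\dot\upxi\otimes\dot\upnu_-)$ with $\bar m$ restricting to an isomorphism onto $\msR_\hbar(U(\tplus))$ by the filtered Poincar\'{e}--Birkhoff--Witt decomposition of $U(\tplus)$, whence $\dot{m}$ is an isomorphism.
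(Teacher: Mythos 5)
Your proof is correct and follows exactly the route the paper implies (it gives no explicit proof, only says the corollary follows from Corollary~\ref{C:QFYhg-chi} and Proposition~\ref{P:QFSH}): you use the ordered-monomial basis $B(\hbar\mathds{G})$ of $\QFYhg$ from Part~\eqref{QFSH:3} of Proposition~\ref{P:QFSH}, together with the compatibility of the fixed total order $\preceq$ (namely $x^+\preceq h\preceq x^-$) with the triangular factorization, to match it with $B(\hbar\mathds{G}_+)\otimes B(\hbar\mathds{G}_0)\otimes B(\hbar\mathds{G}_-)$ via Corollary~\ref{C:QFYhg-chi}. The alternative argument you give at the end, restricting $\upnu\circ m=\bar m\circ(\upnu_+\otimes\upxi\otimes\upnu_-)$ to the Drinfeld--Gavarini subalgebras, is likewise a valid repackaging of the same ingredients.
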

\begin{remark}\label{R:QFhYhg-TD}
As in Remark \ref{R:hYhg-TD}, the above results can easily be lifted to the $\hbar$-adic setting using Corollary \ref{C:Top-Ngraded}; see also Corollary \ref{C:QFhYhg}. We especially note that, for each choice of the symbol $\chi$, the $\hbar$-adic completion 
\begin{equation*}
\dot{\msY}_\hbar^\chi\mfg:=\varprojlim_n \dYh{\chi}/\hbar^n \dYh{\chi} 
\end{equation*}
is a topologically free $\N$-graded $\C[\![\hbar]\!]$-algebra of finite type, which provides a flat deformation of the symmetric algebra $\msS(\hbar\tplus^\chi)$ over $\C[\![\hbar]\!]$. In addition, $\dot{\msY}_\hbar^\chi\mfg$ embeds inside the completed Yangian $\hYhg$, and the multplication map induces an isomorphism of $\N$-graded $\C[\![\hbar]\!]$-modules 
\begin{equation*}
\dot{m}:\dot{\msY}_\hbar^{+}\mfg\otimes \dot{\msY}_\hbar^{0}\mfg\otimes \dot{\msY}_\hbar^{-}\mfg\iso \QFhYhg,
\end{equation*}
where $\otimes$ is now the topological tensor product $\wh{\otimes}$ over $\C[\![\hbar]\!]$; see Remark \ref{R:otimes}. 
\end{remark}
%
\subsection{The adjoint and coadjoint actions}\label{ssec:ad-coad} We now prove two lemmas concerning the Drinfeld--Gavarini subalgebra $\QFYhg$ which will play an important role in the main results and constructions of Sections \ref{sec:DYhg-quant} and \ref{sec:DYhg-double}. The first of these, Lemma \ref{L:adjoint}, will be used to construct the quantum double of the Yangian in Section \ref{ssec:D(Yhg)}.

Let $\lad:\Yhg\otimes \Yhg\to \Yhg$ and $\rcoad:\Yhg\to \Yhg\otimes \Yhg$ denote the left adjoint action of $\Yhg$ on itself, and the right adjoint coaction of $\Yhg$ on itself, respectively. That is, 
\begin{gather*}
\lad= m^3\circ (\id^{\otimes 2}\otimes S)\circ (2\,3)\circ (\Delta \otimes \id)  \\
\rcoad=(1\otimes m)\circ (1\,2)\circ  (S\otimes \id^{\otimes 2})\circ \Delta^3
\end{gather*}
The $\hbar$-adic analogue of the below result, for a quantized enveloping algebra $U_\hbar\mfb$ with $\QFYhg$ replaced by $U_\hbar\mfb^\prime$, was established in Propositions 4.3 and 4.4 of \cite{Andrea-Valerio-18}; see also Proposition  A.5 therein. 
\begin{lemma}\label{L:adjoint}
One has 
\begin{equation*}
\lad(\Yhg\otimes \QFYhg)\subset \QFYhg \quad \text{ and }\quad \rcoad(\Yhg)\subset \Yhg\otimes \QFYhg.
\end{equation*}
\end{lemma}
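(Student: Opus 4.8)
The plan is to verify the two inclusions separately, using the characterization of $\QFYhg$ as the set of $x$ with $(\id-\veps)^{\otimes n}\Delta^n(x)\in\hbar^n\Yhg^{\otimes n}$ for all $n$, together with the generating sets from Proposition \ref{P:QFSH}\eqref{QFSH:3}, the almost-commutativity \eqref{almost-comm}, and the formulas of Proposition \ref{P:Yhg-Hopf}.

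\begin{proof}[Proof sketch]
\textbf{The coaction statement.} The cleanest route is to first observe that $\rcoad$ is related to a power of $\Delta$: writing $\rcoad=(1\otimes m)\circ(1\,2)\circ(S\otimes\id^{\otimes 2})\circ\Delta^3$, one can compose with $(\id-\veps)^{\otimes n}\Delta^n$ on the second tensor factor and, using coassociativity, the antipode axioms, and the fact that $(\id-\veps)S=-S(\id-\veps)+(\text{terms killed by }\veps)$, reduce the condition ``$\rcoad(x)\in\Yhg\otimes\QFYhg$'' to a statement about $\Delta^{n+1}(x)$. Concretely, one checks that $\bigl(\id\otimes(\id-\veps)^{\otimes n}\Delta^n\bigr)\rcoad(x)$ can be rewritten, via standard Hopf-algebra manipulations, as $(S\otimes\id^{\otimes n})$ applied to a summand of $(\id-\veps)^{\otimes(n+1)}\Delta^{n+1}(x)$ after multiplying two adjacent legs together. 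Since multiplication sends $\hbar^{n+1}\Yhg^{\otimes(n+1)}$ into $\hbar^{n+1}\Yhg^{\otimes n}\subset\hbar^n\Yhg^{\otimes n}$, and this holds for \emph{all} $x\in\Yhg$ (not just $x\in\QFYhg$) because $\Delta^{n+1}(x)\in\Yhg^{\otimes(n+1)}$ trivially and the $\hbar$-powers are being \emph{created} by the $(\id-\veps)$ projections rather than assumed — wait, this needs care: the point is that $\rcoad$ produces one genuine ``non-grouplike'' factor plus structural terms, so one extra $(\id-\veps)$ is automatically present. I would therefore instead mimic the argument of \cite{Andrea-Valerio-18}*{Prop.~4.4} directly: it suffices to verify the inclusion on algebra generators of $\Yhg$, i.e.\ on $\mfg\cup\{t_{i1}\}_{i\in\mbI}$, because $\rcoad$ is an algebra map into the $\Yhg$-module-algebra $\Yhg\otimes\Yhg$ (with $\Yhg$ acting via $\lad$ on the second factor through its image) and $\Yhg\otimes\QFYhg$ is a subalgebra stable under that action once we know $\lad(\Yhg\otimes\QFYhg)\subset\QFYhg$. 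For a generator $x$, the formula $\rcoad(x)=1\otimes x - S(x_{(1)})x_{(3)}\otimes x_{(2)}+\dots$ from Proposition \ref{P:Yhg-Hopf} shows every term lies in $\Yhg\otimes(\hbar\Yhg+\C x)$; since $\hbar\mfg\subset\QFYhg$ and $\hbar t_{i1}\in\QFYhg$ but $x$ itself need not be, one checks the problematic bare term $1\otimes x$ is absent (it is cancelled or only appears attached to an $\hbar$ from $\msr_i$), leaving the result in $\Yhg\otimes\QFYhg$.

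\textbf{The adjoint statement.} For $\lad(\Yhg\otimes\QFYhg)\subset\QFYhg$, the key is that $\QFYhg$ is, by Proposition \ref{P:QFSH}\eqref{QFSH:3}, generated as a $\C[\hbar]$-algebra by $\hbar\mu(\tplus)$, hence by $\hbar\mfg$ together with $\{\hbar x_{ik}^\pm,\hbar h_{ik}\}$, all obtained from $\hbar\mfg\cup\{\hbar t_{i1}\}$ by iterated adjoint actions of $\mfg$ and of $\mathrm{ad}(t_{i1})$. Using that $\lad$ makes $\Yhg$ a $\Yhg$-module algebra, it suffices to check (i) $\QFYhg$ is stable under $\lad$-action of the algebra generators $\mfg\cup\{t_{i1}\}$ of $\Yhg$, and (ii) the generators of $\QFYhg$ land back in $\QFYhg$. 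For (i): $\lad(y\otimes -)$ for $y\in\mfg$ is the ordinary adjoint action $\mathrm{ad}(y)$, under which $\QFYhg$ is visibly stable (it's a graded $\C[\hbar]$-subalgebra containing $\hbar\mfg$, and $[\mfg,\QFYhg]\subset\QFYhg$ follows since $\mathrm{ad}(y)$ on $\hbar\mu(x)$ gives $\hbar\mu([y,x])\in\hbar\mu(\tplus)$, extended by the Leibniz rule); for $y=t_{i1}$ one uses $\lad(t_{i1}\otimes z)=[t_{i1},z]-z t_{i1}^{(?)}\cdots$ — more precisely $\lad(t_{i1}\otimes z)=t_{i1}z + z S(t_{i1}) + \hbar(\text{term from }\msr_i)\,z$, and since $S(t_{i1})=-t_{i1}+m(\hbar\msr_i)$ the non-$\hbar$ part is exactly $[t_{i1},z]$, which lies in $\QFYhg$ whenever $z\in\QFYhg$ because $\mathrm{ad}(t_{i1})$ preserves $\QFYhg$ (this is built into the construction of the generators $\hbar x_{ik}^\pm$), while the $\hbar$-correction lies in $\hbar\Yhg\cdot\QFYhg$; one must still argue this correction is in $\QFYhg$, which follows from $\hbar\msr_i\in\hbar(\mfg\otimes\mfg)$ and $\hbar\mfg\subset\QFYhg$ plus \eqref{almost-comm}. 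For (ii): the generators $\hbar x\in\hbar\mu(\tplus)$ of $\QFYhg$ are handled by the same module-algebra reduction. The main obstacle, and where I would spend the most effort, is the careful bookkeeping of the $\hbar$-corrections coming from $\msr_i$ in the antipode and coproduct formulas: one needs to confirm that after cancellation every surviving ``low-$\hbar$-order'' term is either structural (lies in the span of grouplike/primitive pieces handled by induction) or is explicitly divisible by $\hbar$ and paired with an element of $\mfg$, so that it re-enters $\QFYhg$ via $\hbar\mfg\subset\QFYhg$. Once these reductions are set up, each individual verification is a short computation with the formulas of Proposition \ref{P:Yhg-Hopf}.
\end{proof}
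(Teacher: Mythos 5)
Your treatment of the first inclusion essentially matches the paper's: one reduces to the algebra generators $\mfg\cup\{t_{i1}\}_{i\in\mbI}$ of $\Yhg$ acting on $y\in\QFYhg$ via $\lad$, then checks each. One slip in the justification: writing $\hbar\msr_i=\sum a_k\otimes\hbar b_k\in\mfg\otimes\hbar\mfg$, the $\hbar$-correction is $m([y^{(1)},\hbar\msr_i])=\sum[y,a_k]\,\hbar b_k$, and this lies in $\QFYhg$ because $[y,a_k]\in\QFYhg$ (the $\mfg$-submodule property) and $\hbar b_k\in\hbar\mfg\subset\QFYhg$, with $\QFYhg$ a subalgebra. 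The almost-commutativity \eqref{almost-comm} compares two elements of $\QFYhg$ with each other and is not the relevant fact here. Your extra reduction to algebra generators of $\QFYhg$ is harmless but unnecessary.

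The second inclusion contains a genuine gap: the reduction to generators rests on the assertion that ``$\rcoad$ is an algebra map into $\Yhg\otimes\Yhg$,'' which is false for a non-cocommutative Hopf algebra. Indeed $\rcoad(x)=x_{(2)}\otimes S(x_{(1)})x_{(3)}$, and a Sweedler computation gives $\rcoad(hk)=h_{(2)}k_{(2)}\otimes S(k_{(1)})S(h_{(1)})h_{(3)}k_{(3)}$, whereas $\rcoad(h)\rcoad(k)=h_{(2)}k_{(2)}\otimes S(h_{(1)})h_{(3)}S(k_{(1)})k_{(3)}$; these do not coincide. The correct structural fact, which is what the paper uses, is that $\rcoad$ intertwines $m$ with a \emph{twisted} right $\Yhg$-action $\gamma$ on $\Yhg^{\otimes 2}$ defined by $\gamma(y\otimes z\otimes c)=yc_{(2)}\otimes S(c_{(1)})\,z\,c_{(3)}$, namely $\rcoad\circ m=\gamma\circ(\rcoad\otimes\id)$. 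With that identity in hand, reducing to generators requires verifying that $\Yhg\otimes\QFYhg$ is a $\gamma$-submodule, which is a separate computation on $c\in\mfg\cup\{t_{i1}\}$ structurally parallel to the one for $\lad$ but not a formal consequence of the first inclusion. Your explicit computations of $\rcoad$ on the generators are correct, but without identifying the twisted-module identity the reduction they are meant to anchor is unsupported, and the alternative sketch in your opening paragraph (composing $(\id-\veps)^{\otimes n}\Delta^n$ with $\rcoad$) is left incomplete, as you yourself note.
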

\begin{proof}
Since $\lad$ makes $\Yhg$ a left module, to prove the first inclusion it suffices to show that  $x\cdot y:=\lad(x\otimes y)\in \QFYhg$ for all $x\in \mfg\cup \{t_{i1}\}_{i\in \mbI}$ and $y\in \QFYhg$. Since 
\begin{gather*}
x\cdot y=[x,y] \quad \text{ and } \quad t_{i1}\cdot y=[t_{i1},y]+m([y^{(1)},\hbar \msr_i]) \quad \forall\; x\in \mfg,\, i\in \mbI,\, y\in \QFYhg,
\end{gather*}
this follows from the observation that $\hbar \msr_i\in \mfg\otimes \hbar\mfg\subset \mfg\otimes \QFYhg$ and the fact that $\QFYhg$ is a $\mfg$-submodule of $\Yhg$ stable under $\mathrm{ad}(t_{i1})$ for all $i\in \mbI$. 

As for the second inclusion, note that $\rcoad$ is a homomorphism of right modules: 
\begin{equation}\label{adco-hom}
\rcoad\circ m = \gamma \circ (\rcoad\otimes \id),
\end{equation}
where $\gamma:\Yhg^{\otimes 2}\otimes \Yhg\to \Yhg^{\otimes 2}$ is the right action defined by 
\begin{equation*}
\gamma=m\otimes m^2 \circ (\id^{\otimes 2} \otimes S\otimes \id^{\otimes 2})\circ (2\,4)\circ (\id^{\otimes 2} \otimes \Delta^3).
\end{equation*}
This right action preserves $\Yhg\otimes \QFYhg\subset \Yhg^{\otimes 2}$. Indeed, if $y\in \Yhg$, $z\in \QFYhg$ and $x\in \mfg$,  we have 
\begin{equation*}
\gamma(y\otimes z\otimes x)=y\otimes [z,x]+yx\otimes z \in \Yhg\otimes \QFYhg,
\end{equation*}
and replacing $x$ by $t_{i1}$, for any $i\in \mbI$, yields instead 
\begin{equation*}
\gamma(y\otimes z\otimes t_{i1})=y\otimes ([z,t_{i1}]+ m([z^{(2)},\hbar\msr_i]))+yt_{i1}\otimes z +y^{(1)}(z^{(2)}\hbar\msr_i-\hbar \msr_i^{21} z^{(2)})
\end{equation*}
which again belongs to $\Yhg\otimes \QFYhg$ as $\hbar \msr_i^{21}, \hbar \msr_i\in (\hbar\mfg\otimes \mfg)\cap (\mfg\otimes \hbar\mfg)$ and $\QFYhg$ is an $\mathrm{ad}(t_{i1})$-stable $\mfg$-submodule of $\Yhg$.

Since $\Yhg\otimes \QFYhg$ is a  right submodule of $\Yhg^{\otimes 2}$ under $\gamma$, the condition \eqref{adco-hom} guarantees that $\rcoad(\Yhg)\subset \Yhg\otimes \QFYhg$ will hold provided it holds on $\mfg\cup\{t_{i1}\}_{i\in \mbI}$. To conclude, it suffices to note that, for each $x\in \mfg$ and $i\in \mbI$, one has 
\begin{gather*}
\rcoad(x)=x\otimes 1\in\Yhg\otimes \QFYhg, \\
\rcoad(t_{i1})=t_{i1}\otimes 1+\hbar(\msr_i-\msr_i^{21})\in \Yhg\otimes \QFYhg. \qedhere
\end{gather*}
\end{proof}

%
\subsection{The \texorpdfstring{$R$}{R}-matrix}
The second lemma we will need concerns the universal $R$-matrix $\mcR(z)$ of the Yangian, and will play a crucial role in identifying the dual Yangian as a subalgebra of the Yangian double $\DYhg$ in Section \ref{sec:DYhg-quant}. In what follows, all notation is as in Section \ref{ssec:Yhg-R}. 
\begin{lemma}\label{L:R-DG}
The factors $\mcR^\pm(z)$ and $\mcR^0(z)$ of the universal $R$-matrix $\mcR(z)$ have coefficients in $(\QFYhg\otimes \Yhg)\cap (\Yhg\cap \QFYhg)$. Consequently, 
\begin{equation*}
\mcR(z)\in (\QFYhg\otimes \Yhg)[\![z^{-1}]\!] \cap (\Yhg\otimes \QFYhg)[\![z^{-1}]\!].
\end{equation*}
\end{lemma}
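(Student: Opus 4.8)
The statement has two parts: first that the Gauss factors $\mcR^\pm(z)$ and $\mcR^0(z)$ have coefficients in $(\QFYhg\otimes\Yhg)\cap(\Yhg\otimes\QFYhg)$, and second that $\mcR(z)$ itself does. The second part follows from the first once we know each factor lies in both subalgebras: since $\QFYhg\otimes\Yhg$ and $\Yhg\otimes\QFYhg$ are each subalgebras of $\Yhg^{\otimes 2}$ (Proposition~\ref{P:QFSH}\eqref{QFSH:1} gives that $\QFYhg$ is a subalgebra), the product $\mcR^+(z)\mcR^0(z)\mcR^-(z)$ of three elements in $(\QFYhg\otimes\Yhg)[\![z^{-1}]\!]$ again lies in $(\QFYhg\otimes\Yhg)[\![z^{-1}]\!]$, and symmetrically for $\Yhg\otimes\QFYhg$. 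So the whole content is in analyzing the three factors individually.

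\textbf{The factor $\mcR^-(z)$.} I would argue by induction on $\mathrm{ht}(\beta)$ using the recursion \eqref{R-recur}, the starting point being $\mcR^-_0(z)=1\in\QFYhg\otimes\QFYhg$. For the inductive step, note that the first tensor slot of \eqref{R-recur} involves only applications of $\mathbf{T}(\zeta)^p=\mathrm{ad}(\mathrm{T}(\zeta)\otimes 1+1\otimes\mathrm{T}(\zeta))^p$ to $\mcR_{\beta-\alpha}(z)\cdot(x_\alpha^-\otimes x_\alpha^+)$, together with an overall factor of $\hbar$. Because $\mathrm{T}(h_i)=t_{i1}$ and $\QFYhg$ is stable under $\mathrm{ad}(t_{i1})$ (this is exactly the property used repeatedly in the proof of Proposition~\ref{P:QFSH} and Lemma~\ref{L:adjoint}), once I know the coefficients of $\mcR^-_{\beta-\alpha}(z)$ lie in $\QFYhg\otimes\Yhg$, and that $\hbar x_\alpha^-\in\QFYhg$ (which follows from $\hbar\mathds{G}\subset\QFYhg$, Proposition~\ref{P:QFSH}\eqref{QFSH:3}), I get that the coefficients of $\mcR^-_\beta(z)$ lie in $\QFYhg\otimes\Yhg$ — the leftover $\hbar$ absorbs into making $x_\alpha^-$ a Drinfeld--Gavarini element. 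For the \emph{other} inclusion $\Yhg\otimes\QFYhg$, the cleanest route is to use Corollary~\ref{C:Yhg-R-Chev}: since $(\omega\otimes\omega)\mcR(z)=\mcR(z)$ and $\omega$ is compatible with the triangular decomposition, one deduces $(\omega\otimes\omega)\mcR^\pm(z)=\mcR^\pm(z)$ and $(\omega\otimes\omega)\mcR^0(z)=\mcR^0(z)$ (by uniqueness of the Gauss decomposition, since $\omega$ sends $Y_\hbar^{\pm}(\mfg)\to Y_\hbar^{\pm}(\mfg)$, it preserves each triangular factor of $\mcR(z)$); because $\omega(\QFYhg)=\QFYhg$ by Proposition~\ref{P:QFSH}\eqref{QFSH:1}, applying $\omega\otimes\omega$ to the first inclusion yields the second. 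Alternatively, one can run the symmetric induction directly using $\mcR^+(z)=\mcR_{21}^-(-z)^{-1}$ and the fact that $\QFYhg\otimes\Yhg$ is closed under inversion of elements of $1+\hbar(\cdots)$.

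\textbf{The factor $\mcR^0(z)$.} Here I would exploit the explicit formula \eqref{logR0} for $\mcS(z)$: up to the operator $T^{2\kappa}$ and the scalar-valued function $g(z/2\kappa\hbar)$, it is $\hbar^{-2}\sum_{i,j}c_{ij}(T)B_i(\partial_z)\otimes B_j(-\partial_z)$ applied to a scalar, and $B_i(u)=\hbar\sum_r \frac{t_{ir}}{r!}u^r$. The key point is that each $\hbar^{-1}B_i(\partial_z)$ produces coefficients that are $\hbar$ times sums of $t_{ir}$'s, hence in $\QFYhg$ (again because $\hbar t_{ir}\in\hbar\mathds{G}\subset\QFYhg$ — more precisely $\hbar h_{ir}\in\QFYhg$ and $t_{ir}$ differs from $h_{ir}$ by lower terms times $\hbar$, so $\hbar t_{ir}\in\QFYhg$). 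So $\mcS(z)$ has coefficients in $\QFYhg\otimes\QFYhg$, and $\mcR^0(z)=\exp(\mcS(z))$ then has coefficients in $\QFYhg\otimes\QFYhg\subset(\QFYhg\otimes\Yhg)\cap(\Yhg\otimes\QFYhg)$. The operator $T^{2\kappa}=\exp(\kappa\hbar\partial_z)$ only introduces extra factors of $\hbar$ and derivatives, which is harmless.

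\textbf{Main obstacle.} I expect the technical heart to be the bookkeeping in the $\mcR^-(z)$ induction: one must track carefully that after applying $\mathbf{T}(\zeta)^p z^{-p-1}$ — which is a degree $-1$ homogeneous operator — the $\hbar$ prefactor in \eqref{R-recur} is precisely enough to upgrade the generators $x_\alpha^{\pm}$ appearing to Drinfeld--Gavarini elements \emph{in the correct tensor slot}, and that the $\mathrm{ad}(t_{i1})$-stability of $\QFYhg$ is being invoked in the slot where the coefficient already lives in $\QFYhg$. A subtlety worth checking is whether $\mcR^-(z)$ genuinely has each coefficient in $\QFYhg\otimes\Yhg$ as opposed to only satisfying this after multiplying by a power of $\hbar$; the estimate $\mcR^-_\beta(z)\in\hbar^{\nu(\beta)}\mathds{L}(\cYhg{\vphantom{\hYhg}}^{\scriptscriptstyle(2)}_z)_{-\nu(\beta)}$ recalled in Section~\ref{ssec:Yhg-R}, combined with Corollary~\ref{C:QFYhg-chi}\eqref{QFYhg-chi:4} (which says $\dYh{-}_{-\beta}\subset\hbar^{\nu(\beta)}Y_\hbar^-(\mfg)_{-\beta}$), strongly suggests the first tensor factor naturally lands in $\dYh{-}\subset\QFYhg$ with the right power of $\hbar$ built in, so this should work out, but it is the place where I would be most careful.
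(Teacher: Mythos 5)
Your overall route — reduce to the factors, induct on $\mathrm{ht}(\beta)$ for $\mcR^-(z)$, and track $\hbar$-divisibility of the Borel transforms for $\mcR^0(z)$ — is the paper's strategy. But there are two concrete errors in the execution.

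First, for $\mcR^\pm(z)$: your primary route relies on the claim that $\omega$ sends $Y_\hbar^\pm(\mfg)$ to $Y_\hbar^\pm(\mfg)$ and hence that $(\omega\otimes\omega)$ fixes each Gauss factor. This is false: by Lemma~\ref{L:Chev}, $\omega(x_i^\pm(u))=x_i^\mp(u)$, so $\omega$ \emph{swaps} $Y_\hbar^+(\mfg)$ and $Y_\hbar^-(\mfg)$, and correspondingly $(\omega\otimes\omega)\mcR^\pm(z)=\mcR^\mp(z)$ (this is stated in Remark~\ref{R:chev}). Applying $\omega\otimes\omega$ to $\mcR^-(z)\in\QFYhg\otimes\Yhg$ therefore yields $\mcR^+(z)\in\QFYhg\otimes\Yhg$, which is not what you want; you need $\mcR^-(z)\in\Yhg\otimes\QFYhg$. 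Your parenthetical ``alternatively'' — run the symmetric induction directly, using that $\hbar x_\alpha^-\otimes x_\alpha^+=x_\alpha^-\otimes\hbar x_\alpha^+$ simultaneously lies in both $\QFYhg\otimes\Yhg$ and $\Yhg\otimes\QFYhg$ and that $\mathrm{ad}(t_{i1})$ preserves $\QFYhg$ in either slot — is precisely what the paper does, and it works cleanly without invoking any symmetry of $\mcR^\pm(z)$; I would promote it to the main argument.

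Second, and more seriously, your $\mcR^0(z)$ argument miscounts $\hbar$'s. You write that ``$\hbar^{-1}B_i(\partial_z)$ produces coefficients that are $\hbar$ times sums of $t_{ir}$'s,'' but from $B_i(u)=\hbar\sum_r\frac{t_{ir}}{r!}u^r$ one has $\hbar^{-1}B_i(u)=\sum_r\frac{t_{ir}}{r!}u^r$, whose coefficients carry \emph{no} factor of $\hbar$ and are not in $\QFYhg$. Consequently your conclusion that $\mcS(z)$ lands in $\QFYhg\otimes\QFYhg$ is both unsupported and too strong. Moreover, you never use the fact that $g(z/2\kappa\hbar)$ is divisible by $\hbar$, which is essential: formula~\eqref{logR0} has an overall prefactor $\hbar^{-2}$, the tensor $B_i\otimes B_j$ has coefficients in $\mbJ\otimes\mbJ\subset\hbar^2\Yhg^{\otimes 2}$ (so the $\hbar^{-2}$ only brings you back to $\Yhg\otimes\Yhg$), and it is exactly the extra $\hbar$ from $g(z/2\kappa\hbar)$ that lets you rewrite the coefficients as elements of $\hbar^{-1}\mbJ\otimes\mbJ\subset\Yhg\otimes\QFYhg$, or equivalently $\mbJ\otimes\hbar^{-1}\mbJ\subset\QFYhg\otimes\Yhg$. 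Without that $\hbar$, the argument does not close. The paper states this step as: $\hbar^{-1}B_i(u)\otimes B_j(-u)\in(\QFYhg\otimes\Yhg)[\![u]\!]\cap(\Yhg\otimes\QFYhg)[\![u]\!]$ and $g(z/2\kappa\hbar)\in\hbar\C[\![\hbar]\!][\![z^{-1}]\!]$, combining to give the claim for $\mcS(z)$.
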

\begin{proof} Since $\QFYhg$ is preserved by $\mathrm{ad}(t_{i1})$ for any $i\in \mbI$ and, for each $\alpha\in \Root^+$, the simple tensor 
$\hbar x_\alpha^-\otimes x_\alpha^+=x_\alpha^-\otimes \hbar x_\alpha^+$ belongs to the intersection of 
$\QFYhg\otimes \Yhg$ and $\Yhg\otimes \QFYhg$, it follows from \eqref{R-recur} and induction on $\mathrm{ht}(\beta)$ that 
\begin{equation*}
\mcR^-_\beta(z)\in (\QFYhg\otimes \Yhg)[\![z^{-1}]\!] \cap (\Yhg\otimes \QFYhg)[\![z^{-1}]\!] \quad \forall\; \beta\in Q_+.
\end{equation*}
Consequently, both $\mcR^-(z)$ and $\mcR^+(z)=\mcR^-_{21}(-z)^{-1}$ belong to the intersection of $(\QFYhg\otimes \Yhg)[\![z^{-1}]\!]$ and $(\Yhg\otimes \QFYhg)[\![z^{-1}]\!]$. It is thus enough to prove that 
\begin{equation*}
\mcR^0(z)\in (\QFYhg\otimes \Yhg)[\![z^{-1}]\!] \cap (\Yhg\otimes \QFYhg)[\![z^{-1}]\!].
\end{equation*}
Recall from \eqref{J-def} that $\mbJ=\QFYhg\cap \hbar\Yhg$. 
Since $\hbar h_i(u)\in \mbJ[\![u^{-1}]\!]$, the logarithm $t_i(u)=\log(1+\hbar h_i(u))$ and its Borel transform $B_i(u)$ both have coefficients in $\mbJ$. Therefore, we have 
\begin{equation*}
\hbar^{-1} B_i(u)\otimes B_j(-u)\in (\QFYhg\otimes \Yhg)[\![u]\!] \cap (\Yhg\otimes \QFYhg)[\![u]\!]\quad \forall\; i,j\in\mbI.
\end{equation*}
Since $g(z/2\kappa\hbar)$ is divisible by $\hbar$, it follows from \eqref{logR0} that the logarithm $\mcS(z)$ of $\mcR^0(z)$, and thus $\mcR^0(z)$ itself, has coefficients belonging to the intersection of $\QFYhg\otimes \Yhg$ and  $\Yhg\otimes \QFYhg$. \qedhere 
\end{proof}
%
%
\section{The dual Yangian}\label{sec:Yhg*}

With Section \ref{sec:QFYhg} at our disposal, we are now in a position to introduce the dual Yangian $\Yhgstar$. After defining $\Yhgstar$ and spelling out some of its basic properties in Sections \ref{ssec:Yhg*-Hopf} and \ref{ssec:Yhg*-Chev}, we prove in Sections \ref{ssec:Yhg*-scl} and \ref{ssec:Yhg*-quant} that it is a homogeneous quantization of the Lie bialgebra $(\tminus,\delta_{\scriptscriptstyle{-}})$ defined in Section \ref{ssec:Pr-Manin}. We conclude in Section \ref{ssec:Yhg*-TD} by identifying a family of generators for $\Yhgstar$ and establishing a triangular decomposition.
%
\subsection{The dual Yangian \texorpdfstring{$\Yhgstar$}{Yhg*} }\label{ssec:Yhg*-Hopf}
By Corollary \ref{C:QFhYhg}, the $\hbar$-adic completion $\QFhYhg$ of $\QFYhg$ is an $\N$-graded topological Hopf algebra of finite type. We may thus apply the machinery from Section \ref{ssec:Pr-gr*} to obtain a $\Z$-graded topological Hopf structure on its restricted dual. 

In more detail, by Proposition \ref{P:graded-tensor} the restricted dual $\Yhgstar$, as defined in Definition \ref{D:M-star}, is a $\Z$-graded topological Hopf algebra over $\C[\![\hbar]\!]$ with product, unit, coproduct, counit and antipode given by the transposes 
\begin{gather*}
\Delta^t: \Yhgstar \otimes \Yhgstar \to \Yhgstar, \quad \veps^t: \C[\![\hbar]\!]\to \Yhgstar,\\
m^t:\Yhgstar \to \Yhgstar \otimes \Yhgstar, \quad \iota^t:\Yhgstar\to\C[\![\hbar]\!],\quad S^t:\Yhgstar\to \Yhgstar,
\end{gather*}
respectively, where $\Delta$, $\veps$, $m$, $\iota$ and $S$ are the coproduct, counit, product, unit and antipode of $\QFhYhg$, respectively, and $\otimes$ is the topological tensor product over $\C[\![\hbar]\!]$; see Remark \ref{R:otimes}.
\begin{definition}\label{D:Yhg*}
The topological Hopf algebra  $\Yhgstar$ introduced above is called \textit{the dual Yangian} of $\mfg$. 
\end{definition}
Explicitly,  $\Yhgstar$ is the subspace of the $\C[\![\hbar]\!]$-linear dual $\QFhYhg{\vphantom{)}}^\ast$ consisting of those $f:\QFhYhg\to \C[\![\hbar]\!]$ which are continuous with respect to the gradation topology on $\QFhYhg$. It can be recovered as the $\hbar$-adic completion of the $\Z$-graded $\C[\hbar]$-algebra 
\begin{equation*}
(\Yhgstar)_\Z:=\bigoplus_{a\in \Z}\Yhgstar_a\cong \QFYhg{\vphantom{)}}^\star,
\end{equation*}
where $\QFYhg{\vphantom{)}}^\star$ is the graded dual of $\QFYhg$ over $\C[\hbar]$ which, as explained in Remark \ref{R:M_N*}, coincides with $(\Yhgstar)_\Z$ under the natural identification of
$\QFhYhg{\vphantom{)}}^\ast$ with $\Hom_{\C[\hbar]}(\QFYhg,\C[\![\hbar]\!])$. Here we have set 
\begin{equation*}
\Yhgstar_a:=\Hom_{\C[\![\hbar]\!]}^a(\QFhYhg,\C[\![\hbar]\!])\cong \Hom_{\C[\hbar]}^a(\QFYhg,\C[\hbar]) \quad \forall\; a\in \Z. 
\end{equation*}
%

By Corollary \ref{C:Vstar[[h]]}, $\Yhgstar$ is a flat deformation of the graded Hopf algebra 
\begin{equation*} 
 \Yhgstar/\hbar\Yhgstar\cong\Symtstar.
\end{equation*}
In particular, the dual Yangian $\Yhgstar$ is isomorphic to  $\Symtstar[\![\hbar]\!]$ as a $\Z$-graded topological $\C[\![\hbar]\!]$-module. Here we recall that $\Symtstar$ is the graded dual of the $\N$-graded Hopf algebra $\Symt$ over $\C$. As a vector space, one has
\begin{equation*}
\Symtstar=\bigoplus_{n\in \N}\Symtstar_{-n}
\end{equation*}
where  $\Symtstar_{-n}=\hSymt{n}{\vphantom{)}}^\ast\subset \Hom_\C(\Symt,\C)$. We shall identify this Hopf algebra with the enveloping algebra of the Lie algebra $\tminus=t^{-1}\mfg[t^{-1}]$ in Section \ref{ssec:Yhg*-scl} below.

Now let us make a few comments which concern the restricted dual $\hYhg{\vphantom{)}}^\star$ of the full, completed, Yangian $\hYhg$. Since $\hYhg$ is an $\N$-graded topological Hopf algebra over $\C[\![\hbar]\!]$, the formalism of Section \ref{ssec:Pr-gr*} implies that $\Yhgstar$ is a $\Z$-graded topological $\C[\![\hbar]\!]$-algebra, which provides a flat deformation of the algebra $U(\tplus){\vphantom{)}}^\star$ over $\C[\![\hbar]\!]$. In particular, there is an isomorphism of $\Z$-graded topological $\C[\![\hbar]\!]$-modules 
\begin{equation*}
\hYhg{\vphantom{)}}^\star\cong U(\tplus){\vphantom{)}}^\star[\![\hbar]\!]. 
\end{equation*}
However, $\hYhg{\vphantom{)}}^\star$ is not itself a topological Hopf algebra over $\C[\![\hbar]\!]$ with respect to the $\hbar$-adic topology. It is, however, naturally a subalgebra of the topological Hopf algebra $\Yhgstar$. This is made explicit by the below result.
\pagebreak 
\begin{proposition}\label{P:hYhg*->QFhYhg*}
The $\C[\![\hbar]\!]$-linear map 
\begin{equation*}
\hYhg{\vphantom{)}}^\star \to \Yhgstar, \quad f\mapsto f|_{\QFhYhg} \quad \forall \quad f\in \hYhg{\vphantom{)}}^\star,
\end{equation*}
is an injective homomorphism of $\Z$-graded topological $\C[\![\hbar]\!]$-algebras. 
\end{proposition}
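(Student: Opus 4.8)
The plan is to construct the map in two steps and verify its properties by reducing everything to statements about graded duals of $\C[\hbar]$-modules, where the finite-type hypothesis on $\QFhYhg$ makes the duality well-behaved.

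\textbf{Construction of the map.} First I would observe that the inclusion $\QFhYhg \hookrightarrow \hYhg$ of Corollary \ref{C:QFhYhg}(2) is a morphism of $\N$-graded topological Hopf algebras, so its transpose sends continuous graded functionals to continuous graded functionals: concretely, restriction $f \mapsto f|_{\QFhYhg}$ carries $\hYhg{\vphantom{)}}^\star$ into $\QFhYhg{\vphantom{)}}^\ast$, and it is clearly $\C[\![\hbar]\!]$-linear and preserves the $\Z$-grading since the inclusion is degree-preserving. To see the image lands in $\Yhgstar$ (i.e.\ in the \emph{continuous} dual with respect to the gradation topology of $\QFhYhg$), I would note that the gradation filtration $\msJ_k^{\QFhYhg}$ of $\QFhYhg$ is induced by that of $\hYhg$ (both are the $\hbar$-adic completions of $\bigoplus_{k\ge n}(\cdot)_k$), so if $f$ is continuous on $\hYhg$ then $f|_{\QFhYhg}$ is continuous on $\QFhYhg$. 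It is cleanest to work at the level of the dense $\Z$-graded $\C[\hbar]$-submodules: using Remark \ref{R:M_N*}, the map is identified with the transpose $\Hom_{\C[\hbar]}(\hYhg_\N,\C[\hbar]) \supset U(\tplus){\vphantom{)}}^\star \to \QFYhg{\vphantom{)}}^\star \subset \Hom_{\C[\hbar]}(\QFYhg,\C[\hbar])$ of the graded inclusion $\QFYhg \hookrightarrow Y_\hbar(\mfg)$, extended to $\hbar$-adic completions.

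\textbf{It is an algebra homomorphism.} The product on both $\hYhg{\vphantom{)}}^\star$ and $\Yhgstar$ is the restriction of $\Delta^t$, where $\Delta$ is the coproduct on $\hYhg$ (resp.\ $\QFhYhg$). Since $\QFhYhg$ is a Hopf subalgebra, the coproduct of $\QFhYhg$ is the restriction/corestriction of that of $\hYhg$; dualizing, restriction of functionals intertwines the two convolution products. The only subtlety is that on $\hYhg{\vphantom{)}}^\star$ the product is genuinely defined (one must check $\Delta^t$ lands in $\hYhg{\vphantom{)}}^\star$ — but this is part of the setup already invoked in the paragraph after Corollary \ref{C:QFhYhg}), whereas on $\Yhgstar$ it is the honest Hopf-algebra product from Proposition \ref{P:graded-tensor}; compatibility is then a direct diagram chase with $\upgamma$ and the transposes. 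I would also check the unit is preserved, which is immediate since $\veps^t(1)$ is the counit functional, and the counit of $\QFhYhg$ is the restriction of that of $\hYhg$.

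\textbf{Injectivity.} This is the one genuinely substantive point. A functional $f \in \hYhg{\vphantom{)}}^\star$ with $f|_{\QFhYhg}=0$ must be shown to vanish. By Lemma \ref{L:scl-map}(1) it suffices to check injectivity of the semiclassical limit together with separatedness/torsion-freeness, which reduces the claim to: the transpose $U(\tplus){\vphantom{)}}^\star \to \Symtstar$ of the inclusion $\Symt \cong \QFYhg/\hbar\QFYhg \hookrightarrow U(\tplus) \cong \hYhg/\hbar\hYhg$ (this inclusion being $\mathsf{S}(\hbar\tplus)\hookrightarrow U(\tplus)$, $\hbar x \mapsto x$ followed by symmetrization — it is injective, indeed it is the inclusion of the Rees-algebra semiclassical limit) is injective. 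But the transpose of an injection of $\N$-graded vector spaces with finite-dimensional components is surjective on restricted duals, not obviously injective; injectivity instead requires that $\Symt$ \emph{spans} a complement — i.e.\ that the inclusion $\Symt \hookrightarrow U(\tplus)$ has dense image in an appropriate sense, or rather that $U(\tplus)/\Symt$ contributes nothing. The clean fix: the map $\hbar\mfg[t]\to U(\mfg[t])$, $\hbar\,xt^k\mapsto xt^k$ has image generating $U(\mfg[t])$ as an algebra, and symmetrization gives a \emph{linear} isomorphism $\mathsf{S}(\hbar\tplus)\xrightarrow{\sim}\mathrm{gr}\,U(\tplus)$, so at the \emph{associated-graded} level (for the enveloping-algebra filtration, which corresponds to the $\hbar$-adic one on the Rees algebra) the inclusion $\QFYhg\hookrightarrow Y_\hbar(\mfg)$ becomes an isomorphism $\mathsf{R}_\hbar(U(\tplus))\cong \mathsf{R}_\hbar(U(\tplus))$ onto a cofinal subobject — more precisely $\mathbf{J}^n = \hbar^n Y_\hbar(\mfg)\cap\QFYhg$, as recorded around \eqref{J^k-tensor}. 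Thus any $f\in\hYhg{\vphantom{)}}^\star$ is determined by its values on $\QFYhg$ after clearing denominators by powers of $\hbar$: if $f|_{\QFYhg}=0$ then for each homogeneous $y\in Y_\hbar(\mfg)$ of degree $d$, some $\hbar^m y\in\QFYhg$ (by Corollary \ref{C:QFYhg-chi}(4) and its $\mathfrak{h}$-analogue, or directly from Proposition \ref{P:QFSH}(3) since $\hbar\mathds{G}\subset\QFYhg$ and monomials of length $\ell$ lie in $\hbar^{\ell}$-times $\QFYhg$... actually more carefully $\hbar^{\ell}\cdot(\text{monomial of length }\ell)\in\QFYhg$), so $\hbar^m f(y)=f(\hbar^m y)=0$, and torsion-freeness of $\C[\![\hbar]\!]$ gives $f(y)=0$. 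Hence $f=0$.

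\textbf{Main obstacle.} The delicate step is injectivity: one must argue that $\QFhYhg$ is ``large enough'' inside $\hYhg$ that a continuous functional is determined by its restriction — equivalently that $\C(\!(\hbar)\!)\otimes\QFhYhg = \C(\!(\hbar)\!)\otimes\hYhg$, which follows from Proposition \ref{P:QFSH}(3) (the generators $\hbar\mu(\tplus)$ of $\QFYhg$ generate $\hYhg$ after inverting $\hbar$). I would make this precise by combining Lemma \ref{L:scl-map} with the filtered/Rees-algebra description recalled in Section \ref{ssec:Yhg'}, reducing to the classical fact that the symmetrization map identifies $\mathsf{S}(\hbar\tplus)$ with $\mathrm{gr}\,U(\tplus)$; everything else — $\C[\![\hbar]\!]$-linearity, grading, algebra-homomorphism property — is formal transpose bookkeeping that I would state but not belabor.
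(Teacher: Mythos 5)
Your final argument for injectivity --- that $\hbar^{\ell(w)}w \in B(\hbar\mathds{G}) \subset \QFYhg$ for each ordered monomial $w$ of length $\ell(w)$ in $B(\mathds{G})$, so $f|_{\QFYhg} = 0$ forces $\hbar^{\ell(w)}f(w) = f(\hbar^{\ell(w)}w) = 0$ and hence $f(w) = 0$ by torsion-freeness of $\C[\![\hbar]\!]$ --- is exactly the paper's proof, and your algebra-homomorphism and grading observations are also what the paper records. However, the detour through Lemma \ref{L:scl-map} is not merely inessential; it rests on a false identification. The semiclassical limit of the inclusion $\QFhYhg \hookrightarrow \hYhg$ is \emph{not} the symmetrization isomorphism $\Symt \cong \mathrm{gr}\, U(\tplus) \cong U(\tplus)$. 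It is the map $\QFhYhg/\hbar\QFhYhg \to \hYhg/\hbar\hYhg$ induced on quotients, which sends the class of $\hbar x \in \hbar\tplus$ (a nonzero degree-one element of $\Symt$) to the class of $\hbar x$ in $U(\tplus)$, which is zero since $\hbar x \in \hbar\hYhg$. As $\Symt$ is generated as an algebra by $\hbar\tplus$, this map factors through the counit $\Symt \to \C$ and has image $\C\cdot 1 \subset U(\tplus)$. The induced map $\bar\uptau: U(\tplus)^\star \to \Symtstar$ therefore has a very large kernel --- every functional of nonzero degree, in particular --- so the hypothesis of Lemma \ref{L:scl-map} fails and that route cannot be salvaged. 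Your parenthetical suspicion (``not obviously injective'') understates the problem: the map whose transpose you would need is not injective, and is not the symmetrization map you describe. Your ``clean fix'' is the correct proof and matches the paper; the preceding semiclassical-limit paragraph should simply be deleted.
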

\begin{proof}
Since $\QFhYhg$ is a  $\Z$-graded Hopf subalgebra of $\hYhg$,  the map $f\mapsto f|_{\QFhYhg}$ respects the underlying $\Z$-graded algebra structures. Moreover, if $f\in  \hYhg{\vphantom{)}}^\star$ vanishes on the basis $B(\hbar \mathds{G})$ from Proposition \ref{P:QFSH}, then it vanishes on the basis $B(\mathds{G})$ of $\QFYhg$ as $\C[\![\hbar]\!]$ is torsion free. This yields the injectivity. \qedhere
\end{proof}
\begin{remark}
Let $\mbJ$ be as in \eqref{J-def}. Then, since  $\mbJ$ satisfies 
\begin{equation*}
\mbJ^n\subset \bigoplus_{k\geq n}\QFYhg_k \quad \forall \; n\in \N, 
\end{equation*}
  every $f\in \QFhYhg{\vphantom{)}}^\star$ is automatically continuous with respect to the $\mbJ$-adic topology on $\QFYhg$, and so uniquely extends to an element of the associated topological dual $\hYhg{\vphantom{)}}^{\circ}$ to $\QFJYhg$; see Section \ref{ssec:QUE-dual} and Remark \ref{R:QFJYhg}. In this sense, the notion of duality considered here is compatible with that for a general quantized enveloping algebra $U_\hbar\mfb$ outlined in Section \ref{ssec:QUE-dual}, despite the fact that we did not need to leave the category of topological Hopf algebras over $\C[\![\hbar]\!]$ to define $\Yhgstar$. 
\end{remark}

\begin{remark}
An alternative description of $\QFhYhg{\vphantom{)}}^\star$ using the formalism of Remark \ref{R:QDP} can be found in \cite{Etingof-Kazhdan-III}*{\S3.1}. 
\end{remark}
%
%

%
\subsection{Chevalley involution and \texorpdfstring{$\mfg$}{g}-action}\label{ssec:Yhg*-Chev}
We now make a handful of simple observations which will play an important role in the remainder of this article. In what follows, we shall freely make use of the fact that $\Yhgstar$ can be naturally viewed as a subspace of $\Hom_{\C[\hbar]}(\QFYhg,\C[\![\hbar]\!])$.

Since the Chevalley involution $\omega$ defined in Lemma \ref{L:Chev} is an anti-automorphism of the graded Hopf algebra $\Yhg$, it follows from the definition of $\QFYhg$ (or, alternatively, from Proposition \ref{P:QFSH}) that it restricts to an anti-automorphism of the graded Hopf algebra $\QFYhg$, which we again denote by $\omega$. Consequently, the transpose $\omega^t$ of $\omega$, uniquely determined by
\begin{equation*}
\omega^t(f)(x)=f(\omega(x)) \quad \forall\; f\in \Yhgstar \; \text{ and }\; x\in \QFYhg,
\end{equation*}
is an involutive Hopf algebra anti-automorphism of $\Yhgstar$. We call $\omega^t$ the \textit{Chevalley involution} of $\Yhgstar$. 

Next, recall that the adjoint action of $\mfg$ on $\Yhg$ preserves the Drinfeld--Gavarini subalgebra $\QFYhg$. Since each graded component $\QFYhg_k$ is also a submodule, the restricted dual $\Yhgstar$ is a $\mfg$-module equipped with the coadjoint action
\begin{equation*}
(x\cdot f)(y)=f(S(x)\cdot y)\quad \forall \; x\in U(\mfg),\; y\in \QFYhg\; \text{ and }\; f\in \QFhYhg^\star.
\end{equation*} 

We now introduce a topological $Q$-grading on $\Yhgstar$ compatible with the above action which is analogous to that obtained for $\DYhg$ in Section \ref{ssec:DYhg-root}. 
For each $\beta\in Q$, define the closed $\C[\![\hbar]\!]$-submodule $\Yhgstar_\beta\subset \Yhgstar$ by
\begin{equation*}
\Yhgstar_\beta=\{f\in \Yhgstar: f(\QFYhg_\alpha)\subset \C[\![\hbar]\!]_{\alpha+\beta} \;\forall \;\alpha\in Q\},
\end{equation*}
where $\C[\![\hbar]\!]_\alpha$ is $\C[\![\hbar]\!]$ if $\alpha=0$ and is $\{0\}$ otherwise. It is easy to see that $\Yhgstar_\beta$ is just the $\beta$-weight space of $\Yhgstar$ with respect to the $\mfg$-module structure introduced above. That is, one has 
\begin{equation*}
\Yhgstar_\beta=\{f\in \Yhgstar:\, h\cdot f= \beta(h)f \quad \forall\; h\in \mfh\}. 
\end{equation*}
As $\QFYhg$ is $Q$-graded as a Hopf algebra, the direct sum 
\begin{equation*}
\Yhgstar_Q=\bigoplus_{\beta\in Q}\Yhgstar_\beta \subset \Yhgstar
\end{equation*}
is a $Q$-graded $\C[\![\hbar]\!]$-subalgebra of $\Yhgstar$. Moreover, the counit, coproduct and antipode of $\Yhgstar$ are all $Q$-graded, degree zero, maps. It is not difficult to prove that $\Yhgstar_Q$ is a dense subalgebra of $\Yhgstar$ whose subspace topology coincides with its $\hbar$-adic topology. Hence, we obtain the following result:
\begin{corollary}\label{C:Yhg*-Qgrad}
$\Yhgstar$ is $Q$-graded as a topological Hopf algebra over $\C[\![\hbar]\!]$. 
\end{corollary}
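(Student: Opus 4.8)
The statement to prove is Corollary~\ref{C:Yhg*-Qgrad}: that $\Yhgstar$ is $Q$-graded as a topological Hopf algebra over $\C[\![\hbar]\!]$. My plan is to follow the exact template already used in the excerpt for establishing that $\DYhg$ and $\Yhgstar$ carry topological $\Z$-gradings (via Lemma~\ref{L:grad-top} and its consequences), but now with the lattice $Q$ in place of $\Z$ and with the grading coming from the coadjoint $\mfh$-action rather than the loop grading. The key point is that $Q$-gradings behave formally just like $\Z$-gradings here: $Q$ is a free abelian group, each weight space $\QFYhg_\alpha$ is finite-dimensional over $\C$ in each loop-degree (indeed $\QFhYhg$ is of finite type by Corollary~\ref{C:QFhYhg}), and the structure maps of $\QFhYhg$ are all $Q$-graded of degree zero.

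First I would verify the three ingredients needed to invoke the $Q$-graded analogue of Lemma~\ref{L:grad-top}. (i) \emph{Density}: $\Yhgstar_Q = \bigoplus_{\beta\in Q}\Yhgstar_\beta$ is dense in $\Yhgstar$. This follows because $\QFhYhg$ is $Q$-graded as a Hopf algebra (it inherits this from $\Yhg$, whose $Q$-grading comes from $\mathrm{ad}(\mfh)$), so for each $n$ the quotient $\QFhYhg/\hbar^n\QFhYhg$ is $Q$-graded over $\msK_n$ with finite-dimensional homogeneous pieces in each loop degree; dualizing, $\Yhgstar/\hbar^n\Yhgstar \cong (\QFhYhg/\hbar^n\QFhYhg)^\star$ decomposes as a direct sum of weight spaces, and a limit argument (as in the passage establishing \eqref{M_k} and the discussion after it) shows every element of $\Yhgstar$ is approximated $\hbar$-adically by its homogeneous components, each of which lies in $\Yhgstar_Q$. (ii) \emph{Closedness}: each $\Yhgstar_\beta$ is a closed $\C[\![\hbar]\!]$-submodule; this is immediate from the defining condition $f(\QFYhg_\alpha)\subset \C[\![\hbar]\!]_{\alpha+\beta}$, which is preserved under $\hbar$-adic limits. (iii) \emph{Compatibility with the filtration}: $\hbar^n\Yhgstar\cap\Yhgstar_Q = \hbar^n\Yhgstar_Q$ for all $n$; this holds because $\C[\hbar]$ is torsion free and the $Q$-decomposition of $(\Yhgstar)_\Z\cong\QFYhg^\star$ is an honest direct-sum decomposition of $\C[\hbar]$-modules, so an element of $\Yhgstar_Q$ divisible by $\hbar^n$ in $\Yhgstar$ has each homogeneous component divisible by $\hbar^n$.

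With these in hand, the $Q$-graded analogue of Lemma~\ref{L:grad-top} gives that $\Yhgstar$ is a $Q$-graded topological $\C[\![\hbar]\!]$-module with $(\Yhgstar)_Q$ dense, and that the subspace topology on $\Yhgstar_Q$ coincides with its $\hbar$-adic topology (so $\Yhgstar$ is the $\hbar$-adic completion of $\Yhgstar_Q$). It then remains to check that the Hopf structure maps are $Q$-graded of degree zero. For the product $\Delta^t$ and unit $\veps^t$: the coproduct $\Delta$ and counit $\veps$ of $\QFhYhg$ are $Q$-graded of degree zero (since $\mfg$ — in particular $\mfh$ — is primitive in $\Yhg$, the coadjoint action is compatible with $\Delta$), hence their transposes preserve weights; concretely, the convolution product of $f\in\Yhgstar_\beta$ and $g\in\Yhgstar_\gamma$ lands in $\Yhgstar_{\beta+\gamma}$ because $\Delta(\QFYhg_\alpha)\subset\bigoplus_{\alpha_1+\alpha_2=\alpha}\QFYhg_{\alpha_1}\otimes\QFYhg_{\alpha_2}$. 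For the coproduct $m^t$, counit $\iota^t$ and antipode $S^t$: transpose the statements that $m$, $\iota$, $S$ are $Q$-graded of degree zero, using Proposition~\ref{P:graded-tensor} to identify $(\QFhYhg\otimes\QFhYhg)^\star$ with $\Yhgstar\otimes\Yhgstar$ compatibly with the $Q$-grading (this is where finite type is used, just as in the $\Z$-graded case). Assembling these, each structure map of $\Yhgstar$ is $Q$-graded of degree zero, which is exactly the assertion that $\Yhgstar$ is $Q$-graded as a topological Hopf algebra.

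I do not anticipate a serious obstacle: this is a routine transcription of machinery already set up in Sections~\ref{ssec:Pr-grTop}--\ref{ssec:Pr-gr*} from the $\Z$-graded to the $Q$-graded setting. The only point requiring a little care is that Lemma~\ref{L:grad-top}, Corollary~\ref{C:Top-Z=Free}, Proposition~\ref{P:graded-tensor} etc.\ are literally stated for $\Z$-gradings; I would either remark explicitly that all proofs go through verbatim for gradings by any countable abelian group with the relevant finiteness (the ``finite type'' condition becoming: each graded piece of the semiclassical limit is finite-dimensional), or — more economically — observe that the combined $\N\times Q$-grading on $\QFhYhg$ refines to a $\Z$-grading by choosing any group homomorphism $Q\to\Z$ separating the finitely many weights appearing in each loop degree, reducing to the already-proved $\Z$-graded statements. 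The slightly subtle bookkeeping item is confirming that ``degree zero'' is forced for the structure maps — but this is immediate from $\Delta$ having loop-degree zero together with $\mfg\subset\Yhg$ being primitive, exactly as noted for the $\Z$-grading after Definition~\ref{D:Hom-quant}.
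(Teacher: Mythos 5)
Your proposal is correct and follows essentially the same route as the paper, which itself only sketches this verification (closedness of each $\Yhgstar_\beta$, density of $\Yhgstar_Q$ via convergence of $\sum_\beta f\circ\pi_\beta$, compatibility of the $\hbar$-adic and subspace topologies, and $Q$-gradedness of the Hopf structure maps transposed from $\QFhYhg$). One small caution: your suggested shortcut of reducing to the $\Z$-graded statements via a homomorphism $Q\to\Z$ is shaky, since any such $\ell$ identifies weights across different loop degrees and the resulting $\Z$-grading does not actually recover the $Q$-grading; your primary option — noting that the Section~\ref{ssec:Pr-grTop} machinery transcribes verbatim to gradings by an arbitrary free abelian group with the relevant finiteness — is the right one and is what the paper tacitly does.
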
  
%
%
%

%
\subsection{Classical duality}\label{ssec:Yhg*-scl}

We now wish to identify the graded dual $\Symtstar$ of the $\N$-graded Hopf algebra $\Symt$ with the enveloping algebra $U(\tminus)$, where we recall that $\tminus=t^{-1}\mfg[t^{-1}]$.

 To formulate this result optimally, we must first give a few preliminary remarks. To begin, we note that the semiclassical limit of the Chevalley involution $\omega^t$ of $\Yhgstar$ coincides, by definition, with the tranpose $\dot{\omega}^t$ of the automorphism $\dot{\omega}$ of $\Symt$ given on $\hbar\tplus$ by 
\begin{equation*}
\dot{\omega}(\hbar x)=\hbar \bar{\omega}(x) \quad \forall \; x\in \tplus,
\end{equation*}
where $\bar{\omega}$ is as in \eqref{bar-omega}. Similarly, the coadjoint action of $\mfg$ on $\Yhgstar$ introduced in Section \ref{ssec:Yhg*-Chev} specializes to an action of $\mfg$ on $\Symtstar$. By definition, this action is dual to that of $\mfg$ on $\Symt$ inherited from the adjoint action of $\mfg$ on $\QFYhg$. 

On the other hand, the Chevalley involution $\bar\omega$ of $U(\mfg[t^{\pm 1}])$, defined as in \eqref{bar-omega} with $r$ taking values in $\Z$, and the adjoint action of $\mfg$ on $U(\mfg[t^{\pm 1}])$ both preserve $U(\tminus)$. The resulting involution and $\mfg$-module structure on $U(\tminus)$ will be compared to those of $\Symtstar$ described in the previous paragraph in \eqref{Pdual:3} of Proposition \ref{P:dual}. 

Consider now the standard symmetric algebra grading
\begin{equation*}
\Symt=\bigoplus_{n\in \N}\sSymt{n},
\end{equation*}
where $\sSymt{n}$ is the $n$-th symmetric power of $\hbar\tplus$, as in \eqref{Symt-J}. Since $\sSymt{1}=\hbar\tplus$, every linear functional $f$ in $(\hbar\tplus)^\ast$ trivially extends to an element of $\Symt{\vphantom{)}}^\ast$ satisfying $f(\sSymt{n})=0$ for all $n\neq 1$, which is contained in $\Symtstar$ provided $f\in (\hbar\tplus)^\star$. That is, we have $(\hbar\tplus)^\star\subset \Symtstar$. In addition, we have a homogeneous, degree zero, isomorphism of graded vector spaces 
\begin{equation*}
 \mathsf{Res}_{\scriptscriptstyle-}^\hbar:\tminus \iso (\hbar\tplus)^\star, \quad \mathsf{Res}_{\scriptscriptstyle-}^\hbar(x)(\hbar y)=\mathsf{Res}_{\scriptscriptstyle-}(x)(y)=\langle x, y\rangle \quad \forall\; x\in \tminus, \, y\in \tplus,
\end{equation*}
where $\mathsf{Res}_{\scriptscriptstyle-}$ and $\langle \,, \,\rangle$ are as defined in Section \ref{ssec:Pr-Manin}. 
With the above at our disposal, we are now prepared to identify $\Symtstar $ and $U(\tminus)$. 
\begin{proposition}\label{P:dual} 
The restricted dual $\Symtstar $ has the following properties:
\begin{enumerate}[font=\upshape]
\item\label{Pdual:1} $(\hbar\tplus)^\star$ is the Lie algebra of primitive elements in $\Symtstar $, with bracket
\begin{equation*}
[f,g]=(f\otimes g) \circ \hbar\delta_{\scriptscriptstyle +} \quad \forall \quad f,g\in (\hbar\tplus)^\star.
\end{equation*}
\item\label{Pdual:2} $\mathsf{Res}_{\scriptscriptstyle -}^\hbar$ uniquely extends to an isomorphism of graded Hopf algebras 
\begin{equation*}
\varphi:U(\tminus)\iso \Symtstar
\end{equation*}
\item\label{Pdual:3} $\varphi$ is a $\mfg$-module intertwiner commuting with Chevalley involutions. 
\end{enumerate}
\end{proposition}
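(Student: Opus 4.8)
The plan is to establish the three parts in sequence, with Part \eqref{Pdual:2} being the technical heart and Parts \eqref{Pdual:1} and \eqref{Pdual:3} following essentially by unwinding definitions once the right framework is in place. The underlying principle throughout is the classical statement that for a graded connected Hopf algebra $H$ of finite type over $\C$, the graded dual $H^\star$ is again a graded connected Hopf algebra of finite type, and that primitives of $H^\star$ are dual to indecomposables of $H$. Here $\Symt$ is such an algebra (graded by the loop grading, with $\hSymt{0}=\C$ and each $\hSymt{n}$ finite-dimensional by Corollary \ref{C:QFhYhg}), so $\Symtstar$ inherits all of this structure automatically.

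For Part \eqref{Pdual:1}: the space of primitives $P(\Symtstar)$ is, by the finite-type duality, canonically identified with $(\mathbb{J}/\mathbb{J}^2)^\star$ where $\mathbb{J}=\bigoplus_{n>0}\sSymt{n}$ is the augmentation ideal from \eqref{Symt-J}. Since $\mathbb{J}/\mathbb{J}^2\cong \sSymt{1}=\hbar\tplus$, we get $P(\Symtstar)\cong (\hbar\tplus)^\star$ as claimed, and the pairing is nondegenerate in each degree because the graded pieces are finite-dimensional. The Lie bracket on $P(\Symtstar)$ is by definition $[f,g]=m_{\Symtstar}^{\text{(comm)}}$-commutator $=\Delta_{\Symt}^t(f\otimes g)-\Delta_{\Symt}^t(g\otimes f)$, and since the only part of the coproduct on $\Symt$ that contributes to pairing against primitives is the ``primitive-correction'' term, this evaluates to $(f\otimes g)\circ(\Delta_{\Symt}-\op{\Delta}_{\Symt})|_{\hbar\tplus}$. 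But $\Symt=\QFYhg/\hbar\QFYhg$ carries precisely the coproduct whose antisymmetrization on $\hbar\tplus$ is $\hbar\delta_{\scriptscriptstyle+}$ (this is exactly formula \eqref{quant-comm} applied to the quantization $\QFYhg$ of $\tplus$, rescaled by $\hbar$ because the generators of $\QFYhg$ are $\hbar\mu(\tplus)$ — see Proposition \ref{P:QFSH}\eqref{QFSH:3}); hence $[f,g]=(f\otimes g)\circ\hbar\delta_{\scriptscriptstyle+}$.

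For Part \eqref{Pdual:2}: by Part \eqref{Pdual:1}, $(\hbar\tplus)^\star$ is a Lie subalgebra of $\Symtstar$ whose bracket, transported via the isomorphism $\mathsf{Res}_{\scriptscriptstyle-}^\hbar:\tminus\iso(\hbar\tplus)^\star$, becomes exactly $-\delta_{\scriptscriptstyle-}$ dualized, i.e.\ the Lie bracket on $\tminus=t^{-1}\mfg[t^{-1}]$ — this is precisely the content of the computation in Section \ref{ssec:Pr-Manin} identifying $\delta_{\scriptscriptstyle-}$ as the transpose of the bracket on $\mft_-$, combined with the residue pairing $\langle\,,\,\rangle$ being the one used to define $\mathsf{Res}_{\scriptscriptstyle-}^\hbar$. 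Thus $\mathsf{Res}_{\scriptscriptstyle-}^\hbar$ is a Lie algebra isomorphism onto $P(\Symtstar)$, and by the universal property of the enveloping algebra it extends to a Hopf algebra homomorphism $\varphi:U(\tminus)\to\Symtstar$. To see $\varphi$ is an isomorphism: it is graded of degree zero, and in each (negative) degree $-n$ both sides are finite-dimensional. Surjectivity follows because $\Symtstar$ is generated by its primitives — this is the dual statement to $\Symt$ being generated in degree $1$ over $\C$ (equivalently, $\QFYhg/\hbar\QFYhg$ is generated by $\hbar\tplus$, Proposition \ref{P:QFSH}\eqref{QFSH:3}) — so the connected graded Hopf algebra $\Symtstar$ is, by Milnor--Moore, a quotient of $U(P(\Symtstar))=U(\tminus)$; injectivity then follows by a dimension count in each graded degree, using that $\tminus$ and $\Symt$ are ``dual'' graded Lie data so that $\dim U(\tminus)_{-n}=\dim\Symtstar_{-n}=\dim\hSymt{n}$ by PBW on both sides. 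The main obstacle here is bookkeeping the grading shift: the loop grading places $\hbar\mfg t^k$ in degree $k+1$, while $t^{-k-1}\mfg[t^{-1}]$ sits in degree $-(k+1)$, and one must check $\mathsf{Res}_{\scriptscriptstyle-}^\hbar$ respects this (which it does, being degree zero as recorded in the excerpt), and that the PBW dimension counts on the two sides match term by term; this is the one place requiring genuine care rather than formal nonsense.

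For Part \eqref{Pdual:3}: both the Chevalley-involution and the $\mfg$-action assertions reduce, because $\varphi$ is a Hopf algebra isomorphism and both sides are generated by their primitives (respectively by $\tminus$ and by $P(\Symtstar)\cong(\hbar\tplus)^\star$), to checking compatibility on primitives. On primitives, $\varphi$ is $\mathsf{Res}_{\scriptscriptstyle-}^\hbar$. The coadjoint $\mfg$-action on $\Symtstar$ is by construction the transpose (twisted by antipode, i.e.\ by $-1$ on $\mfg$) of the adjoint action on $\Symt$, which on $\hbar\tplus$ is just $\hbar\,\mathrm{ad}$; dualizing against the invariant form $(\,,\,)$ gives back the adjoint action of $\mfg$ on $\tminus\subset\mfg[t^{\pm1}]$ — this is the invariance of $\langle\,,\,\rangle$ from \eqref{Res-t}. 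Likewise, $\dot\omega$ acts on $\hbar\tplus$ as $\hbar\bar\omega$, and since $\bar\omega$ is an isometry of $(\,,\,)$ (the standard Chevalley involution preserves the form), its transpose under $\mathsf{Res}_{\scriptscriptstyle-}^\hbar$ is $\bar\omega$ on $\tminus$. Hence $\varphi$ intertwines the two involutions and the two $\mfg$-actions on generators, and therefore everywhere. I expect this last part to be entirely routine once the identifications of Parts \eqref{Pdual:1}–\eqref{Pdual:2} are in hand.
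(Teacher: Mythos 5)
Your argument is sound and tracks the paper's closely in Parts \eqref{Pdual:1} and \eqref{Pdual:3}. The one genuinely different move is in Part \eqref{Pdual:2}: you invoke Milnor--Moore to get surjectivity of $\varphi$ and then conclude injectivity by counting dimensions, whereas the paper runs the two steps in the opposite order, citing \cite{Mont}*{Lem.~5.3.3} (the extension of a Lie algebra embedding onto primitives is automatically injective) and then getting surjectivity by the dimension count. Your route is conceptually clean but requires an extra observation you did not state: Milnor--Moore applies here precisely because $\Symt$ is a \emph{commutative} Hopf algebra, so $\Symtstar$ is cocommutative; the remark about $\Symt$ being generated in degree $1$ is a red herring and does not by itself dualize to ``$\Symtstar$ is generated by primitives.'' If you fully commit to Milnor--Moore, you in fact get the isomorphism outright, so the dimension count becomes a consistency check rather than a needed step. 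One small inaccuracy worth flagging: $\QFYhg$ is \emph{not} a quantization of the Lie bialgebra $\tplus$ in the sense of \eqref{quant-comm} --- its semiclassical limit is $\Symt$, not $U(\tplus)$. The correct statement, which is what the paper's equation \eqref{delta-ker} encodes, is that \eqref{quant-comm} is applied in $\Yhg$ (which does quantize $(\tplus,\delta_{\scriptscriptstyle+})$) and then pushed down to $\Symt$ via $\dot\msq$, with the extra factor of $\hbar$ coming from the rescaling $\tplus\rightsquigarrow\hbar\mu(\tplus)$; the resulting correction term lands in $\sum_{n+m=3}\mathbb{J}^n\otimes\mathbb{J}^m$, which is annihilated by $f\otimes g$. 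Your parenthetical essentially captures this, but the phrasing suggests $\QFYhg$ itself is a QUE of $\tplus$, which it is not. Part \eqref{Pdual:3} of your proposal is essentially identical to the paper's proof; the only thing glossed over is the verification that the adjoint action of $\mfg$ on $\hbar\tplus\subset\Symt$ agrees with the Lie bracket modulo $\mathbb{J}^2$ (the paper shows this explicitly, analogously to \eqref{delta-ker}), but this is routine.
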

Parts \eqref{Pdual:1} and \eqref{Pdual:2} of this proposition can be viewed as a variant of \cite{KWWY}*{Cor.~3.4} applied to a restricted version of the setting in \cite{KWWY}*{\S3E}. They can be seen as a consequence of a graded generalization of (a special case of) Theorem 4.8 in \cite{Gav07}. It is not difficult to prove this variant directly using a fairly general argument, as we illustrate below. 

\begin{proof}[Proof of \eqref{Pdual:1}] 
An element $f\in \Symtstar$ is primitive 
precisely when it satisfies
\begin{equation}\label{prim}
f(xy)=f(x)\veps(y)+\veps(x)f(y) \quad \forall \;x,y\in \Symt.
\end{equation}
Since the counit $\veps$ of $\Symt$ vanishes on $\mathbb{J}=\bigoplus_{n>0}\sSymt{n}$, it follows readily that $f$ must vanish on $\mathbb{J}^2=\bigoplus_{n>1}\sSymt{n}$. Moreover, the above condition gives $f(1)=2f(1)$, and hence $f$ vanishes on $\C$. It follows that $f\in \sSymt{1}{\vphantom{(}}^\star=(\hbar\tplus)^\star$. 

Conversely, if $f\in (\hbar\tplus)^\star$, then $\Delta(f)$ vanishes on $\sSymt{n}\otimes \sSymt{m}$ unless $n+m=1$, and on $\hbar\tplus\otimes \C$ and $\C\otimes \hbar\tplus$ the identity \eqref{prim} trivially holds. This completes the proof that the Lie algebra $\mathrm{Prim}\Symtstar$ coincides with $(\hbar\tplus)^\star$ as a vector space.  Let us now prove that its bracket is given by 
\begin{equation*}
[f,g]=(f\otimes g) \circ \hbar\delta_{\scriptscriptstyle +} \quad \forall \quad f,g\in (\hbar\tplus)^\star.
\end{equation*}
Since $[f,g]\in (\hbar\tplus)^\star$, it is enough to establish this equality on $\hbar\tplus$. By definition, we have
\begin{equation*}
[f,g](\hbar x)=(fg-gf)(\hbar x)=(f\otimes g)(\Delta-\op{\Delta})(\hbar x) \quad \forall\;\hbar x\in \hbar\tplus.
\end{equation*}
It thus suffices to prove that $(\Delta-\op{\Delta})|_{\hbar\tplus}-\hbar\delta_{\scriptscriptstyle +}$ has image in $\mathrm{Ker}(f\otimes g)$. 
As $\Yhg$ is a quantization of $(\tplus,\delta_{\scriptscriptstyle +})$, we have 
\begin{equation*}
(\Delta_{\Yhg}-\op{\Delta}_{\Yhg})(\hbar \mu(x))-\hbar^2(\mu\otimes \mu)\delta_{\scriptscriptstyle +}(x)\in \hbar^3\Yhg^{\otimes 2}\cap \QFYhg^{\otimes 2},
\end{equation*}
where we recall that $\mu=\upnu^{-1}$, with $\upnu$ as in \eqref{nu-Yhg}. 
Applying $\dot\msq\otimes \dot\msq$ and taking note of \eqref{J^k-tensor}, we obtain 
\begin{equation}\label{delta-ker}
(\Delta-\op{\Delta})(\hbar x)-\hbar\delta_{\scriptscriptstyle +}(\hbar x)\in \sum_{n+m=3} \mathbb{J}^n\otimes \mathbb{J}^m\subset \mathrm{Ker}(f\otimes g),
\end{equation}
which completes the proof of \eqref{Pdual:1}.
\end{proof}

\begin{proof}[Proof of \eqref{Pdual:2}] \let\qed\relax 
Consider now \eqref{Pdual:2}. By Part \eqref{Pdual:1}, $\mathsf{Res}_{\scriptscriptstyle-}^\hbar$ is an isomorphism of graded Lie algebras $\tminus\iso \mathrm{Prim}\Symtstar\subset \Symtstar$. By the universal property of $U(\tminus)$, it extends uniquely to a homomorphism of graded Hopf algebras 
\begin{equation*}
\varphi: U(\tminus)\to \Symtstar,
\end{equation*}
which is necessarily injective (by \cite{Mont}*{Lem.~5.3.3}, for instance). As the finite-dimensional graded components $U(\tminus)_{-n}$ and $\Symtstar_{-n}=\hSymt{n}{\vphantom{(}}^\ast$ have the same dimension for each $n\in \N$, it follows that $\varphi$ is an isomorphism. 
\end{proof}
\begin{proof}[Proof of \eqref{Pdual:3}] 
If $x\in \tminus$, then $\varphi(\bar\omega (x))$ is the element of $(\hbar\tplus)^\star$ determined by
\begin{equation*}
\varphi(\bar\omega (x))(\hbar y)=\langle\bar\omega(x), y\rangle=\langle x,\bar\omega(y)\rangle=\dot{\omega}^t(\varphi(x))(\hbar y) \quad \forall \; y\in \tplus,
\end{equation*}
where the second equality follows from the fact that the bilinear form $(\,,\,)$ on $\mfg$ is $\omega$-invariant. As $(\hbar\tplus)^\star$ is stable under $\dot\omega^t$ and $U(\tminus)$ is generated by $\tminus$, we may conclude that $\varphi\circ \bar\omega=\dot\omega^t \circ \varphi$.

Similarly, if $x\in\mfg$ and $y\in \tminus$, then $\varphi([x,y])\in (\hbar\tplus)^\star$ is 
determined by 
\begin{equation*}
\varphi([x,y])(\hbar z)=\langle[x,y],z\rangle=\langle y, [z,x] \rangle=\varphi(y)(\hbar[z,x]) \quad \forall\; z\in \tplus.
\end{equation*}
 On the other hand, since $\mathbb{J}^k$ is a $\mfg$-submodule of $\Symt$ for each $k\in \N$, $x \cdot \varphi(y)$ also belongs to $(\hbar\tplus)^\star$. Moreover, we have 
\begin{equation*}
(x \cdot \varphi(y)) (\hbar z)=\varphi(y)\left( S(x) \cdot \hbar z\right)=\varphi(y)(\hbar[z,x]) \quad \forall \; z\in \tplus,
\end{equation*}
where we have used that 
$S(x)\cdot \hbar z-\hbar[z,x]\in \mathbb{J}^2\subset \mathrm{Ker}(\varphi(y))$, which is proven analogously to \eqref{delta-ker}. 
Since $\mfg$ acts on both $U(\tminus)$ and $\Symtstar$ by derivations and $U(\tminus)$ is generated by $\tminus$, the above computation proves that $\varphi$ is a $\mfg$-module homomorphism. \qedhere 
\end{proof}
%
%
%
%

%
\subsection{\texorpdfstring{$\QFhYhg{\vphantom{)}}^\star$}{Yhg*} as a quantization}\label{ssec:Yhg*-quant}
Since the dual Yangian $\Yhgstar$ is a $\Z$-graded topologically free Hopf algebra over $\C[\![\hbar]\!]$ with semiclassical limit that, by Proposition \ref{P:dual}, can be identified with $U(\tminus)$ as a graded Hopf algebra, it is a quantized enveloping algebra which provides a homogeneous quantization of a $\Z$-graded Lie bialgebra structure $(\tminus,\delta)$ on the graded Lie algebra $\tminus=t^{-1}\mfg[t^{-1}]$. 

The following theorem asserts that this Lie bialgebra structure on $\tminus$ is precisely that associated to the Manin triple $(\mft,\tplus,\tminus)$ from Section \ref{ssec:Pr-Manin}.  
\begin{theorem}\label{T:Yhg*-quant}
 $\QFhYhg{\vphantom{)}}^\star$ is a homogeneous quantization of the Lie bialgebra $(\tminus,\delta_{\scriptscriptstyle -})$.
\end{theorem}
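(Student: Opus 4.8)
The plan is to combine the two main inputs already assembled in this section: Definition~\ref{D:Yhg*} (together with Corollary~\ref{C:QFhYhg} and Proposition~\ref{P:graded-tensor}), which tells us that $\Yhgstar$ is a $\Z$-graded topologically free Hopf algebra over $\C[\![\hbar]\!]$; and Proposition~\ref{P:dual}, which identifies its semiclassical limit $\Yhgstar/\hbar\Yhgstar$ with $U(\tminus)$ as a graded Hopf algebra. By the discussion preceding the theorem, these facts already show that $\Yhgstar$ is a quantized enveloping algebra and hence a homogeneous quantization of \emph{some} $\Z$-graded Lie bialgebra structure $(\tminus,\delta)$; the content of the theorem is the identification $\delta=\delta_{\scriptscriptstyle-}$. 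So the proof reduces to a single verification: the cocommutator induced on $\tminus$ via formula~\eqref{quant-comm} agrees with $\delta_{\scriptscriptstyle-}$ from Section~\ref{ssec:Pr-Manin}.

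First I would recall that $\delta_{\scriptscriptstyle-}$ is determined by its values on the generating subspace $\tminus=t^{-1}\mfg[t^{-1}]$, and that it is dual to the Lie bracket on $\tplus$ via the residue pairing; equivalently, under the isomorphism $\varphi:U(\tminus)\iso\Symtstar$ of Proposition~\ref{P:dual}\eqref{Pdual:2}, $\tminus$ is carried onto the primitives $(\hbar\tplus)^\star$, and by Part~\eqref{Pdual:1} the Lie bracket there is $[f,g]=(f\otimes g)\circ\hbar\delta_{\scriptscriptstyle+}$. Thus I only need to show that the cocommutator $\delta_{\Yhgstar}$ induced by~\eqref{quant-comm} on $\tminus$, when transported through $\varphi$, is the transpose of $\hbar\delta_{\scriptscriptstyle+}$; since $\hbar\delta_{\scriptscriptstyle+}$ is (up to the residue identification) the bracket on $\tplus$, this transpose is exactly $\delta_{\scriptscriptstyle-}$ by the construction of the Manin-triple bialgebra structure. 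Concretely: pick $f\in(\hbar\tplus)^\star\subset\Yhgstar$, lift it to $\dot f\in\Yhgstar$, and compute $\hbar^{-1}(\Delta^t(\dot f)-\op{\Delta^t}(\dot f))\bmod\hbar$. Since $\Delta^t$ on $\Yhgstar$ is the transpose of the multiplication $m$ on $\QFhYhg$, and $\op{\Delta^t}$ is the transpose of $\op m$, this quantity evaluated on $\hbar x\otimes\hbar y\in(\hbar\tplus)^{\otimes 2}$ equals $\hbar^{-1}f(\hbar x\cdot\hbar y-\hbar y\cdot\hbar x)=\hbar^{-1}f([\hbar x,\hbar y])\bmod\hbar$. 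Now $[\hbar x,\hbar y]=\hbar[\hbar x,y]\in\hbar\QFYhg$ and, reducing modulo $\hbar$, $\hbar^{-1}[\hbar x,\hbar y]\bmod\hbar$ is precisely the image of $\hbar\{x,y\}$-type Poisson bracket, which by the commutativity relation~\eqref{almost-comm} and Part~\eqref{QFSH:4} of Proposition~\ref{P:QFSH} is the class determined by the Lie cobracket $\hbar\delta_{\scriptscriptstyle+}$ applied to $x$ paired against $y$. Matching indices, this shows $\delta_{\Yhgstar}=\delta_{\scriptscriptstyle-}$.

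The step I expect to be the main obstacle is making the above pairing computation fully rigorous at the level of the \emph{topological} (completed, restricted) dual rather than the algebraic one: one must check that evaluating the transposed coproduct $\Delta^t(\dot f)$ on elements of $(\hbar\tplus)\otimes(\hbar\tplus)$ really does detect only the ``quadratic part'' of the multiplication on $\QFhYhg$, i.e.\ that contributions from higher symmetric powers $\sSymt{n}$ with $n\geq 3$ and from the $\hbar$-adic tails vanish modulo $\hbar$. This is handled by the filtration identity~\eqref{J^k-tensor} together with the fact, established in~\eqref{delta-ker} in the proof of Proposition~\ref{P:dual}\eqref{Pdual:1}, that $(\Delta-\op\Delta)(\hbar x)-\hbar\delta_{\scriptscriptstyle+}(\hbar x)\in\sum_{n+m=3}\mathbb J^n\otimes\mathbb J^m\subset\Ker(f\otimes g)$ for any primitive $f,g$. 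So in fact the required estimate has already been done; the proof of the present theorem is essentially an application of that computation read in the other direction.

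To summarize the write-up I would produce: (1) invoke the paragraph preceding the theorem to reduce to identifying the induced cocommutator; (2) use $\varphi$ and Proposition~\ref{P:dual}\eqref{Pdual:1} to replace $U(\tminus)$ by $(\hbar\tplus)^\star\subset\Yhgstar$ with its bracket $(f\otimes g)\circ\hbar\delta_{\scriptscriptstyle+}$; (3) observe that, for primitive $f$, formula~\eqref{quant-comm} computes $\delta_{\Yhgstar}(f)$ as the transpose of $m-\op m$ restricted to $\hbar\tplus\otimes\hbar\tplus$ modulo $\hbar$, which by~\eqref{delta-ker} and~\eqref{almost-comm} equals the transpose of $\hbar\delta_{\scriptscriptstyle+}$; (4) note that, by the explicit formulas for $\delta_{\scriptscriptstyle\pm}$ in Section~\ref{ssec:Pr-Manin} and the definition of $\mathsf{Res}_{\scriptscriptstyle-}^\hbar$, the transpose of $\hbar\delta_{\scriptscriptstyle+}$ under $\varphi$ is exactly $\delta_{\scriptscriptstyle-}$, using that $\mathsf{Res}_{\scriptscriptstyle\pm}$ intertwine Lie brackets with the dualized cobrackets; (5) conclude that $\Yhgstar=\QFhYhg{\vphantom{)}}^\star$ is a homogeneous quantization of $(\tminus,\delta_{\scriptscriptstyle-})$, the $\Z$-graded and topologically-free hypotheses of Definition~\ref{D:Hom-quant} having been recorded in Corollary~\ref{C:QFhYhg} and Corollary~\ref{C:Vstar[[h]]} together with Proposition~\ref{P:dual}.
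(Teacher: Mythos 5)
Your overall strategy matches the paper's: reduce to computing the cocommutator induced by~\eqref{quant-comm}, evaluate it on $\hbar x\otimes\hbar y$, and identify the result with $\delta_{\scriptscriptstyle -}$. Your concrete calculation, which lands on $\hbar^{-1}\hat f([\hbar\mu(x),\hbar\mu(y)])\equiv f(\hbar[x,y])\bmod\hbar$, is the right computation. However, two pieces of the surrounding argument are genuinely wrong and cannot be repaired by rewording.

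First, there is a bracket/cobracket confusion. You assert that the induced cocommutator is ``the transpose of $\hbar\delta_{\scriptscriptstyle +}$'' and that ``$\hbar\delta_{\scriptscriptstyle +}$ is (up to the residue identification) the bracket on $\tplus$''. Neither statement is correct: $\delta_{\scriptscriptstyle +}$ is the \emph{cobracket} on $\tplus$, defined as $[,]_{\tminus}^t$, and transposing it under the residue pairing produces the Lie \emph{bracket} on $\tminus$, i.e.\ a map $\tminus\otimes\tminus\to\tminus$---this is precisely Proposition~\ref{P:dual}\eqref{Pdual:1}, and it is not $\delta_{\scriptscriptstyle -}$. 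The cobracket $\delta_{\scriptscriptstyle -}$ is by definition $[,]_{\tplus}^t$, the transpose of the Lie \emph{bracket} on $\tplus$. The quantity $f(\hbar[x,y])$ that your calculation produces is the pairing of $f$ with that bracket, which is exactly $\delta_{\scriptscriptstyle -}(f)(\hbar x\otimes\hbar y)$ under the identification of Proposition~\ref{P:dual}\eqref{Pdual:2}; no detour through $\delta_{\scriptscriptstyle +}$ is needed or valid.

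Second, the cited estimate is the wrong one. Equation~\eqref{delta-ker} controls the \emph{coproduct} $\Delta-\op\Delta$ of $\QFhYhg$, and is what drives the Lie bracket computation on $\Symtstar$ in Proposition~\ref{P:dual}\eqref{Pdual:1}. The cocommutator of $\Yhgstar$ is built from the \emph{product} of $\QFhYhg$ via $m^t-m^t_{21}$, so the estimate actually required is about the commutator, namely
\begin{equation*}
[\mu(x),\hbar\mu(y)]-\hbar\mu[x,y]\in \mathbf{J}^2,
\end{equation*}
combined with the observation that $\hat f(\mathbf{J}^2)\subset\hbar\C[\![\hbar]\!]$ because $f\in(\hbar\tplus)^\star$ vanishes on $\sSymt{n}$ for $n\neq 1$. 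This has not ``already been done''; it is a separate (though analogous and elementary) lemma, following from $\mathbf{J}^2=\hbar^2\Yhg\cap\QFYhg$ and the fact that $\Yhg$ quantizes $\tplus$. Once you state that estimate and drop the incorrect identification with the transpose of $\delta_{\scriptscriptstyle +}$, the proof closes out exactly as in the paper.
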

\begin{proof}
In light of the above discussion, the definition $\delta_{\scriptscriptstyle -}$ given in Section \ref{ssec:Pr-Manin}, and the identification of Proposition \ref{P:dual},  it is sufficient to prove that the Lie bialgebra structure on $\tminus\cong (\hbar\tplus)^\star$  quantized by $\Yhgstar$ has cobracket $\delta$ given by 
\begin{equation*}
\delta(f)(\hbar x \otimes \hbar y)=f(\hbar[x,y]) \quad \forall\; x,y\in \tplus,\, f\in (\hbar\tplus)^\star.
\end{equation*}
By definition, $\delta$ is given by the formula
\begin{equation*}
\delta(f):=\hbar^{-1}(m^t(\hat f)-m^t_{21}(\hat f)) \mod \hbar \QFhYhg{\vphantom{)}}^\star\otimes \QFhYhg{\vphantom{)}}^\star,
\end{equation*}
where $\hat f\in \Yhgstar$ is any lift of $f\in (\hbar\tplus)^\star$ and $m$ is the product on $\QFhYhg$. For any two elements $x,y\in \tplus$, we have 
\begin{equation}\label{ad-J}
[\mu(x),\hbar\mu(y)]-\hbar\mu[x,y]\in \mathbf{J}^2\subset \hat{f}^{-1}(\hbar\C[\![\hbar]\!])
\end{equation}
which implies the desired result:
\begin{align*}
(\delta(f)-&f\circ \hbar^{-1}[,])(\hbar x\otimes \hbar y)\\
      &=\hat{f}([\mu(x),\hbar\mu(y)]-\hbar\mu[x,y])\mod \hbar\C[\![\hbar]\!]=0. \qedhere
\end{align*}
\end{proof}

%
\subsection{The dual triangular decomposition} \label{ssec:Yhg*-TD}

Our main goal in this subsection is to establish a triangular decomposition for $\Yhgstar$ dual to that for $\QFhYhg$ established in Corollaries \ref{C:QFYhg-chi} and \ref{C:dYhg-TD}; see also Remark \ref{R:QFhYhg-TD}. Along the way, we shall identify a family of generators for $\Yhgstar$; see Lemma \ref{L:Yhg*-gen}.

  For each choice of the symbol $\chi$, consider the restricted dual $\YTDstar{\chi}$ of the $\Z$-graded topological $\C[\![\hbar]\!]$-algebra $\dhYh{\chi}$. By Corollaries \ref{C:Vstar[[h]]} and \ref{C:QFYhg-chi}, this is a $\Z$-graded topologically free $\C[\![\hbar]\!]$-module with semiclassical limit equal to the graded dual 
\begin{equation*}
\msS(\hbar\tplus^\chi){\vphantom{)}}^\star=\bigoplus_{n\in \N} \msS(\hbar\tplus^\chi){\vphantom{)}}^\star_{-n}
\end{equation*}
of the symmetric algebra $\msS(\hbar\tplus^\chi)$. Moreover, by Remark \ref{R:M_N*}, $\YTDstar{\chi}$ coincides with the $\hbar$-adic completion of the graded dual to $\dYh{\chi}$ taken in the category of $\Z$-graded $\C[\hbar]$-modules. Now let $\pi^\chi:\QFhYhg\to \dhYh{\chi}$ denote the $\C[\![\hbar]\!]$-linear projection associated to the identification of $\QFhYhg$ with $\dhYh{+}\otimes \dhYh{0}\otimes \dhYh{-}$ established in Remark \ref{R:QFhYhg-TD}. That is, we have 
\begin{equation*}
\pi^+=\id_{\dhYh{+}}\otimes \veps \otimes \veps,\quad \pi^0= \veps\otimes \id_{\dhYh{0}}\otimes \veps\quad\text{ and }\quad \pi^-=\veps \otimes \veps\otimes \id_{\dhYh{-}}.
\end{equation*}
Taking the transpose of $\pi^\chi$ yields a $\Z$-graded embedding of $\C[\![\hbar]\!]$-modules
\begin{equation*}
(\pi^\chi)^t:\YTDstar{\chi}\into \Yhgstar, \quad f \mapsto f \circ \pi^\chi \quad \forall \quad f\in \YTDstar{\chi},
\end{equation*}
with image consisting of precisely those $g\in \Yhgstar$ for which $g\circ \pi^\chi=g$. Note that if $\hbar^ng$ is contained in this image for some $n\in \N$ then $g$ itself is, and so the subspace topology on $(\pi^\chi)^t(\YTDstar{\chi})$ coincides with its $\hbar$-adic topology. Furthermore, the semiclassical limit of $(\pi^\chi)^t$ is the embedding $\msS(\hbar\tplus^\chi){\vphantom{)}}^\star\into \Symt{\vphantom{)}}^\star$ induced by the projection $\tplus=\tplus^+\oplus \tplus^0\oplus\tplus^-\to \tplus^\chi$. 
 
We shall henceforth adopt the viewpoint that $\YTDstar{\chi}$ is a $\Z$-graded topological submodule of $\Yhgstar$, with the above identification assumed.  We further note that the Chevalley involution $\omega^t$ of $\Yhgstar$ satisfies 
\begin{equation*}
\omega^t(\YTDstar{\pm})=\YTDstar{\mp} \quad \text{ and }\quad \omega^t|_{\YTDstar{0}}=\id_{\YTDstar{0}}. 
\end{equation*}

 Let us now identify a set of elements which generate $\Yhgstar$ as a topological $\C[\![\hbar]\!]$-algebra. These generators are constructed so as to naturally correspond to the coefficients of the $\DYhg$-valued series $\dot{h}_i(u)$ and $\dot{x}_\beta^\pm(u)$ defined in Section \ref{ssec:DYhg-root}, and such an identification will be made precise in the proof of Lemma \ref{L:Yhg*->DYhg}; see \eqref{gen-id}.

Given $\beta\in Q$, let 
$\pi_{\beta}:\dYh{-}\to \dYh{-}_\beta$ denote the natural projection onto the $\beta$-component of $\dYh{-}$.  Since the $\C[\hbar]$-module isomorphism 
\begin{equation*}
\dot\upnu=\upnu|_{\QFYhg}:\QFYhg\iso \msR_\hbar(U(\tplus))\subset U(\tplus)[\hbar]
\end{equation*}
 is $Q$-graded and respects the underlying triangular decompositions, we have 
\begin{equation*}
\dot\upnu(\pi_{-\alpha_i}(\dYh{-}))\subset \bigoplus_{k\in \N}\C[\hbar] \cdot \hbar x_i^- t^k \subset \hbar\tplus [\hbar] \quad \forall \; i\in \mbI.
\end{equation*}
We may thus compose $\dot\upnu\circ \pi_{-\alpha_i}$ with $\mathsf{Res}_{\scriptscriptstyle-}^\hbar(x_i^+t^{-k-1})\in (\hbar\tplus)^\star$, as defined above Proposition \ref{P:dual}, for any fixed $k\in \N$. This outputs a degree $-k-1$ element 
\begin{equation}\label{def:X-dual}
\msX_{i,-k-1}^+:=\mathsf{Res}_{\scriptscriptstyle-}^\hbar(x_i^+t^{-k-1})\circ \dot\upnu\circ \pi_{-\alpha_i}\in \YTDstar{-\!}\subset \Yhgstar,
\end{equation}
where we work through the identification of  $\Hom_{\C[\hbar]}^a (\dYh{-},\C[\hbar])$ with the $a$-th component $\Hom_{\C[\![\hbar]\!]}^a (\dhYh{-\!},\C[\![\hbar]\!])$ of $\YTDstar{-\!}$; see Remark \ref{R:M_N*}.
We now enlarge this family of elements to a generating set for the $\C[\![\hbar]\!]$-algebra $\Yhgstar$ using the coadjoint action of $\mfg$ on  $\Yhgstar$ and the Chevalley involution $\omega^t$ from Section \ref{ssec:Yhg*-Chev}. For each $i\in \mbI$, $\beta\in \Root^+$ and $k\in \N$, we introduce the degree $-k-1$ elements
\begin{gather*}
\mathsf{h}_{i,-k-1}=-x_i^- \cdot \msX_{i,-k-1}^+,\\
\mathsf{X}_{\beta,-k-1}^+:=\mathbf{X}^\beta\cdot \msX_{i(\beta),-k-1}^+\in \Yhgstar_\beta,\quad  \mathsf{X}_{\beta,-k-1}^-:=\omega^t(\mathsf{X}_{\beta,-k-1}^+)\in \Yhgstar_{-\beta},
\end{gather*}
where $i(\beta)\in \mbI$ and $\mathbf{X}^\beta\in U(\mfn_{+})_{\beta-\alpha_{i(\beta)}}$ are as in \eqref{X-beta}.
In the same spirit as in Section \ref{ssec:DYhg-root}, we organize these elements into generating series in $\Yhgstar[\![u]\!]$ by setting
\begin{equation*}
\msh_i(u):=-\sum_{r<0} \msh_{i,r}u^{-r-1}\quad\text{ and }\quad \msX_\beta^\pm(u):=-\sum_{r<0}\msX_{\beta,r}^{\pm} u^{-r-1} \quad 
\end{equation*}
for each $i\in \mbI$ and $\beta\in \Root^+$. We then have the following lemma. 
\begin{lemma}\label{L:Yhg*-gen}
The dual Yangian $\Yhgstar$ is topologically generated as a $\C[\![\hbar]\!]$-algebra by 
the coefficients of $\{\msX_\beta^\pm(u)\}_{\beta\in \Root^+}$ and $\{\msh_i(u)\}_{i\in \mbI}$. Moreover, 
their images under the quotient map $\Yhgstar\onto  \Yhgstar/\hbar\Yhgstar\cong U(\tminus)$ are given by
\begin{equation*}
\mathsf{h}_{i,r}\mapsto h_i t^{r} \quad \text{ and } \quad \msX_{\beta,r}^\pm\mapsto x_\beta^\pm t^{r}
\quad \forall\quad i\in \mbI,\, \beta\in \Root^+\;\text{ and }\; r<0. 
\end{equation*} 
\end{lemma}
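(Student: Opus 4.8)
The plan is to prove the two assertions in turn, with the generation statement being essentially a formal consequence of the triangular decomposition established in Remark \ref{R:QFhYhg-TD} together with Proposition \ref{P:dual}, and the "semiclassical limit" statement following by carefully tracking the definitions.

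\textbf{Generation.} First I would observe that, by the transpose of the triangular decomposition isomorphism $\dot m:\dhYh{+}\otimes\dhYh{0}\otimes\dhYh{-}\iso\QFhYhg$ from Remark \ref{R:QFhYhg-TD} and Proposition \ref{P:graded-tensor}, one has $\Yhgstar\cong\YTDstar{+}\otimes\YTDstar{0}\otimes\YTDstar{-}$ as $\Z$-graded topological modules, and the product on $\Yhgstar$ realizes this identification via multiplication of the embedded subalgebras $(\pi^\chi)^t(\YTDstar{\chi})$. Hence it suffices to show that each $\YTDstar{\chi}$ is topologically generated by the relevant coefficients. For $\chi=-$ this reduces, after passing to semiclassical limits and invoking Lemma \ref{L:scl-map}(2) (completeness of $\YTDstar{-}$, separatedness of the target) together with topological Nakayama, to showing that the images of the $\msX_{\beta,r}^-$ generate $\msS(\hbar\tplus^-){\vphantom{)}}^\star$ as an algebra. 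By Proposition \ref{P:dual}\eqref{Pdual:2} the latter is (a restricted-dual analogue of) $U(\mfn_-[t^{-1}]\!\cdot\! t^{-1})$, and the elements in question, modulo $\hbar$, correspond to a spanning set of $\mfn_-t^{-1}[t^{-1}]$ by the defining formula \eqref{def:X-dual}; the case $\chi=+$ follows by applying $\omega^t$, using $\omega^t(\YTDstar{-})=\YTDstar{+}$, and the case $\chi=0$ is handled by the same argument applied to $\msh_i(u)$, noting $-x_i^-\cdot(-)$ is (dual to) the coadjoint action bringing $\msX^+_{i,r}$ into the Cartan part. The coadjoint action of $\mfg$ preserves $\Yhgstar$ (Section \ref{ssec:Yhg*-Chev}) and descends to the adjoint action on $U(\tminus)$ via $\varphi$ (Proposition \ref{P:dual}\eqref{Pdual:3}), so the enlargement from $\{\msX_{i,r}^+\}$ to the full root-vector family is legitimate both integrally and in the limit.

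\textbf{Semiclassical images.} For the second assertion I would compute directly from \eqref{def:X-dual}. Under the identification $\Yhgstar/\hbar\Yhgstar\cong\Symtstar\xleftarrow{\varphi}U(\tminus)$, the class of $\msX_{i,-k-1}^+$ is the image of $\mathsf{Res}_{\scriptscriptstyle-}^\hbar(x_i^+t^{-k-1})\circ(\dot\upnu\circ\pi_{-\alpha_i}\bmod\hbar)$. But modulo $\hbar$ the map $\dot\upnu$ becomes the identification $\msq$ of Proposition \ref{P:QFSH}\eqref{QFSH:4} (i.e.\ $\QFYhg/\hbar\QFYhg\cong\Symt$ with $\hbar x_i^-t^k\leftrightarrow\hbar x_i^-t^k$), and $\pi_{-\alpha_i}$ becomes projection onto the span of the $\hbar x_i^-t^k$; composing with $\mathsf{Res}_{\scriptscriptstyle-}^\hbar(x_i^+t^{-k-1})$ picks out, via the residue pairing $\langle x_i^+t^{-k-1},x_i^-t^\ell\rangle=\delta_{k\ell}$, precisely the linear functional $\mathsf{Res}_{\scriptscriptstyle-}^\hbar(x_i^+t^{-k-1})\in(\hbar\tplus)^\star=\mathrm{Prim}\,\Symtstar$. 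Under $\varphi^{-1}$ this is $x_i^+t^{-k-1}=x_{\alpha_i}^+t^{-k-1}$ — wait, here one must be careful about the $\pm$ convention: $x_i^+$ appears paired with $x_i^-$, and the generator $\msX^+$ sits in weight space $\Yhgstar_{\alpha_i}$, which under $\varphi^{-1}$ lands in $(\mfg t^{-k-1})_{\alpha_i}=\C x_i^+t^{-k-1}$; so $\msX_{i,r}^+\mapsto x_i^+ t^r = x_{\alpha_i}^+t^r$. Then $\mathsf{h}_{i,r}=-x_i^-\cdot\msX_{i,r}^+$ maps to $-\mathrm{ad}(x_i^-)(x_i^+t^r)=-[x_i^-,x_i^+]t^r=[x_i^+,x_i^-]t^r=h_it^r$ using the relation $h_i=[x_i^+,x_i^-]$ from Section \ref{ssec:Yhg-g}, while $\msX_{\beta,r}^+=\mathbf X^\beta\cdot\msX_{i(\beta),r}^+\mapsto\mathbf X^\beta\cdot(x_{i(\beta)}^+t^r)=x_\beta^+t^r$ by the very definition \eqref{X-beta} of $x_\beta^+$ and the fact that $\mathfrak g$ acts by the adjoint action compatibly on both sides; finally $\msX_{\beta,r}^-=\omega^t(\msX_{\beta,r}^+)\mapsto\bar\omega(x_\beta^+t^r)=x_\beta^-t^r$ using Proposition \ref{P:dual}\eqref{Pdual:3} and \eqref{X-beta}.

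\textbf{Main obstacle.} The step I expect to require the most care is the identification, modulo $\hbar$, of the composite $\mathsf{Res}_{\scriptscriptstyle-}^\hbar(x_i^+t^{-k-1})\circ\dot\upnu\circ\pi_{-\alpha_i}$ with the primitive functional it is meant to be — i.e.\ verifying that the weight-space projection $\pi_{-\alpha_i}$ kills everything in $\dYh{-}$ except the "linear in $\hbar x_i^-$" part seen by $\mathsf{Res}_{\scriptscriptstyle-}^\hbar$, so that no higher-symmetric-power or multi-tensor contributions survive. This is where Corollary \ref{C:QFYhg-chi}\eqref{QFYhg-chi:4} ($\dYh{-}_{-\beta}\subset\mbJ^{\nu(\beta)}$) is essential: for $\beta=\alpha_i$ we have $\nu(\alpha_i)=1$, and the grading argument shows $\pi_{-\alpha_i}(\dYh{-})$, modulo $\hbar\dYh{-}$, lies in $\mathsf{S}^1(\hbar\tplus^-)=\hbar\tplus^-$ intersected with the $(-\alpha_i)$-weight space, on which $\mathsf{Res}_{\scriptscriptstyle-}^\hbar(x_i^+t^{-k-1})$ is exactly the required evaluation; this is precisely the content packaged into the displayed inclusion $\dot\upnu(\pi_{-\alpha_i}(\dYh{-}))\subset\bigoplus_k\C[\hbar]\hbar x_i^-t^k$ stated just before \eqref{def:X-dual}. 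With that inclusion in hand the remaining computations are the routine applications of $\mathrm{ad}$ and $\omega$ sketched above, and the generation claim then follows formally from completeness and the triangular factorization.
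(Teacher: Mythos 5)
Your computation of the semiclassical images agrees in substance with the paper's proof (and is carried out in more explicit detail, which is fine): you reduce $\msX_{i,-k-1}^+$ to $x_i^+t^{-k-1}$ by unwinding definition \eqref{def:X-dual} through $\varphi$ from Proposition \ref{P:dual}\eqref{Pdual:2}, and the remaining images follow from the $\mfg$-equivariance and compatibility with Chevalley involutions supplied by Proposition \ref{P:dual}\eqref{Pdual:3}. That part is correct.

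The generation argument, however, is misdirected and as written circular. You reduce via a triangular factorization $\Yhgstar\cong\YTDstar{+}\otimes\YTDstar{0}\otimes\YTDstar{-}$, treating each $\YTDstar{\chi}$ as an embedded subalgebra and asserting that the relevant generators land inside the relevant factor. None of that is available at this point in the paper: that $\YTDstar{\chi}$ is a subalgebra of $\Yhgstar$, that the coefficients of $\msX_\beta^\mp(u)$ and $\msh_i(u)$ lie in $\YTDstar{\mp}$ and $\YTDstar{0}$, and that multiplication realizes the factorization are all established only later, in Corollary \ref{C:Yhg*-TD-2}, Remark \ref{R:Yhg*-TD-done}, and Corollary \ref{C:Yhg*-TD}. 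Moreover, the paper's proof of Proposition \ref{P:Yhg*-TD}\eqref{Yhg*-TD:3} explicitly cites Lemma \ref{L:Yhg*-gen}, so routing the proof of this lemma through that machinery creates a genuine dependency loop. The transpose of $\dot m$ does give a module isomorphism at this stage, but it is a coalgebra map, not the multiplication, and conflating the two is precisely the content of the later Corollary \ref{C:Yhg*-TD}.

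The fix is both simpler and already available: $\Yhgstar$ is a flat $\C[\![\hbar]\!]$-algebra deformation of $U(\tminus)$ (Corollary \ref{C:Vstar[[h]]} plus Proposition \ref{P:dual}\eqref{Pdual:2}), so any subset of $\Yhgstar$ whose image modulo $\hbar$ generates $U(\tminus)$ as a $\C$-algebra automatically topologically generates $\Yhgstar$ — this is the standard lifting argument (the closed subalgebra generated by the lifts maps onto $U(\tminus)$ modulo $\hbar$, hence is dense, hence equals $\Yhgstar$), applied once, directly to $\Yhgstar$. Since $h_i t^r$ and $x_\beta^\pm t^r$ ($r<0$) generate $\tminus$ as a Lie algebra and hence $U(\tminus)$ as an algebra, the whole generation claim reduces immediately to the second assertion. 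This is exactly the paper's one-line reduction, and it avoids the triangular decomposition entirely.
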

\begin{proof}
As $\Yhgstar$ is a flat deformation of the algebra $ U(\tminus)$ over  $\C[\![\hbar]\!]$ and the elements $h_i t^{-k-1}$ and $x_\beta^\pm t^{-k-1}$ generate $\tminus=t^{-1}\mfg[t^{-1}]$ as a Lie algebra, it is sufficient to prove the second assertion of the proposition.

 That the element $\msX_{ir}^+$ coincides with $x_i^+t^{r}$ modulo $\hbar\Yhgstar$ is an immediate consequence of its definition and the identification of $U(\tminus)$ with $\Yhgstar/\hbar\Yhgstar\cong \Symtstar$ provided by Part \eqref{Pdual:2} of Proposition \ref{P:dual}.   The remaining equivalences now follow from the definitions of $h_i$ and $x_\beta^\pm$ (see \eqref{X-beta}) and Part \eqref{Pdual:3} of Proposition \ref{P:dual}, which implies that the quotient map $\Yhgstar \onto U(\tminus)$ is a $\mfg$-module homomorphism intertwining Chevalley involutions. \qedhere
\end{proof}
%
%

Let us now turn towards establishing a triangular decomposition for the dual Yangian $\Yhgstar$.
Following the notation from Section \ref{ssec:QFYhg-TD}, let us introduce the Lie subalgebras $\tminus^\chi$ of $\mft=t^{-1}\mfg[t^{-1}]$ by setting 
\begin{equation*}
\tminus^\pm:=t^{-1}\mfn_{\pm}[t^{-1}]\subset \tminus \quad \text{ and }\quad \tminus^0:=t^{-1}\mfh[t^{-1}]\subset \tminus.
\end{equation*}
The below proposition provides a strengthening of the $\hbar$-adic analogues of Proposition 3.2 and Theorem 4.2 (i) from \cite{KT96}.
\begin{proposition}\label{P:Yhg*-TD}
$\YTDstar{\pm}$ and $\YTDstar{0}$ are $\C[\![\hbar]\!]$-subalgebras of $\Yhgstar$. Moreover: 
\begin{enumerate}[font=\upshape]
\item\label{Yhg*-TD:1} $\YTDstar{\pm}$ is a flat deformation of the $\Z$-graded algebra $U(\tminus^\mp)$ over $\C[\![\hbar]\!]$. In particular, there is an isomorphism of $\Z$-graded topological $\C[\![\hbar]\!]$-modules 
\begin{equation*}
\YTDstar{\pm}\cong U(\tminus^\mp)[\![h]\!].
\end{equation*}
\item \label{Yhg*-TD:2} $\YTDstar{0}$ is commutative and isomorphic to $U(\tminus^0)[\![\hbar]\!]\cong \msS(\tminus^0)[\![\hbar]\!]$ as a $\Z$-graded topological $\C[\![\hbar]\!]$-algebra.

\item\label{Yhg*-TD:3} $\YTDstar{\pm}$ and $\YTDstar{0}$ are topologically generated as $\C[\![\hbar]\!]$-algebras by the coefficients of $\{\msX_\beta^\mp(u)\}_{\beta\in \Root^+}$ and $\{\msh_i(u)\}_{i\in \mbI}$, respectively. 
\end{enumerate}
\end{proposition}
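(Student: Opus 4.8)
The plan is to derive the triangular decomposition of $\Yhgstar$ by dualizing the triangular decomposition of $\QFhYhg$ established in Remark \ref{R:QFhYhg-TD}, together with the identification of semiclassical limits provided by Proposition \ref{P:dual}. First I would recall that by Remark \ref{R:QFhYhg-TD} the multiplication map $\dot m:\dhYh{+}\otimes\dhYh{0}\otimes\dhYh{-}\iso\QFhYhg$ is an isomorphism of $\N$-graded topological $\C[\![\hbar]\!]$-modules, and that each $\dhYh{\chi}$ is topologically free of finite type. Dualizing and invoking Proposition \ref{P:graded-tensor} (so that $\upgamma$ is an isomorphism on restricted duals of finite-type modules), the transpose $(\dot m)^t$ gives an isomorphism of $\Z$-graded topological $\C[\![\hbar]\!]$-modules
\begin{equation*}
\Yhgstar\iso \YTDstar{-}\otimes\YTDstar{0}\otimes\YTDstar{+},
\end{equation*}
where the order is reversed because transposition reverses the tensor factors and $\dot m$ is a map of algebras whose dual is a map of coalgebras. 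Restricting this to the images $(\pi^\chi)^t(\YTDstar{\chi})$ inside $\Yhgstar$ — which are precisely the $g$ with $g\circ\pi^\chi=g$ — one checks that $(\pi^\chi)^t(\YTDstar{\chi})$ is closed under the product $\Delta^t$ of $\Yhgstar$: this follows because $\pi^\chi$ is a coalgebra map (as $\dhYh{\chi}$ is a sub-Hopf-algebra? — actually it is a subalgebra, but $\pi^\chi$ being built from $\veps$ and identity maps on tensor factors does respect comultiplication up to the computations of Remark \ref{R:QFhYhg-TD}), hence $\YTDstar{\chi}$ is a $\C[\![\hbar]\!]$-subalgebra of $\Yhgstar$.

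Next I would pin down the semiclassical limits. By Corollary \ref{C:QFYhg-chi}(3) together with Corollary \ref{C:Top-Ngraded}, $\dhYh{\chi}$ is a flat deformation of $\msS(\hbar\tplus^\chi)$, so by Corollary \ref{C:Vstar[[h]]} its restricted dual $\YTDstar{\chi}$ is topologically free with semiclassical limit the graded dual $\msS(\hbar\tplus^\chi){\vphantom{)}}^\star$. Applying the argument of Proposition \ref{P:dual} in the restricted setting $\tplus^\chi\rightsquigarrow\tminus^{-\chi}$ — where the pairing $\mathsf{Res}_{\scriptscriptstyle-}^\hbar$ identifies $(\hbar\tplus^\chi)^\star$ with $\tminus^{-\chi}$ and the primitives of $\msS(\hbar\tplus^\chi){\vphantom{)}}^\star$ with $(\hbar\tplus^\chi)^\star$ — gives $\msS(\hbar\tplus^\chi){\vphantom{)}}^\star\cong U(\tminus^{-\chi})$ as graded Hopf algebras for $\chi=\pm$, and $\msS(\hbar\tplus^0){\vphantom{)}}^\star\cong U(\tminus^0)\cong\msS(\tminus^0)$ for $\chi=0$, noting that $\tplus^0=\mfh[t]$ is abelian so its symmetric algebra is cocommutative and the dual is commutative. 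This establishes Parts \eqref{Yhg*-TD:1} and \eqref{Yhg*-TD:2}: flatness of $\YTDstar{\pm}$ and $\YTDstar{0}$ over $\C[\![\hbar]\!]$ is inherited from topological freeness, and the module isomorphisms $\YTDstar{\pm}\cong U(\tminus^\mp)[\![\hbar]\!]$, $\YTDstar{0}\cong\msS(\tminus^0)[\![\hbar]\!]$ follow from Corollary \ref{C:Top-Z=Free}. Commutativity of $\YTDstar{0}$ reduces modulo $\hbar$ to commutativity of $\msS(\tminus^0)$; to upgrade this to exact commutativity I would use that $\YTDstar{0}$ is dual to the commutative algebra $\dhYh{0}$ (Corollary \ref{C:QFYhg-chi}(2)), so its product $\Delta^t$ is cocommutative-to-commutative, i.e. the transpose of the cocommutative coproduct on $\dhYh{0}$ — wait, $\dhYh{0}$ is commutative but need not be cocommutative; instead I would argue that $\YTDstar{0}$ is generated by the primitives $\msh_i(u)$, which form an abelian Lie algebra by Proposition \ref{P:dual}(1) since $\delta_{\scriptscriptstyle+}|_{\mfh[t]}$ lands in $\mfh[t]\wedge\mfh[t]$ but the relevant bracket computation $[f,g]=(f\otimes g)\circ\hbar\delta_{\scriptscriptstyle+}$ vanishes because $\mathsf{Res}_{\scriptscriptstyle-}^\hbar$ of $\tminus^0$ pairs trivially against $\tplus^+\otimes\tplus^+$-type terms — more precisely the cobracket of $\tminus^0$ inside the Manin triple is $-\delta_{\scriptscriptstyle-}|_{\tminus^0}$ which is again abelian-valued, hence $\YTDstar{0}$ is commutative.

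For Part \eqref{Yhg*-TD:3} I would combine Lemma \ref{L:Yhg*-gen} with the placement of the generators: by construction $\msX_{\beta,r}^+\in\YTDstar{-}$ (via \eqref{def:X-dual} and the fact that $\msX_{i,-k-1}^+$ is defined through the projection $\pi_{-\alpha_i}$ onto $\dYh{-}$), $\msX_{\beta,r}^-=\omega^t(\msX_{\beta,r}^+)\in\YTDstar{+}$ since $\omega^t$ swaps $\YTDstar{\pm}$, and $\msh_{i,r}=-x_i^-\cdot\msX_{i,r}^+$ lies in $\YTDstar{0}$ because the coadjoint $\mfg$-action preserves the triangular pieces appropriately — here one must check $x_i^-\cdot\YTDstar{-}\subset\YTDstar{0}\oplus\cdots$ and isolate the $\YTDstar{0}$-component, or more simply note that $\msh_{i,r}$ reduces modulo $\hbar$ to $h_it^r\in\tminus^0$ and its weight is $0$, placing it in $\Yhgstar_0\cap\YTDstar{-}\cdot\YTDstar{0}\cdot\YTDstar{+}$; a weight and degree count via the triangular decomposition forces it into $\YTDstar{0}$. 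Then the subalgebra topologically generated by $\{\msX_\beta^\mp(u)\}$ contains a lift of a generating set of $U(\tminus^\mp)$, hence equals $\YTDstar{\pm}$ by flatness and a standard Nakayama-type argument (Lemma \ref{L:scl-map}(2)), and similarly for $\YTDstar{0}$. The main obstacle I anticipate is verifying cleanly that $\pi^\chi$ interacts correctly with the coproduct so that the images $(\pi^\chi)^t(\YTDstar{\chi})$ are genuinely subalgebras — the coproduct on $\QFhYhg$ is not respected by the triangular projections in an obvious componentwise way — and relatedly, confirming that $\msh_{i,r}$ and the full coadjoint-generated family actually land in the $\YTDstar{0}$ and $\YTDstar{\pm}$ pieces rather than straddling them; this will require care with the $\N\times Q$-grading and the explicit form of $\Delta$ from Proposition \ref{P:Yhg-Hopf}.
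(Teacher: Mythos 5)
Your approach is a genuinely different route from the paper's, and you have, to your credit, already flagged the central difficulty; but as written the argument has real gaps which the paper sidesteps entirely. The paper does not try to prove directly that the projections $\pi^\chi$ interact well with the coproduct — it instead proves (in Corollary \ref{C:Yhg*-TD-2}, logically independent of this proposition) the characterization $\YTDstar{\chi}=(\chk{\Phi_z})^{-1}\bigl(Y_\hbar^{\shortminus\chi}\mfg[\![z^{-1}]\!]\bigr)$, where $\chk{\Phi_z}$ is the injective algebra homomorphism built from $\mcR(z)$, and then reads off essentially everything you are trying to prove by hand.

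Concretely, three steps in your proposal do not close. First, the subalgebra claim: to show $\YTDstar{\chi}=\{g:g\circ\pi^\chi=g\}$ is closed under the product $\Delta^t$ one must control $(\pi^\chi\otimes\pi^\chi)\circ\Delta$ on the complement of $\dhYh{\chi}$, and the Yangian coproduct does \emph{not} respect the triangular blocks — already $\Delta(t_{i1})=t_{i1}\otimes 1+1\otimes t_{i1}+\hbar\msr_i$ has a cross-term in $Y_\hbar^-(\mfg)\otimes Y_\hbar^+(\mfg)$. This can be handled by a careful block analysis of $\Delta$, as in the reference the paper cites for an alternative proof, but your sketch does not carry it out. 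Second, exact commutativity of $\YTDstar{0}$: the bracket formula $[f,g]=(f\otimes g)\circ\hbar\delta_{\scriptscriptstyle+}$ from Proposition \ref{P:dual}(1) is a statement in the semiclassical limit $\Symtstar$, so it only yields commutativity modulo $\hbar$; and as you yourself noticed midway, there is no cocommutative structure on $\dhYh{0}$ to transpose, since $\dhYh{0}$ is not a subcoalgebra of $\QFhYhg$ (same $\hbar\msr_i$ obstruction). The paper gets exact commutativity for free: $\chk{\Phi_z}$ is injective and sends $\YTDstar{0}$ into the commutative $Y_\hbar^0\mfg[\![z^{-1}]\!]$. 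Third, the placement $\msh_{i,r}\in\YTDstar{0}$: a weight-and-degree count does not force this, because the weight-zero part of $\Yhgstar$ under the triangular decomposition is $\bigoplus_\beta(\YTDstar{+})_\beta\otimes\YTDstar{0}\otimes(\YTDstar{-})_{-\beta}$, which strictly contains $\YTDstar{0}$. The paper reads off the placement immediately from Proposition \ref{P:dual-z}(4), which exhibits $\chk{\Phi_z}(\msh_i(u))$ as a $Y_\hbar^0\mfg$-valued series, combined with the $R$-matrix characterization. Your Parts (1) and (2) become correct once the subalgebra property is in hand, so the decisive missing ingredient throughout is a working replacement for Corollary \ref{C:Yhg*-TD-2}.
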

\begin{proof} \let\qed\relax
One can deduce that $\YTDstar{\chi}$ is a subalgebra of $\Yhgstar$ using properties of the Yangian coproduct, as in \cite{KT96}*{Prop.~3.2}. We shall give an alternate simple proof of this fact in Section \ref{ssec:Yhg*-quant-TD} which illustrates that it follows naturally from properties of $\mcR(z)$; see Corollary \ref{C:Yhg*-TD-2} and Remark \ref{R:Yhg*-TD-done}. 

Let us complete the proof of the Proposition assuming Corollary \ref{C:Yhg*-TD-2} which, as explained in Remark \ref{R:Yhg*-TD-done}, also implies that the coefficients of $\{\msX_\beta^\mp(u)\}_{\beta\in \Root^+}$ and $\{\msh_i(u)\}_{i\in \mbI}$ belong to $\YTDstar{\mp}$ and $\YTDstar{0}$, respectively, and that $\YTDstar{0}$ is commutative.

\begin{proof}[Proof of \eqref{Yhg*-TD:1} and \eqref{Yhg*-TD:2}]\let\qed\relax
The graded Lie bialgebra isomorphism $\mathsf{Res}_{\scriptscriptstyle -}^\hbar:\tminus\iso (\hbar\tplus)^\star$ of Proposition \ref{P:dual} restricts to an isomorphism $\tminus^{\shortminus\chi}\iso (\hbar\tplus^\chi)^\star \subset\msS(\hbar\tplus^\chi){\vphantom{)}}^\star$.  Since $\YTDstar{\chi}$ is a subalgebra of $\Yhgstar$, its semiclassical limit $\msS(\hbar\tplus^\chi){\vphantom{)}}^\star$ is a $\Z$-graded $\C[\![\hbar]\!]$-subalgebra of $\Symt{\vphantom{()}}^\star$ and not just a submodule. It follows from these observations that the graded Hopf algebra isomorphism $\varphi$ of Proposition \ref{P:dual} restricts to an injective $\Z$-graded algebra homomorphsim
 \begin{equation}\label{vphi-chi}
 \varphi_\chi:U(\tminus^{\shortminus\chi})\to \msS(\hbar\tplus^\chi){\vphantom{)}}^\star\subset \Symt{\vphantom{)}}^\star
 \end{equation}
 which is surjective by the same argument as used in the proof of Part \eqref{Pdual:2} of Proposition \ref{P:dual}. Since $\YTDstar{\chi}$ is a $\Z$-graded topologically free $\C[\![\hbar]\!]$-algebra with semiclassical limit $\msS(\hbar\tplus^\chi){\vphantom{)}}^\star\cong U(\tminus^{\shortminus\chi})$, taking $\chi=\pm$ recovers  Part \eqref{Yhg*-TD:1} of the Proposition. 
 
 As for Part \eqref{Yhg*-TD:2}, since $\YTDstar{0}$ is a commutative topological  $\C[\![\hbar]\!]$-algebra containing $\{\msh_{ik}\}_{i\in \mbI,k<0}$ (by Corollary \ref{C:Yhg*-TD-2}), there is a $\C[\![\hbar]\!]$-algebra homomorphism 
 \begin{equation*}
 \upeta:\msS(\tminus^0)[\![\hbar]\!]\to \YTDstar{0}
 \end{equation*}
 uniquely determined by $\upeta(h_it^k)=\msh_{ik}$ for all $i\in \mbI$ and $k<0$. By Lemma \ref{L:Yhg*-gen}, the semiclassical limit $\bar\upeta: \msS(\tminus^0)\to U(\tminus^0)$ of $\upeta$ satisfies $\bar\upeta(h_it^k)=h_it^k$ for all $i\in \mbI$ and $k<0$, and thus coincides with the canonical isomorphism  $\msS(\tminus^0)\iso U(\tminus^0)$. Here we have assumed the identification of $\YTDstar{0}/\hbar\YTDstar{0}$ with $U(\tminus^0)$ provided by $\varphi_0$ from \eqref{vphi-chi} above. As $\msS(\tminus^0)$ and $\YTDstar{0}$ are both topologically free, we can conclude from Lemma \ref{L:scl-map} that $\upeta$ is an isomorphism of topological $\C[\![\hbar]\!]$-algebras. 
 \end{proof}
 
\begin{proof}[Proof of \eqref{Yhg*-TD:3}]
For $\YTDstar{0}$, this follows from the definition of the isomorphism $\upeta$ given in the proof of \eqref{Yhg*-TD:2} above. 

Similarly, by Lemma \ref{L:Yhg*-gen} and Corollary \ref{C:Yhg*-TD-2},  $\msX_{\beta,k}^\mp$ belongs to $\YTDstar{\pm}$ and specializes to $x_\beta^\mp t^k$ in 
$\YTDstar{\pm}/\hbar \YTDstar{\pm}\cong U(\tminus^\mp) \subset U(\tminus)$, for each $\beta\in \Root^+$ and $k<0$. Since the elements $x_\beta^\mp t^k$ generate the Lie algebra $\tminus^\mp$ and, by Part \eqref{Yhg*-TD:1}, $\YTDstar{\pm}$ is a flat deformation of the algebra $U(\tminus^\mp)$ over $\C[\![\hbar]\!]$, this completes the proof of the proposition.  \qedhere
\end{proof}
\end{proof}
\begin{remark}
We caution that $\YTDstar{\pm}$ is \textit{not} generated by the elements $\msX_{i,-k-1}^\mp$, for $i\in \mbI$ and $k\in \N$, unless $\mfg=\mfsl_2$, just as $t^{-1}\mfn_{\mp}[t^{-1}]$ is not generated as a Lie algebra by the elements $x_i^\mp t^{-k-1}$ outside of the rank one case. In particular, the statement of Part (i) in \cite{KT96}*{Thm.~4.2}, which is the analogue of Part \eqref{Yhg*-TD:2} above, should be adjusted. 
\end{remark}

As an application of Lemma \ref{L:scl-map}, Proposition \ref{P:Yhg*-TD}, the decomposition $\tminus=\tminus^-\oplus \tminus^0 \oplus \tminus^+$ 
and the Poincar\'{e}--Birkhoff--Witt Theorem for enveloping algebras, we obtain the following variant of Theorem 3.1 (ii) in \cite{KT96}. 
\begin{corollary}\label{C:Yhg*-TD}
The multiplication map 
\begin{equation*}
\msm:\YTDstar{+} \otimes  \YTDstar{0} \otimes \YTDstar{-} \to \Yhgstar
\end{equation*}
is an isomorphism of $\Z$-graded topological $\C[\![\hbar]\!]$-modules. 
\end{corollary}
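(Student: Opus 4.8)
The plan is to deduce this from Lemma \ref{L:scl-map} together with Proposition \ref{P:Yhg*-TD} and the corresponding triangular decomposition of the semiclassical limit. First I would record that the multiplication map $\msm$ is $\Z$-graded, since each of $\YTDstar{+}$, $\YTDstar{0}$, $\YTDstar{-}$ is a $\Z$-graded $\C[\![\hbar]\!]$-subalgebra of $\Yhgstar$ (the fact that they are subalgebras being part of Proposition \ref{P:Yhg*-TD}), and the product on $\Yhgstar$ is $\Z$-graded. Next, both the source $\YTDstar{+}\otimes \YTDstar{0}\otimes \YTDstar{-}$ and the target $\Yhgstar$ are topologically free $\Z$-graded $\C[\![\hbar]\!]$-modules: the target by Corollary \ref{C:Vstar[[h]]}, and the source because it is a topological tensor product of topologically free modules. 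Hence Lemma \ref{L:scl-map} applies, and it suffices to show that the semiclassical limit $\bar\msm$ of $\msm$ is an isomorphism.

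To identify $\bar\msm$, I would use Proposition \ref{P:Yhg*-TD} parts \eqref{Yhg*-TD:1} and \eqref{Yhg*-TD:2}, which identify the semiclassical limits of $\YTDstar{+}$, $\YTDstar{0}$, $\YTDstar{-}$ with $U(\tminus^-)$, $U(\tminus^0)$, $U(\tminus^+)$ respectively, as $\Z$-graded algebras — and, more importantly, these identifications are realized as restrictions of the single isomorphism $\varphi:U(\tminus)\iso \Symtstar$ of Proposition \ref{P:dual} (this is the content of the maps $\varphi_\chi$ constructed in \eqref{vphi-chi} in the proof of Proposition \ref{P:Yhg*-TD}). Since $\varphi$ is an algebra homomorphism and the semiclassical limit functor is monoidal (the semiclassical limit of a topological tensor product is the tensor product of semiclassical limits, as recalled in Section \ref{ssec:Pr-Top}), the semiclassical limit $\bar\msm$ fits into a commutative square whose left vertical is the multiplication map $U(\tminus^-)\otimes_\C U(\tminus^0)\otimes_\C U(\tminus^+)\to U(\tminus)$ and whose right vertical is $\varphi$. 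More precisely, $\bar\msm = \varphi\circ m_{U}\circ(\varphi_-^{-1}\otimes\varphi_0^{-1}\otimes\varphi_+^{-1})$, where $m_U$ is the multiplication map of $U(\tminus)$ restricted to the indicated tensor factors.

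The final step is then the classical statement that $m_U:U(\tminus^-)\otimes_\C U(\tminus^0)\otimes_\C U(\tminus^+)\to U(\tminus)$ is an isomorphism of graded vector spaces. This follows from the Poincar\'e--Birkhoff--Witt Theorem for $U(\tminus)$ applied to the vector-space decomposition $\tminus=\tminus^-\oplus \tminus^0\oplus \tminus^+$: choosing compatible ordered homogeneous bases of the three summands, the PBW monomials split as products of a monomial in each factor, in the order $\tminus^-$, then $\tminus^0$, then $\tminus^+$, and this is exactly the image of a basis of the tensor product under $m_U$. Composing with the isomorphisms $\varphi$, $\varphi_\pm$, $\varphi_0$ shows $\bar\msm$ is an isomorphism, and Lemma \ref{L:scl-map} upgrades this to the assertion that $\msm$ itself is an isomorphism of $\Z$-graded topological $\C[\![\hbar]\!]$-modules.

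The only genuinely delicate point — and hence the main thing to be careful about rather than a true obstacle — is making sure the semiclassical limits of the three factors are matched to $U(\tminus^\mp)$ and $U(\tminus^0)$ via restrictions of \emph{one and the same} $\varphi$, so that $\bar\msm$ is literally conjugate to $m_U$ by $\varphi$; this is guaranteed by the construction of $\varphi_\chi$ in the proof of Proposition \ref{P:Yhg*-TD}, where $\varphi_\chi$ is defined as a restriction of $\varphi$ composed with the projection $\tplus\to\tplus^\chi$. Everything else is a routine combination of Lemma \ref{L:scl-map}, monoidality of the semiclassical limit, and PBW. I would state the argument in a single short paragraph: $\msm$ is graded between topologically free modules, its semiclassical limit is $\varphi\circ m_U\circ(\varphi_-^{-1}\otimes\varphi_0^{-1}\otimes\varphi_+^{-1})$, which is bijective by PBW for $U(\tminus)$, so $\msm$ is an isomorphism by Lemma \ref{L:scl-map}.
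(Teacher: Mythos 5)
Your proof is correct and follows exactly the route the paper indicates: the remark immediately preceding the Corollary states that it follows from Lemma \ref{L:scl-map}, Proposition \ref{P:Yhg*-TD}, the decomposition $\tminus=\tminus^-\oplus\tminus^0\oplus\tminus^+$, and PBW for enveloping algebras, which is precisely the set of ingredients you assemble. The point you flag as delicate — that the semiclassical limits of the three factors are realized inside $\Symt{\vphantom{)}}^\star$ as restrictions of the single isomorphism $\varphi$ — is indeed the one place worth making explicit, and it is exactly what the maps $\varphi_\chi$ from the proof of Proposition \ref{P:Yhg*-TD} provide.
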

\begin{remark}
We note that for the statement of \cite{KT96}*{Thm.~3.1} to hold, the tensor product $\otimes$ must be taken to be a completion of the algebraic tensor product $\otimes_\C$ compatible with the underlying $\Z$-filtrations. 
\end{remark}

\section{\texorpdfstring{$\mathrm{D}Y_\hbar\mfg$}{DYhg} as a quantization}\label{sec:DYhg-quant}

In this section, we construct a $\Z$-graded topological Hopf algebra structure on $\DYhg$ which quantizes the graded Lie bialgebra structure on the loop algebra $\mft=\mfg[t^{\pm 1}]$ defined in Section \ref{ssec:Pr-Manin}. This will be achieved in Theorem \ref{T:dual} using Proposition \ref{P:dual-z} and Corollary \ref{C:chk-Phi-Hopf}. As a consequence of these results, we obtain in Section \ref{ssec:Yhg*-quant-TD} a characterization of the restricted  duals $\YTDstar{\pm}$ and $\YTDstar{0}$ in terms of the universal $R$-matrix $\mcR(z)$ which completes the proof of Proposition \ref{P:Yhg*-TD}.

%
\subsection{The morphism \texorpdfstring{$\chk{\Phi_z}$}{}}\label{ssec:chk-Phiz}
Henceforth, the notation $\chk{\hYhg}$ will be used to denote the topological Hopf algebra $(\Yhgstar){\vphantom{)}}^{\scriptscriptstyle\mathrm{cop}}$ over $\C[\![\hbar]\!]$. That is, $\chk{\hYhg}$ coincides with the dual Yangian $\Yhgstar$ as an algebra, and has coproduct $\chk{\Delta}$, counit $\chk{\veps}$ and antipode $\chk{S}$ given by 
\begin{gather*}
\chk{\Delta}:=(1\,2)\circ m^t:\chk{\hYhg}\to \chk{\hYhg}\otimes \chk{\hYhg},\\ \chk{\veps}:=\iota^t: \chk{\hYhg}\to \C[\![\hbar]\!], \quad \chk{S}:=(S^{-1})^t:\chk{\hYhg}\to \chk{\hYhg},
\end{gather*}
where $m$, $\iota$ and $S$ are the product, unit and antipode of the Drinfeld--Gavarini Yangian $\QFhYhg\subset \hYhg$; see Section \ref{ssec:Yhg*-Hopf}. By Theorem \ref{T:Yhg*-quant}, $\chk{\hYhg}$ is a flat deformation of the enveloping algebra $U(\tminus)=U(t^{-1}\mfg[t^{-1}])$ over $\C[\![\hbar]\!]$ which provides a homogeneous quantization of the Lie bialgebra $(\tminus, -\delta_{\scriptscriptstyle -})$. 

Our present goal is to construct a homomorphism of $\C[\![\hbar]\!]$-algebras 
\begin{equation*}
\chk{\Phi}_z:\chk{\hYhg}\to \hYhg[\![z^{-1}]\!] 
\end{equation*}
which is compatible with both the formal shift operator $\Phi_z$ of Theorem \ref{T:Phiz} and the Hopf structures on $\chk{\hYhg}$ and $\hYhg$. This is achieved using the universal $R$-matrix $\mcR(z)$ of the Yangian as follows. By Lemma \ref{L:R-DG}, $\mcR(z)$ is an element of $(\QFYhg\otimes \Yhg)[\![z^{-1}]\!]$, and thus gives rise to a $\C[\![\hbar]\!]$-module homomorphism
\begin{equation*}
\chk{\Phi}_z: \chk{\hYhg}\to \hYhg[\![z^{-1}]\!],\quad  f\mapsto (f\otimes \id) \mcR(-z) \quad \forall\; f\in \chk{\hYhg}.
\end{equation*}
Now recall that $\dot{h}_i(u),\dot{x}_\beta^\pm(u)\in \DYhg[\![u]\!]$ and $\msh_i(u),\msX_\beta^\pm(u)\in \chk{\hYhg}[\![u]\!]$ are the generating series defined in Sections \ref{ssec:DYhg-root} and \ref{ssec:Yhg*-TD}, respectively. In addition, we let $\scriptE_z$ denote the canonical $\C[\![\hbar]\!]$-algebra homomorphism 
\begin{equation*}
\scriptE_{z}: \hYhg[\![w^{-1}]\!]\otimes \hYhg[\![z^{-1}]\!] \to  \hYhg^{\otimes 2}[\![z^{-1}]\!]
\end{equation*}
given by evaluating $w$ to $z$. 
The following proposition asserts that $\chk{\Phi}_z$ indeed has the desired properties. 
\begin{proposition}\label{P:dual-z} $\chk{\Phi}_z$ has the following properties:
\leavevmode
\begin{enumerate}[font=\upshape]
\item\label{dual-z:1} It is a homomorphism of $\C[\![\hbar]\!]$-algebras satisfying
\begin{equation*}
\Delta\circ \chk{\Phi}_z= \scriptE_z\circ (\chk{\Phi}_w \otimes \chk{\Phi}_z)\circ \chk{\Delta},\quad \veps \circ \chk{\Phi}_z=\chk{\veps} \quad \text{ and }\quad S\circ \chk{\Phi}_z=\chk{\Phi}_z\circ \chk{S}. 
\end{equation*}
\item\label{dual-z:2} It is a $U(\mfg)$-module homomorphism compatible with Chevalley involutions:
\begin{equation*}
\chk{\Phi}_z\circ\omega^t=\omega\circ \chk{\Phi}_z. 
\end{equation*}

\item \label{dual-z:3} Its restriction to $\QFYhg{\vphantom{)}}^\star$ is a $\Z$-graded $\C[\hbar]$-algebra homomorphism
\begin{equation*}
\chk{\Phi}_z|_{\QFYhg{\vphantom{)}}^\star}:\QFYhg{\vphantom{)}}^\star \to \LzYhg.
\end{equation*}
\item\label{dual-z:4} For each $\beta\in \Root^+$ and $i\in \mbI$, one has 
\begin{gather*}
\chk{\Phi}_z(\msX_{\beta}^\pm(u))=\sum_{n\in \N}(-1)^{n}u^n \partial_z^{(n)} x_\beta^\pm(-z)=\Phi_z(\dot{x}_\beta^\pm(u)),\\
\chk{\Phi}_z(\msh_i(u))=\sum_{n\in \N}(-1)^{n}u^n \partial_z^{(n)} h_i(-z)=\Phi_z(\dot{h}_i(u)).
\end{gather*}
\end{enumerate}
\end{proposition}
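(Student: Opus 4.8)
The plan is to prove the four parts of Proposition~\ref{P:dual-z} by exploiting the relationship between $\chk{\Phi}_z$ and the universal $R$-matrix $\mcR(z)$, together with the defining properties of $\mcR(z)$ recorded in Theorem~\ref{T:Yhg-R} and Corollary~\ref{C:Yhg-R-Chev}.

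First, for Part~\eqref{dual-z:1}, the multiplicativity of $\chk{\Phi}_z$ should follow from the fact that the product on $\chk{\hYhg}=(\Yhgstar){\vphantom{)}}^{\scriptscriptstyle\mathrm{cop}}$ is (up to the flip) the transpose $m^t$ of the coproduct $m$ on $\QFhYhg\subset\hYhg$, combined with the first cabling identity $\Delta\otimes\id(\mcR(z))=\mcR_{13}(z)\mcR_{23}(z)$: pairing the first two legs of this identity against $f\otimes g$ and using that $\chk{\Delta}=(1\,2)\circ m^t$ converts the product $f\cdot g$ in $\chk{\hYhg}$ into $(f\otimes g)\circ\Delta_{\QFhYhg}$ yields $\chk{\Phi}_z(f\cdot g)=\chk{\Phi}_z(f)\chk{\Phi}_z(g)$; here one has to be careful about the sign $z\mapsto -z$ in the definition $\chk{\Phi}_z(f)=(f\otimes\id)\mcR(-z)$. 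Dually, applying the second cabling identity $\id\otimes\Delta(\mcR(z))=\mcR_{13}(z)\mcR_{12}(z)$ and pairing the single first leg against $f$ gives $\Delta(\chk{\Phi}_z(f))$ expressed in terms of $\chk{\Delta}(f)=\sum f_{(2)}\otimes f_{(1)}$, which is exactly the coproduct intertwining identity $\Delta\circ\chk{\Phi}_z=\scriptE_z\circ(\chk{\Phi}_w\otimes\chk{\Phi}_z)\circ\chk{\Delta}$ once the $w,z$ bookkeeping through $\scriptE_z$ is unwound. The counit identity $\veps\circ\chk{\Phi}_z=\chk{\veps}$ follows from $(\veps\otimes\id)\mcR(z)=1$, and the antipode identity $S\circ\chk{\Phi}_z=\chk{\Phi}_z\circ\chk{S}$ follows from $(S\otimes\id)\mcR(z)=\mcR(z)^{-1}$ together with unitarity $\mcR(z)^{-1}=\mcR_{21}(-z)$ and $\chk{S}=(S^{-1})^t$; this is the step requiring the most careful matching of conventions.

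For Part~\eqref{dual-z:2}, the $U(\mfg)$-equivariance of $\chk{\Phi}_z$ reduces to the $\mfg$-invariance of $\mcR(z)$ (which follows from the intertwiner equation applied to $x\in\mfg\subset\Yhg$, since $\tau_z$ and $\Delta$ fix $\mfg$ and $\mcR(z)$ has weight zero), and the Chevalley compatibility $\chk{\Phi}_z\circ\omega^t=\omega\circ\chk{\Phi}_z$ is a direct consequence of $(\omega\otimes\omega)\mcR(z)=\mcR(z)$ from Corollary~\ref{C:Yhg-R-Chev}: unwinding, $\chk{\Phi}_z(\omega^t(f))=(\omega^t(f)\otimes\id)\mcR(-z)=(f\otimes\id)(\omega\otimes\id)\mcR(-z)=(f\otimes\id)(\id\otimes\omega)^{-1}(\omega\otimes\omega)\mcR(-z)=\omega((f\otimes\id)\mcR(-z))$, using that $\omega$ is an anti-automorphism and $f$ takes scalar values. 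Part~\eqref{dual-z:3} then follows by combining Part~\eqref{Yhg-R:3} of Theorem~\ref{T:Yhg-R} — that $\mcR(z)$ is a homogeneous degree-zero element of $\mathds{L}(\cYhg{\vphantom{\hYhg}}^{\scriptscriptstyle{(2)}}_z)$ — with the fact that $\QFYhg{\vphantom{)}}^\star=(\Yhgstar)_\Z$ consists of graded functionals of definite degree; pairing a degree $-k$ functional against the first leg of a degree-zero element of $\bigoplus_n z^n\cYhg{\vphantom{\hYhg}}^{\scriptscriptstyle{(2)}}_z$ lands in $z^k\cYhgz\subset\LzYhg$, which is the graded statement, and multiplicativity has already been established in Part~\eqref{dual-z:1}.

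The main obstacle, and the technical heart of the argument, will be Part~\eqref{dual-z:4}: the explicit formulas for $\chk{\Phi}_z(\msX_\beta^\pm(u))$ and $\chk{\Phi}_z(\msh_i(u))$. The strategy here is first to establish the root-$\alpha_i$ case $\chk{\Phi}_z(\msX_{i,-k-1}^+)=(-1)^k\partial_z^{(k)}x_i^+(-z)$ by computing the pairing of the defining functional $\msX_{i,-k-1}^+=\mathsf{Res}_{\scriptscriptstyle-}^\hbar(x_i^+t^{-k-1})\circ\dot\upnu\circ\pi_{-\alpha_i}$ from \eqref{def:X-dual} against the first leg of $\mcR(-z)$; the key input is the semiclassical limit formula $q^{\otimes 2}\hbar^{-1}(\mcR(z)-1)=\Omega_\mfg/(z+t-w)$ from Theorem~\ref{T:Yhg-R}\eqref{Yhg-R:3}, which identifies the leading behaviour, combined with the fact that $\msX_{i,-k-1}^+$ annihilates all symmetric powers $\sSymt{n}$ with $n\neq1$ and that $\mcR^-(z)$ is built (via \eqref{R-recur}) out of the simple tensors $x_\alpha^-\otimes x_\alpha^+$; the $Q$-graded structure forces only the $\beta=\alpha_i$ term of $\mcR^-(z)$ to contribute. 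One then propagates to general $\beta\in\Root^+$ using the $\mfg$-equivariance from Part~\eqref{dual-z:2} together with the definition $\msX_{\beta,-k-1}^+=\mathbf{X}^\beta\cdot\msX_{i(\beta),-k-1}^+$ and the parallel definition $x_{\beta,k}^+=\mathbf{X}^\beta\cdot x_{i(\beta),k}^+$ in $\DYhg$, to the negative root vectors using the Chevalley compatibility, and to $\msh_i(u)$ via $\msh_{i,-k-1}=-x_i^-\cdot\msX_{i,-k-1}^+$ against the analogous relation for $\dot h_i(u)$ in $\DYhg$. Finally the identification of the resulting series with $\Phi_z(\dot x_\beta^\pm(u))$ and $\Phi_z(\dot h_i(u))$ is immediate from the explicit formulas in Theorem~\ref{T:Phiz}\eqref{Phiz:1}. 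The delicate point throughout Part~\eqref{dual-z:4} is keeping track of the Borel/derivative bookkeeping — the identity $\sum_{n}(-1)^nu^n\partial_z^{(n)}x_\beta^\pm(-z)$ encodes exactly the Taylor expansion that converts the ``negative-mode'' generating series into a shifted ``positive-mode'' one — and verifying that higher-height contributions to $\mcR^-(z)$ and all of $\mcR^0(z)$, $\mcR^+(z)$ pair trivially with these degree-one functionals, which is where the structure of $\QFYhg{\vphantom{)}}^\star$ and Lemma~\ref{L:R-DG} are used decisively.
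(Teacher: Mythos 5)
Your Parts (1)--(3) follow the paper's proof essentially verbatim: Part (1) from the cabling identities and the relations $(\veps\otimes\id)\mcR(z)=1=(\id\otimes\veps)\mcR(z)$ together with an antipode relation for $\mcR(z)$ (your route via unitarity and $(S\otimes\id)\mcR=\mcR^{-1}$ is an acceptable substitute for the paper's use of $(S^{-1}\otimes\id)\mcR(z)=(\id\otimes S)\mcR(z)$); Part (2) from $\mfg$-invariance and Corollary~\ref{C:Yhg-R-Chev}; Part (3) from Theorem~\ref{T:Yhg-R}\eqref{Yhg-R:3} and Lemma~\ref{L:R-DG}.

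The gap is in your treatment of the base case of Part (4). You present the semiclassical limit formula $q^{\otimes 2}\hbar^{-1}(\mcR(z)-1)=\Omega_\mfg/(z+t-w)$ as ``the key input'', but that formula determines $\mcR_{\alpha_i}^-(z)$ only modulo $\hbar^2$, whereas you must compute the pairing of the $\C[\![\hbar]\!]$-linear functional $\msX_{i,-k-1}^+$ against the first tensor leg of $\mcR^-_{\alpha_i}(-z)$ \emph{exactly}. Since $\dYh{-}_{-\alpha_i}$ is a free $\C[\hbar]$-module on $\{\hbar x_{i,n}^-\}_n$, the pairing depends on the exact expansion of $\mcR_{\alpha_i}^-(z)$ in this basis, and the annihilation property of $\msX_{i,-k-1}^+$ on higher symmetric powers — a statement only about its behaviour on $B(\hbar\mathds{G})$ — cannot make up for not having that expansion. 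The paper supplies it directly from \eqref{R-recur}: because $\mathrm{ad}(\mathrm{T}(\zeta))(x_{i,s}^\pm)=\pm\alpha_i(\zeta)x_{i,s+1}^\pm$ with no $\hbar$-correction, the recursion gives the closed formula $\mcR_{\alpha_i}^-(z)=\sum_{n}\hbar x_{i,n}^-\otimes\partial_z^{(n)}x_i^+(z)$ over $\C[\hbar]$, not merely mod $\hbar^2$. You cite \eqref{R-recur}, so you are aware of it, but the argument as written leans on the semiclassical limit where it has no force. Similarly, Lemma~\ref{L:R-DG} only guarantees that $\chk{\Phi}_z$ is well defined; the reduction from $\mcR(-z)$ to $\mcR^-_{\alpha_i}(-z)$ comes instead from $\msX_{i,-k-1}^+=g\circ\pi^-$ for $g\in\YTDstar{-}$, from $(\veps\otimes\id)\mcR^0(z)=1=(\veps\otimes\id)\mcR^+(z)$, and from $\msX_{i,-k-1}^+$ vanishing on $\dYh{-}_\beta$ for $\beta\ne -\alpha_i$. (One minor slip: the base identity should read $\chk{\Phi}_z(\msX_{i,-k-1}^+)=(-1)^{k+1}\partial_z^{(k)}x_i^+(-z)$, not $(-1)^k$, consistent with $\Phi_z(x_{i,-k-1}^+)=(-1)^{k+1}\partial_z^{(k)}x_i^+(-z)$ coming from Theorem~\ref{T:Phiz}\eqref{Phiz:1}.) The remainder of your Part (4) — propagation via $\mfg$-equivariance, $\mathbf{X}^\beta$, and the Chevalley involution, then matching against Theorem~\ref{T:Phiz}\eqref{Phiz:1} — is exactly the paper's argument.
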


\begin{proof}[Proof of \eqref{dual-z:1}] \let\qed\relax
This is a modification of the standard result that if $H$ is a finite-dimensional quasitriangular Hopf algebra over a field $\mathbb{K}$ with $R$-matrix $R\in H\otimes_{\mathbb{K}}H$, then the map $f\mapsto (f\otimes \id_H)R$ defines a homomorphism of Hopf algebras $(H^\ast){\vphantom{)}}^{\scriptscriptstyle\mathrm{cop}}\to H$; see for instance \cite{CPBook}*{\S4.2.B} or \cite{Radford93}*{\S2}. The only novelty in the present setting is the appearance of the formal parameter $z$. Nonetheless, for the sake of completeness we shall give a full proof.

As the product on $\chk{\hYhg}$ is the transpose of the coproduct $\Delta$ on $\QFhYhg$ and $\Delta\otimes \id (\mcR(z))=\mcR_{13}(z)\mcR_{23}(z)$, we have
\begin{align*}
\chk{\Phi_z}(fg)
=f\otimes g\otimes \id (\mcR_{13}(-z)\mcR_{23}(-z))= \chk{\Phi_z}(f) \chk{\Phi_z}(g) \quad \forall\; f,g\in \chk{\hYhg}.
\end{align*}
Since in addition $(\veps\otimes \id) \mcR(z)=1$, we can conclude that $\chk{\Phi}_z$ is a homomorphism of unital, associative $\C[\![\hbar]\!]$-algebras.

Let us now verify the coproduct identity $\Delta\circ \chk{\Phi}_z= \scriptE_z\circ (\chk{\Phi}_w \otimes \chk{\Phi}_z)\circ \chk{\Delta}$ on $f\in \chk{\hYhg}$. Using the cabling identity $\id\otimes \Delta(\mcR(z))=\mcR_{13}(z)\mcR_{12}(z)$, we obtain 
\begin{align*}
\Delta(\chk{\Phi}_z(f))&=(f\otimes \Delta)(\mcR(-z))\\
&=(f\otimes \id \otimes \id)(\mcR_{13}(-z)\mcR_{12}(-z))\\
&
=(\chk{\Delta}(f)\otimes \id\otimes \id)(\mcR_{13}(-z)\mcR_{24}(-z))\\
&
=(\scriptE_z\circ (\chk{\Delta}(f)\otimes \id\otimes \id) )(\mcR_{13}(-w)\mcR_{24}(-z))\\
&
=(\scriptE_z \circ (\chk{\Phi}_w \otimes \chk{\Phi}_z) \circ \chk{\Delta})(f).
\end{align*}
Finally, the remaining two identities follow from the relations $(\id\otimes \veps)\mcR(z)=1$ and $(S^{-1}\otimes \id)\mcR(z)=(\id \otimes S)\mcR(z)$. Indeed, for each $f\in \chk{\hYhg}$,  we have
\begin{gather*}
\veps \circ \chk{\Phi}_z(f)=(f\otimes \veps)\mcR(-z)=f(1)=\chk{\veps}(f),\\
S\circ \chk{\Phi}_z(f)= (f\otimes S)\mcR(-z)=(\chk{S}(f)\otimes \id) \mcR(-z)=\chk{\Phi}_z\circ \chk{S}(f).
\end{gather*}
\end{proof}

\begin{proof}[Proof of \eqref{dual-z:2}] \let\qed\relax

Since $\tau_z$ restricts to the identity on $U(\mfg)$, the intertwiner equation \eqref{R-inter} implies that $\mcR(z)$ is a $\mfg$-invariant element of $\Yhg^{\otimes 2}[\![z^{-1}]\!]$: 
\begin{equation*}
[x\otimes 1 + 1\otimes x,\mcR(z)]=0 \quad \forall \; x\in \mfg. 
\end{equation*}
It follows readily from this fact, and the definition of the $\mfg$-module structure on $\chk{\hYhg}=\Yhgstar$ introduced in Section \ref{ssec:Yhg*-Chev}, that $\chk{\Phi_z}$ is a $U(\mfg)$-module homomorphism. 

Similarly, by Corollary \ref{C:Yhg-R-Chev} the Chevalley involution $\omega$ satisfies $(\omega\otimes \omega) \mcR(z)=\mcR(z)$, and we thus have 
\begin{align*}
\chk{\Phi_z}(\omega^t(f))=((f\circ\omega) \otimes \omega^2)\mcR(-z)
=(f\otimes \omega)\mcR(-z)=\omega(\chk{\Phi_z}(f)) \quad \forall \; f\in \chk{\hYhg}.
\end{align*}
%
\end{proof}

\begin{proof}[Proof of \eqref{dual-z:3}] \let\qed\relax
Suppose that $f$ is a degree $k$ element of $\QFYhg{\vphantom{)}}^\star$ for some $k\in \Z$. Then $f\otimes \id:\QFYhg\otimes \Yhg \to \Yhg$ is homogeneous of degree $k$, and hence
\begin{equation*}
f\otimes \id (\cYhg{\vphantom{\hYhg}}^{\scriptscriptstyle{(2)}}_z \cap (\QFYhg\otimes \Yhg)[\![z^{-1}]\!]) \subset z^k\prod_{n\in \N} \Yhg_{n+k}z^{-n-k}\subset z^k\Yhgz,
\end{equation*}
where $\cYhg{\vphantom{\hYhg}}^{\scriptscriptstyle{(2)}}_z=\prod_{n\in \N}(\Yhg^{\otimes 2})_n z^{-n}$, as in Section \ref{ssec:Yhg-R}. The assertion now follows from Part \eqref{Yhg-R:3} of Theorem \ref{T:Yhg-R} and Lemma \ref{L:R-DG}, which yield 
\begin{equation*}
\mcR(z)\in \cYhg{\vphantom{\hYhg}}^{\scriptscriptstyle{(2)}}_z \cap (\QFYhg\otimes \Yhg)[\![z^{-1}]\!]. \qedhere
\end{equation*}
\end{proof}

\begin{proof}[Proof of \eqref{dual-z:4}]
Since $\chk{\Phi_z}$ is a $U(\mfg)$-module homomorphism intertwining Chevalley involutions, it is sufficient to establish that 
\begin{equation}\label{chkPhi_X}
\chk{\Phi}_z(\msX_{i,-k-1}^+)=(-1)^{k+1} \partial_z^{(k)} x_i^+(-z)=\Phi_{z}(x_{i,-k-1}^+) \quad \forall\; i \in \mbI \; \text{ and }k\in \N, 
\end{equation}
where we recall that $\msX_{i,-k-1}^+$ was defined explicitly in \eqref{def:X-dual}. 
Since $\veps\otimes \id$ sends both $\mcR^0(z)$ and $\mcR^+(z)$ to $1$ and $\msX_{i,-k-1}^+$ vanishes on $\dot{\msY}_\hbar^-(\mfg)_\beta$ for $\beta\neq -\alpha_i$, we have 
\begin{align*}
\chk{\Phi}_z(\msX_{i,-k-1}^+)=(\msX_{i,-k-1}^+\otimes \id)\mcR^-_{\alpha_i}(z).
\end{align*}
Using \eqref{R-recur}, we deduce that the element $\mcR^-_{\alpha_i}(z)$ is given by 
\begin{equation*}
\mcR^-_{\alpha_i}(z)=\sum_{p\geq 0}\frac{\alpha_i(\zeta)\mbT(\zeta)^p}{(z\alpha_i(\zeta))^{p+1}}
(\hbar x_{i,0}^-
\otimes x_{i,0}^+)=\sum_{p\geq 0}\sum_{n=0}^p \binom{p}{n}(-1)^n \hbar x_{i,n}^-\otimes x_{i,p-n}^+z^{-p-1},
\end{equation*}
where we have used $\mathrm{ad}(\mathrm{T}(\zeta))^p(x_{i,0}^\pm)=(\pm 1)^p \alpha_i(\zeta)^p x_{i,p}^\pm$. 
This can be rewritten as 
\begin{equation*}
\mcR^-_{\alpha_i}(z)=\sum_{n\in \N} \hbar x_{i,n}^- \otimes \partial_z^{(n)}x_i^+(z)
=-\sum_{n\in \N}\hbar x_{i,n}^- \otimes \Phi_{-z}(x_{i,-n-1}^+),
\end{equation*}
where the second equality is due to Part \eqref{Phiz:1} of Theorem \ref{T:Phiz}. 
As $\msX_{i,-k-1}^+(\hbar x_{i,n}^-)=\mathsf{Res}_{\scriptscriptstyle-}^\hbar(x_i^+t^{-k-1})(\hbar x_{i}^-t^n)=-\delta_{kn}$ for all $k,n\in \N$, this implies the identity \eqref{chkPhi_X}.
\end{proof}
\begin{remark}\label{R:dual-z-graded}
As $\chk{\hYhg}$ and $\LzhYhg$ are $\Z$-graded topological $\C[\![\hbar]\!]$-algebras with
\begin{equation*}
(\chk{\hYhg})_\Z=(\Yhgstar)_\Z\cong\QFYhg{\vphantom{)}}^\star \quad \text{ and }\quad (\LzhYhg)_\Z=\LzYhg.
\end{equation*}
Part \eqref{dual-z:3} of the Proposition is equivalent to the assertion that $\chk{\Phi_z}$ is a $\Z$-graded $\C[\![\hbar]\!]$-algebra homomorphism $\chk{\Phi_z}:\chk{\hYhg}\to \LzhYhg$.
\end{remark}
Since $\chk{\Phi_z}$ has image in $\LzhYhg$, we may compose it with $\mathscr{Ev}$ from \eqref{DYhg-Ev} to obtain a  $\C[\![\hbar]\!]$-algebra homomorphism 
\begin{equation*}\label{chk-Phi}
\chk{\Phi}:=\mathscr{Ev}\circ \chk{\Phi_z}:\chk{\hYhg}\to \cYhg.
\end{equation*}
Our present goal is to apply Part \eqref{dual-z:1} of Proposition \ref{P:dual-z} to interpret $\chk{\Phi}$ as a homomorphism of topological Hopf algebras, where the topological structure on the completed Yangian is that induced by the gradation topology on $\hYhg$. To make this precise, let us define $\cYhg{\vphantom{)}}^{\scriptscriptstyle(n)}$, for any $n\in \N$, to be the formal completion of $\Yhg^{\otimes n}$ with respect to its $\N$-grading: 
\begin{equation*}
\cYhg{\vphantom{)}}^{\scriptscriptstyle(n)}:=\prod_{k\in \N}(\Yhg^{\otimes n})_k.
\end{equation*}
Equivalently, it is the completion of the $\N$-graded topological $\C[\![\hbar]\!]$-algebra $\hYhg^{\otimes n}$ with respect to the filtration defining its gradation topology, as defined in Section \ref{ssec:Pr-gr*}. By \cite{WDYhg}*{Prop.~A.1}, this is a topologically free $\C[\![\hbar]\!]$-algebra containing $\hYhg^{\otimes n}$ as a subalgebra; see also  \cite{WDYhg}*{Lem.~4.1} and Section \ref{ssec:DYhg-shift} above. Furthermore, we can (and shall) view 
\begin{equation*}
(\cYhg)^{\otimes n}\subset \cYhg{\vphantom{)}}^{\scriptscriptstyle(n)} \quad \forall \; n\in \N,
\end{equation*}
where $\otimes$ denotes the topological tensor product over $\C[\![\hbar]\!]$. 
%

Next, observe that the $\N$-graded Hopf algebra structure on $\Yhg$ induces a topological Hopf structure on $\cYhg$, equipped with the grading-completed tensor product. More precisely, as the multiplication $m$, coproduct $\Delta$, counit $\veps$, and antipode $S$ on $\Yhg$ are $\N$-graded, they uniquely extend to $\C[\![\hbar]\!]$-module homomorphisms 
\begin{gather*}
m: \cYhg{\vphantom{)}}^{\scriptscriptstyle(2)}\to \cYhg, \quad \Delta: \cYhg\to \cYhg{\vphantom{)}}^{\scriptscriptstyle(2)}, \quad \veps: \cYhg\to \C[\![\hbar]\!], \quad S: \cYhg\to \cYhg,
\end{gather*}
which collectively satisfy the axioms of a Hopf algebra. Proposition \ref{P:dual-z} then admits the following corollary. 
\begin{corollary}\label{C:chk-Phi-Hopf}
The $\C[\![\hbar]\!]$-algebra homomorphism $\chk{\Phi}$ is a morphism of topological Hopf algebras. That is, it satisfies
\begin{equation*}
\Delta\circ \chk{\Phi}=(\chk{\Phi}\otimes \chk{\Phi})\circ \chk{\Delta}, \quad \veps \circ \chk{\Phi}=\chk{\veps} \quad \text{ and }\quad S\circ \chk{\Phi}=\chk{\Phi}\circ \chk{S}.
\end{equation*}
In particular, one has $\mathrm{Im}(\Delta\circ \chk{\Phi})\subset \cYhg\otimes \cYhg$. 
\end{corollary}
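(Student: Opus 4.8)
The plan is to deduce Corollary~\ref{C:chk-Phi-Hopf} from Part~\eqref{dual-z:1} of Proposition~\ref{P:dual-z} by composing with the evaluation map $\mathscr{Ev}$ and carefully tracking how $\scriptE_z$ interacts with $\mathscr{Ev}$ at the level of completed tensor products. First I would record that $\mathscr{Ev}^{\otimes n}\colon (\LzhYhg)^{\otimes n}\to (\cYhg)^{\otimes n}$ (the $n$-fold evaluation $w_i\mapsto 1$) is a $\C[\![\hbar]\!]$-algebra homomorphism, and that it is compatible with the grading-completed coproduct on $\cYhg$ in the sense that $\Delta\circ \mathscr{Ev}=(\mathscr{Ev}\otimes \mathscr{Ev})\circ(\mathscr{Ev}_z^{(2)})$, where $\mathscr{Ev}_z^{(2)}$ denotes evaluation of the single auxiliary spectral variable. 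Concretely, the point is that $\scriptE_z\colon \hYhg[\![w^{-1}]\!]\otimes\hYhg[\![z^{-1}]\!]\to \hYhg^{\otimes 2}[\![z^{-1}]\!]$ followed by $z\mapsto 1$ agrees with first setting $w\mapsto 1$ and $z\mapsto 1$ separately; this is the elementary observation that specialization of two independent variables to the same value factors through either order of specialization.

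Next I would apply $\mathscr{Ev}$ to the three identities in Part~\eqref{dual-z:1}. For the counit and antipode identities this is immediate: $\veps\circ\chk{\Phi}=\veps\circ\mathscr{Ev}\circ\chk{\Phi_z}=\veps\circ\chk{\Phi_z}=\chk{\veps}$ since $\mathscr{Ev}$ is counit-compatible in the trivial way (the counit on $\cYhg$ restricted along $\mathscr{Ev}$ is the evaluation-at-$1$ of the $z$-series counit), and similarly $S\circ\chk{\Phi}=S\circ\mathscr{Ev}\circ\chk{\Phi_z}=\mathscr{Ev}\circ S\circ\chk{\Phi_z}=\mathscr{Ev}\circ\chk{\Phi_z}\circ\chk{S}=\chk{\Phi}\circ\chk{S}$, using that $\mathscr{Ev}$ is an algebra map and that $S$ on $\cYhg$ is the continuous extension of $S$ on $\Yhg$, hence commutes with $\mathscr{Ev}$. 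For the coproduct identity, starting from $\Delta\circ\chk{\Phi}_z=\scriptE_z\circ(\chk{\Phi}_w\otimes\chk{\Phi}_z)\circ\chk{\Delta}$ and applying the two-variable evaluation $w\mapsto1,\,z\mapsto1$ on the left via $\Delta\circ\mathscr{Ev}=(\mathscr{Ev}\otimes\mathscr{Ev})\circ(\text{spec})$, one gets $\Delta\circ\chk{\Phi}=(\mathscr{Ev}\otimes\mathscr{Ev})\circ(\chk{\Phi}_w\otimes\chk{\Phi}_z)\circ\chk{\Delta}=(\chk{\Phi}\otimes\chk{\Phi})\circ\chk{\Delta}$, where the last step collapses each $\mathscr{Ev}\circ\chk{\Phi}_{(\cdot)}$ into $\chk{\Phi}$. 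The final sentence, $\mathrm{Im}(\Delta\circ\chk{\Phi})\subset\cYhg\otimes\cYhg$, is then automatic from this identity since the right-hand side $(\chk{\Phi}\otimes\chk{\Phi})\circ\chk{\Delta}$ visibly lands in $\cYhg\otimes\cYhg\subset\cYhg{\vphantom{)}}^{\scriptscriptstyle(2)}$, using that $\chk{\Delta}$ on $\chk{\hYhg}$ has image in $\chk{\hYhg}\otimes\chk{\hYhg}$ (as $\chk{\hYhg}$ is a topological Hopf algebra by Theorem~\ref{T:Yhg*-quant}).

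I expect the main obstacle to be the bookkeeping around the \emph{grading-completed} tensor products: one must verify that $\mathscr{Ev}$ really does extend to a continuous algebra homomorphism on each $(\LzhYhg)^{\otimes n}$ with values in $(\cYhg)^{\otimes n}$ (not merely in $\cYhg{\vphantom{)}}^{\scriptscriptstyle(n)}$), and that the coproduct $\Delta$ extended to $\cYhg$ is genuinely compatible with these evaluations — i.e.\ that no completion subtleties obstruct the interchange of $\Delta$ with $\mathscr{Ev}$. This is where one invokes \cite{WDYhg}*{Prop.~A.1}, which guarantees $\cYhg{\vphantom{)}}^{\scriptscriptstyle(n)}$ is topologically free and contains $\hYhg^{\otimes n}$, so that the structure maps of $\Yhg$, being homogeneous of degree zero, extend uniquely and continuously; the evaluation maps are likewise graded of degree zero in the spectral variables and hence extend. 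Once these continuity statements are in place, the computation is a formal diagram chase and poses no further difficulty.
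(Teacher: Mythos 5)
Your proposal follows essentially the same route as the paper: apply $\mathscr{Ev}$ to the three identities of Proposition~\ref{P:dual-z}, with the counit and antipode cases being immediate and the coproduct case resting on the factorization of iterated spectral-variable evaluations (what the paper formalizes as $\mathscr{Ev}^{\scriptscriptstyle{(2)}}\circ \scriptE_z^\mathrm{L}=\mathscr{Ev}\otimes\mathscr{Ev}$), together with the observation that the final containment follows because $\chk{\Delta}$ lands in $\chk{\hYhg}\otimes\chk{\hYhg}$. The one minor imprecision is that the relevant intermediate codomain is the grading-completion $\cYhg{\vphantom{)}}^{\scriptscriptstyle(2)}$ rather than $\cYhg\otimes\cYhg$; the sharper containment into $\cYhg\otimes\cYhg$ is, as you correctly note, a consequence of the identity once established, not a precondition.
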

\begin{proof}
The counit and antipode relations are obtained by applying $\mathscr{Ev}$ to the corresponding relations of Part \eqref{dual-z:1} of Proposition \ref{P:dual-z} and appealing to the identity $\chk{\Phi}=\mathscr{Ev}\circ \chk{\Phi_z}$. The idea now is that the relation $\Delta\circ \chk{\Phi}=(\chk{\Phi}\otimes \chk{\Phi})\circ \chk{\Delta}$ should follow by applying $\mathscr{Ev}\otimes \mathscr{Ev}$ to both sides of the identity $\Delta\circ \chk{\Phi}_z= \scriptE_z\circ (\chk{\Phi}_w \otimes \chk{\Phi}_z)\circ \chk{\Delta}$.

However, to make this precise we must first make a few technical observations. Recall from \eqref{LzYhg-n} that $\mathds{L}(\cYhg{\vphantom{\hYhg}}^{\scriptscriptstyle{(2)}}_z)$ is the $\Z$-graded subalgebra of $\Yhg^{\otimes 2}[z;z^{-1}]\!]$ with $k$-th homogeneous component $ z^k \cYhg{\vphantom{\hYhg}}^{\scriptscriptstyle{(2)}}_z$, where 
\begin{equation*}
\cYhg{\vphantom{\hYhg}}^{\scriptscriptstyle{(2)}}_z=\prod_{n\in \N} (\Yhg^{\otimes 2})_n z^{-n} \subset \Yhg^{\otimes 2}[\![z^{-1}]\!]. 
\end{equation*}
Following Section \ref{ssec:DYhg-shift}, we shall write $\mathrm{L}(\cYhg{\vphantom{\hYhg}}^{\scriptscriptstyle{(2)}}_z)$ for the $\hbar$-adic completion of $\mathds{L}(\cYhg{\vphantom{\hYhg}}^{\scriptscriptstyle{(2)}}_z)$. This is a $\Z$-graded topological $\C[\![\hbar]\!]$-algebra contained in the formal series space $(\hYhg\otimes \hYhg)[\![z^{\pm 1}]\!]$; see Lemma \ref{L:DYhg-LzYhg}. As in  \eqref{DYhg-Ev}, evaluation at $z=1$ yields a $\C[\![\hbar]\!]$-algebra epimorphism
\begin{equation*}
\mathscr{Ev}^{\scriptscriptstyle{(2)}}: \mathrm{L}(\cYhg{\vphantom{\hYhg}}^{\scriptscriptstyle{(2)}}_z)
\onto \cYhg{\vphantom{)}}^{\scriptscriptstyle(2)}, \quad f(z)\mapsto f(1).
\end{equation*}
Next, let $\scriptE_z^{\mathrm{L}}$ denote the natural $\C[\![\hbar]\!]$-algebra homomorphism 
\begin{equation*}
\scriptE_z^{\mathrm{L}}: \mathrm{L}\cYhg{\vphantom{\hYhg}}_w\otimes \LzhYhg\to \mathrm{L}(\cYhg{\vphantom{\hYhg}}^{\scriptscriptstyle{(2)}}_z)
\end{equation*}
given by evaluating $w\mapsto z$. Then, since $\chk{\Phi_z}$ is a $\Z$-graded $\C[\![\hbar]\!]$-algebra homomorphism $\chk{\hYhg}\to \LzhYhg$ and $\Delta$ is homogeneous of degree zero, the first relation of Part \eqref{dual-z:1} in Proposition \ref{P:dual-z} is equivalent to the identity
\begin{equation*}
\Delta\circ \chk{\Phi}_z= \scriptE_z^{\mathrm{L}}\circ (\chk{\Phi}_w \otimes \chk{\Phi}_z)\circ \chk{\Delta}
\end{equation*}
in $\Hom_{\C[\![\hbar]\!]}(\chk{\hYhg},\mathrm{L}(\cYhg{\vphantom{\hYhg}}^{\scriptscriptstyle{(2)}}_z))$. As $\scriptE_z^\mathrm{L}$ satisfies the relation $\mathscr{Ev}^{\scriptscriptstyle{(2)}}\circ \scriptE_z^\mathrm{L}= \mathscr{Ev}\otimes \mathscr{Ev}$, applying $\mathscr{Ev}^{\scriptscriptstyle{(2)}}$ to the above identity recovers $\Delta\circ \chk{\Phi}=(\chk{\Phi}\otimes \chk{\Phi})\circ \chk{\Delta}$. \qedhere
\end{proof}

%
\subsection{\texorpdfstring{$\mathrm{D}Y_\hbar\mfg$}{DYhg} as a quantization}\label{ssec:DYhg-quant}
With Proposition \ref{P:dual-z} and Corollary \ref{C:chk-Phi-Hopf} in hand, we now turn to proving that $\DYhg$ provides a homogeneous quantization of the $\Z$-graded
Lie bialgebra $\mft=\mfg[t^{\pm 1}]$, equipped with the Lie cobracket $\delta$ defined in Section \ref{ssec:Pr-Manin}. 

To begin, observe that the homomorphisms  $\chk{\Phi_z}$ and $\chk{\Phi}$ satisfy the relation
\begin{equation*}
\Gamma\circ \chk{\Phi}=\Gamma_z\circ \chk{\Phi_z},
\end{equation*}
where $\Gamma$ and $\Gamma_z$ are as in Theorem \ref{T:Phi} and \eqref{Gamma_z}, respectively. The $\C[\![\hbar]\!]$-algebra homomorphism defined by either side of this relation shall be denoted $\check{\imath}$:
\begin{equation*}
\check{\imath}:=\Gamma_z\circ \chk{\Phi_z}:\chk{\hYhg}\to \cDYhg.
\end{equation*}
The following lemma shows that $\check{\imath}$ is injective, $\Z$-graded, and has image contained in the Yangian double $\DYhg$.
\begin{lemma}\label{L:Yhg*->DYhg} 
The morphism $\check{\imath}$ is an embedding of $\Z$-graded topological $\C[\![\hbar]\!]$-algebras
\begin{equation*}
\check{\imath}:\chk{\hYhg}\into \DYhg
\end{equation*}
satisfying $\Phi\circ \check{\imath}=\chk{\Phi}$ and $\Phi_z \circ \check{\imath} =\chk{\Phi}_z$. In particular, $\chk{\Phi}$ and $\chk{\Phi_z}$ are both injective.
\end{lemma}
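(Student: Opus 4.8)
The plan is to prove Lemma \ref{L:Yhg*->DYhg} by combining the factorization $\check{\imath}=\Gamma_z\circ \chk{\Phi_z}$ with the structural results already at hand: Part \eqref{dual-z:4} of Proposition \ref{P:dual-z}, which computes $\chk{\Phi_z}$ on the generating series $\msh_i(u)$ and $\msX_\beta^\pm(u)$, together with Part \eqref{Phiz:1} of Theorem \ref{T:Phiz}, which computes $\Phi_z$ on $\dot h_i(u)$ and $\dot x_\beta^\pm(u)$, and the relation $\Gamma_z|_{\mathrm{Im}(\Phi_z)}=\Phi_z^{-1}$ from \eqref{Gamma_z}. First I would observe that Part \eqref{dual-z:4} of Proposition \ref{P:dual-z} gives exactly
\begin{equation*}
\chk{\Phi}_z(\msX_\beta^\pm(u))=\Phi_z(\dot x_\beta^\pm(u)),\qquad \chk{\Phi}_z(\msh_i(u))=\Phi_z(\dot h_i(u)),
\end{equation*}
so that $\chk{\Phi}_z$ sends each of the topological algebra generators of $\chk{\hYhg}$ identified in Lemma \ref{L:Yhg*-gen} into $\mathrm{Im}(\Phi_z)\subset \LzhYhg$. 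Since $\Phi_z(\DYhg)$ is a closed subalgebra of $\LzhYhg$ (Remark \ref{R:DYhg->cDYhg}) and $\chk{\Phi}_z$ is a continuous $\C[\![\hbar]\!]$-algebra homomorphism, it follows that $\mathrm{Im}(\chk{\Phi}_z)\subset \Phi_z(\DYhg)$. Applying $\Gamma_z$ and using $\Gamma_z|_{\mathrm{Im}(\Phi_z)}=\Phi_z^{-1}$, we get $\check{\imath}=\Gamma_z\circ \chk{\Phi}_z=\Phi_z^{-1}\circ \chk{\Phi}_z$ as a map $\chk{\hYhg}\to \DYhg$, which immediately gives $\Phi_z\circ \check{\imath}=\chk{\Phi}_z$, and hence $\Phi\circ \check{\imath}=\mathscr{Ev}\circ \Phi_z\circ \check{\imath}=\mathscr{Ev}\circ \chk{\Phi}_z=\chk{\Phi}$.

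Next I would establish that $\check{\imath}$ is $\Z$-graded. By Remark \ref{R:dual-z-graded}, $\chk{\Phi}_z:\chk{\hYhg}\to \LzhYhg$ is a $\Z$-graded homomorphism, and by Part \eqref{Phiz:2} of Theorem \ref{T:Phiz} (equivalently Remark \ref{R:Phiz-graded}), $\Phi_z:\DYhg\to\LzhYhg$ is $\Z$-graded; since $\Phi_z$ is injective with closed image, its inverse on $\mathrm{Im}(\Phi_z)$ is $\Z$-graded as well (a $\C[\![\hbar]\!]$-linear map is graded iff it commutes with the projections $\pi_k$, and these are transported along $\Phi_z$). Therefore $\check{\imath}=\Phi_z^{-1}\circ \chk{\Phi}_z$ is $\Z$-graded.

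Finally, for injectivity I would argue via the semiclassical limit using Lemma \ref{L:scl-map}(1): both $\chk{\hYhg}$ and $\DYhg$ are topologically free $\C[\![\hbar]\!]$-modules (hence separated and torsion free), so it suffices to show that the semiclassical limit $\bar{\check\imath}:U(\tminus)\to U(\mft)$ is injective. By Lemma \ref{L:Yhg*-gen}, the generators $\msh_{ir},\msX_{\beta,r}^\pm$ of $\chk{\hYhg}$ reduce modulo $\hbar$ to $h_i t^r, x_\beta^\pm t^r\in U(\tminus)$; on the other hand, $\check{\imath}(\msh_{ir})$ and $\check{\imath}(\msX_{\beta,r}^\pm)$ are (by the generating-series identities above and the explicit form of $\Phi_z^{-1}$) precisely the elements $\dot h_{ir}=h_{ir}\in \DYhg$ and $x_{\beta,r}^\pm\in\DYhg$ for $r<0$, whose reductions mod $\hbar$ are $h_i t^r$ and $x_\beta^\pm t^r$ in $U(\mft)$. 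Thus $\bar{\check\imath}$ is the canonical embedding $U(t^{-1}\mfg[t^{-1}])\hookrightarrow U(\mfg[t^{\pm 1}])$ induced by the Lie algebra inclusion $\tminus\hookrightarrow\mft$, which is injective by the Poincaré--Birkhoff--Witt theorem. Hence $\check{\imath}$ is injective, and since $\chk{\Phi}_z=\Phi_z\circ\check{\imath}$ and $\chk{\Phi}=\Phi\circ\check{\imath}$ are composites of injective maps (using injectivity of $\Phi_z$ and $\Phi$ from Theorems \ref{T:Phiz} and \ref{T:Phi}), both $\chk{\Phi}_z$ and $\chk{\Phi}$ are injective. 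The main obstacle I anticipate is the bookkeeping needed to pin down $\check{\imath}$ on generators precisely enough to identify $\bar{\check\imath}$ with the standard PBW embedding — in particular matching the generating-series identity $\chk{\Phi}_z(\msX_\beta^\pm(u))=\Phi_z(\dot x_\beta^\pm(u))$ with the claim $\check{\imath}(\msX_{\beta,r}^\pm)=x_{\beta,r}^\pm$ via injectivity of $\Phi_z$; this is the content of the displayed identification \eqref{gen-id} referenced in the statement.
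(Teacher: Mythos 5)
Your proposal is correct and follows essentially the same route as the paper: use Part \eqref{dual-z:4} of Proposition \ref{P:dual-z} together with $\Gamma_z\circ\Phi_z=\id_{\DYhg}$ to pin down $\check{\imath}$ on the generating series (this is precisely the identity \eqref{gen-id}), invoke the closedness of $\DYhg\subset\cDYhg$ and of $\Phi_z(\DYhg)\subset\LzhYhg$ from Remark \ref{R:DYhg->cDYhg} to deduce the image lands in $\DYhg$ and the identity $\Phi_z\circ\check{\imath}=\chk{\Phi}_z$, deduce gradedness from the gradedness and injectivity of $\Phi_z$ and $\chk{\Phi_z}$, and get injectivity by passing to the semiclassical limit (Lemma \ref{L:scl-map}) where $\check{\underline{\imath}}$ is the PBW embedding $U(\tminus)\into U(\mft)$. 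The only cosmetic difference is the order of operations — you first argue $\mathrm{Im}(\chk{\Phi}_z)\subset\Phi_z(\DYhg)$ and then apply $\Gamma_z$, whereas the paper first computes $\check{\imath}$ on generators and then uses closedness twice — but the underlying ideas and the supporting lemmas invoked are identical.
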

\begin{proof}

Since $\Gamma_z\circ \Phi_z=\id_{\DYhg}$, Part \eqref{dual-z:4} of Proposition \ref{P:dual-z} yields
\begin{equation}\label{gen-id}
\begin{gathered}
\check{\imath}\,(\msX_\beta^\pm(u))=\Gamma_z(\Phi_z(\dot{x}_\beta^\pm(u)))=\dot{x}_\beta^\pm(u)\quad \forall\; \beta\in \Root^+,\\
\check{\imath}\,(\msh_i(u))=\Gamma_z(\Phi_z(\dot{h}_i(u)))=\dot{h}_i(u)\quad \forall\; i \in \mbI. 
\end{gathered}
\end{equation}
By Remark \ref{R:DYhg->cDYhg}, $\DYhg$ is a closed subspace of its $\mcJ$-adic completion, viewed as a topological $\C[\![\hbar]\!]$-module. As $\chk{\hYhg}$ is topologically generated as a $\C[\![\hbar]\!]$-algebra by the coefficients of all the series $\msX_\beta^\pm(u)$ and $\msh_i(u)$ (by Lemma \ref{L:Yhg*-gen}), the equalities \eqref{gen-id} imply that $\check{\imath}$ has image in $\DYhg$ and thus can be viewed as a $\C[\![\hbar]\!]$-algebra homomorphism 
\begin{equation*}
\check{\imath}:\chk{\hYhg}\to \DYhg
\end{equation*}
which necessarily satisfies $\Phi\circ \check{\imath}=\chk{\Phi}$. Similarly, since $\Phi_z(\DYhg)$ is closed in $\LzhYhg$ (see Remark \ref{R:DYhg->cDYhg}), Part \eqref{dual-z:4} of Proposition \ref{P:dual-z} and \eqref{gen-id} give $\Phi_z \circ \check{\imath} =\chk{\Phi}_z$. 

As $\chk{\hYhg}$ and $\DYhg$ are both topologically free, it follows from Lemma \ref{L:scl-map} That $\check{\imath}$ will be injective provided its semiclassical limit $\check{\underline{\imath}}: U(\tminus)\to U(\mft)$ is. That this is indeed the case is a consequence of  Lemma \ref{L:Yhg*-gen}, the relations \eqref{gen-id}, and the definitions of $\dot{x}_\beta^\pm(u)$ and $\dot{h}_i(u)$ given in Section \ref{ssec:DYhg-root}, which imply that $\check{\underline{\imath}}$ coincides with the natural inclusion of $U(\tminus)$ into $U(\mft)$ induced by the polarization $\mft=\tplus\oplus \tminus=\mfg[t]\oplus t^{-1}\mfg[t^{-1}]$.

Finally, by Part \eqref{Phiz:2} of Theorem \ref{T:Phiz} and Part \eqref{dual-z:3} of Proposition \ref{P:dual-z}, $\Phi_z$ and $\chk{\Phi_z}$ are both $\Z$-graded; see Remarks \ref{R:Phiz-graded} and \ref{R:dual-z-graded}. As $\Phi_z$ is injective, it follows automatically that $\check{\imath}$ is $\Z$-graded.\qedhere
\end{proof}

The embedding $\check{\imath}$ is at the heart of the following theorem, which provides the first main result of this article.
\begin{theorem}\label{T:dual} There is a unique $\Z$-graded topological Hopf algebra structure on $\DYhg$ such that the inclusions 
\begin{equation*}
\hYhg \xrightarrow{\;\imath^{\,}\;} \DYhg \xleftarrow{\;\check{\imath}^{\,}\;} \chk{\hYhg}
\end{equation*}
are morphisms of $\Z$-graded topological Hopf algebras over $\C[\![\hbar]\!]$. Equipped with this Hopf structure, $\DYhg$ is a homogeneous quantization of the Lie bialgebra $(\mft ,\delta)$.
\end{theorem}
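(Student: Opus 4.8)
The strategy is to transport the Hopf structure from the two ``halves'' $\hYhg$ and $\chk{\hYhg}$ of $\DYhg$ and show that the resulting structure is well-defined and unique. The key structural input is the triangular decomposition of $\DYhg$, which should follow by combining the Poincar\'e--Birkhoff--Witt theorems for $\hYhg$ and $\chk{\hYhg}$ with the injectivity and compatibility of $\imath$ and $\check{\imath}$ established above: concretely, I expect that the multiplication map induces an isomorphism of $\Z$-graded topological $\C[\![\hbar]\!]$-modules $\check{\imath}(\YTDstar{+})\otimes \check{\imath}(\YTDstar{0})\otimes \check{\imath}(\YTDstar{-})\otimes \imath(Y_\hbar^{+\!}\mfg)\otimes \imath(Y_\hbar^{0}\mfg)\otimes \imath(Y_\hbar^{-\!}\mfg)\iso \DYhg$ (or some reordering thereof), whose semiclassical limit is the PBW decomposition of $U(\mft)$ along $\mft=\tminus^-\oplus\tminus^0\oplus\tminus^+\oplus\tplus^+\oplus\tplus^0\oplus\tplus^-$. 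This is effectively a restatement of Theorem \ref{T:DYhg-PBW} organized so as to display $\hYhg$ and $\chk{\hYhg}$ as complementary sub-tensor-factors; it reduces the construction of the coproduct, counit and antipode to the requirement that they restrict correctly on the generators, which by Lemma \ref{L:Yhg*-gen} and Section \ref{ssec:Yhg-coord} are the coefficients of $x_\beta^\pm(u),h_i(u)$ (coming from $\imath$) and $\dot x_\beta^\pm(u),\dot h_i(u)$ (coming, via \eqref{gen-id}, from $\check{\imath}$).

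First I would establish uniqueness: any $\Z$-graded topological Hopf structure on $\DYhg$ for which $\imath$ and $\check{\imath}$ are Hopf morphisms is forced on generators --- $\Delta$, $\veps$, $S$ are determined on $\imath(\hYhg)$ by Proposition \ref{P:Yhg-Hopf} and on $\check{\imath}(\chk{\hYhg})$ by the dual-Yangian Hopf structure (\textit{i.e.} the transposes $m^t,\iota^t,(S^{-1})^t$ transported by $\check{\imath}$) --- and since these generators topologically generate $\DYhg$ as a $\C[\![\hbar]\!]$-algebra, and a Hopf algebra morphism is an algebra morphism, the structure maps are determined on all of $\DYhg$, hence unique. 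For existence, I would define $\Delta$, $\veps$, $S$ on the generating set by these required formulas and check they extend to well-defined algebra (resp.\ anti-algebra, for $S$) homomorphisms $\DYhg\to\DYhg\,\wh\otimes\,\DYhg$, etc. The cleanest route to well-definedness is to realize each structure map as a restriction/corestriction of an already-known morphism: for the ``$\hYhg$-part'' this is immediate since $\imath$ is an algebra homomorphism and $\Delta_{\hYhg}$ exists; for the ``$\chk{\hYhg}$-part'' I would use Corollary \ref{C:chk-Phi-Hopf} together with Lemma \ref{L:Yhg*->DYhg}, which gives $\check{\imath}=\Gamma_z\circ\chk\Phi_z$ with $\chk\Phi_z$ a Hopf-compatible map into $\LzhYhg$; pulling the coproduct identity of Corollary \ref{C:chk-Phi-Hopf} back through the isomorphism $\wh\Phi$ of Theorem \ref{T:Phi} shows $\check{\imath}$ intertwines $\chk\Delta$ with the would-be coproduct on $\DYhg$ composed with $\imath\circ\tau_{-1}$-type maps. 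The remaining work is to check the mixed relations --- that the defining relations of $\DYhg$ (Definition \ref{D:DYhg}) are respected by the proposed $\Delta$ --- which, because $\DYhg$ embeds in $\cDYhg\cong\cYhg$ via $\Phi$ (Theorem \ref{T:Phi}), can be verified after applying $\Phi\otimes\Phi$, i.e.\ inside the completed Yangian where the coproduct is already known to be an algebra map; this is the technical heart but is essentially bookkeeping once the embedding $\Phi$ and the compatibility $\Phi\circ\check{\imath}=\chk\Phi$ are invoked.

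Once $\DYhg$ is a $\Z$-graded topological Hopf algebra, it is topologically free with semiclassical limit $U(\mft)$ by Theorem \ref{T:DYhg-PBW}, so it is a quantized enveloping algebra; the induced Lie bialgebra cobracket $\delta_\mft$ on $\mft$ is given by formula \eqref{quant-comm}, and the inclusion $\mft\into U(\mft)\cong\DYhg/\hbar\DYhg$ is $\Z$-graded since the Hopf structure is, so $\DYhg$ is a \emph{homogeneous} quantization of $(\mft,\delta_\mft)$ for whatever $\delta_\mft$ turns out to be. It then remains to identify $\delta_\mft$ with the cobracket $\delta$ of Section \ref{ssec:Pr-Manin}. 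For this I would use that $\mft=\tplus\oplus\tminus$ and that, on $\tplus$, $\delta_\mft$ restricts to the semiclassical cobracket of $\imath(\hYhg)$, which is $\delta_{\scriptscriptstyle +}$ by Theorem \ref{T:Yhg-quant}; dually, on $\tminus$, $\delta_\mft$ restricts to the semiclassical cobracket transported from $\check{\imath}(\chk{\hYhg})=\check{\imath}((\Yhgstar)^{\mathrm{cop}})$, which by Theorem \ref{T:Yhg*-quant} and the co-opposite twist is $-(-\delta_{\scriptscriptstyle -})=\dots$ --- one must track the sign carefully, and the claim (consistent with Section \ref{ssec:Pr-Manin}, where $\delta=\delta_{\scriptscriptstyle +}\oplus(-\delta_{\scriptscriptstyle -})$) is that it comes out to $-\delta_{\scriptscriptstyle -}$, the correct summand. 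Since a Lie bialgebra cobracket on $\mft$ is determined by its restrictions to a graded spanning set and these agree with $\delta$ on $\tplus$ and on $\tminus$, we conclude $\delta_\mft=\delta$. \textbf{The main obstacle} I anticipate is not any single hard estimate but the verification that the proposed $\Delta$ on the \emph{mixed} generators is compatible with the defining relations \eqref{DY:xx}, \eqref{DY:xxh} of $\DYhg$ --- in particular the relation involving the formal delta function $u^{-1}\delta(v/u)\mcH_i(v)$ relating $x_i^+(u)$-type and $\dot x_i^-(v)$-type series --- and organizing the triangular-decomposition argument so that ``$\Delta$ is determined on generators'' genuinely upgrades to ``$\Delta$ is a well-defined coassociative algebra map on all of $\DYhg$''; the embedding $\Phi:\DYhg\into\cYhg$ and the identity $\Phi\circ\check\imath=\chk\Phi$ (Lemma \ref{L:Yhg*->DYhg}), which move every check into the completed Yangian where the Hopf structure is already available, are what make this tractable.
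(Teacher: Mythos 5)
Your overall strategy is aligned with the paper's: both use the formal shift operator $\Phi$ (and the isomorphism $\wh\Phi:\cDYhg\iso\cYhg$ with inverse $\Gamma$) to transport the Hopf structure from the completed Yangian back to $\DYhg$, and both derive uniqueness from topological generation by $\imath(\hYhg)\cup\check{\imath}(\chk{\hYhg})$ (via \eqref{gen-id}). But the paper's formulation of the existence step is materially slicker and sidesteps the ``main obstacle'' you flag. Rather than defining $\dot\Delta$, $\dot\veps$, $\dot S$ on generators and then checking compatibility with the defining relations of $\DYhg$, the paper directly sets $\dot\Delta:=(\Gamma\otimes\Gamma)\circ\Delta\circ\Phi$, $\dot S:=\Gamma\circ S\circ\Phi$, $\dot\veps:=\veps\circ\Phi$. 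Each is then automatically a (anti)algebra homomorphism, being a composite of such maps, so there is no relation-checking whatsoever. The only genuine work is to verify that $\mathrm{Im}(\Delta\circ\Phi)\subset\mathrm{Im}(\Phi\otimes\Phi)$ and $\mathrm{Im}(S\circ\Phi)\subset\mathrm{Im}(\Phi)$ so that $\Gamma\otimes\Gamma$ (resp.\ $\Gamma$) can be applied; this follows immediately from the intertwining relations \eqref{DeltaPhi-z} (which are exactly $\Phi\circ\imath=\tau_1$ being Hopf and $\Phi\circ\check{\imath}=\chk\Phi$ from Corollary \ref{C:chk-Phi-Hopf}) plus the fact that $\Delta\circ\Phi$ is an algebra map whose values on a generating set land in the subalgebra $\mathrm{Im}(\Phi\otimes\Phi)$. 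The compatibility relations $\dot\Delta\circ\imath=(\imath\otimes\imath)\circ\Delta$ and $\dot\Delta\circ\check{\imath}=(\check{\imath}\otimes\check{\imath})\circ\chk\Delta$ then drop out formally.

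Two smaller comments. First, the six-fold triangular/PBW decomposition of $\DYhg$ you propose at the outset is not needed for this theorem and is essentially the content of Theorem \ref{T:DYhg-DD}, which is proved afterwards and uses Theorem \ref{T:dual}; invoking it here would risk circularity. All that is needed is topological generation of $\DYhg$ by $\imath(\hYhg)\cup\check{\imath}(\chk{\hYhg})$, which is elementary from \eqref{gen-id} and Definition \ref{D:DYhg}. Second, your sign computation $-(-\delta_{\scriptscriptstyle -})$ has one negation too many (or too few): Theorem \ref{T:Yhg*-quant} gives $\Yhgstar$ as a quantization of $(\tminus,\delta_{\scriptscriptstyle -})$ with the plus sign, and the single co-opposite twist to $\chk{\hYhg}=(\Yhgstar)^{\mathrm{cop}}$ flips this to $-\delta_{\scriptscriptstyle -}$, which is what is needed to match $\delta=\delta_{\scriptscriptstyle+}\oplus(-\delta_{\scriptscriptstyle-})$; you flagged the uncertainty and stated the right conclusion, so this is a bookkeeping slip rather than a gap.
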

\begin{proof}

If $\DYhg$ can be equipped with a coproduct $\dot\Delta$, counit $\dot\veps$ and antipode $\dot S$ 
which give it the structure of a topological Hopf algebra over $\C[\![\hbar]\!]$ and, in addition, make $\imath$ and $\check{\imath}$ morphisms of $\Z$-graded topological Hopf algebras, then this structure is necessarily unique and $\Z$-graded. Indeed,  $\dot \Delta$, $\dot\veps$ and $\dot S$ are determined by their values on any set of generators of the topological algebra $\DYhg$, and thus by their values on $\imath(\hYhg)\cup \check{\imath}(\chk{\hYhg})$.
 
Let us now establish the existence of a topological Hopf structure on $\DYhg$ over $\C[\![\hbar]\!]$ with the claimed properties. 
We begin by observing that $\imath$ and $\check{\imath}$ satisfy the relations
\begin{equation}\label{DeltaPhi-z}
\begin{aligned}
\Delta\circ \Phi \circ \imath&= (\Phi\otimes \Phi) \circ (\imath\otimes \imath)\circ \Delta,\\
\Delta\circ \Phi \circ \check{\imath}&= (\Phi\otimes \Phi) \circ (\check{\imath}\otimes \check{\imath})\circ \chk{\Delta}.
\end{aligned}
\end{equation}
The first relation follows from the identity $\Phi \circ \imath=\tau_1$ and that, for each $c\in \C$, $\tau_c$ is a Hopf algebra automorphism; see Section \ref{ssec:Yhg-aut}. As for the second relation, since $\Phi\circ \check{\imath}=\chk{\Phi}$, Corollary \ref{C:chk-Phi-Hopf} yields 
\begin{align*}
\Delta\circ \Phi \circ \check{\imath}&=\Delta\circ \chk{\Phi}= (\chk{\Phi} \otimes \chk{\Phi})\circ \chk{\Delta}=(\Phi\otimes \Phi) \circ (\check{\imath}\otimes \check{\imath})\circ \chk{\Delta}.
\end{align*}
Since $\DYhg$ is a generated as a topological $\C[\![\hbar]\!]$-algebra by $\imath(\hYhg)\cup \check{\imath}(\chk{\hYhg})$, these relations imply that $\Delta\circ \Phi$ has image satisfying
\begin{equation*}
\mathrm{Im}(\Delta\circ \Phi)\subset \mathrm{Im}(\Phi\otimes \Phi)\subset \cYhg\otimes \cYhg,
\end{equation*}
where we view the right-hand side as subspace of $\cYhg{\vphantom{)}}^{\scriptscriptstyle(2)}$, as in Corollary \ref{C:chk-Phi-Hopf}. 
We may therefore introduce a $\C[\![\hbar]\!]$-algebra homomorphism $\dot\Delta$ by 
\begin{equation*}
\dot\Delta:=(\Gamma\otimes \Gamma) \circ \Delta \circ \Phi:\DYhg\to \DYhg\otimes \DYhg,
\end{equation*}
where $\Gamma$ is the inverse of $\wh{\Phi}$, as in Theorem \ref{T:Phi}. Since $\Gamma\circ \Phi=\id$, it follows from this definition and the relations  \eqref{DeltaPhi-z} that
\begin{equation}\label{i-delta}
\dot\Delta\circ \imath=(\imath\otimes \imath) \circ \Delta\quad \text{ and }\quad \dot\Delta\circ \check{\imath}=(\check{\imath}\otimes \check{\imath}) \circ \chk{\Delta}.
\end{equation}
Similarly, from Corollary \ref{C:chk-Phi-Hopf} and the relations 
$\tau_1\circ S=S\circ \tau_1$ and $\veps \circ \tau_1=\veps$, we find that $\imath$ and $\check{\imath}$ satisfy
\begin{gather*}
S\circ \Phi \circ \imath=\Phi \circ \imath\circ S, \quad S\circ \Phi \circ \check{\imath}=\Phi \circ \check{\imath}\circ \chk{S},\\
 \veps \circ \Phi\circ \imath=\veps, \quad \veps \circ \Phi\circ \check{\imath}=\chk{\veps}.
\end{gather*}
In particular, these relations imply that $\mathrm{Im}(S\circ \Phi)\subset \mathrm{Im}(\Phi)$. We may therefore define morphisms $\dot{S}$ and $\dot{\veps}$ by 
\begin{equation*}
\dot{S}:=\Gamma\circ S\circ \Phi: \DYhg\to\DYhg \quad \text{ and }\quad \dot{\veps}:=\veps\circ \Phi:\DYhg\to \C[\![\hbar]\!]
\end{equation*}
which by construction satisfy the compatibility relations 
\begin{equation}\label{i-S-veps}
\dot{S}\circ \imath=\imath \circ S, \quad \dot{S}\circ \check{\imath}=\check{\imath}\circ \chk{S},\quad \dot{\veps}\circ \imath=\veps \quad \text{ and }\quad \dot{\veps}\circ \check{\imath}=\chk{\veps}.
\end{equation}

Since $\Gamma|_{\mathrm{Im}(\Phi)}=\Phi^{-1}$ and $\hYhg$ is a topological Hopf algebra with coproduct $\Delta$, antipode $S$ and counit $\veps$, the above definitions imply that $\DYhg$ is a topological Hopf algebra over $\C[\![\hbar]\!]$ with coproduct $\dot\Delta$, antipode $\dot S$, and counit $\dot\veps$. Moreover, 
the relations \eqref{i-delta} and \eqref{i-S-veps} prove that, when $\DYhg$ is given this Hopf structure, 
the $\Z$-graded embeddings $\imath$ and $\check{\imath}$ are homomorphisms of topological Hopf algebras.

We are left to establish the second assertion of the Theorem. 
By what we have shown so far and Theorem \ref{T:DYhg-PBW}, $\DYhg$ is a flat, $\Z$-graded Hopf algebra deformation of the universal enveloping $U(\mft)$ of $\mft=\mfg[t^{\pm 1}]$ over $\C[\![\hbar]\!]$. It thus provides a homogeneous quantization of a $\Z$-graded Lie bialgebra structure on the Lie algebra $\mft$ with cobracket $\delta_\mft$ given by the formula \eqref{quant-comm}. From Theorem \ref{T:Yhg-quant}, Theorem \ref{T:Yhg*-quant} and the first part of the theorem, $\delta_\mft$ satisfies  $\delta_\mft|_{\tplus}=\delta_{\scriptscriptstyle +}$ and $\delta_\mft|_{\tminus}=-\delta_{\scriptscriptstyle -}$. As $\mft=\tplus\oplus \tminus$, we can conclude that $\delta_\mft$ coincides with the Lie cobracket $\delta$ defined in Section \ref{ssec:Pr-Manin}.  
\end{proof}
\begin{remark}
The proof of Theorem \ref{T:dual} shows that the coproduct $\dot\Delta$, counit $\dot\veps$ and antipode $\dot S$ on $\DYhg$ are uniquely determined by the requirement that $\Phi$ is a homomorphism of topological Hopf algebras 
\begin{equation*}
\Phi:\DYhg\to \cYhg,
\end{equation*}
where $\cYhg$ is given the topological Hopf structure defined above Corollary \ref{C:chk-Phi-Hopf}. Here we emphasize that although $\DYhg$ is a genuine topological Hopf algebra over $\C[\![\hbar]\!]$ in the sense of Section \ref{ssec:Pr-Top}, the completed Yangian $\cYhg$ is not. In the same breath, the Hopf algebra structure on $\DYhg$ is uniquely characterized by the requirement that $\Phi_z$ satisfies the relations
\begin{equation*}
\Delta\circ \Phi_z= \scriptE_z^{\mathrm{L}}\circ (\Phi_w \otimes \Phi_z)\circ \dot{\Delta},\quad \veps \circ \Phi_z=\dot{\veps} \quad \text{ and }\quad S\circ \Phi_z=\Phi_z\circ \dot{S},
\end{equation*}
where $\scriptE_z^{\mathrm{L}}$ is as in the proof of Corollary \ref{C:chk-Phi-Hopf}. In particular, this makes precise the uniqueness statement in the first assertion of Theorem \ref{T:Intro}. 
\end{remark}
%
%
%

%
\subsection{The dual triangular decomposition revisited}\label{ssec:Yhg*-quant-TD}

We conclude this section by giving an equivalent characterization of the restricted dual $\YTDstar{\chi}$ to $\dhYh{\chi}$ considered in Section \ref{ssec:Yhg*-TD}, where we recall that $\chi$ takes value $+$, $-$ or $0$. 
\begin{corollary}\label{C:Yhg*-TD-2}
 For each each choice of $\chi$, $\YTDstar{\chi}$ satisfies
\begin{equation*}
\YTDstar{\chi}=\{f\in \Yhgstar: \chk{\Phi_z}(f)=(f\otimes \id)\mcR^{\chi}(-z)\}=(\chk{\Phi_z})^{-1}(Y_\hbar^{\shortminus \chi}\mfg [\![z^{-1}]\!]).
\end{equation*}
\end{corollary}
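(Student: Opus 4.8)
The strategy is to prove the chain of equalities by first showing the middle set is contained in the right-hand set, then that $\YTDstar{\chi}$ is contained in the middle set, and finally that the right-hand set is contained in $\YTDstar{\chi}$, which closes the cycle. Throughout I would use the Gauss decomposition $\mcR(z)=\mcR^+(z)\mcR^0(z)\mcR^-(z)$ together with the triangular factorization $\QFhYhg\cong \dhYh{+}\otimes \dhYh{0}\otimes \dhYh{-}$ of Remark \ref{R:QFhYhg-TD}, which dualizes to the embeddings $(\pi^\chi)^t:\YTDstar{\chi}\into \Yhgstar$ of Section \ref{ssec:Yhg*-TD}. The key structural input is Lemma \ref{L:R-DG}, which places $\mcR^\chi(z)$ in $(\QFYhg\otimes \Yhg)[\![z^{-1}]\!]$, together with the refined information that $\mcR^-_\beta(z)\in (\dYh{-}_{-\beta}\otimes Y_\hbar^+(\mfg)_\beta)[\![z^{-1}]\!]$ coming from the recursion \eqref{R-recur} and Corollary \ref{C:QFYhg-chi}\eqref{QFYhg-chi:4}; analogous statements hold for $\mcR^+(z)$ via $\mcR^+(z)=\mcR^-_{21}(-z)^{-1}$ and for $\mcR^0(z)$ since its logarithm $\mcS(z)$ has coefficients in $\dYh{0}\otimes\dYh{0}$ by \eqref{logR0}.

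For the first inclusion, suppose $f\in\Yhgstar$ satisfies $\chk{\Phi_z}(f)=(f\otimes\id)\mcR^\chi(-z)$. Using that $\mcR^+(z)$ has coefficients in $Y_\hbar^+(\mfg)\otimes Y_\hbar^-(\mfg)$ (so its left tensor leg lies in $\dYh{+}$), $\mcR^0(z)$ in $\dYh{0}\otimes Y_\hbar^0(\mfg)$, and $\mcR^-(z)$ in $\dYh{-}\otimes Y_\hbar^+(\mfg)$, I read off that $(f\otimes\id)\mcR^\chi(-z)$ lands in $Y_\hbar^{\shortminus\chi}\mfg[\![z^{-1}]\!]$ for any $f$; hence $\chk{\Phi_z}(f)\in Y_\hbar^{\shortminus\chi}\mfg[\![z^{-1}]\!]$, giving $f\in(\chk{\Phi_z})^{-1}(Y_\hbar^{\shortminus\chi}\mfg[\![z^{-1}]\!])$. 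For the inclusion $\YTDstar{\chi}\subseteq$ (middle set): take $f\in\YTDstar{\chi}$, i.e. $f\circ\pi^\chi=f$. Then $(f\otimes\id)\mcR(-z)=(f\otimes\id)(\mcR^+\mcR^0\mcR^-)(-z)$, and because $f$ vanishes on the tensor legs of the two factors other than $\mcR^\chi$ (their left legs lying in $\dhYh{\chi'}$ for $\chi'\neq\chi$, on which $f$ is zero since $\veps$ kills the augmentation ideal and $f$ is supported on the $\chi$-component), only the $\mcR^\chi(-z)$ factor survives: $(f\otimes\id)\mcR(-z)=(f\otimes\id)\mcR^\chi(-z)$. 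This is exactly $\chk{\Phi_z}(f)$, so $f$ lies in the middle set. The cleanest way to make the "only one factor survives'' step rigorous is to use that $\chk{\Phi_z}$ is an algebra homomorphism (Proposition \ref{P:dual-z}\eqref{dual-z:1}) together with the coalgebra structure on $\QFhYhg$ relative to its triangular decomposition — concretely, $\Delta(\dYh{\chi})\subset \QFhYhg\otimes\dYh{\chi}\oplus\cdots$, so that pairing $f\in\YTDstar{\chi}$ against products picks out only $\dYh{\chi}$-components.

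The remaining and most substantial step is the reverse inclusion $(\chk{\Phi_z})^{-1}(Y_\hbar^{\shortminus\chi}\mfg[\![z^{-1}]\!])\subseteq\YTDstar{\chi}$. Here I would argue contrapositively using the triangular decomposition of $\Yhgstar$ from Corollary \ref{C:Yhg*-TD}: write an arbitrary $f\in\Yhgstar$ via $\msm^{-1}$ as a sum of products of elements of $\YTDstar{+}$, $\YTDstar{0}$, $\YTDstar{-}$, and suppose $f\notin\YTDstar{\chi}$, so its expansion involves a genuinely "mixed'' term. Applying $\chk{\Phi_z}$ and using that it is an algebra homomorphism that sends $\YTDstar{\chi'}$-generators to the explicit series of Proposition \ref{P:dual-z}\eqref{dual-z:4} — which by Lemma \ref{L:Yhg*->DYhg} and the identification \eqref{gen-id} correspond to $\Phi_z(\dot x_\beta^\pm(u))$ and $\Phi_z(\dot h_i(u))$, hence to $\LzYhg$-elements whose leading terms populate $Y_\hbar^{\mp}\mfg$ and $Y_\hbar^0\mfg$ respectively — one sees that a mixed term cannot have image in $Y_\hbar^{\shortminus\chi}\mfg[\![z^{-1}]\!]$, because the semiclassical limit $\check{\underline\imath}$ is the honest triangular inclusion $U(\tminus)\into U(\mft)$ (proof of Lemma \ref{L:Yhg*->DYhg}) and distinct triangular components stay distinct mod $\hbar$. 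I expect this last point — cleanly extracting, from the condition $\chk{\Phi_z}(f)\in Y_\hbar^{\shortminus\chi}\mfg[\![z^{-1}]\!]$, that $f$ must be purely of type $\chi$ — to be the main obstacle, and the natural tool is to pass to semiclassical limits: $\overline{\chk{\Phi_z}}$ composed with evaluation recovers $\check{\underline\imath}$, which is injective with image respecting the grading $\mft=\tplus\oplus\tminus$ refined by $\mft^\pm,\mft^0$, so the preimage of $U(\tminus^{\shortminus\chi})$ is exactly $U(\tminus^{\shortminus\chi})\cong\msS(\hbar\tplus^\chi)^\star$; then Lemma \ref{L:scl-map} lifts this back to $\C[\![\hbar]\!]$, using that both $\YTDstar{\chi}$ and $(\chk{\Phi_z})^{-1}(Y_\hbar^{\shortminus\chi}\mfg[\![z^{-1}]\!])$ are topologically free with the stated semiclassical limits.
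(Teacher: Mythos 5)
Your treatment of the two forward inclusions is essentially right (and the paper leaves these unspelled-out as well): the middle set lies in the right-hand set because the right tensor leg of $\mcR^\chi$ lies in $Y_\hbar^{\shortminus\chi}(\mfg)$, and $\YTDstar{\chi}$ lies in the middle set via the identity $(\pi^\chi\otimes\id)\mcR(z)=\mcR^\chi(z)$, which follows from $(\veps\otimes\id)\mcR^{\chi'}(z)=1$ applied to the two undesired factors of the Gauss decomposition $\mcR=\mcR^+\mcR^0\mcR^-$.

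The difficulty is in your argument for the reverse inclusion $(\chk{\Phi_z})^{-1}(Y_\hbar^{\shortminus\chi}\mfg[\![z^{-1}]\!])\subseteq\YTDstar{\chi}$, and it has a genuine circularity. You propose to expand an arbitrary $f\in\Yhgstar$ using the factorization $\msm:\YTDstar{+}\otimes\YTDstar{0}\otimes\YTDstar{-}\iso\Yhgstar$ of Corollary \ref{C:Yhg*-TD}. But that corollary is deduced from Proposition \ref{P:Yhg*-TD}, whose proof (as stated in Section \ref{ssec:Yhg*-TD} and confirmed in Remark \ref{R:Yhg*-TD-done}) explicitly assumes Corollary \ref{C:Yhg*-TD-2} — the very statement you are trying to prove. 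Invoking the triangular decomposition of $\Yhgstar$ at this point therefore closes a vicious circle. Separately, even putting circularity aside, the semiclassical-limit step is not yet complete: applying Lemma \ref{L:scl-map} to the inclusion $\YTDstar{\chi}\into(\chk{\Phi_z})^{-1}(Y_\hbar^{\shortminus\chi}\mfg[\![z^{-1}]\!])$ would require first showing that the preimage is a $\C[\![\hbar]\!]$-submodule meeting $\hbar\Yhgstar$ in its own $\hbar$-multiple and that its semiclassical limit is exactly $\msS(\hbar\tplus^\chi)^\star$ — and that last fact is essentially what one needs to prove.

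The paper's argument for the reverse inclusion avoids both problems and is shorter. Suppose $\chk{\Phi_z}(f)\in Y_\hbar^{\shortminus\chi}\mfg[\![z^{-1}]\!]$, so that $\pi_{\shortminus\chi}(\chk{\Phi_z}(f))=\chk{\Phi_z}(f)$ where $\pi_{\shortminus\chi}$ is the projection attached to the triangular decomposition of $\hYhg$ in Remark \ref{R:hYhg-TD}. Using $(\id\otimes\veps)\mcR^{\chi'}(z)=1$ one computes $\pi_{\shortminus\chi}(\chk{\Phi_z}(f))=(f\otimes\id)\mcR^\chi(-z)$. On the other hand, for the element $f\circ\pi^\chi\in\YTDstar{\chi}$, the identity $(\pi^\chi\otimes\id)\mcR(z)=\mcR^\chi(z)$ gives $\chk{\Phi_z}(f\circ\pi^\chi)=(f\otimes\id)\mcR^\chi(-z)=\chk{\Phi_z}(f)$. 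Injectivity of $\chk{\Phi_z}$ (Lemma \ref{L:Yhg*->DYhg}) then forces $f=f\circ\pi^\chi\in\YTDstar{\chi}$. This uses only the projections $\pi^\chi$ on $\QFhYhg$ (from Section \ref{ssec:Yhg*-TD}, which does not rely on the corollary), the Gauss factorization of $\mcR(z)$, and the injectivity of $\chk{\Phi_z}$, and it does not require any control of semiclassical limits of the preimage.
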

\begin{proof}
From the definitions of $\dot{\msY}_\hbar^\chi\mfg$ and $\chk{\Phi_z}$, and the Gauss decomposition of the universal $R$-matrix, we obtain the sequence of inclusions 
\begin{equation*}
\YTDstar{\chi}\subset\{f\in \Yhgstar: \chk{\Phi_z}(f)=(f\otimes \id)\mcR^{\chi}(-z)\}\subset(\chk{\Phi_z})^{-1}(Y_\hbar^{\shortminus \chi}\mfg [\![z^{-1}]\!]).
\end{equation*}
It therefore suffices to show that  $(\chk{\Phi_z})^{-1}(Y_\hbar^{\shortminus \chi}\mfg [\![z^{-1}]\!])\subset \YTDstar{\chi}$. We shall establish this for $\chi=-$. The proof in the remaining cases is identical, and hence omitted.

Let $\pi_+:\hYhg\to Y_\hbar^{+\!}\mfg$ be the $\C[\![\hbar]\!]$-linear projection associated to the opposite triangular decomposition $\hYhg\cong Y_\hbar^{-\!}\mfg\otimes Y_\hbar^0\mfg\otimes Y_\hbar^{+\!}\mfg$ from  Remark \ref{R:hYhg-TD}, and suppose $f\in \Yhgstar$ satisfies $\Phi_z(f)\in Y_\hbar^{+}\!\mfg [\![z^{-1}]\!]$. We wish to show that $f\in \YTDstar{-\!}$. 

To begin, note that since $(\id\otimes \veps)\mcR^\chi(z)=1$, we have 
\begin{equation*}
\chk{\Phi_z}(f)=\pi_+(\chk{\Phi_z}(f))=(f\otimes 1)\mcR^-(z).
\end{equation*}
Consider now the element $f\circ \pi^-\in \YTDstar{-} \subset \Yhgstar$, where $\pi^-:\QFhYhg\to \dhYh{-}$ is as in Section \ref{ssec:Yhg*-TD}. Since $(\pi^-\otimes \id)\mcR(z)=\mcR^-(z)$, it satisfies
\begin{equation*}
\chk{\Phi_z}(f\circ \pi^-)=(f\otimes 1)\mcR^-(z)=\chk{\Phi_z}(f).
\end{equation*}
As  $\chk{\Phi_z}$ is injective (by Lemma \ref{L:Yhg*->DYhg}) we can conclude that $f=f\circ \pi^-\in \YTDstar{-}$.  \qedhere
\end{proof}
\begin{remark}\label{R:Yhg*-TD-done}
Since $Y_\hbar^{\pm\!}\mfg$ and $Y_\hbar^0\mfg$ are $\C[\![\hbar]\!]$-subalgebras of $\hYhg$, it follows from this corollary that $\YTDstar{\chi}$ is a $\C[\![\hbar]\!]$-subalgebra of $\Yhgstar$ for each choice of $\chi$. In addition, it is immediate from this characterization and Part \eqref{dual-z:4} of Proposition \ref{P:dual-z} that the coefficients of the series $\msX_\beta^\pm(u)$ and $\msh_i(u)$ belong to $\YTDstar{\mp}$ and $\YTDstar{0}$, respectively, for each $\beta\in\Root^+$ and $i\in \mbI$. Similarly, since $\chk{\Phi_z}$ is injective and $Y_\hbar^0\mfg$ is commutative, we deduce that $\YTDstar{0}$ is commutative.

In particular, these observations complete the proof of Proposition \ref{P:Yhg*-TD}. We emphasize that we have not applied that Proposition in establishing any of the results in Section \ref{sec:DYhg-quant}. 
\end{remark}
%
%
\section{\texorpdfstring{$\mathrm{D}Y_\hbar\mfg$}{DYhg} as a quantum double}\label{sec:DYhg-double}

We now turn to reframing Theorem \ref{T:dual} in the context of the quantum double. Our central objective is to prove the second main result of this article, Theorem \ref{T:DYhg-DD}, which establishes that the Yangian double $\DYhg$ is isomorphic, as a $\Z$-graded topological Hopf algebra over $\C[\![\hbar]\!]$, to the restricted quantum double of the Yangian, which is defined explicitly in Section \ref{ssec:D(Yhg)}.
%
\subsection{The quantum double of \texorpdfstring{$U_\hbar\mfb$}{U\_hb}}\label{ssec:D(H)} Let us begin by recalling, in broad strokes, the general construction of the quantum double of a quantized enveloping algebra, as was first outlined by Drinfeld in \cite{DrQG}*{\S13}.

Suppose that $U_\hbar\mfb$ is a quantization of a finite-dimensional Lie bialgebra $\mfb$, and let $\chk{U_\hbar\mfb}$ denote the quantized enveloping algebra $(U_\hbar\mfb^{\circ}){\vphantom{)}}^{\scriptscriptstyle\mathrm{cop}}$, where $U_\hbar\mfb^{\circ}$ is the topological dual to $U_\hbar\mfb^\prime$ introduced in Section \ref{ssec:QUE-dual}.  Then there exists a unique topological Hopf algebra $D(U_\hbar\mfb)$ over $\C[\![\hbar]\!]$, the \textit{quantum double} of $U_\hbar\mfb$, satisfying the following three properties: 
\begin{enumerate}\setlength{\itemsep}{3pt}
\item\label{DH:1} There are embeddings of topological Hopf algebras  
\begin{equation*}
U_\hbar\mfb \xrightarrow{\;\imath\;} D(U_\hbar\mfb) \xleftarrow{\;\check{\imath}\;} \chk{U_\hbar\mfb}.
\end{equation*}
\item\label{DH:2} The composite $m\circ (\check{\imath}\otimes \imath):  \chk{U_\hbar\mfb}\otimes U_\hbar\mfb\to D(U_\hbar\mfb)$ is an isomorphism of $\C[\![\hbar]\!]$-modules.
\item\label{DH:3} The canonical element $R\in U_\hbar\mfb^\prime\otimes \chk{U_\hbar\mfb}\subset D(U_\hbar\mfb)\otimes D(U_\hbar\mfb)$ associated to the pairing between $U_\hbar\mfb^\prime$ and $U_\hbar\mfb^\circ$, which coincides with the canonical tensor in $U_\hbar\mfb\otimes U_\hbar\mfb^\ast$, defines a quasitriangular structure on $D(U_\hbar\mfb)$. That is, one has: 
\begin{gather*}
\op{\Delta}(x)=R \Delta(x)   R^{-1} \quad \forall\; x\in D(U_\hbar\mfb),\\
\Delta\otimes \id ( R ) =R_{13}R_{23}, \quad \id\otimes \Delta (R)=R_{13}R_{12}.
\end{gather*}
\end{enumerate}
In addition, $D(U_\hbar\mfb)$ provides a quantization of the Drinfeld double of the finite-dimensional Lie bialgebra $\mfb$. 

The quantum double $D(U_\hbar\mfb)$ can be realized explicitly as the tensor product of topological coalgebras $\chk{U_\hbar\mfb}\otimes U_\hbar\mfb$, with multiplication determined from the cross relations 
\begin{equation}\label{fd-double}
\imath(x)\check{\imath}(f)=f_3(x_1)f_1\!\left(S^{-1}(x_3)\right)f_2\otimes x_2 \quad \forall\; x\in U_\hbar\mfb^\prime, \, f\in \chk{U_\hbar\mfb},
\end{equation}
where we have used the sumless Sweedler notation for iterated coproducts on $U_\hbar\mfb$ and $\chk{U_\hbar\mfb}$, and $\imath$ and $\check{\imath}$ are now given by $\imath(x)=1\otimes x$ and $\check{\imath}(f)=f\otimes 1$ for all $x\in U_\hbar\mfb$ and $f\in \chk{U_\hbar\mfb}$. As spelled out in detail in \cite{Andrea-Valerio-18}*{\S A.5}, this can be realized as a special instance of the \textit{double cross product} construction. Namely, one has 
\begin{equation*}
D(U_\hbar\mfb)=\chk{U_\hbar\mfb} \bowtie U_\hbar\mfb,
\end{equation*}
with respect to the left coadjoint action $\rhd$ of $U_\hbar\mfb$ on $\chk{U_\hbar\mfb}$ and the right coadjoint action $\lhd$ on $\chk{U_\hbar\mfb}$ on $U_\hbar\mfb$. Given the $U_\hbar\mfb$ analogue of Lemma \ref{L:adjoint}, established in \cite{Andrea-Valerio-18}*{Prop.~A.5}, this construction of $D(U_\hbar\mfb)$ proceeds identically to the analogous construction for the quantum double of a finite-dimensional Hopf algebra; we refer the reader to the texts \cite{Majid-book}*{\S7}, \cite{KasBook95}*{\S IX.4}, \cite{KS-book}*{\S8.2} and \cite{Mont}*{\S10.3}, for instance, as well as the articles \cites{Radford93,Majid-90a, Majid-94}.

\subsection{The restricted quantum double of the Yangian}\label{ssec:D(Yhg)}
In our case, $U_\hbar\mfb=\hYhg$ is not a quantization of a finite-dimensional Lie bialgebra, but rather a homogeneous quantization of an $\N$-graded Lie bialgebra $\mfb$ with finite-dimensional graded components $\mfb_k$. In this setting, the double cross product construction alluded to above remains valid provided all duals are taken in the category of $\Z$-graded topological $\C[\![\hbar]\!]$-modules; that is, we replace $\chk{U_\hbar\mfb}$ with $\chk{\hYhg}$. This produces the \textit{restricted quantum double} $D(U_\hbar\mfb)$ of $U_\hbar\mfb$. 

Let us now give the detailed construction of this topological Hopf algebra. Following Section \ref{ssec:ad-coad}, let $\lad$ and $\rcoad$ denote the left adjoint action of $\hYhg$ on itself, and the right adjoint coaction of $\hYhg$ on itself, respectively:
\begin{gather*}
\lad= m^3\circ (\id^{\otimes 2}\otimes S)\circ (2\,3)\circ (\Delta \otimes \id),  \\
\rcoad=(1\otimes m)\circ (1\,2)\circ  (S\otimes \id^{\otimes 2})\circ \Delta^3,
\end{gather*}
where all tensor products are now taken to be the topological tensor product over $\C[\![\hbar]\!]$. By Lemma \ref{L:adjoint}, we have 
\begin{equation*}
\lad(\hYhg\otimes \QFhYhg)\subset \QFhYhg \quad \text{ and }\quad \rcoad(\hYhg)\subset \hYhg\otimes \QFhYhg.
\end{equation*}
We may thus  dualize  $\lad$ and $\rcoad$ to obtain the so-called left and right coadjoint actions
\begin{gather*}
\vartriangleright:\hYhg\otimes \chk{\hYhg}\to \chk{\hYhg} \quad \text{ and }\quad \vartriangleleft: \hYhg\otimes \chk{\hYhg}\to \hYhg,
\end{gather*}
respectively, of $\hYhg$ on $\chk{\hYhg}$ and  of $\chk{\hYhg}$ on $\hYhg$. These are defined on simple tensors by the formulas 
\begin{gather*}
x\vartriangleleft f= \id \otimes f \circ (\id \otimes S^{-1}) \circ \rcoad(x)\quad \text{ and }\quad  (x \vartriangleright \!f)(y)=f(S^{-1}(x)\cdot y) 
\end{gather*}
for all $x\in \hYhg$, $f\in \chk{\hYhg}$ and $y\in \QFhYhg$, 
where the action of $S^{-1}(x)$ on $y$ is given by $\lad$ and we have written $x \vartriangleright \!f$ for $\vartriangleright\!(x\otimes f)$ and $x\vartriangleleft f$ for $\vartriangleleft \!(x\otimes f)$. 

The tuple $(\hYhg,\chk{\hYhg},\rhd,\lhd)$ forms a \textit{matched pair} of $\Z$-graded topological Hopf algebras over $\C[\![\hbar]\!]$. Explicitly, $\rhd$ and $\lhd$ are $\Z$-graded homomorphisms of topological $\C[\![\hbar]\!]$-modules which satisfy the following set of conditions: 
\begin{enumerate}[label=(M\arabic*)]\setlength{\itemsep}{5pt}
\item $(\chk{\hYhg},\rhd)$ is a left $\hYhg$-module coalgebra and $(\hYhg,\lhd)$ is a right $\chk{\hYhg}$-module coalgebra. Equivalently, $\rhd$ and $\lhd$ are morphisms of topological coalgebras:
\begin{gather*}
\chk{\Delta}\circ \rhd = \rhd\otimes \rhd \circ (2\,3)\circ \Delta\otimes \chk{\Delta},\quad \Delta\circ \lhd = \lhd \otimes \lhd \circ (2\,3)\circ \Delta\otimes \chk{\Delta}\\\veps \otimes \chk{\veps}=\chk{\veps} \circ \rhd,\quad \veps \otimes \chk{\veps}=\veps \circ \lhd,
\end{gather*}
which make $\chk{\hYhg}$ a left $\hYhg$-module and $\hYhg$ a right $\chk{\hYhg}$-module.
\item $\lhd$ and $\rhd$ are compatible with the products $m$ and $\chk{m}$ on $\hYhg$ and $\chk{\hYhg}$:
\begin{gather*}
\lhd \circ (m\otimes \id) = m\circ (\lhd\otimes \id)\circ (\id \otimes \rhd\otimes \lhd)\circ (3\,4)\circ \id \otimes \Delta\otimes \chk{\Delta}\\
\rhd \circ (\id \otimes \chk{m})=\chk{m}\circ (\id \otimes \rhd)\circ (\rhd\otimes\lhd\otimes \id)\circ (2\,3)\circ \Delta\otimes \chk{\Delta}\otimes \id 
\end{gather*}
\item The unit maps $\iota$ and $\chk{\iota}$ are module homomorphisms:
\begin{equation*}
\lhd \circ (\iota\otimes \id) =\iota\otimes \chk{\veps} \quad \text{ and }\quad \rhd \circ\, ( \id \otimes \chk{\iota})=\veps \otimes \chk{\iota}.
\end{equation*}
\item $\lhd$ and $\rhd$ satisfy the compatibility relation
\begin{equation*}
(\lhd \otimes \rhd)\circ (2\,3)\circ \Delta\otimes\chk{\Delta} 
=
(1\,2)\circ (\rhd \otimes \lhd)\circ (2\,3)\circ  \Delta\otimes\chk{\Delta}.
\end{equation*}
\end{enumerate}
Here we note that the above conditions coincide with those from \cite{KasBook95}*{Def.~IX.2.2}, \cite{Radford93}*{\S2} and \cite{Andrea-Valerio-18}*{\S A.1}, which agree with those from \cite{Majid-book}*{Def.~7.2.1} up to conventions on the order in which the tensor factors appear. 

Since $\chk{\hYhg}$ and $\hYhg$ are matched, we may form the double cross product Hopf algebra $\chk{\hYhg}\bowtie \hYhg$ in the category of $\Z$-graded topological Hopf algebras over $\C[\![\hbar]\!]$ by following the standard procedure; see \cite{KasBook95}*{Thm.~IX.2.3} or \cite{Majid-book}*{Thm.~7.2.2}, for instance. As a $\Z$-graded topological coalgebra, $\chk{\hYhg}\bowtie \hYhg$ coincides with the tensor product of $\chk{\hYhg}$ and $\hYhg$:
\begin{gather*}
\chk{\hYhg}\bowtie \hYhg=\chk{\hYhg}\otimes \hYhg,\\
\Delta_D=(2\,3)\circ \chk{\Delta} \otimes \Delta \quad \text{ and }\quad \veps_D=\chk{\veps}\otimes \veps,
\end{gather*}
where $\Delta_D$ denotes the coproduct and $\veps_D$ the counit. The algebra structure on $\chk{\hYhg}\bowtie \hYhg$ is uniquely determined by the requirement that the inclusions
\begin{equation*}
\check{\imath}_D:\chk{\hYhg}\to \chk{\hYhg}\otimes \hYhg \quad \text{ and }\quad \imath_D:\hYhg\to \chk{\hYhg}\otimes \hYhg
\end{equation*}
are $\C[\![\hbar]\!]$-algebra homomorphisms satisfying the relations 
\begin{equation}\label{Dgr:m}
\begin{gathered}
m_{D}\circ (\check{\imath}_D\otimes \imath_D) = \id_{\chk{\hYhg}}\otimes \id_{\hYhg},\\
m_{D}\circ (\imath_D \otimes \check{\imath}_D)=(\rhd\otimes \lhd)\circ (2\,3)\circ \Delta \otimes \chk{\Delta},
\end{gathered}
\end{equation}
where $m_D$ denotes the multiplication on $D(\hYhg)$. 
In particular, the unit map is $\chk{\iota}\otimes \iota$. Finally, the antipode $S_D$ on $\chk{\hYhg}\bowtie \hYhg$ is given by 
\begin{equation*}
S_D=m_D\circ (\imath_D \otimes \check{\imath}_D)\circ (S\otimes \chk{S})\circ (1\,2).
\end{equation*}

This double cross product Hopf algebra is the restricted quantum double of the Yangian, as we formally record in the below definition.
\begin{definition}\label{D:D(Yhg)}
The $\Z$-graded topological Hopf algebra $\chk{\hYhg}\bowtie \hYhg$ is called the \textit{restricted quantum double} of the Yangian $\hYhg$, and is denoted 
\begin{equation*}
D(\hYhg):=\chk{\hYhg}\bowtie \hYhg.
\end{equation*}
\end{definition}
This definition, together with the general theory, implies that $D(\hYhg)$ provides a homogeneous quantization of the restricted Drinfeld double $\mft\cong D(\tplus)$ of the graded Lie bialgebra $(\tplus,\delta_{\scriptscriptstyle +})$, realized on the space $\tplus^\star\oplus \tplus \cong \tminus\oplus \tplus$\footnote{This also follows immediately from Theorem \ref{T:DYhg-DD}.}. 

One stipulation to carrying out the quantum double construction in the restricted homogeneous setting is that the canonical element $R$ associated to the pairing between $\QFhYhg$ and $\Yhgstar$, although formally satisfying the relations of \eqref{DH:3} in Section \ref{ssec:D(H)}, does not converge in the $\hbar$-adically complete tensor product $\QFhYhg\otimes \Yhgstar\subset D(\hYhg)\otimes D(\hYhg)$, and so $\DYhg$ is only \textit{topologically} quasitriangular. Here the prefix ``topologically'' is a bit subtle: it does not refer to the $\hbar$-adic topology and must be handled with care. In Section \ref{sec:R-matrix}, we shall identify $R$ with the universal $R$-matrix $\mcR(w-z)\in \Yhg^{\otimes 2}[w][\![z^{-1}]\!]$. This viewpoint allows for a precise interpretation of the  relations of \eqref{DH:3} in terms of those from Theorem \ref{T:Yhg-R}.
\begin{remark}\label{R:conventions}
There are many competing, though equivalent, variants of the definition of the quantum double. For instance, it may be realized on the tensor product $\hYhg\otimes \chk{\hYhg}$, as in \cite{DrQG}*{\S13}. Here we follow the conventions from \cites{KasBook95, Radford93}.
\end{remark}
%
%
%
\subsection{\texorpdfstring{$\mathrm{D}Y_\hbar\mfg$}{DYhg} as a quantum double}\label{ssec:DYhg=DD}

We now turn to proving the $\hbar$-adic variant of the main conjecture from \cite{KT96}*{\S2}, which postulates that $\DYhg$ and $D(\hYhg)$ are one and the same; see Theorem \ref{T:DYhg-DD} below.

Recall from the proof of Corollary \ref{C:chk-Phi-Hopf} that $\mathrm{L}(\cYhg{\vphantom{\hYhg}}^{\scriptscriptstyle{(2)}}_z)\subset (\hYhg\otimes \hYhg)[\![z^{\pm 1}]\!]$ denotes the $\Z$-graded topological $\C[\![\hbar]\!]$-algebra obtained by completing the graded subalgebra
$\mathds{L}(\cYhg{\vphantom{\hYhg}}^{\scriptscriptstyle{(2)}}_z)$ of  $\Yhg^{\otimes 2}[z;z^{-1}]\!]$  defined in \eqref{LzYhg-n}. Given this notation, we have $\C[\![\hbar]\!]$-linear maps
\begin{gather*}
\pi_z:\LzhYhg\otimes \hYhg\to \LzhYhg, \quad \pi_z:=m_z\circ (\id \otimes \tau_z),\\
\pi_z^\ast: \LzhYhg \to \mathrm{L}(\cYhg{\vphantom{\hYhg}}^{\scriptscriptstyle{(2)}}_z), \quad \pi_z^\ast(y)=(1\otimes y)\mcR(-z),
\end{gather*}
where $m_z$ denotes the multiplication in $\LzhYhg$. These obey a Drinfeld--Yetter right action/left coaction compatibility condition, as the next lemma makes explicit. 
\begin{lemma}\label{L:Dr-Yet}
The pair $(\pi_z,\pi_z^\ast)$ satisfy the relation
\begin{equation*}
\label{Dr-Yet}
\pi_z^\ast \circ \pi_z\circ (1\,2) = m\otimes \pi_z \circ (4\,3\,2)\circ \dot\rcoad \otimes \dot{\lad}\otimes \id \circ  \op{\Delta}\otimes \pi_z^\ast
\end{equation*}
in $\Hom_{\C[\![\hbar]\!]}(\hYhg\otimes \LzhYhg, \mathrm{L}(\cYhg{\vphantom{\hYhg}}^{\scriptscriptstyle{(2)}}_z))$, where we have set
\begin{equation*}
\dot\rcoad:=(S^{-1}\otimes \id)\circ (1\,2)\circ \blacktriangle\quad \text{ and }\quad
\dot{\lad}:=\lad\circ (S^{-1}\otimes \id).
\end{equation*}
\end{lemma}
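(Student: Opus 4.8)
The identity asserted in Lemma \ref{L:Dr-Yet} is a ``$z$-deformed'' incarnation of the standard Drinfeld--Yetter compatibility relating the right adjoint coaction $\blacktriangle$ and left adjoint action $\blacktriangledown$ of $\hYhg$ on itself through the universal $R$-matrix. The first thing I would do is observe that both $\pi_z$ and $\pi_z^\ast$ are built out of structure maps of $\hYhg$ together with $\tau_z$ and $\mcR(-z)$, and that the whole statement lives in $\Hom_{\C[\![\hbar]\!]}(\hYhg\otimes \LzhYhg,\mathrm{L}(\cYhg{\vphantom{\hYhg}}^{\scriptscriptstyle{(2)}}_z))$, which is separated and torsion free; by Lemma \ref{L:scl-map} it therefore suffices to check the identity modulo $\hbar$ only if one wants injectivity-type arguments, but here I would instead verify it directly on the nose, since every ingredient is an honest $\C[\![\hbar]\!]$-linear map. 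Because $\LzhYhg$ is topologically generated over $\C[\![\hbar]\!]$ by $\tau_z(\hYhg)$ together with $z^{\pm 1}$, and both sides are built from algebra homomorphisms except for the coaction/action pieces, I would reduce to checking the relation on elements of the form $y'\otimes \tau_z(y)$ with $y,y'\in \hYhg$ after establishing the appropriate multiplicativity in the $\LzhYhg$-argument.

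The key computational input is the intertwiner equation \eqref{R-inter} of Theorem \ref{T:Yhg-R}, rewritten as $\tau_z\otimes \id\circ \op{\Delta}(x)=\mcR(-z)^{-1}\cdot \tau_z\otimes\id\circ\Delta(x)\cdot\mcR(-z)$ after replacing $z$ by $-z$ (using unitarity, Part \eqref{Yhg-R:1}, if needed to keep signs straight), together with the two cabling identities. The plan is: first expand the left-hand side $\pi_z^\ast\circ\pi_z\circ(1\,2)$ applied to $y'\otimes\tau_z(y)$, obtaining $(1\otimes \tau_z(y)\,y')\mcR(-z)$ — here one uses that $\pi_z(y'\otimes \tau_z(y))=\tau_z(y)\cdot y$ after the flip, wait, more carefully $\pi_z\circ(1\,2)(y'\otimes\tau_z(y))=m_z(\tau_z(y)\otimes \tau_z(y'))$ so one must be attentive to which factor carries $\tau_z$; I would fix these bookkeeping conventions at the outset. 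Second, expand the right-hand side: $\op{\Delta}\otimes \pi_z^\ast$ produces $\op{\Delta}(\text{first factor})\otimes (1\otimes \tau_z(y))\mcR(-z)$, then $\dot\blacktriangle\otimes\dot\blacktriangledown\otimes\id$ acts, then the permutation $(4\,3\,2)$ and the final $m\otimes\pi_z$ assemble the result. The heart of the matching is then precisely the intertwiner equation: the $\mcR(-z)$ produced on the left conjugates $\tau_z\otimes\id\circ\Delta$ into $\tau_z\otimes\id\circ\op{\Delta}$, which is exactly what the adjoint coaction $\dot\blacktriangle=(S^{-1}\otimes\id)\circ(1\,2)\circ\blacktriangle$ and adjoint action $\dot\blacktriangledown=\blacktriangledown\circ(S^{-1}\otimes\id)$ encode on the other side, after using the Hopf axioms $m\circ(S\otimes\id)\circ\Delta=\iota\circ\veps$ and the antipode relations for $\mcR$ from the proof of Proposition \ref{P:dual-z}\eqref{dual-z:1}.

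I expect the genuine obstacle to be purely organizational rather than conceptual: keeping track of the many tensor legs and permutations $(1\,2)$, $(2\,3)$, $(4\,3\,2)$ correctly, and ensuring that all the implicit completions are respected — in particular that the relevant infinite sums converge in $\mathrm{L}(\cYhg{\vphantom{\hYhg}}^{\scriptscriptstyle{(2)}}_z)$, which follows because $\mcR(-z)-1\in \hbar\,\mathrm{L}(\cYhg{\vphantom{\hYhg}}^{\scriptscriptstyle{(2)}}_z)_{-1}$ by Part \eqref{Yhg-R:3} of Theorem \ref{T:Yhg-R} and because $\blacktriangle(\hYhg)\subset \hYhg\otimes\QFhYhg$ by Lemma \ref{L:adjoint}, so that the adjoint coaction lands in a space where pairing against $\mcR$ is well-defined. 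A clean way to organize the verification, which I would adopt, is to recognize that $(\pi_z,\pi_z^\ast)$ is nothing but the image under the functor ``apply $\tau_z$ to one leg and use $\mcR(-z)$'' of the tautological Drinfeld--Yetter module structure of $\hYhg$ over itself; then the lemma becomes a formal consequence of the fact that $\mcR(-z)$ intertwines $\Delta$ and $\op{\Delta}$ in the $\tau_z$-twisted sense, i.e.\ of Theorem \ref{T:Yhg-R}. I would present the proof in that structural language, reducing the explicit diagram chase to a single invocation of the intertwiner and cabling identities, and relegate the permutation bookkeeping to a remark.
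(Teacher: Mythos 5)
Your overall plan is correct and matches the paper's approach, which is a direct modification of the proof of (9) in Radford's Lemma~1 of \cite{Radford93}: apply both sides to a simple tensor, use Sweedler notation, invoke the antipode and counit axioms, and feed in the intertwiner relation for $\mcR(-z)$ to swap $\Delta$ and $\op{\Delta}$ on the $\tau_z$-twisted leg. You also correctly identify that the cabling identities and unitarity are the source of the needed properties of $\mcR(-z)$.

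Two of your steps would not survive contact with the calculation, though neither is fatal. First, the proposed reduction to tensors of the form $y'\otimes\tau_z(y)$ is both unnecessary and slightly confused. Both sides of the lemma are $\C[\![\hbar]\!]$-linear, and the $\LzhYhg$-argument appears exactly once on each side (through $\pi_z$ on the left, $\pi_z^\ast$ on the right), so there is no multiplicativity to establish and nothing to be gained by restricting the second leg to the image of $\tau_z$. The paper simply applies both sides to an arbitrary simple tensor $x\otimes y\in\hYhg\otimes\LzhYhg$, and the element $y$ goes along for the ride. Your mid-sentence self-correction (``wait, more carefully...'') is a symptom of fighting bookkeeping you imposed on yourself.

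Second, the form of the intertwiner relation you write, $\tau_z\otimes\id\circ\op{\Delta}(x)=\mcR(-z)^{-1}\,\tau_z\otimes\id\circ\Delta(x)\,\mcR(-z)$, is not what the computation uses and is not obtained by merely ``replacing $z$ by $-z$''. The shift $\tau_z$ here is placed on the \emph{second} leg, because $\pi_z=m_z\circ(\id\otimes\tau_z)$ twists the right tensor factor. The relation actually needed is
\begin{equation*}
1\otimes\tau_z\circ\Delta(x)=\mcR(-z)^{-1}\cdot 1\otimes\tau_z\circ\op{\Delta}(x)\cdot\mcR(-z),
\end{equation*}
and deriving it from \eqref{R-inter} requires \emph{both} the substitution $z\mapsto -z$ and the unitarity identity $\mcR(z)^{-1}=\mcR_{21}(-z)$ (equivalently, a flip of tensor legs). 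If you use the relation as you wrote it you will find the legs of $\dot\blacktriangle$ and $\dot\blacktriangledown$ do not align with the permutation $(4\,3\,2)$. Once these two points are corrected, the remainder of your plan reduces to the same Sweedler-notation chase that appears in the paper.
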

\begin{proof}
This follows from a straightforward modification of the proof of (9) in \cite{Radford93}*{Lem.~1}, using the antipode relations 
\begin{gather*}
m\circ (\id \otimes S^{-1}) \circ \op{\Delta}=\iota\circ \veps=m\circ (S^{-1}\otimes \id) \circ \op{\Delta},
\end{gather*}
the counit relation $m\circ (\id \otimes \veps) \circ \Delta=\id$, and 
the intertwining relation 
\begin{equation}\label{inter}
1\otimes \tau_z \circ \Delta(x)=\mcR(-z)^{-1} \cdot 1\otimes \tau_z \circ \op{\Delta}(x) \cdot \mcR(-z)
\quad \forall \; x\in \hYhg,
\end{equation}
which itself follows from the identities \eqref{Yhg-R:1} and \eqref{R-inter} of Theorem \ref{T:Yhg-R}. 

To illustrate this, we apply both sides of the claimed identity to a simple tensor $x\otimes y\in \hYhg\otimes \LzhYhg$, while exploiting the sumless Sweedler notation $\Delta^n(x)=x_1\otimes \cdots \otimes x_n$ for iterated coproducts. Expanding the right-hand side of the resulting expression, while using that ${\dot\lad}(x\otimes y)=S^{-1}(x_2)yx_1$ and ${\dot\rcoad}(x)=S^{-1}(x_3)x_1\otimes x_2$, we obtain
\begin{align*}
S^{-1}(x_5)x_3S^{-1}(x_2)\otimes y &\cdot \mcR(-z)\cdot x_1 \otimes \tau_z(x_4)\\
&= S^{-1}(x_4)\otimes y \cdot \mcR(-z) \cdot x_1\veps(x_2) \otimes \tau_z(x_3)\\
&=S^{-1}(x_3)\otimes y \cdot \mcR(-z) \cdot x_1\otimes \tau_z(x_2)\\
&=S^{-1}(x_3)x_2\otimes y\tau_z(x_1) \cdot \mcR(-z)\\
&=1 \otimes y\tau_z(x_1\veps(x_2)) \cdot \mcR(-z)\\
&=1\otimes y\tau_z(x) \cdot \mcR(-z),
\end{align*} 
where we have applied the intertwining relation \eqref{inter} in the third equality, and the appropriate Hopf relations listed above in each of the remaining equalities. Since 
\begin{equation*}
\pi_z^\ast \circ \pi_z(y\otimes x)=\pi_z^\ast(y\tau_z(x))=(1\otimes y\tau_z(x))\mcR(-z),  
\end{equation*}
the above computation implies the lemma. \qedhere
\end{proof}

We now come to the main theorem of this section. Recall from Definition \ref{D:D(Yhg)} that we may identify the restricted quantum double $D(\hYhg)$ with $\chk{\hYhg}\otimes\hYhg$ as a topological coalgebra over $\C[\![\hbar]\!]$. We further recall that 
\begin{equation*}
\imath: \hYhg\into \DYhg \quad \text{ and }\quad \check{\imath}:\chk{\hYhg}\into \DYhg
\end{equation*}
are the $\Z$-graded embeddings of topological Hopf algebras over $\C[\![\hbar]\!]$ which featured prominently in Theorem \ref{T:dual}. 
\begin{theorem}\label{T:DYhg-DD}
The $\C[\![\hbar]\!]$-module homomorphism 
\begin{equation*}
\Upsilon:=m\circ (\check{\imath}\otimes \imath): \chk{\hYhg}\otimes\hYhg \to \DYhg
\end{equation*}
is an isomorphism  of $\Z$-graded topological Hopf algebras $D(\hYhg)\iso \DYhg$.
\end{theorem}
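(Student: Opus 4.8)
The plan is to verify that $\Upsilon$ is simultaneously an algebra homomorphism, a coalgebra homomorphism, and a bijection, and then to invoke the uniqueness of the Hopf structure. The key observation is that Theorem \ref{T:dual} has already endowed $\DYhg$ with a Hopf structure for which both $\imath$ and $\check{\imath}$ are morphisms of $\Z$-graded topological Hopf algebras; the task is thus to match the multiplication on $D(\hYhg)$ with that on $\DYhg$ under $\Upsilon$. First I would establish that $\Upsilon$ is a coalgebra morphism: since $D(\hYhg)=\chk{\hYhg}\otimes\hYhg$ has coproduct $\Delta_D=(2\,3)\circ\chk\Delta\otimes\Delta$ and counit $\veps_D=\chk\veps\otimes\veps$, and since $\imath,\check{\imath}$ are Hopf morphisms into $\DYhg$, the relation $\dot\Delta\circ\Upsilon = (\Upsilon\otimes\Upsilon)\circ\Delta_D$ follows formally from $\dot\Delta\circ\imath=(\imath\otimes\imath)\circ\Delta$, $\dot\Delta\circ\check\imath=(\check\imath\otimes\check\imath)\circ\chk\Delta$ together with the fact that $\dot\Delta$ is an algebra homomorphism; the counit identity is even easier. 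Second I would show $\Upsilon$ is bijective. This is exactly where the Poincar\'e--Birkhoff--Witt theorems enter: by Theorem \ref{T:DYhg-PBW}, $\DYhg\cong U(\mft)[\![\hbar]\!]$, by Theorem \ref{T:Yhg-PBW} (in completed form) $\hYhg\cong U(\tplus)[\![\hbar]\!]$, and by Theorem \ref{T:Yhg*-quant} and Proposition \ref{P:dual}, $\chk{\hYhg}\cong U(\tminus)[\![\hbar]\!]$. The semiclassical limit $\bar\Upsilon: U(\tminus)\otimes_\C U(\tplus)\to U(\mft)$ is the multiplication map $m\circ(\bar{\check\imath}\otimes\bar\imath)$, which by the description of $\bar{\check\imath}$ in the proof of Lemma \ref{L:Yhg*->DYhg} (the inclusion induced by $\mft=\tplus\oplus\tminus$) is an isomorphism by the ordinary PBW theorem for the polarization $\mft=\tminus\oplus\tplus$. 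Since both source and target are topologically free, Lemma \ref{L:scl-map} upgrades this to an isomorphism of $\C[\![\hbar]\!]$-modules, and it is $\Z$-graded since $\imath$ and $\check{\imath}$ are.

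\textbf{The multiplicative compatibility.} The main obstacle is verifying that $\Upsilon$ is an algebra homomorphism, i.e. that the cross relations \eqref{Dgr:m} defining $m_D$ are respected. Concretely, one must show
\begin{equation*}
\Upsilon\circ m_D\circ(\imath_D\otimes\check{\imath}_D) = m\circ\big(\imath\otimes\check{\imath}\big),
\end{equation*}
which, after unwinding \eqref{Dgr:m}, amounts to the identity $\imath(x)\,\check{\imath}(f) = (\check{\imath}\otimes\imath)\big((\rhd\otimes\lhd)\circ(2\,3)\circ\Delta\otimes\chk\Delta\big)(x\otimes f)$ inside $\DYhg$, for all $x\in\hYhg$ and $f\in\chk{\hYhg}$. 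The natural way to prove this is to transport everything through the injection $\Phi_z:\DYhg\into\LzhYhg$ of Theorem \ref{T:Phiz}. Under $\Phi_z$ we have $\Phi_z\circ\imath=\tau_z$ and $\Phi_z\circ\check{\imath}=\chk{\Phi_z}$ by Lemma \ref{L:Yhg*->DYhg}; recalling $\chk{\Phi_z}(f)=(f\otimes\id)\mcR(-z)$ from Section \ref{ssec:chk-Phiz}, the left-hand side becomes $\tau_z(x)\cdot(f\otimes\id)\mcR(-z)$, while Lemma \ref{L:Dr-Yet} — which packages precisely the Drinfeld--Yetter compatibility between $\pi_z$ and $\pi_z^\ast$ coming from the intertwiner equation \eqref{inter} — rewrites the right-hand side in the same form. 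In other words, Lemma \ref{L:Dr-Yet} is the $\Phi_z$-image of the cross relations \eqref{Dgr:m}, and since $\rhd$, $\lhd$ are by construction the duals of $\dot{\lad}$, $\dot{\rcoad}$ appearing there, the algebra compatibility of $\Upsilon$ follows by applying $\Phi_z^{-1}$ (legitimate because $\Phi_z$ is injective with closed image, Remark \ref{R:DYhg->cDYhg}, and the relevant elements lie in its image by Lemma \ref{L:Yhg*->DYhg}). One must also check that $\Upsilon$ carries the unit $\chk\iota\otimes\iota$ to $1$, which is immediate, and is a homomorphism of topological algebras, i.e. continuous, which follows since $\Phi_z$ is a topological embedding.

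\textbf{Conclusion.} Having shown $\Upsilon$ is a bijective bialgebra morphism, it is automatically a Hopf algebra isomorphism (the antipodes $S_D$ and $\dot S$ then necessarily correspond, by uniqueness of antipodes, or directly from $S_D = m_D\circ(\imath_D\otimes\check{\imath}_D)\circ(S\otimes\chk S)\circ(1\,2)$ and the relations \eqref{i-S-veps}). Since we have already arranged that $\Upsilon$ is $\Z$-graded, this establishes $D(\hYhg)\iso\DYhg$ as $\Z$-graded topological Hopf algebras. I expect the bulk of the work to be bookkeeping in the proof that $\Upsilon$ respects multiplication: translating the abstract matched-pair cross relations \eqref{Dgr:m} into the concrete statement of Lemma \ref{L:Dr-Yet} requires care with the sumless Sweedler notation, the placement of $S^{-1}$ in $\dot\rcoad$ and $\dot\lad$, and the order of tensor factors (which differs between the references cited for the double cross product). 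Everything else — bijectivity via PBW and Lemma \ref{L:scl-map}, the coalgebra compatibility, the unit and antipode — is routine given the machinery already assembled.
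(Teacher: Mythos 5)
Your proposal is correct and follows essentially the same route as the paper's proof: show $\Upsilon$ is a coalgebra morphism directly from Theorem \ref{T:dual}, deduce bijectivity from PBW via Lemma \ref{L:scl-map}, and then establish the algebra compatibility by transporting the second cross relation of \eqref{Dgr:m} through $\Phi_z$ and invoking Lemma \ref{L:Dr-Yet}, with the antipode handled by uniqueness. The only (harmless) slip is that the displayed identity for $\imath(x)\check{\imath}(f)$ omits the multiplication $m$ after $(\check{\imath}\otimes\imath)$ on the right-hand side, though your parenthetical ``inside $\DYhg$'' makes the intent clear.
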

\begin{proof}

We shall first prove that $\Upsilon$ is an isomorphism of $\Z$-graded topological coalgebras over $\C[\![\hbar]\!]$, which follows from Theorem \ref{T:dual}. Afterwards, we will use Lemma \ref{L:Dr-Yet} to complete the proof of the theorem by proving that $\Upsilon$ is an algebra homomorphism $D(\hYhg)\to \DYhg$. Here we note that it follows automatically that $\Upsilon$ intertwines the underlying antipodes, though this is straightforward to verify.  

\begin{proof}[Proof that $\Upsilon$ is an isomorphism of $\Z$-graded coalgebras] \let\qed\relax

Since $m$, $\imath$ and $\check{\imath}$ are all $\Z$-graded $\C[\![\hbar]\!]$-module homomorphisms, the same is true of $\Upsilon$. Moreover, as the coproduct $\dot{\Delta}$ of $\DYhg$ is an algebra homomorphism, it satisfies 
\begin{align*}
\dot{\Delta} \circ \Upsilon &= \dot{\Delta} \circ m \circ (\check{\imath} \otimes \imath)\\
&
=m\otimes m \circ  (2\,3)\circ (\dot{\Delta} \otimes \dot{\Delta})\circ (\check{\imath} \otimes \imath)\\
&
=m\otimes m \circ  (2\,3)\circ (\check{\imath}\otimes \check{\imath} \otimes \imath\otimes \imath) \circ (\chk{\Delta} \otimes \Delta)\\
&
=\Upsilon \otimes \Upsilon \circ (2\,3) \circ \chk{\Delta} \otimes \Delta.
\end{align*}
Similarly, as the counit $\dot{\veps}$ of $\DYhg$ is an algebra homomorphism, one has 
\begin{align*}
\dot{\veps} \circ \Upsilon&=\dot{\veps} \circ m \circ (\check{\imath} \otimes \imath)
=m \circ (\dot{\veps} \otimes \dot{\veps})\circ (\check{\imath} \otimes \imath)
=m \circ \chk{\veps}\otimes \veps.
\end{align*}
This proves that $\Upsilon$ is a $\Z$-graded homomorphism of topological coalgebras.
Now let us turn to establishing the bijectivity of $\Upsilon$. As $\chk{\hYhg}\otimes \hYhg$ is a topologically free $\C[\![\hbar]\!]$-module with semiclassical limit $U(\tminus)\otimes_\C U(\tplus)$ and $\DYhg$ is a flat deformation of $U(\mft)$, Lemma \ref{L:scl-map} asserts that it is sufficient to establish that the semiclassical limit 
\begin{equation*}
\bar{\Upsilon}=\bar{m}\circ (\check{\underline{\imath}}\otimes \underline{\imath}):U(\tminus)\otimes_\C U(\tplus)\to U(\mft)
\end{equation*}
of $\Upsilon$ is an isomorphism, where $\bar{m}, \check{\underline{\imath}}$ and $\underline{\imath}$ are the semiclassical limits of $m$, $\check{\imath}$ and $\imath$, respectively. As $\check{\underline{\imath}}$ and $\underline{\imath}$ quantize the natural inclusions of $\tminus$ and $\tplus$ into $\mft$ (as was shown in the proof of Lemma \ref{L:Yhg*->DYhg} for $\check{\underline{\imath}}$) and $\bar{m}$ is the multiplication map on $U(\mft)$,  this follows from the Poincar\'{e}--Birkhoff--Witt Theorem for enveloping algebras and the decomposition $\mft=\tminus\oplus \tplus$. 
\end{proof}

\begin{proof}[Proof that $\Upsilon$ is an algebra homomorphism]

By \eqref{Dgr:m}, we must show that $\Upsilon\circ \imath_D$ and $\Upsilon\circ \check{\imath}_D$ are $\C[\![\hbar]\!]$-algebra homomorphisms and that, in addition, $\Upsilon$ satisfies the relations
\begin{equation}\label{suff-Up}
\begin{gathered}
\Upsilon\circ m_{D}\circ (\check{\imath}_D\otimes \imath_D) = m\circ  (\Upsilon \otimes \Upsilon) \circ  (\check{\imath}_D\otimes \imath_D),\\
m\circ (\Upsilon \otimes \Upsilon)\circ (\imath_D \otimes \check{\imath}_D)=m\circ (\Upsilon \otimes \Upsilon) \circ (\rhd\otimes \lhd)\circ (2\,3)\circ \Delta \otimes \chk{\Delta}.
\end{gathered}
\end{equation}
By definition of $\Upsilon$, we have $\Upsilon\circ \imath_D=\imath$ and $\Upsilon\circ \check{\imath}_D=\check{\imath}$. Moreover, both sides of the identity 
\begin{equation*}
\Upsilon\circ m_{D}\circ (\check{\imath}_D\otimes \imath_D)=m\circ  (\Upsilon \otimes \Upsilon) \circ  (\check{\imath}_D\otimes \imath_D)
\end{equation*}
coincide with $\Upsilon$, viewed as a map $\chk{\hYhg}\otimes \hYhg\to \DYhg$. Hence, we are left to verify the second relation of \eqref{suff-Up}, which we shall establish by appealing to the injective $\C[\![\hbar]\!]$-algebra homomorphism $\Phi_z$ from Theorem \ref{T:Phiz}. Namely, it is enough to show that 
\begin{equation*}
\Phi_z\circ m\circ (\Upsilon \otimes \Upsilon)\circ (\imath_D \otimes \check{\imath}_D)=\Phi_z\circ m\circ (\Upsilon \otimes \Upsilon) \circ (\rhd\otimes \lhd)\circ (2\,3)\circ \Delta \otimes \chk{\Delta}.
\end{equation*}
 Since $\Phi_z$ is an algebra homomorphism satisfying $\Phi_z\circ \imath=\tau_z$ and $\Phi_z\circ \check{\imath}=\chk{\Phi_z}$ (see Lemma \ref{L:Yhg*->DYhg}), this is equivalent to 
\begin{equation}\label{suff-Psi}
m_z\circ (\tau_z\otimes \chk{\Phi_z})=m_z\circ ( \chk{\Phi_z}\otimes \tau_z) \circ (\rhd\otimes \lhd)\circ (2\,3)\circ \Delta \otimes \chk{\Delta},
\end{equation}
where $m_z$ is the product in $\LzhYhg$. The proof that \eqref{suff-Psi} is satisfied follows an argument parallel to that employed to establish Part (a) of Theorem 2 in \cite{Radford93}, using Lemma \ref{L:Dr-Yet} in place of \cite{Radford93}*{(9)}. For the sake of completeness, we give a complete argument below.

Applying the left-hand side of \eqref{suff-Psi} to an arbitrary simple tensor $x\otimes f$, we obtain
\begin{equation*}
\tau_z(x)\chk{\Phi_z}(f)=(f\otimes \tau_z(x))\mcR(-z)=(f\otimes \id) \circ \pi_z^\ast \circ \pi_z(1\otimes x),
\end{equation*}
which, by Lemma \ref{L:Dr-Yet}, may be rewritten as 
\begin{equation}\label{suff-Psi:2}
\tau_z(x)\chk{\Phi_z}(f)=f\otimes \id \circ m\otimes \pi_z \circ (4\,3\,2)\circ \dot\rcoad \otimes \dot{\lad}\otimes \id \circ  \op{\Delta}(x)\otimes \mcR(-z).
\end{equation}
On the other hand,  by definition of $\chk{\Phi_z}$, $\rhd$  and $\lhd$, the right-hand side of \eqref{suff-Psi} evaluated on $x\otimes f$ is 
\begin{equation*}
\chk{\Phi_z}(x_1\rhd f_1)\cdot \tau_z(x_2\lhd f_2)=(f_1\otimes \id \circ \dot{\lad}\otimes \id(x_1\otimes \mcR(-z)))\cdot (f_2\otimes \tau_z \circ \dot{\rcoad}(x_2)),
\end{equation*}
where we have employed the sumless Sweedler notation $\Delta(x)=x_1\otimes x_2$ and $\chk{\Delta}(f)=f_1\otimes f_2$.
That this coincides with the expression \eqref{suff-Psi:2} for $\tau_z(x)\chk{\Phi_z}(f)$ is a consequence of the following general computation. For each $a\in \QFhYhg$ and $b\in \hYhg$, we have
\begin{align*}
(f_1(x_1 \bullet a)\otimes b)\cdot & (f_2\otimes \tau_z \circ \dot{\rcoad}(x_2))\\
&=f(x_2^{(1)} (x_1\bullet a)) b\tau_z(x_2^{(2)})\\
&=f\otimes \id \circ m\otimes \pi_z(x_2^{(1)}\otimes (x_1 \bullet a) \otimes b \otimes x_2^{(2)})\\
%
&=(f\otimes \id \circ m\otimes \pi_z \circ (4\,3\,2)\circ \dot\rcoad \otimes \dot{\lad}\otimes \id
\circ \op{\Delta}\otimes \id \otimes \id  ) (  x\otimes a\otimes b),
\end{align*}
where we have set $x\bullet a= \dot{\lad}(x\otimes a)$ and in the first and second equalities we have used the (sumless) Sweedler type notation $\dot{\rcoad}(x)=x^{(1)}\otimes x^{(2)}$. This completes the proof of \eqref{suff-Psi}, and thus the proof that $\Upsilon$ is an algebra homomorphism. \qedhere
\end{proof}
\let\qed\relax
\end{proof}
%

\section{The universal \texorpdfstring{$R$}{R}-matrix}\label{sec:R-matrix}

In this final section, we establish the last assertion of Theorem \ref{T:Intro}, which identifies the universal $R$-matrix $\boldR$ of the Yangian double $\DYhg\cong D(\hYhg)$ with Drinfeld's universal $R$-matrix $\mcR(z)$; see Theorem \ref{T:R}. Though this in fact follows without too much effort from the results of Sections \ref{sec:DYhg-quant} and \ref{sec:DYhg-double}, constructing $\boldR$ precisely does require some care. After laying the groundwork in Sections \ref{ssec:R-Theta} and \ref{ssec:Theta-chi}, we define $\boldR$ and prove our last main result in Section \ref{ssec:R-can}. The final two subsections --- Sections \ref{ssec:R-compute} and \ref{ssec:Rminus-beta} --- are devoted to providing additional context pertinent to this theorem. 
%
\subsection{The isomorphism \texorpdfstring{$\Theta$}{Theta}}\label{ssec:R-Theta}

Let $\theta$ denote the canonical $\C[\![\hbar]\!]$-module injection 
\begin{equation*}
\theta: \hYhg\otimes \Yhgstar\into \Hom_{\C[\![\hbar]\!]}(\QFhYhg,\hYhg)
\end{equation*}
determined on simple tensors by $\theta(x\otimes f)(y)=f(y)x$. This injection is a homomorphism of topological $\C[\![\hbar]\!]$-algebras provided we equip $\Hom_{\C[\![\hbar]\!]}(\QFhYhg,\hYhg)$ with the convolution product 
\begin{equation*}
\varphi_1\star \varphi_2:= m \circ (\varphi_1 \otimes \varphi_2) \circ \Delta|_{\QFhYhg} \quad \forall\; \varphi_1,\varphi_2\in \Hom_{\C[\![\hbar]\!]}(\QFhYhg,\hYhg)
\end{equation*}
and identity element  $\iota\circ \veps$, where $m$, $\Delta$, $\iota$ and $\veps$ are the multiplication, coproduct, unit and counit of $\hYhg$, respectively. Our main goal in this subsection is to identity a natural extension of $\theta$ which is an isomorphism.

Let us begin by noting some topological properties of the homomorphism space $\Hom_{\C[\![\hbar]\!]}(\QFhYhg,\hYhg)$. As $\hYhg$ and $\QFhYhg$ are topologically free with semiclassical limits $U(\tplus)$ and $\Symt$, respectively, $\Hom_{\C[\![\hbar]\!]}(\QFhYhg,\hYhg)$ is a topologically free $\C[\![\hbar]\!]$-modules with 
\begin{equation}\label{E:scl}
\Hom_{\C[\![\hbar]\!]}(\QFhYhg,\hYhg)\cong \left(\Hom_\C(\Symt,U(\tplus))\right)\![\![\hbar]\!].
\end{equation}
In addition, $\Hom_{\C[\![\hbar]\!]}(\QFhYhg,\hYhg)$ is a Hausdorff and complete topological space with respect to the topology associated to the descending filtration $\mathbb{E}_\bullet$ of closed $\C[\![\hbar]\!]$-submodules defined by 
\begin{equation}\label{E_bullet}
\mathbb{E}_n:=\{f\in \Hom_{\C[\![\hbar]\!]}(\QFhYhg,\hYhg): f(\mbJ_{n})=0\},
\end{equation}
where we have set $\mbJ_n=\bigoplus_{k< n}\QFYhg_k$ with $\mbJ_{0}=\{0\}$. Said in more algebraic terms, the natural $\C[\![\hbar]\!]$-module homomorphism 
\begin{equation*}
\Hom_{\C[\![\hbar]\!]}(\QFhYhg,\hYhg)\to \varprojlim_{n} \left(\Hom_{\C[\![\hbar]\!]}(\QFhYhg,\hYhg)/\mathbb{E}_n\right)
\end{equation*} 
is an isomorphism, as is readily verified. Moreover, as the coproduct $\Delta$ on $\hYhg$ is graded, $\mathbb{E}_\bullet$ is a descending filtration of ideals and the above is an isomorphism of $\C[\![\hbar]\!]$-algebras. 

We now turn towards enlarging the domain of $\theta$. 
For each $n\in \Z$, let $\Yhgstar_{\scriptscriptstyle (n)}$ denote the closure of the $\C[\![\hbar]\!]$-submodule of $\Yhgstar$ generated by $\bigoplus_{k\geq n} \Yhgstar_{-k}$. Since $\Yhgstar_k\subset \hbar\Yhgstar_{k-1}$ for $k>0$, $\Yhgstar_{\scriptscriptstyle (0)}=\Yhgstar$ and we have a descending $\N$-filtration 
\begin{equation*}
\Yhgstar=\Yhgstar_{\scriptscriptstyle (0)}\supset \Yhgstar_{\scriptscriptstyle (1)} \supset \cdots \supset \Yhgstar_{\scriptscriptstyle (n)} \supset \cdots 
\end{equation*}
We may thus introduce the topological tensor product
\begin{equation*}
\hYhg \,\wt{\otimes}\, \Yhgstar:= \varprojlim_n (\hYhg\otimes \Yhgstar / \hYhg\otimes \Yhgstar_{\scriptscriptstyle(n)}).
%
\end{equation*}
Since $\Yhgstar$ is topologically generated as a $\C[\![\hbar]\!]$-algebra by the space $\bigoplus_{k>0} \Yhgstar_{-k}$ and, for each $n\in \N$, $\bigoplus_{k\geq n} \Yhgstar_{-k}$ is stable under multiplication by any $x$ in this space, we see that each $\Yhgstar_{\scriptscriptstyle (n)}$ is an ideal in $\Yhgstar$. It follows that $\hYhg \,\wt{\otimes}\, \Yhgstar$ is a $\C[\![\hbar]\!]$-algebra.

To see that $\hYhg \,\wt{\otimes}\, \Yhgstar$ is a topologically free $\C[\![\hbar]\!]$-module, let us introduce the classical spaces $\Symt{\vphantom{)}}^\star_{\scriptscriptstyle(n)}$ and $U(\tplus)\,\wt{\otimes}\,\Symt{\vphantom{)}}^\star$ by setting
\begin{gather*}
\Symt{\vphantom{)}}^\star_{\scriptscriptstyle(n)}:= \bigoplus_{k\geq n}\Symt{\vphantom{)}}^\star_{{-k}},\\
U(\tplus)\,\wt{\otimes}\,\Symt{\vphantom{)}}^\star:=\prod_{n\in \N} U(\tplus)\otimes_\C \Symt{\vphantom{)}}^\star_{-n} 
\cong \varprojlim_{n} U(\tplus)\otimes_\C (\Symt{\vphantom{)}}^\star/\Symt{\vphantom{)}}^\star_{\scriptscriptstyle (n)}).
\end{gather*}
\begin{lemma}\label{L:wt-otimes}
The $\C[\![\hbar]\!]$-algebra $\hYhg \,\wt{\otimes}\, \Yhgstar$ is topologically free with 
%
\begin{gather*}
\hYhg \,\wt{\otimes}\, \Yhgstar/\hbar(\hYhg \,\wt{\otimes}\, \Yhgstar)\cong U(\tplus)\,\wt{\otimes}\,\Symt{\vphantom{)}}^\star.
\end{gather*}
%
%
%
\end{lemma}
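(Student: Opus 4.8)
The plan is to reduce the statement to a concrete description of $\hYhg\,\wt\otimes\,\Yhgstar$ as an inverse limit of topologically free $\C[\![\hbar]\!]$-modules, and then apply the elementary fact that an inverse limit of topologically free modules along surjective transition maps with topologically free kernels is topologically free, together with compatibility of $-/\hbar(-)$ with such limits. First I would observe that, by construction, $\hYhg\,\wt\otimes\,\Yhgstar=\varprojlim_n\bigl(\hYhg\otimes\Yhgstar/\hYhg\otimes\Yhgstar_{\scriptscriptstyle(n)}\bigr)$, so it suffices to analyze each quotient $\hYhg\otimes\Yhgstar/\hYhg\otimes\Yhgstar_{\scriptscriptstyle(n)}$ and the transition maps between them. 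Since $\Yhgstar$ is topologically free and $\Yhgstar_{\scriptscriptstyle(n)}$ is a closed submodule, I would first check that $\Yhgstar/\Yhgstar_{\scriptscriptstyle(n)}$ is topologically free, with semiclassical limit $\Symtstar/\Symtstar_{\scriptscriptstyle(n)}\cong\bigoplus_{k<n}\Symtstar_{-k}$, a finite-dimensional complex vector space; this uses Corollary \ref{C:Vstar[[h]]} and the explicit description $\Yhgstar\cong\Symtstar[\![\hbar]\!]$ as $\Z$-graded topological $\C[\![\hbar]\!]$-modules, under which $\Yhgstar_{\scriptscriptstyle(n)}$ is identified with (the closure of) $\bigoplus_{k\geq n}\Symtstar_{-k}[\![\hbar]\!]$. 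Because the quotient has finite-dimensional (hence free) semiclassical limit, $\hYhg\otimes\bigl(\Yhgstar/\Yhgstar_{\scriptscriptstyle(n)}\bigr)$ is a finite direct sum of copies of $\hYhg$ and is therefore topologically free with semiclassical limit $U(\tplus)\otimes_\C\bigl(\Symtstar/\Symtstar_{\scriptscriptstyle(n)}\bigr)\cong\bigoplus_{k<n}U(\tplus)\otimes_\C\Symtstar_{-k}$.

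Next I would verify that $\hYhg\otimes\Yhgstar/\hYhg\otimes\Yhgstar_{\scriptscriptstyle(n)}\cong\hYhg\otimes\bigl(\Yhgstar/\Yhgstar_{\scriptscriptstyle(n)}\bigr)$; this is where a little care is needed, since $\otimes=\wh\otimes$ is the $\hbar$-adic completed tensor product. The point is that for the algebraic tensor product one has $\hYhg\otimes_{\C[\![\hbar]\!]}\Yhgstar/\hYhg\otimes_{\C[\![\hbar]\!]}\Yhgstar_{\scriptscriptstyle(n)}\cong\hYhg\otimes_{\C[\![\hbar]\!]}\bigl(\Yhgstar/\Yhgstar_{\scriptscriptstyle(n)}\bigr)$ by right-exactness of $\otimes_{\C[\![\hbar]\!]}$, and then one passes to $\hbar$-adic completions on both sides, using that $\hbar$-adic completion is exact on the relevant finitely generated (over $\hYhg$) modules and that the quotient $\Yhgstar/\Yhgstar_{\scriptscriptstyle(n)}$ is already $\hbar$-adically complete being topologically free of ``finite rank over $\C[\![\hbar]\!]$ in each graded piece.'' With this identification the transition maps $\hYhg\otimes\bigl(\Yhgstar/\Yhgstar_{\scriptscriptstyle(n+1)}\bigr)\to\hYhg\otimes\bigl(\Yhgstar/\Yhgstar_{\scriptscriptstyle(n)}\bigr)$ are surjective, and their kernels $\hYhg\otimes\bigl(\Yhgstar_{\scriptscriptstyle(n)}/\Yhgstar_{\scriptscriptstyle(n+1)}\bigr)$ are topologically free (again a finite direct sum of copies of $\hYhg$, since $\Yhgstar_{\scriptscriptstyle(n)}/\Yhgstar_{\scriptscriptstyle(n+1)}\cong\Yhgstar_{-n}$ is topologically free of finite type over $\C[\![\hbar]\!]$ by Corollary \ref{C:Vstar[[h]]} and the remark on finite type below it).

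Then I would conclude via a standard lemma: if $(\msM_n)$ is an inverse system of separated, complete, torsion-free $\C[\![\hbar]\!]$-modules with surjective transition maps whose kernels are also torsion free, then $\msM=\varprojlim\msM_n$ is torsion free, separated, and complete, and $\msM/\hbar\msM\cong\varprojlim(\msM_n/\hbar\msM_n)$. Torsion-freeness of $\msM$ is immediate from that of each $\msM_n$; separatedness and completeness are formal for an inverse limit of complete separated modules; and the isomorphism $\msM/\hbar\msM\cong\varprojlim(\msM_n/\hbar\msM_n)$ follows from applying the snake lemma to multiplication by $\hbar$ on the short exact sequences $0\to K_n\to\msM_{n+1}\to\msM_n\to0$, whose Mittag-Leffler condition (surjectivity of transition maps) kills the relevant $\varprojlim^1$ term. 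Applying this with $\msM_n=\hYhg\otimes(\Yhgstar/\Yhgstar_{\scriptscriptstyle(n)})$ gives that $\hYhg\,\wt\otimes\,\Yhgstar$ is topologically free with
\begin{equation*}
\hYhg\,\wt\otimes\,\Yhgstar/\hbar(\hYhg\,\wt\otimes\,\Yhgstar)\cong\varprojlim_n\Bigl(U(\tplus)\otimes_\C\bigl(\Symtstar/\Symtstar_{\scriptscriptstyle(n)}\bigr)\Bigr)=\prod_{n\in\N}U(\tplus)\otimes_\C\Symtstar_{-n}=U(\tplus)\,\wt\otimes\,\Symtstar,
\end{equation*}
which is exactly the claim. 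The main obstacle, I expect, is the bookkeeping in the middle step---justifying that the completed tensor product commutes with these particular quotients and inverse limits---rather than any genuinely deep input; once the interchange $\hYhg\otimes(\Yhgstar/\Yhgstar_{\scriptscriptstyle(n)})\cong(\hYhg\otimes\Yhgstar)/(\hYhg\otimes\Yhgstar_{\scriptscriptstyle(n)})$ is pinned down, everything else is the standard inverse-limit argument, and the finite type hypothesis from Corollary \ref{C:QFhYhg} (equivalently, finite-dimensionality of $\hSymt{n}$ and $\Symtstar_{-n}$) is what makes it go through.
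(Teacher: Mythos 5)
Your proposal is correct and takes essentially the same route as the paper, but you spend effort on a general inverse-limit lemma where the paper instead does a single uniform reduction that makes all the bookkeeping you worry about disappear. Specifically, the paper fixes one $\Z$-graded isomorphism $\upnu_\star:\Yhgstar\iso\Symtstar[\![\hbar]\!]$ and proves as its central technical step (equation \eqref{Yhgstar-n-scl}) that $\upnu_\star(\Yhgstar_{\scriptscriptstyle(n)})=\Symtstar_{\scriptscriptstyle(n)}[\![\hbar]\!]$ simultaneously for all $n$; you invoke this fact when you write ``$\Yhgstar_{\scriptscriptstyle(n)}$ is identified with (the closure of) $\bigoplus_{k\geq n}\Symtstar_{-k}[\![\hbar]\!]$,'' but you should be aware that this is precisely the nontrivial content of the lemma and must be argued --- the paper devotes roughly half its proof to it. Once \eqref{Yhgstar-n-scl} is in hand, the step you flag as the main obstacle (the interchange $(\hYhg\otimes\Yhgstar)/(\hYhg\otimes\Yhgstar_{\scriptscriptstyle(n)})\cong\hYhg\otimes(\Yhgstar/\Yhgstar_{\scriptscriptstyle(n)})$) does not need any appeal to right-exactness and $\hbar$-adic completion: the paper simply rewrites everything as $V[\![\hbar]\!]$ for explicit graded vector spaces $V$, where $\wh\otimes$ is literally $(V\otimes_\C W)[\![\hbar]\!]$ and the relevant quotient is $(V/W)[\![\hbar]\!]$ because $W$ is a direct summand. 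This yields the finite-level isomorphism
\begin{equation*}
\hYhg\otimes\Yhgstar/\hYhg\otimes\Yhgstar_{\scriptscriptstyle(n)}\cong\bigl(U(\tplus)\otimes_\C(\Symtstar/\Symtstar_{\scriptscriptstyle(n)})\bigr)[\![\hbar]\!]
\end{equation*}
directly, and passing to the inverse limit --- where surjectivity of the transition maps lets you commute $\varprojlim$ with $[\![\hbar]\!]$, as you correctly note --- gives the claim. So: your conclusion and ingredients are right, but the cleanest path is to prove \eqref{Yhgstar-n-scl} first and let it do the work you are attributing to right-exactness and the general Mittag-Leffler machinery.
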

\begin{proof}
Let $\upnu_\star:\Yhgstar\iso \Symt{\vphantom{)}}^\star[\![\hbar]\!]$ be any fixed $\Z$-graded isomorphism of topological $\C[\![\hbar]\!]$-modules. We claim that
\begin{equation}\label{Yhgstar-n-scl}
\upnu_\star(\Yhgstar_{\scriptscriptstyle (n)})=\Symt{\vphantom{)}}^\star_{\scriptscriptstyle(n)}[\![\hbar]\!].
\end{equation}
To see this, fix $n\in \N$. Since $\upnu_\star$ is graded, we have 
\begin{equation*}
\upnu_\star(\Yhgstar_{-k})=\prod_{a\in \N}\Symt{\vphantom{)}}^\star_{{-k-a}}\hbar^a \subset \Symt{\vphantom{)}}^\star_{\scriptscriptstyle(n)}[\![\hbar]\!] \quad \forall\; k\geq n.
\end{equation*}
As $\Symt{\vphantom{)}}^\star_{\scriptscriptstyle(n)}[\![\hbar]\!]$ is a closed $\C[\![\hbar]\!]$-submodule of $\Symt{\vphantom{)}}^\star[\![\hbar]\!]$, we get $\upnu_\star(\Yhgstar_{\scriptscriptstyle{(n)}})\subset \Symt{\vphantom{)}}^\star_{\scriptscriptstyle(n)}[\![\hbar]\!]$. Conversely, if $\sum_{a\geq 0}x_a\hbar^a\in \Symt{\vphantom{)}}^\star_{\scriptscriptstyle(n)}[\![\hbar]\!]$, then $y_a:=\upnu_\star^{-1}(x_a)\in \bigoplus_{k\geq n} \Yhgstar_{-k}$ for each $a\in \N$. Therefore, we have 
\begin{equation*}
\upnu_\star(\sum_{a\geq 0} x_a \hbar ^a)= \sum_{a\geq 0} y_a \hbar ^a\in \Yhgstar_{\scriptscriptstyle(n)},
\end{equation*}
which completes the proof of \eqref{Yhgstar-n-scl}. The statement of the lemma now follows from the sequence of isomorphisms
\begin{align*}
\hYhg\otimes \Yhgstar / \hYhg  \otimes \Yhgstar_{\scriptscriptstyle(n)}&\cong (U(\tplus)\otimes_\C \Symt{\vphantom{)}}^\star)[\![\hbar]\!]/(U(\tplus)\otimes_\C \Symt{\vphantom{)}}^\star_{\scriptscriptstyle (n)})[\![\hbar]\!]
\\
&\cong \left(U(\tplus)\otimes_\C (\Symt{\vphantom{)}}^\star/\Symt{\vphantom{)}}^\star_{\scriptscriptstyle (n)})\right)[\![\hbar]\!].\qedhere
\end{align*}
\end{proof}
%
%
%
%
The next result outputs the desired extensions of $\theta$. 
\begin{proposition}\label{P:Theta}
The injection $\theta$ extends to an isomorphism of $\C[\![\hbar]\!]$-algebras
\begin{gather*}
\Theta:\hYhg \,\wt{\otimes}\, \Yhgstar\iso \Hom_{\C[\![\hbar]\!]}(\QFhYhg,\hYhg).
\end{gather*}
\end{proposition}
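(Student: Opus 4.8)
The strategy is to identify $\Theta$ as the inverse of a concrete map built from the filtration-completion structure on both sides, and then to apply Lemma~\ref{L:scl-map} to promote a semiclassical statement to the full $\C[\![\hbar]\!]$-linear one. First I would observe that $\theta$ is compatible with the two filtrations in play: it sends $\hYhg\otimes \Yhgstar_{\scriptscriptstyle(n)}$ into $\mathbb{E}_n$ (since a functional vanishing on $\Yhgstar_{-k}$ for $k<n$, i.e.\ supported in degrees $\geq n$, kills $\mbJ_n=\bigoplus_{k<n}\QFYhg_k$ by the degree pairing), and more precisely it descends to $\C[\![\hbar]\!]$-algebra maps $\hYhg\otimes\Yhgstar/\hYhg\otimes\Yhgstar_{\scriptscriptstyle(n)}\to \Hom_{\C[\![\hbar]\!]}(\QFhYhg,\hYhg)/\mathbb{E}_n$. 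Passing to the inverse limit over $n$ — using the identification of $\Hom_{\C[\![\hbar]\!]}(\QFhYhg,\hYhg)$ with $\varprojlim_n(\Hom_{\C[\![\hbar]\!]}(\QFhYhg,\hYhg)/\mathbb{E}_n)$ recorded just above the statement, and the definition of $\hYhg\,\wt\otimes\,\Yhgstar$ — yields a $\C[\![\hbar]\!]$-algebra homomorphism $\Theta:\hYhg\,\wt\otimes\,\Yhgstar\to \Hom_{\C[\![\hbar]\!]}(\QFhYhg,\hYhg)$ extending $\theta$ and respecting the convolution product (which is continuous for $\mathbb{E}_\bullet$ because $\Delta$ is graded).

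Next I would check bijectivity via Lemma~\ref{L:scl-map}. By Lemma~\ref{L:wt-otimes} the source $\hYhg\,\wt\otimes\,\Yhgstar$ is topologically free with semiclassical limit $U(\tplus)\,\wt\otimes\,\Symt{}^\star=\prod_n U(\tplus)\otimes_\C \hSymt{n}{}^\ast$, and by \eqref{E:scl} the target is topologically free with semiclassical limit $\Hom_\C(\Symt,U(\tplus))$. Both are separated and complete, so it suffices to show the semiclassical limit $\bar\Theta$ of $\Theta$ is an isomorphism. But $\bar\Theta$ is exactly the canonical map sending a sequence $(x_n\otimes f_n)_n$, with $f_n\in \hSymt{n}{}^\ast$, to the linear functional $y\mapsto \sum_n f_n(y)x_n$, which is well defined since only finitely many $\hSymt{n}$ are hit by a fixed homogeneous $y$, and which is visibly a bijection onto $\Hom_\C(\Symt,U(\tplus))=\prod_n \Hom_\C(\hSymt{n},U(\tplus))$ because $\Symt=\bigoplus_n\hSymt{n}$ with each $\hSymt{n}$ finite-dimensional. (Here I am using that $\QFhYhg$ is of finite type, Corollary~\ref{C:QFhYhg}, so that $\Hom_\C(\hSymt{n},U(\tplus))\cong U(\tplus)\otimes_\C\hSymt{n}{}^\ast$.) Hence $\bar\Theta$ is an isomorphism, and Lemma~\ref{L:scl-map} gives that $\Theta$ is both injective and surjective.

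The one point requiring genuine care — and the main obstacle — is the bookkeeping showing $\theta$ is filtration-compatible in the \emph{right} direction, namely that $\theta^{-1}$ (where defined) does not decrease filtration, so that $\Theta$ is genuinely the inverse limit of the quotient maps and not merely a map into a larger completion. Concretely one must verify that if $\varphi\in\Hom_{\C[\![\hbar]\!]}(\QFhYhg,\hYhg)$ lies in $\mathbb{E}_n$ then its ``coefficients'' against the homogeneous pieces of $\Yhgstar$ of degree $>-n$ vanish; this is where the duality between the gradation filtration $\mbJ_\bullet$ on $\QFhYhg$ and the filtration $\Yhgstar_{\scriptscriptstyle(\bullet)}$ on $\Yhgstar$ — together with the fact that $\Yhgstar_k\subset\hbar\Yhgstar_{k-1}$ forces the relevant completions to match — has to be invoked. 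Once that matching of topologies is in hand, the inverse-limit description of $\Theta$ and its algebra-homomorphism property are formal, and the semiclassical argument above finishes the proof. I would also remark that $\Theta$ is automatically $\Z$-graded in the appropriate completed sense, though that is not needed for the statement as phrased.
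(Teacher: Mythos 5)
Your proof is correct and follows the paper's argument: you verify the filtration compatibility $\theta(\hYhg\otimes\Yhgstar_{\scriptscriptstyle(n)})\subset\mathbb{E}_n$, pass to the inverse limit to produce $\Theta:\hYhg\,\wt{\otimes}\,\Yhgstar\to\mathbb{E}$, and deduce bijectivity from Lemma~\ref{L:scl-map} applied to the semiclassical limit $\bar\Theta$, which is the evident isomorphism $U(\tplus)\,\wt{\otimes}\,\Symtstar\iso\Hom_\C(\Symt,U(\tplus))$. One small remark: the worry raised in your third paragraph is not needed — the inverse limit automatically lands in $\mathbb{E}=\varprojlim\mathbb{E}/\mathbb{E}_n$ because $\mathbb{E}$ is already complete for $\mathbb{E}_\bullet$ (as recorded just above the Proposition), and Lemma~\ref{L:scl-map}, using topological freeness of source and target, then yields bijectivity of $\Theta$ directly from that of $\bar\Theta$ without any separate check that $\theta^{-1}$ preserves the filtration; the argument of your first two paragraphs is already self-contained.
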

\begin{proof}
Let $\mathbb{E}_\bullet$ be the filtration of $\mathbb{E}:=\Hom_{\C[\![\hbar]\!]}(\QFhYhg,\hYhg)$ defined in \eqref{E_bullet}. Then
\begin{equation*}
\theta(\hYhg\otimes \Yhgstar_{\scriptscriptstyle(n)})\subset \mathbb{E}_n \quad \forall\; n\in \N. 
\end{equation*}
Indeed, this follows from the definition of $\theta$ and the fact that 
if $f\in \bigoplus_{k\geq n}\Yhgstar_{-k}$ and $x\in \mbJ_{n}$, then $f(x)=0$. Hence, we obtain 
\begin{equation*}
\Theta:\hYhg \,\wt{\otimes}\, \Yhgstar =\varprojlim (\hYhg \otimes \Yhgstar / \hYhg \otimes \Yhgstar_{\scriptscriptstyle(n)}) \to \varprojlim \mathbb{E}/\mathbb{E}_n = \mathbb{E}.
\end{equation*}
As $\hYhg$ and $\mathbb{E}$ are topologically free, to show $\Theta$ is an isomorphism it is sufficient to show its semiclassical limit $\bar\Theta$ is; see Lemma \ref{L:scl-map}. Employing the identifications of \eqref{E:scl} and Lemma \ref{L:wt-otimes}, we see that $\bar{\Theta}$ coincides with the isomorphism
\begin{equation*}
U(\tplus)\,\wt{\otimes} \,\Symt{\vphantom{)}}^\star\iso \Hom_\C (\Symt,U(\tplus))
\end{equation*}
uniquely extending the canonical map $U(\tplus)\otimes_\C \Symt{\vphantom{)}}^\star\into \Hom_\C (\Symt,U(\tplus))$, which is the semiclassical limit $\bar\theta$ of $\theta$, by continuity.  \qedhere
\end{proof}
We now give two remarks pertinent to the above discussion. 
%
%
%
%
\begin{remark}\label{R:Theta}
By Proposition \ref{P:Theta}, we may introduce the $\C[\![\hbar]\!]$-subalgebra 
\begin{equation*}
\QFhYhg\,\wt{\otimes}\, \Yhgstar:= \Theta^{-1}(\End_{\C[\![\hbar]\!]}(\QFhYhg)) \subset \hYhg\,\wt{\otimes}\, \Yhgstar. 
\end{equation*}
By definition, it comes equipped with an isomorphism
\begin{equation*}
\Theta|_{\QFhYhg \,\wt{\otimes}\, \Yhgstar}:\QFhYhg \,\wt{\otimes}\, \Yhgstar\iso \End_{\C[\![\hbar]\!]}(\QFhYhg)
\end{equation*}
extending the injection $\theta|_{\QFhYhg\otimes \Yhgstar}$. 
It is worth noting that, by following the above arguments, it is easy to see that $\QFhYhg\,\wt{\otimes}\, \Yhgstar$ admits the equivalent description 
\begin{equation*}
\QFhYhg\,\wt{\otimes}\, \Yhgstar\cong  \varprojlim_n(\QFhYhg\otimes \Yhgstar / \QFhYhg\otimes \Yhgstar_{\scriptscriptstyle(n)}).
\end{equation*}
Suppose now that $\varphi,\gamma\in\End_{\C[\![\hbar]\!]}(\QFhYhg)$ with $\gamma$ homogeneous of degree $a\in \Z$. Then  $\varphi\otimes \gamma^t$ uniquely extends to an element of $\End_{\C[\![\hbar]\!]}(\QFhYhg\,\wt{\otimes}\,\Yhgstar)$ and $\Theta$ has the property that
\begin{equation}\label{Theta-prop}
\varphi \circ \Theta(y)\circ \gamma=(\Theta \circ (\varphi \otimes\gamma^t))(y) \quad \forall\; y\in \QFhYhg \,\wt{\otimes}\,\Yhgstar. 
\end{equation}
\end{remark}
\begin{remark}\label{R:End(hYhg)}
The argument used to establish Proposition \ref{P:hYhg*->QFhYhg*} implies that the natural $\C[\![\hbar]\!]$-algebra homomorphism 
\begin{equation*}
\End_{\C[\![\hbar]\!]}(\hYhg)\to \Hom_{\C[\![\hbar]\!]}(\QFhYhg,\hYhg), \quad \varphi\mapsto \varphi|_{\QFhYhg},
\end{equation*}
is an embedding. Hence, we can (and shall) view $\End_{\C[\![\hbar]\!]}(\hYhg)$ as a subalgebra of $\Hom_{\C[\![\hbar]\!]}(\QFhYhg,\hYhg)$. We may thus introduce $\hYhg\,\wt{\otimes}\,\hYhg{\vphantom{)}}^\star\subset \hYhg\,\wt{\otimes}\, \Yhgstar$ by setting
\begin{equation*}
\hYhg\,\wt{\otimes}\,\hYhg{\vphantom{)}}^\star:=\Theta^{-1}(\End_{\C[\![\hbar]\!]}(\hYhg)). 
\end{equation*}
We then have 
\begin{equation*}
(\hYhg\,\wt{\otimes}\,\hYhg{\vphantom{)}}^\star)\cap (\QFhYhg\,\wt{\otimes}\, \Yhgstar)\cong \End_{\C[\![\hbar]\!]}^{\QFhYhg}(\hYhg),
\end{equation*}
where the right-hand side consists of all $f\in \End_{\C[\![\hbar]\!]}(\hYhg)$ for which $f(\QFhYhg)\subset \QFhYhg$. 
\end{remark}
%
\subsection{The restriction \texorpdfstring{$\Theta_\chi$}{Theta-chi}}\label{ssec:Theta-chi}
We now give a few comments which concern the triangular decompositions of $\QFhYhg$ and $\Yhgstar$. For each choice of the symbol $\chi$, let $\pi^\chi:\QFhYhg\to \dhYh{\chi}$ denote the $\C[\![\hbar]\!]$-linear projection associated to the triangular decomposition of $\QFhYhg$, as defined in Section \ref{ssec:Yhg*-TD}. These projections give rise to $\C[\![\hbar]\!]$-module embeddings
\begin{equation*}
\Hom_{\C[\![\hbar]\!]}(\dhYh{\chi},Y_\hbar^\chi\mfg)\into \Hom_{\C[\![\hbar]\!]}(\QFhYhg,\hYhg), \quad \varphi^\chi\mapsto \varphi^\chi \circ \pi^\chi.
\end{equation*}
We shall henceforth adopt the viewpoint that $\Hom_{\C[\![\hbar]\!]}(\dhYh{\chi},Y_\hbar^\chi\mfg)$ is a submodule of $\Hom_{\C[\![\hbar]\!]}(\QFhYhg,\hYhg)$, with the above identification assumed. The restriction $\theta_\chi$ of $\theta$ to $Y_\hbar^\chi\mfg\otimes \YTDstar{\chi}$ is then a $\C[\![\hbar]\!]$-module injection 
\begin{equation*}
\theta_\chi: Y_\hbar^\chi\mfg\otimes \YTDstar{\chi}\into \Hom_{\C[\![\hbar]\!]}(\dhYh{\chi},Y_\hbar^\chi\mfg).
\end{equation*}
Following the above procedure, we may introduce the $\C[\![\hbar]\!]$-algebra 
\begin{equation*}
Y_\hbar^\chi\mfg \,\wt{\otimes}\, \YTDstar{\chi}:= \varprojlim_n (Y_\hbar^\chi\mfg \otimes \YTDstar{\chi} / Y_\hbar^\chi\mfg \otimes \YTDstar{\chi}_{\scriptscriptstyle(n)}),
\end{equation*}
where $\YTDstar{\chi}_{\scriptscriptstyle(n)}$ is the closure of the $\C[\![\hbar]\!]$-submodule of $\YTDstar{\chi}$ generated by the direct sum $\bigoplus_{k\geq n}\YTDstar{\chi}_{-k}$. The above arguments show that this completed tensor product is a flat $\C[\![\hbar]\!]$-algebra deformation of 
\begin{equation*}
U(\tplus^\chi)\,\wt{\otimes}\, \msS(\hbar\tplus^\chi){\vphantom{)}}^\star=\prod_{n\in \N}U(\tplus^\chi)\otimes_\C \msS(\hbar\tplus^\chi){\vphantom{)}}^\star_{-n}.
\end{equation*}
It follows that the natural algebra homomorphism $Y_\hbar^\chi\mfg \,\wt{\otimes}\, \YTDstar{\chi}\to \hYhg\,\wt{\otimes}\,\Yhgstar$ is injective 
and, by the proof of Proposition \ref{P:Theta}, that $\theta_\chi$ uniquely extends to an isomorphism of $\C[\![\hbar]\!]$-modules 
\begin{equation}\label{Theta^chi}
\Theta_\chi: Y_\hbar^\chi\mfg\,\wt{\otimes}\, \YTDstar{\chi}\iso \Hom_{\C[\![\hbar]\!]}(\dhYh{\chi},Y_\hbar^\chi\mfg)
\end{equation}
which coincides with the restriction of $\Theta$ to $Y_\hbar^\chi\mfg\,\wt{\otimes}\, \YTDstar{\chi}$. In particular, 
since $Y_\hbar^\chi\mfg\,\wt{\otimes}\, \YTDstar{\chi}$ is a subalgebra of  $\hYhg\,\wt{\otimes}\,\Yhgstar$, the space $\Hom_{\C[\![\hbar]\!]}(\dhYh{\chi},Y_\hbar^\chi\mfg)$ is a subalgebra of $\Hom_{\C[\![\hbar]\!]}(\QFhYhg,\hYhg)$ and $\Theta_\chi$ is an isomorphism of $\C[\![\hbar]\!]$-algebras. 

Finally, as in Remarks \ref{R:Theta} and \ref{R:End(hYhg)}, we set
\begin{gather*}
\dhYh{\chi}\,\wt{\otimes}\, \YTDstar{\chi}:=\Theta^{-1}_\chi(\End_{\C[\![\hbar]\!]}(\dhYh{\chi}))\cong \varprojlim_n(\dhYh{\chi}\otimes \YTDstar{\chi} / \dhYh{\chi}\otimes \YTDstar{\chi}_{\scriptscriptstyle(n)}),\\
Y_\hbar^\chi\mfg\,\wt{\otimes}\, Y_\hbar^\chi\mfg{\vphantom{)}}^\star:=\Theta^{-1}_\chi(\End_{\C[\![\hbar]\!]}(Y_\hbar^{\chi}\mfg)).
\end{gather*} 
%

%
%
\subsection{Canonical tensors and universal R-matrices}\label{ssec:R-can}
By Proposition \ref{P:Theta}, we may introduce $R$ and $R^\chi$ in $\hYhg\,\wt{\otimes}\,\Yhgstar $, for each choice of $\chi$, as the elements
\begin{gather*}
R:= \Theta^{-1}(\id_{\QFhYhg})\in \hYhg\,\wt{\otimes}\,\Yhgstar \quad \text{ and}\quad R^\chi:=\Theta^{-1}(\id_{\dhYh{\chi}})\in Y_\hbar^\chi\mfg\,\wt{\otimes}\, \YTDstar{\chi}. 
\end{gather*}
That is, $R$ and $R^\chi$ are the canonical elements associated to the Hopf pairing $\QFhYhg\times \Yhgstar\to \C[\![\hbar]\!]$ and its restriction to $\dhYh{\chi}\times \YTDstar{\chi}$, respectively.  

As $\id_{\QFhYhg}$ and $\id_{\dhYh{\chi}}$ coincide with $\id_{\hYhg}$ and $\id_{Y_\hbar^\chi\mfg}$ under the identifications of Remark \ref{R:End(hYhg)} and Section \ref{ssec:Theta-chi}, respectively,  $R$ (resp. $R^\chi$) is also the canonical element defined by the pairing between $\hYhg$ and its restricted dual $\hYhg{\vphantom{)}}^\star$ (resp. $Y_\hbar^\chi\mfg$ and $Y_\hbar^\chi\mfg{\vphantom{)}}^\star$). In particular, we have
\begin{equation*}
R\in (\hYhg\,\wt{\otimes}\,\hYhg{\vphantom{)}}^\star)\cap (\QFhYhg\,\wt{\otimes}\, \Yhgstar)
\quad \text{ and }\quad R^\chi\in (Y_\hbar^\chi\mfg\,\wt{\otimes}\,Y_\hbar^\chi\mfg{\vphantom{)}}^\star)\cap (\dhYh{\chi}\,\wt{\otimes}\, \YTDstar{\chi}).
\end{equation*}
It is worth pointing out that one can immediately deduce a number of properties that $R$ and $R^\chi$ satisfy using only elementary properties of $\Theta$. For instance:
\begin{enumerate}[label=(R\arabic*)]
\item\label{R1} Applying \eqref{Theta-prop} with $\gamma=\id$ while using that $\Theta(1\otimes f)=\iota\circ f$ recovers the characteristic identities
\begin{equation*}
(\iota\circ f)\otimes \id \cdot R=1\otimes f\quad \text{ and }\quad (\iota\circ f^\chi)\otimes \id \cdot R^\chi=1\otimes f^\chi
\end{equation*}
for all $f\in \Yhgstar$ and $f^\chi\in \YTDstar{\chi}$. 
 
 \item \label{R2} Taking $\varphi=\gamma=\omega$ in \eqref{Theta-prop} with $y=\id_{\QFhYhg}$ or $\id_{\dhYh{\mp}}$ while using that $\id_{\dhYh{\pm}}=\omega \circ \id_{\dhYh{\mp}}\circ \omega$ gives rise to the identities
 \begin{equation*}
 R=(\omega\otimes \omega^t)R \quad\text{ and }\quad R^\pm=(\omega\otimes \omega^t)(R^\mp).
 \end{equation*}
\item Since $\id_{\QFhYhg}$ is a $\mfg$-invariant element of $\End_{\C[\![\hbar]\!]}(\QFhYhg)$ and $\Theta$ is a $\mfg$-module intertwiner, $R$ is a $\mfg$-invariant element $\QFhYhg\,\wt{\otimes}\, \Yhgstar$. Using the multiplication in $D(\hYhg)$, this can be written as
\begin{equation*}
[x\otimes 1 + 1\otimes x,R]=0 \quad \forall\; x\in \mfg. 
\end{equation*}
 Similarly, $R^\pm$ and $R^0$ are both $\mfh$-invariant.
%
\end{enumerate}
In addition, the standard quantum double arguments show that $R$ satisfies the quasitriangularity relations of \eqref{DH:3} in Section \ref{ssec:D(H)} in suitable completions of tensor powers of $D(\hYhg)$. For instance, the cabling identities $\Delta\otimes \id(R)=R_{13}R_{23}$ and $\id\otimes \chk{\Delta}(R)=R_{13}R_{12}$ are equivalent to the simple identities 
\begin{equation*}
\id^{(1)}_{\QFhYhg}\star \id^{(2)}_{\QFhYhg}=\Delta \quad \text{ and }\quad (\veps\otimes \id_{\QFhYhg}) \star (\id_{\QFhYhg} \otimes \veps)= (1\,2)\circ m
\end{equation*}
 in the convolution algebras 
\begin{gather*} 
 (\QFhYhg^{\otimes 2})\,\wt{\otimes}\,\Yhgstar:=\Hom_{\C[\![\hbar]\!]}(\QFhYhg,\QFhYhg^{\otimes 2})\\
 \QFhYhg\,\wt{\otimes}\, (\Yhgstar)^{\otimes 2}:=\Hom_{\C[\![\hbar]\!]}(\QFhYhg^{\otimes 2},\QFhYhg)
\end{gather*}
  respectively, where $\id^{(a)}_{\QFhYhg}:\QFhYhg\to\QFhYhg^{\otimes 2}$ is given by $x\mapsto x^{(a)}$.

Our main goal in this section  is to establish the remaining assertion of Theorem \ref{T:Intro} from Section \ref{ssec:I-results}, which claims that $R$ can be identified with the universal $R$-matrix of the Yangian. This interpretation allows for a precise framework for understanding the topological quasitriangular structure on $D(\hYhg)$ alluded to above. To achieve this, we will need the next lemma, where $\chk{\Phi_z}$ is as in Proposition \ref{P:dual-z}. 
\begin{lemma}\label{L:1xPhiz}
$\tau_w \otimes \chk{\Phi_z}$ extends to an injective $\C[\![\hbar]\!]$-algebra homomorphism 
 \begin{equation*}
 \tau_w\,\wt{\otimes}\, \chk{\Phi_z}: \hYhg \, \wt{\otimes}\, \chk{\hYhg}\into (\hYhg^{\otimes 2}[\![w]\!])[\![z^{-1}]\!].
 \end{equation*}
 \end{lemma}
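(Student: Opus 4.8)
My proof plan for Lemma \ref{L:1xPhiz}:

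\medskip

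The strategy is to exhibit $\tau_w\,\wt\otimes\,\chk{\Phi_z}$ as a composition of maps that are either already known to be algebra homomorphisms or are easily seen to be injective, and then to check compatibility with the defining filtrations so that the algebraic tensor product map extends continuously to the completed tensor product. First I would recall that $\tau_w:\hYhg\into\hYhg[\![w]\!]$ is an injective $\C[\![\hbar]\!]$-algebra homomorphism (the formal shift homomorphism of \eqref{shift-z}, viewed over $\C[\![\hbar]\!]$) and that $\chk{\Phi_z}:\chk{\hYhg}\to\hYhg[\![z^{-1}]\!]$ is an injective $\C[\![\hbar]\!]$-algebra homomorphism by Proposition \ref{P:dual-z}\eqref{dual-z:1} together with Lemma \ref{L:Yhg*->DYhg}. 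Hence the algebraic tensor product $\tau_w\otimes\chk{\Phi_z}:\hYhg\otimes\chk{\hYhg}\to \hYhg[\![w]\!]\otimes_{\C[\![\hbar]\!]}\hYhg[\![z^{-1}]\!]$ is an algebra homomorphism landing inside $(\hYhg^{\otimes 2}[\![w]\!])[\![z^{-1}]\!]$ after composing with the canonical map $\hYhg[\![w]\!]\otimes\hYhg[\![z^{-1}]\!]\to \hYhg^{\otimes 2}[\![w]\!][\![z^{-1}]\!]$ sending $a(w)\otimes b(z)\mapsto a(w)\otimes b(z)$.

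\medskip

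The content of the lemma is therefore that this map is continuous for the filtration $\hYhg\otimes\chk{\hYhg}_{\scriptscriptstyle(n)}$ defining $\hYhg\,\wt\otimes\,\chk{\hYhg}$ (built from $\chk{\hYhg}_{\scriptscriptstyle(n)}=\Yhgstar_{\scriptscriptstyle(n)}$ as in Section \ref{ssec:R-Theta}) and that the resulting extension remains injective. For continuity, the key observation is that $\chk{\Phi_z}$ is a $\Z$-graded homomorphism $\chk{\hYhg}\to\LzhYhg$ by Proposition \ref{P:dual-z}\eqref{dual-z:3} and Remark \ref{R:dual-z-graded}: a homogeneous element of $\chk{\hYhg}$ of degree $-k$ is sent into $z^{-k}\Yhgz$, so $\chk{\Phi_z}(\Yhgstar_{-k})\subset z^{-k}\Yhgz\subset z^{-k}\hYhg[\![z^{-1}]\!]$. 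Consequently $\chk{\Phi_z}(\Yhgstar_{\scriptscriptstyle(n)})\subset z^{-n}\hYhg[\![z^{-1}]\!]$, and therefore $(\tau_w\otimes\chk{\Phi_z})(\hYhg\otimes\Yhgstar_{\scriptscriptstyle(n)})\subset z^{-n}\hYhg^{\otimes 2}[\![w]\!][\![z^{-1}]\!]$. Since $\{z^{-n}\hYhg^{\otimes 2}[\![w]\!][\![z^{-1}]\!]\}_{n\in\N}$ is a Hausdorff, complete filtration on the target space (its associated graded being $\hYhg^{\otimes 2}[\![w]\!]$ in each degree), passing to inverse limits produces the desired continuous $\C[\![\hbar]\!]$-algebra homomorphism $\tau_w\,\wt\otimes\,\chk{\Phi_z}:\hYhg\,\wt\otimes\,\chk{\hYhg}\to(\hYhg^{\otimes 2}[\![w]\!])[\![z^{-1}]\!]$ extending $\tau_w\otimes\chk{\Phi_z}$.

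\medskip

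For injectivity I would appeal to Lemma \ref{L:scl-map}: both $\hYhg\,\wt\otimes\,\chk{\hYhg}$ and $(\hYhg^{\otimes 2}[\![w]\!])[\![z^{-1}]\!]$ are topologically free (the former by Lemma \ref{L:wt-otimes}, the latter visibly), so it suffices to check that the semiclassical limit is injective. That limit is the map $U(\tplus)\,\wt\otimes\,U(\tminus)\to (U(\tplus[w])\otimes U(\tplus[w]))[\![z^{-1}]\!]$, which is $\bar\tau_w\otimes(\mathrm{ev}\text{ of }q^{\otimes 2}\hbar^{-1}(\mcR(z)-1)\text{-pairing})$; concretely, it is the algebraic tensor product of the injection $\bar\tau_w:U(\tplus)\into U(\tplus)[\![w]\!]$ with the semiclassical limit of $\chk{\Phi_z}$, which by Proposition \ref{P:dual-z}\eqref{dual-z:4} and Part \eqref{Yhg-R:3} of Theorem \ref{T:Yhg-R} is the restriction of the map $f\mapsto(f\otimes\id)\tfrac{\Omega_\mfg}{-z+t-w}$ and is injective on $U(\tminus)=\Symtstar$ because the residue pairing $\langle\,,\,\rangle$ is nondegenerate. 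The main obstacle is really just bookkeeping: one must verify that the semiclassical limit of $\chk{\Phi_z}$ is injective as a map out of all of $U(\tminus)$ — not merely on the degree $-1$ piece — which I would handle by noting it is an algebra homomorphism that is injective on the generating subspace $\tminus\cong(\hbar\tplus)^\star$ (nondegeneracy of the residue form) and whose target, being a subalgebra of a domain, has no zero divisors obstructing the extension; alternatively, one directly identifies it with $\varphi$ of Proposition \ref{P:dual}\eqref{Pdual:2} composed with the canonical tensor of $\mcR(z)$ via \eqref{gen-id}, which is injective since $\Phi_z\circ\check\imath=\chk{\Phi_z}$ and both $\check\imath$ and $\Phi_z$ are injective. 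This last route is the cleanest: injectivity of $\chk{\Phi_z}$ is already recorded in Lemma \ref{L:Yhg*->DYhg}, and combined with injectivity of $\tau_w$ plus topological freeness of the domain it forces injectivity of the completed tensor product map.
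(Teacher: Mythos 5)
Your proposal follows essentially the same route as the paper: both exploit the $\Z$-gradedness of $\chk{\Phi_z}$ from Proposition \ref{P:dual-z}\eqref{dual-z:3} to see that $\chk{\Phi_z}(\Yhgstar_{-k})\subset z^{-k}\hYhg[\![z^{-1}]\!]$, pass to inverse limits over the filtration by $\Yhgstar_{\scriptscriptstyle(n)}$ to build the extension, and reduce injectivity to the semiclassical limit via Lemma \ref{L:scl-map}. The last step---injectivity of the semiclassical limit $\chk{\bar{\Phi}_z}$---is settled exactly as in your ``cleanest'' route, through $\mathscr{Ev}\circ\chk{\Phi_z}=\Phi\circ\check{\imath}$ and the injectivity of the semiclassical limits of both $\Phi$ and $\check{\imath}$.

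Two small caveats are worth recording. First, one of your intermediate sub-arguments would not close as stated: an algebra homomorphism injective on a Lie-generating subspace whose codomain is a domain need not be injective (e.g.\ $\C[x,y]\to\C[t]$, $x\mapsto t$, $y\mapsto t$, which is linearly injective on $\mathrm{span}(x,y)$ and lands in a domain but has nontrivial kernel). You correctly sense this route is weaker and defer to the other identification. Second, passing from injectivity of $\chk{\bar{\Phi}_z}$ to injectivity of the completed tensor-product map is not automatic---the semiclassical limit of the domain is the direct product $\prod_{n}U(\tplus)\otimes_\C\Symt{\vphantom{)}}^\star_{-n}$, not an algebraic tensor product---and the paper treats this a touch more carefully by observing that the $n$-th component is carried into $z^{-n}(U(\tplus)\otimes_\C\widehat{U(\tplus)}_z)$, so that componentwise injectivity propagates across the product via the $z$-shift. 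Your closing clause (``topological freeness of the domain forces injectivity'') compresses precisely this point.
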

\begin{proof}
It is sufficient to show $\id\otimes \chk{\Phi_z}$ extends to an injective $\C[\![\hbar]\!]$-algebra homomorphism 
\begin{equation*}
\id \, \wt{\otimes}\, \chk{\Phi_z}: \hYhg \,\wt{\otimes}\, \chk{\hYhg}\to \hYhg^{\otimes 2}[\![z^{-1}]\!].
\end{equation*}
Since $\chk{\Phi_z}(\Yhgstar_{-k})\subset z^{-k}\Yhgz\subset z^{-k}\hYhg[\![z^{-1}]\!]$, $\id \otimes \chk{\Phi_z}$ induces a family of compatible algebra homomorphisms 
\begin{equation*}
(\id \otimes \chk{\Phi_z})_n: \hYhg \otimes \Yhgstar/ \QFhYhg \otimes \Yhgstar_{\scriptscriptstyle (n)} \to \hYhg^{\otimes 2}[\![z^{-1}]\!]/ z^{-n}\hYhg^{\otimes 2}[\![z^{-1}]\!]. 
\end{equation*}
Taking the projective limit gives the desired extension $\id \, \wt{\otimes}\, \chk{\Phi_z}$. We are left to verify that it is indeed injective, which is perhaps a bit subtle (given that $\chk{\Phi_z}$ is injective), but not difficult. As $\hYhg\, \wt{\otimes}\, \Yhgstar$ is separated with respect to the $\hbar$-adic topology and $\hYhg^{\otimes 2}[\![z^{-1}]\!]$ is torsion free, this can be done by verifying that the semiclassical limit  of $\id \, \wt{\otimes}\, \chk{\Phi_z}$ is injective. This is the linear map 
\begin{equation*}
 U(\tplus)\,\wt{\otimes}\, \Symt{\vphantom{)}}^\star\to \prod_{n\in \N}z^{-n}(U(\tplus)\otimes_\C \widehat{U(\tplus)}_z)\subset U(\tplus)^{\otimes 2}[\![z^{-1}]\!]
\end{equation*}
which is  $\id \otimes \chk{\bar{\Phi}_z}:U(\tplus)\,\wt{\otimes}\, \Symt{\vphantom{)}}^\star_{-n}\to z^{-n}(U(\tplus)\otimes_\C \widehat{U(\tplus)}_z)$ on the $n$-th component of the direct product, 
where $\chk{\bar{\Phi}_z}$ is the semiclassical limit of $\chk{\Phi_z}$ and $\widehat{U(\tplus)}_z=\prod_{n\in \N}U(\tplus)_n z^{-n}$. The desired result now follows from the fact that  $\chk{\bar{\Phi}_z}$ is injective, wich can be seen as a consequence of the equality $\mathscr{Ev}\circ \chk{\Phi_z}= \Phi \circ \check{\imath}$ and that, by \cite{WDYhg}*{Thm.~6.2} and the proof of Lemma \ref{L:Yhg*->DYhg}, both $\Phi$ and $\check{\imath}$ have injective semiclassical limits. \qedhere
\end{proof}
 
 To recover the desired result as stated in \eqref{Intro:2} of Theorem \ref{T:Intro}, we now translate some of the above constructions and results from  $D(\hYhg)$ to $\DYhg$ using the isomorphism $\Upsilon$ from Theorem \ref{T:DYhg-DD} or, equivalently, the Hopf algebra embeddings $\imath$ and $\check{\imath}$ from Theorem \ref{T:dual}. 
Let us set 
\begin{equation*}
\hYhg\,\dot\otimes\,\chk{\hYhg}:=\varprojlim_n \left(\imath(\hYhg) \otimes \check{\imath}(\chk{\hYhg}) / \imath(\hYhg) \otimes \check{\imath}(\chk{\hYhg}_{\scriptscriptstyle{(n)}})\right).
\end{equation*}  
This definition is such that isomorphism 
\begin{equation*}
 \imath\otimes \check{\imath}: \hYhg\otimes \chk{\hYhg}\iso \imath(\hYhg)\otimes \check{\imath}(\chk{\hYhg})\subset \DYhg\otimes \DYhg
\end{equation*}
extends to an isomorphism of $\C[\![\hbar]\!]$-algebras 
\begin{equation*}
\imath \,\dot{\otimes}\, \check{\imath}: \hYhg \,\wt{\otimes}\, \chk{\hYhg} \iso \hYhg\,\dot{\otimes}\, \chk{\hYhg}. 
\end{equation*}
Armed with these preliminaries, we may now introduce the \textit{universal $R$-matrix} of the Yangian double $\DYhg$ as the element
\begin{equation*}
\boldR:=(\imath \,\dot{\otimes}\, \check{\imath})(R)\in (\hYhg\,\dot{\otimes}\,\chk{\hYhg})\cap (\QFhYhg\,\dot{\otimes}\, \chk{\hYhg})\subset \hYhg\,\dot{\otimes}\, \chk{\hYhg},
\end{equation*}
where, for each choice of $A$ and $B$, $A\dot{\otimes} B$ denotes the image of $A\wt{\otimes}B$ under $\imath\,\dot{\otimes}\,\check{\imath}$. Similarly, we define the $\DYhg$ analogues of $R^\pm$ and $R^0$ by setting
\begin{equation*}
\boldR^\pm:=(\imath \,\dot{\otimes}\, \check{\imath})(R^\pm) \in  Y_\hbar^{\pm\!}\mfg\,\dot{\otimes}\, \YTDstar{\pm\!} 
\quad \text{ and } \quad 
\boldR^0:=(\imath \,\dot{\otimes}\, \check{\imath})(R^0) \in  Y_\hbar^{0}\mfg\,\dot{\otimes}\, \YTDstar{0}.
\end{equation*}

By Theorem \ref{T:Phiz} and  Lemma \ref{L:Yhg*->DYhg}, $\Phi_z$ satisfies  $\Phi_z\circ \imath=\tau_z$ and $\Phi_z\circ \check{\imath}=\chk{\Phi_z}$. Lemma \ref{L:1xPhiz} therefore implies that the restriction of $\Phi_w\otimes\Phi_z$ to $\imath(\hYhg) \otimes \check{\imath}(\chk{\hYhg}) \subset \DYhg^{\otimes 2}$  extends to an injective $\C[\![\hbar]\!]$-algebra homomorphism 
\begin{equation*}
\Phi_w \, \dot{\otimes}\, \Phi_z: \hYhg\,\dot{\otimes}\, \chk{\hYhg}\into \hYhg^{\otimes 2}[\![w]\!][\![z^{-1}]\!].
\end{equation*}
Indeed, we may set $\Phi_w \, \dot{\otimes}\, \Phi_z := (\tau_w \,\wt{\otimes}\, \chk{\Phi_z})\circ (\imath \,\dot{\otimes}\, \check{\imath})^{-1}$. With this homomorphism at our disposal, we are now in a position to state and prove the last main result outlined in Section \ref{ssec:I-results}. 
\begin{theorem}\label{T:R}
The following identities hold in $\Yhg^{\otimes 2}[w][\![z^{-1}]\!]$:
\begin{gather*}
(\Phi_w\,\dot\otimes\, \Phi_z) (\boldR)=\mcR(w-z),\\
(\Phi_w\,\dot\otimes\, \Phi_z) (\boldR^\pm)=\mcR^\pm(w-z),\quad (\Phi_w\,\dot\otimes\, \Phi_z) (\boldR^0)=\mcR^0(w-z). 
\end{gather*}
Consequently $\boldR$ admits the factorization $\boldR=\boldR^+\!\cdot\! \boldR^0 \!\cdot\! \boldR^-$ in $\hYhg\,\dot{\otimes}\, \chk{\hYhg}$.
\end{theorem}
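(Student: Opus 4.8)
The plan is to reduce the first identity to a computation entirely inside the Yangian using the embedding $\Phi_z$, and then deduce the remaining identities from the triangular decompositions. Concretely, recall that $R=\Theta^{-1}(\id_{\QFhYhg})$, so by the characteristic identity \ref{R1} one has $(\iota\circ f)\otimes\id\cdot R=1\otimes f$ for all $f\in\Yhgstar$; transporting along $\imath\,\dot\otimes\,\check\imath$ this says $(f\otimes\id)(\boldR_{21})$, read appropriately, recovers $\check\imath(f)$, i.e.\ $\boldR$ is characterized by the property that pairing its first tensor factor against any $f\in\hYhg^\star$ returns $\check\imath(f)\in\DYhg$. The first step is therefore to apply the injective homomorphism $\Phi_w\,\dot\otimes\,\Phi_z$ to $\boldR$ and identify the image via this characterizing property: since $\Phi_z\circ\check\imath=\chk\Phi_z$ and $\chk\Phi_z(f)=(f\otimes\id)\mcR(-z)$ by definition (Section \ref{ssec:chk-Phiz}), the element $(\Phi_w\,\dot\otimes\,\Phi_z)(\boldR)\in\hYhg^{\otimes2}[\![w]\!][\![z^{-1}]\!]$ is precisely the element of that space whose pairing against $f\otimes\id$ in the first slot equals $\tau_w$ applied to $\chk\Phi_z(f)=(f\otimes\id)\mcR(-z)$. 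But $(\tau_w\otimes\id)\mcR(-z)=\mcR(w-z)$ by Part \eqref{Yhg-R:2} of Theorem \ref{T:Yhg-R}, and $\mcR(w-z)$ visibly has the same characterizing property because $R$ itself is the canonical tensor of the pairing $\QFhYhg\times\Yhgstar$ and $\mcR(z)\in(\QFYhg\otimes\Yhg)[\![z^{-1}]\!]$ by Lemma \ref{L:R-DG}; matching coefficients of a topological basis of $\Yhgstar$ against both sides, and invoking injectivity of $\Phi_w\,\dot\otimes\,\Phi_z$ (Lemma \ref{L:1xPhiz}) together with the fact that both sides lie in the polynomial-in-$w$ subspace, forces $(\Phi_w\,\dot\otimes\,\Phi_z)(\boldR)=\mcR(w-z)$.

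For the factor identities, the key observation is Corollary \ref{C:Yhg*-TD-2}: $\YTDstar{\chi}$ is exactly $(\chk\Phi_z)^{-1}(Y_\hbar^{\shortminus\chi}\mfg[\![z^{-1}]\!])$, and on $\YTDstar{\chi}$ one has $\chk\Phi_z(f)=(f\otimes\id)\mcR^\chi(-z)$. Since $R^\chi=\Theta_\chi^{-1}(\id_{\dhYh{\chi}})$ is the canonical tensor of the restricted pairing $\dhYh{\chi}\times\YTDstar{\chi}$, the same characterizing-property argument applied within the subalgebra $Y_\hbar^\chi\mfg\,\wt\otimes\,\YTDstar{\chi}$ shows that $(\Phi_w\,\dot\otimes\,\Phi_z)(\boldR^\chi)$ is the element of $Y_\hbar^\chi\mfg\otimes Y_\hbar^{\shortminus\chi}\mfg$-valued series pairing to $\tau_w\circ\chk\Phi_z|_{\YTDstar{\chi}}$, which by Part \eqref{Yhg-R:2} of Theorem \ref{T:Yhg-R} is $(\tau_w\otimes\id)\mcR^\chi(-z)=\mcR^\chi(w-z)$. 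Here one uses that $\mcR^\pm(z)$ and $\mcR^0(z)$ are themselves shift-covariant, which follows from Part \eqref{Yhg-R:2} of Theorem \ref{T:Yhg-R} applied to the Gauss factors (or directly from their explicit formulas and the uniqueness built into their defining equations).

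Finally, the factorization $\boldR=\boldR^+\!\cdot\!\boldR^0\!\cdot\!\boldR^-$ is obtained by transporting along the injective homomorphism $\Phi_w\,\dot\otimes\,\Phi_z$: the images of both sides are $\mcR(w-z)$ and $\mcR^+(w-z)\mcR^0(w-z)\mcR^-(w-z)$ respectively, which agree by the Gauss decomposition $\mcR(z)=\mcR^+(z)\mcR^0(z)\mcR^-(z)$ of Theorem \ref{T:Yhg-R}; injectivity then gives the equality in $\hYhg\,\dot\otimes\,\chk{\hYhg}$. One must check that the product $\boldR^+\cdot\boldR^0\cdot\boldR^-$ is well-defined in $\hYhg\,\dot\otimes\,\chk{\hYhg}$, which follows because each $\boldR^\chi$ lies in the appropriate completed tensor product and the filtration $\chk{\hYhg}_{\scriptscriptstyle(\bullet)}$ is multiplicative; alternatively one simply \emph{defines} the product on the right via $\Phi_w\,\dot\otimes\,\Phi_z$ and its image. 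I expect the main obstacle to be bookkeeping: making the ``characterizing property'' argument rigorous requires care about which completed tensor product each object lives in and that pairing against a topological basis of $\Yhgstar$ (equivalently, evaluating coefficients) is continuous and separates points on $\hYhg^{\otimes2}[\![w]\!][\![z^{-1}]\!]$ — this is where Lemma \ref{L:1xPhiz}, Proposition \ref{P:Theta}, and the polynomiality in $w$ all have to be combined cleanly.
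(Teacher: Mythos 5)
Your proposal follows the paper's route: test $R$ (and its factors $R^\chi$) against all functionals using the characteristic identity \ref{R1} and the formula $\chk{\Phi_z}(f)=(f\otimes\id)\mcR(-z)$, use Corollary \ref{C:Yhg*-TD-2} to handle the Gauss factors, and then transfer the factorization $\mcR(z)=\mcR^+(z)\mcR^0(z)\mcR^-(z)$ back to $\boldR$ via injectivity of $\Phi_w\,\dot{\otimes}\,\Phi_z$. However, one intermediate claim is wrong as literally stated. Pairing $(\Phi_w\,\dot{\otimes}\,\Phi_z)(\boldR)$ against $f\otimes\id$ in the first slot is \emph{not} ``$\tau_w$ applied to $\chk{\Phi_z}(f)$.'' Writing $R$ informally as $\sum_a x_a\otimes f_a$, the pairing gives $\sum_a f(\tau_w(x_a))\,\chk{\Phi_z}(f_a)=(f\otimes\id)(\tau_w\otimes\id)\mcR(-z)=(f\otimes\id)\mcR(w-z)$, whereas $\tau_w\bigl(\chk{\Phi_z}(f)\bigr)=(f\otimes\tau_w)\mcR(-z)=(f\otimes\id)\mcR(-z-w)$ by Part \eqref{Yhg-R:2} of Theorem \ref{T:Yhg-R}. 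The discrepancy arises because $\tau_w$ does not commute past $f$: your phrasing puts $\tau_w$ on the tensor leg that survives, when it needs to act on the leg being contracted with $f$, and the two placements differ by the sign of $w$. Your next sentence, quoting $(\tau_w\otimes\id)\mcR(-z)=\mcR(w-z)$, suggests you intended the correct computation, but the ``characterizing property'' step needs to be reformulated to match it.

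The paper avoids this bookkeeping by separating the two operations cleanly: it first shows that $(\id\,\wt{\otimes}\,\chk{\Phi_z})(R)=\mcR(-z)$ and $(\id\,\wt{\otimes}\,\chk{\Phi_z})(R^\chi)=\mcR^\chi(-z)$ hold in $(\QFhYhg\otimes\hYhg)[\![z^{-1}]\!]$ and $(\dhYh{\chi}\otimes Y_\hbar^{\shortminus\chi}\mfg)[\![z^{-1}]\!]$, respectively, by evaluating both sides against every $f\in\Yhgstar$ and $f^\chi\in\YTDstar{\chi}$ --- this is exactly where \ref{R1}, the definition of $\chk{\Phi_z}$, and Corollary \ref{C:Yhg*-TD-2} intervene, and the variable $w$ is nowhere in sight --- and only \emph{then} applies $\tau_w\otimes\id$ to the first factor, invoking the shift-covariance $(\tau_w\otimes\id)\mcR(z)=\mcR(w+z)$ and its analogues for the factors $\mcR^\chi$. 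Once your argument is reordered along these lines, it reproduces the paper's proof, including the final deduction of $\boldR=\boldR^+\boldR^0\boldR^-$ from the Gauss decomposition and the injectivity established in Lemma \ref{L:1xPhiz}.
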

\begin{proof}
By Theorems 4.1 and 6.7 of \cite{GTLW19} and Theorem \ref{T:Yhg-R} above, $\mcR(z)$ and $\mcR^\chi(z)$ satisfy
$
(\tau_w\otimes \id)\mcR(z)=\mcR(w+z)$ and $(\tau_w\otimes \id )\mcR^\chi(z)=\mcR^\chi(w+z)
$
in $\Yhg^{\otimes 2}[w][\![z^{-1}]\!]$, for each choice of the symbol $\chi$. Therefore, to prove the first assertion of the theorem it is sufficient to establish the equalities 
\begin{equation*}
(\id \, \wt{\otimes}\, \chk{\Phi_z}) (R)=\mcR(-z) \quad \text{ and }\quad (\id \, \wt{\otimes}\, \chk{\Phi_z}) (R^\chi)=\mcR^\chi(-z)
\end{equation*}
in $(\QFhYhg\otimes \hYhg)[\![z^{-1}]\!]$ and $(\dhYh{\chi}\otimes Y_\hbar^{{\scriptscriptstyle{\shortminus}}\chi}\mfg)[\![z^{-1}]\!]$, respectively. These relations will hold provided their images under the evaluations 
\begin{gather*}
f\otimes \id: (\QFhYhg\otimes \hYhg)[\![z^{-1}]\!]\to \hYhg[\![z^{-1}]\!]\\
f^\chi\otimes \id: (\dhYh{\chi}\otimes Y_\hbar^{{\scriptscriptstyle{\shortminus}}\chi}\mfg)[\![z^{-1}]\!]\to Y_\hbar^{{\scriptscriptstyle{\shortminus}}\chi}\mfg[\![z^{-1}]\!]
\end{gather*}
are satisfied in $\hYhg[\![z^{-1}]\!]$ for each $f\in \Yhgstar$ and $f^\chi\in \YTDstar{\chi}$, respectively. This follows from the definition of $\chk{\Phi_z}$, \ref{R1} and Corollary \ref{C:Yhg*-TD-2}, which collectively output the relations
\begin{gather*}
(f\otimes \chk{\Phi_z}) (R)=\chk{\Phi_z}(f)=(f\otimes \id)\mcR(-z),\\
 (f^\chi\otimes \chk{\Phi_z}) (R)=\chk{\Phi_z}(f)=(f^\chi\otimes \id)\mcR^\chi(-z). 
\end{gather*}
This completes the proof of the first statement of the Theorem. The second assertion
now follows immediately from the decomposition $\mcR(z)=\mcR^+(z)\mcR^0(z)\mcR^-(z)$ (see Theorem \ref{T:Yhg-R}) and the injectivity of $\Phi_w\,\dot\otimes\, \chk{\Phi_z}$. \qedhere
\end{proof}
\begin{remark}
In the closely related setting of \cite{KT96}, a  formal version (\textit{i.e.,} suppressing convergence issues) of the factorization $\boldR=\boldR^+\!\cdot\! \boldR^0 \!\cdot\! \boldR^-$ from Theorem \ref{T:R} was established in \cite{KT96}*{Prop. 5.1} by proving directly that the Hopf pairing between the Yangian and its dual splits with respect to the underlying triangular decompositions; see Theorem 3.1 therein.   
\end{remark}

\begin{remark}\label{R:chev}
Since $\Phi_z$ intertwines the Chevalley involutions of $\DYhg$ and $\hYhg$, it follows from Part \eqref{dual-z:2} of Proposition \ref{P:dual-z} and the relation $\Phi_z\circ \check{\imath}=\chk{\Phi_z}$ of Lemma \ref{L:Yhg*->DYhg}  that  $\check{\imath}$ satisfies $\omega\circ \check{\imath}=\check{\imath}\circ \omega^t$. Consequently, the property \ref{R2} translates to 
 \begin{equation*}
 \boldR=(\omega\otimes \omega)(\boldR) \quad\text{ and }\quad \boldR^\pm=(\omega\otimes \omega)(\boldR^\mp).
 \end{equation*}
Hence, by Theorem \ref{T:R}, $\mcR(z)$ and its components $\mcR^\pm(z)$ satisfy the relations
\begin{equation*}
 \mcR(z)=(\omega\otimes \omega)(\mcR(z)) \quad\text{ and }\quad \mcR^\pm(z)=(\omega\otimes \omega)(\mcR^\mp(z)).
\end{equation*}
In particular, this observation recovers the first identity of Corollary \ref{C:Yhg-R-Chev}.
\end{remark}
One  can now interpret the quasitriangularity relations for $\DYhg$ in terms of the relations of Theorem \ref{T:Yhg-R}. Namely, setting $v=w-z$, we see that the cabling identities of $\boldR$ correspond to the relations 
\begin{align*}
\Delta\otimes \id (\mcR(v))&= \mcR_{13}(v)\mcR_{23}(v)\\
\id\otimes \Delta (\mcR(v))&= \mcR_{13}(v)\mcR_{12}(v)
\end{align*}
in $\Yhg^{\otimes 3}[w][\![z^{-1}]\!]$, which are satisfied by Theorem \ref{T:Yhg-R}. In more detail, these are obtained heuristically by applying $\Phi_u\otimes \Phi_w\otimes \Phi_z$ and $\Phi_w\otimes \Phi_y\otimes \Phi_z$ to 
\begin{equation*}
(\dot\Delta\otimes \id)(\boldR)=\boldR_{13}\boldR_{23} \quad \text{ and }\quad (\id\otimes \dot\Delta)(\boldR)=\boldR_{13}\boldR_{12},
\end{equation*}
respectively, where $\dot\Delta$ is the coproduct on $\DYhg$, and then evaluating $u\mapsto w$ and $y\mapsto z$ while using $(\Phi_u\otimes \Phi_z) \circ \dot\Delta |_{u=z} = \dot \Delta\circ \Phi_z $.  Similarly, applying $\Phi_w\otimes \Phi_z$ to the intertwiner equation $\op{\Delta}(x)=\boldR\, \Delta(x) \boldR^{-1}$ for $x\in \Yhg$ leads to the identity 
\begin{equation*}
\tau_v\otimes \id \circ \op{\Delta}(\tau_z(x))= \mcR(v) \cdot \tau_v\otimes \id \circ \Delta(\tau_z(x)) \cdot \mcR(v)^{-1}
\end{equation*}
in $\Yhg^{\otimes 2}[w][z;z^{-1}]\!]$, which is satisfied by Theorem \ref{T:Yhg-R}. We caution, however, that the situation here is more subtle for general $x\in \DYhg$.  
%
%

%
\subsection{Remarks}\label{ssec:R-compute}
The identifications established in Theorem \ref{T:R} provide a rigorous framework for understanding the motivating remarks given in \cite{GTLW19}*{\S1.6}. We now give a few comments related to this point.

First, we note that the diagonal factor $\mcR^0(z)$ of $\mcR(z)$ was explicitly obtained in \cite{GTLW19}*{\S6.6} by computing the common asymptotic expansion of the two $\mathrm{GL}(V_1\otimes V_2)$-valued meromorphic abelian $R$-matrices constructed in \cite{GTL3}*{\S5}, where $V_1,V_2$ are an arbitrary pair of finite-dimensional representations of the Yangian. As explained in \cite{GTL3}*{\S5.2}, their construction was motivated by the heuristic formula for $\boldR^0$ given in \cite{KT96}*{Thm.~5.2}. Theorem \ref{T:R} makes this relation precise, and shows that the explicit formula for $\mcR^0(z)$ obtained in \cite{GTLW19}, and recalled in Section \ref{ssec:Yhg-R}, does indeed compute the canonical element defined by the pairing on $\dhYh{0}\times \YTDstar{0}$.

Next, we emphasize that the factor $\boldR^\pm$ (equivalently, $\mcR^\pm(z)$) now admits two distinct characterizations. On the one hand, it is uniquely determined by the recurrence relations \eqref{R-recur} which were at the heart of \cite{GTLW19}*{Thm.~4.1}. On the other hand, it arises as the canonical element defined by the pairing on $\dhYh{\pm}\times \YTDstar{\pm}$, and can thus be realized explicitly by computing the dual set  to any fixed homogeneous basis of the $\N$-graded torsion free $\C[\hbar]$-module $\dYh{\pm}$. In more detail, if $\mathsf{B}^\pm_k\subset \dYh{\pm}_k$ is a lift of any basis of the finite-dimensional $k$-th component $\msS_k(\hbar\tplus^\pm)$ of $\msS(\hbar\tplus^\pm)\cong \dYh{\pm}/\hbar\dYh{\pm}$ (see Section \ref{ssec:QFYhg-TD}), then $\mathsf{B}^\pm=\bigcup_k\mathsf{B}^\pm_k$ is a basis of $\QFYhg$ and 
\begin{equation*}
\boldR^\pm=\sum_{x\in \mathsf{B}^\pm} \imath(x)\otimes \check{\imath}(f_x) \in \hYhg\,\dot{\otimes}\chk{\hYhg}, 
\end{equation*}
where $\{f_x\}_{x\in \msB^\pm}\subset \YTDstar{\pm}$ is the dual set to $\msB^\pm$, uniquely determined by $f_x(y)=\delta_{xy}$ for all $x,y \in \msB^\pm$. Here we note if $x$ is of degree $k$, then it follows automatically that $f_x\in \Yhgstar_{-k}\cong \Hom_{\C[\hbar]}^{-k}(\QFYhg,\C[\hbar])$. This implies that the right-hand side of the above expression defines a unique element in $\hYhg\,\dot{\otimes}\chk{\hYhg}$, which coincides with $\boldR^\pm$ as its image under $\Theta_\pm \circ (\imath \,\dot{\otimes}\,\check{\imath})^{-1}$ is $\id_{\dhYh{\pm}}$; see \eqref{Theta^chi} and Section \ref{ssec:R-can}.

In the special case where $\mfg=\mfsl_2$, there are two closed-form expressions for $\boldR^\pm$ (equivalently, $\mcR^\pm(z)$) which have arisen from these two separate viewpoints; see \cite{KT96}*{Thm.~5.1} and \cite{GTLW19}*{Thm.~5.5}. For $\mfg$ of arbitrary rank, no such expressions are known, though an infinite-product formula for $\boldR^\pm$ was conjectured in \cite{KT96}*{(5.43)}, motivated by the earlier works \cites{KT91,KT93,KT94,KST95}. 

%
\subsection{On the blocks of \texorpdfstring{$\boldR^\pm$}{R+\textbackslash-}}\label{ssec:Rminus-beta}
To conclude, we wish to highlight that the dual bases approach discussed in the previous subsection provides a natural interpretation of some of the basic properties of $\mcR^\pm(z)$ discovered in \cite{GTLW19}.

Given $\beta\in Q$, let $\pi_\beta:\DYhg\to\DYhg_\beta$ denote the $\C[\![\hbar]\!]$-linear projection associated to the topological $Q$-grading on $\DYhg$ defined in Section \ref{ssec:DYhg-root}, and let 
$\dot\pi_\beta:\QFhYhg\onto \QFhYhg_\beta$ denote its restriction to $\QFhYhg$. Consider the element 
\begin{equation*}
\dot\pi_\beta\circ \id_{\dhYh{\pm}}=\id_{\dhYh{\pm}}\circ \dot\pi_\beta \in \End_{\C[\![\hbar]\!]}(\QFhYhg),
\end{equation*}
where we view $\End_{\C[\![\hbar]\!]}(\dhYh{\pm})\subset \End_{\C[\![\hbar]\!]}(\QFhYhg)$, as in Section \ref{ssec:Theta-chi}. Note that under the identification provided by the natural inclusion
\begin{equation*}
\End_{\C[\![\hbar]\!]}(\dhYh{\pm}_\beta)\into \Hom_{\C[\![\hbar]\!]}(\dhYh{\pm},Y_\hbar^{\pm\!}\mfg), \quad \varphi\mapsto \varphi\circ \dot\pi_\beta, 
\end{equation*}
it coincides with the identity transformation of $\End_{\C[\![\hbar]\!]}(\dhYh{\pm}_\beta)$. For each $\beta\in Q_+$, we may therefore define $R_\beta^\pm:=\Theta^{-1}(\id_\beta^\pm)$, where $\id_\beta^\pm=\id_{\dhYh{\pm}_{\pm \beta}}$.  By \eqref{Theta-prop} and the reasoning used in \ref{R2} of Section \ref{ssec:R-can}, we have 
\begin{equation}\label{R_beta-props}
R_\beta^\pm=(\dot\pi_{\pm \beta}\otimes \id)(R^\pm)=(\id \otimes \dot{\pi}_{\pm \beta}^t)(R^\pm) \quad \text{ and }\quad R_\beta^\pm=(\omega\otimes \omega^t)(R_\beta^\mp),
\end{equation}
where we note that, for each $\alpha\in Q$, $\dot{\pi}_\alpha^t$ is just the projection $\Yhgstar\onto \Yhgstar_{-\alpha}$ associated to the topological $Q$-grading on $\Yhgstar$; see Corollary \ref{C:Yhg*-Qgrad}. In the topological tensor product $\dhYh{\pm}\,\wt{\otimes}\, \YTDstar{\pm}$, we have the equality $R^\pm=\sum_{\beta\in Q_+}R_\beta^\pm$.

For the sake of the below discussion, it is worth pointing out that the convergence of the infinite sum  $\sum_{\beta\in Q_+}R_\beta^\pm$ is also clear from the point of view of dual bases. Indeed, by Corollary \ref{C:QFYhg-chi} we have $\dYh{\pm}_{\pm\beta}\subset \hbar^{\nu(\beta)}Y_\hbar^\pm(\mfg)_{\pm\beta}$, where we recall that $\nu(\beta)\in \N$ is defined by \eqref{nu-beta}. Hence, any homogeneous element in $\dYh{\pm}_{\pm\beta}$ with respect to the underlying $\N$-grading belongs to $\QFYhg_{\nu(\beta)+\ell}$ for some $\ell\in \N$. It follows that the dual set to any homogeneous basis  of the $\N$-graded $\C[\hbar]$-module $\dYh{\pm}_{\pm\beta}$
belongs to  
\begin{equation*}
\bigoplus_{k\geq\nu(\beta)} \YTDstar{\pm}_{-k} \subset \YTDstar{\pm}_{\scriptscriptstyle(\nu(\beta))},
\end{equation*}
where the space on the right-hand side is the closure of the $\C[\![\hbar]\!]$-module generated by the left-hand side; see Section \ref{ssec:Theta-chi}. 
This implies the convergence of $\sum_{\beta\in Q_+}R_\beta^\pm$ in $\dhYh{\pm}\,\wt{\otimes}\, \YTDstar{\pm}$ while establishing that $R_\beta^\pm\in \hbar^{\nu(\beta)} Y_\hbar^{\pm\!}\mfg\, \wt{\otimes}\, \YTDstar{\pm}$. Next, following the procedure from Section \ref{ssec:R-can}, let us set
\begin{equation*}
\boldR_\beta^\pm:= (\imath\,\dot{\otimes}\,\check{\imath})(R_\beta^\pm) \quad \forall\; \beta \in Q_+.
\end{equation*}
The relations of \eqref{R_beta-props} then translate to 
\begin{equation*}
\boldR_\beta^\pm=(\pi_{\pm \beta}\otimes \id)(\boldR^\pm)=(\id \otimes \pi_{\mp \beta})(\boldR^\pm) \quad \text{ and }\quad \boldR_\beta^\pm=(\omega\otimes \omega)(\boldR_\beta^\mp),
\end{equation*}
where $\omega$ is the Chevalley involution on $\DYhg$; see Section \ref{ssec:DYhg-root}. Now recall that $\mcR_{\beta}^-(z)$ is the $\Yhg_{-\beta}\otimes \Yhg_\beta$ component of $\mcR^-(z)$, characterized by the recurrence relation \eqref{R-recur}. The $\Yhg_{\beta}\otimes \Yhg_{-\beta}$ block of $\mcR^+(z)$ is then 
\begin{equation*}
\mcR^+_\beta(z)=(\omega\otimes \omega)\mcR^-_\beta(z) \quad \forall\; \beta\in Q_+
\end{equation*}
and we have the following corollary of Theorem \ref{T:R}. 
\begin{corollary}\label{C:R-beta}
For each $\beta\in Q_+$, the element $\boldR_\beta^\pm$ satisfies
\begin{equation*}
(\Phi_w\,\dot{\otimes}\,\Phi_z)(\boldR_\beta^\pm)\in (\hbar/z)^{\nu(\beta)}\Yhg^{\otimes 2}[w][\![z^{-1}]\!]
\end{equation*}
in addition to the relation $(\Phi_w\,\dot{\otimes}\,\Phi_z)(\boldR_\beta^\pm)=\mcR^\pm_\beta(w-z)$.
\end{corollary}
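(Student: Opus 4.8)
The proof of Corollary \ref{C:R-beta} should follow by combining the block-decomposition already established above with the main identity of Theorem \ref{T:R}. The plan is to extract the $\Yhg^{\otimes 2}$-graded $\beta$-component from the equality $(\Phi_w\,\dot\otimes\, \Phi_z)(\boldR^\pm)=\mcR^\pm(w-z)$, and to control the $z$-order using the divisibility $R_\beta^\pm\in \hbar^{\nu(\beta)} Y_\hbar^{\pm\!}\mfg\, \wt{\otimes}\, \YTDstar{\pm}$ recorded in the preceding discussion.

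First I would observe that $\Phi_w\,\dot\otimes\,\Phi_z$ is compatible with the topological $Q$-gradings: since $\Phi_w=\Phi_w|_{\imath(\hYhg)}$ is $Q$-graded of degree zero (it is built from $\tau_w$, which commutes with the adjoint $\mfh$-action) and $\chk{\Phi_z}=\Phi_z|_{\check{\imath}(\chk{\hYhg})}$ is likewise $Q$-graded of degree zero (the universal $R$-matrix $\mcR(z)$ is $\mfh$-invariant by the intertwiner equation, so $f\otimes\id$ applied to it preserves weights, up to the sign flip coming from pairing $\QFhYhg_\beta$ with $\Yhgstar_{-\beta}$), the projections $\pi_{\pm\beta}$ on $\DYhg$ correspond under $\Phi_w\,\dot\otimes\,\Phi_z$ to the $\Yhg^{\otimes 2}$-weight projections onto weight $(\pm\beta,\mp\beta)$. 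Applying $\pi_{\pm\beta}\otimes \pi_{\mp\beta}$ (equivalently, using the first relation of \eqref{R_beta-props}) to both sides of Theorem \ref{T:R} and noting that $\mcR^\pm_\beta(z)$ is by definition the $\Yhg_{\pm\beta}\otimes\Yhg_{\mp\beta}$ block of $\mcR^\pm(z)$, I obtain $(\Phi_w\,\dot{\otimes}\,\Phi_z)(\boldR_\beta^\pm)=\mcR^\pm_\beta(w-z)$. For the $\chi=-$ case this is immediate from \eqref{R-recur}; the $\chi=+$ case follows by applying $\omega\otimes\omega$ and using $\boldR_\beta^+=(\omega\otimes\omega)(\boldR_\beta^-)$ together with $\mcR^+_\beta(z)=(\omega\otimes\omega)\mcR^-_\beta(z)$ and the fact (Remark \ref{R:chev}) that $\Phi_w\,\dot\otimes\,\Phi_z$ intertwines Chevalley involutions.

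For the divisibility statement, I would use $R_\beta^\pm\in \hbar^{\nu(\beta)} Y_\hbar^{\pm\!}\mfg\, \wt{\otimes}\, \YTDstar{\pm}$ (established in the paragraph preceding the corollary from $\dYh{\pm}_{\pm\beta}\subset \hbar^{\nu(\beta)}Y_\hbar^\pm(\mfg)_{\pm\beta}$) together with Part \eqref{dual-z:3} of Proposition \ref{P:dual-z}, which says that $\chk{\Phi_z}$ maps $\Yhgstar_{-k}$ into $z^{-k}\Yhgz$. Since any homogeneous element of $\dYh{\pm}_{\pm\beta}$ with respect to the $\N$-grading lies in $\QFYhg_{\nu(\beta)+\ell}$ for some $\ell\geq 0$, its dual functional lies in $\bigoplus_{k\geq \nu(\beta)}\YTDstar{\pm}_{-k}$, so $\chk{\Phi_z}$ sends it into $z^{-\nu(\beta)}\Yhg[\![z^{-1}]\!]$, and moreover the $\hbar^{\nu(\beta)}$ prefactor on the left tensor slot is preserved by $\tau_w$. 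Hence $(\tau_w\otimes\chk{\Phi_z})(R_\beta^\pm)\in (\hbar/z)^{\nu(\beta)}\Yhg^{\otimes 2}[w][\![z^{-1}]\!]$, which is exactly the claimed bound after transporting via $\imath\,\dot\otimes\,\check{\imath}$. Alternatively, one can read this directly off the right-hand side: by the remark following \eqref{R-recur}, $\mcR^-_\beta(z)\in \hbar^{\nu(\beta)}\mathds{L}(\cYhg{\vphantom{\hYhg}}^{\scriptscriptstyle{(2)}}_z)_{-\nu(\beta)}\subset z^{-\nu(\beta)}\Yhg^{\otimes 2}[\![z^{-1}]\!]$, and applying $\tau_w\otimes\id$ keeps us in $(\hbar/z)^{\nu(\beta)}\Yhg^{\otimes 2}[w][\![z^{-1}]\!]$.

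The routine bookkeeping is in checking the $Q$-graded compatibility of $\Phi_w\,\dot\otimes\,\Phi_z$ with the projections $\pi_{\pm\beta}$, but this is a straightforward consequence of the degree-zero $\mfh$-equivariance already noted. The only point requiring a little care is the sign/weight convention relating $\QFhYhg_\beta$ to $\Yhgstar_{-\beta}$ in the pairing, so that the weight-$\beta$ part of the left Yangian factor is correctly matched with the weight-$(-\beta)$ part of the dual factor; I would verify this on the generators $\msX_\beta^\pm(u)$ using Part \eqref{dual-z:4} of Proposition \ref{P:dual-z}. No serious obstacle is anticipated: the corollary is essentially a graded refinement of Theorem \ref{T:R} together with the estimate already in hand.
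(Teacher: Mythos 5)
Your proposal is correct and takes essentially the same approach as the paper: for the identity $(\Phi_w\,\dot{\otimes}\,\Phi_z)(\boldR_\beta^\pm)=\mcR^\pm_\beta(w-z)$ you pull the weight projection through $\Phi_w\,\dot\otimes\,\Phi_z$ (as the paper does, via $\pi_\alpha\circ\Phi_w=\Phi_w\circ\pi_\alpha|_{\hYhg}$), and for the divisibility bound you invoke both the dual-bases estimate $\dYh{\pm}_{\pm\beta}\subset\hbar^{\nu(\beta)}Y_\hbar^\pm(\mfg)$ combined with the gradedness of $\chk{\Phi_z}$, and the direct bound $\mcR^-_\beta(z)\in\hbar^{\nu(\beta)}\mathds{L}(\cYhg{\vphantom{\hYhg}}^{\scriptscriptstyle{(2)}}_z)_{-\nu(\beta)}$ --- exactly the two routes the paper offers.
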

Of course, the second assertion is immediate from Theorem \ref{T:R} as $\Phi_w$ is the identity on $\mfg$ and thus satisfies $\pi_\alpha\circ \Phi_w = \Phi_w\circ {\pi_\alpha}|_{\hYhg}$ for all $\alpha\in Q$. The first statement then follows from the properties of $\mcR^-(z)$ established in \cite{GTLW19} and recalled in Section \ref{ssec:Yhg-R}. However, we wish to point out that this assertion is a natural consequence of the above discussion on dual bases. Indeed, we have seen that any homogeneous basis of $\dYh{\pm}_\beta$ lies in $\hbar^{\nu(\beta)}\Yhg$, and that the image of its dual set under $\check{\imath}$  is contained in $\bigoplus_{k\geq \nu(\beta)}\DYhg_{-k}$. As $\Phi_z$ is graded, we have
\begin{equation*}
\Phi_z(\DYhg_{-k})\subset z^{-k}\Yhgz\subset z^{-\nu(\beta)}\Yhg[\![z^{-1}]\!] \quad \forall \; k\geq \nu(\beta),
\end{equation*}
which yields the first statement of the corollary. 
%
\appendix 

\section{Homogenization of the \texorpdfstring{$R$}{R}-matrix}\label{A:R-matrix}

In this appendix, we show that the results of \cite{GTLW19}*{\S7.4} imply Theorem \ref{T:Yhg-R}, as promised in Remark \ref{R:strict}. Let $Y(\mfg)$ denote the Yangian defined over $\C$ with $\hbar$ specialized to $1$: 
\begin{equation*}
Y(\mfg):=\Yhg/(\hbar-1)\Yhg.
\end{equation*}
Slightly abusing notation, we shall denote the images of $x_{ir}^\pm$ and $h_{ir}$ again by $x_{ir}^\pm$ and $h_{ir}$, respectively.
The graded Hopf algebra structure on $\Yhg$ induces on $Y(\mfg)$ the structure of an $\N$-filtered Hopf algebra over the complex numbers with filtration $\mathds{F}_\bullet$ defined by letting $\mathds{F}_k$ denote the image of $\bigoplus_{n\leq k}Y_\hbar(\mfg)_k$. The Yangian $Y(\mfg)$ is then a filtered deformation of the graded Hopf algebra $U(\tplus)$: 
\begin{equation*}
\mathrm{gr}_{\mathds{F}}Y(\mfg)\cong U(\tplus). 
\end{equation*}

One can recover $\Yhg$ from $Y(\mfg)$ using the Rees algebra formalism; see \cite{GRWEquiv}*{Prop.~2.2} and \cite{GRWvrep}*{Thm.~6.10}, for example. In more detail, there is an isomorphism of $\N$-graded Hopf algebras
\begin{gather*}
\upvarphi_\hbar:\Yhg\iso \msR_\hbar(Y(\mfg))=\bigoplus_{k\in \N} \hbar^k \mathds{F}_k(Y(\mfg))\subset Y(\mfg)[\hbar]\\
x_{ir}^\pm \mapsto \hbar^r x_{ir}^\pm,\quad h_{ir}\mapsto \hbar^r h_{ir} \quad \forall\; i\in \mbI, \, r\in \N. 
\end{gather*}

Here the Hopf algebra structure on $\mathsf{R}_\hbar(Y(\mfg))$ is obtained by extending that of $Y(\mfg)$ by $\C[\hbar]$-linearity, and we note that $\mathsf{R}_\hbar(Y(\mfg)\otimes_\C Y(\mfg))\cong \mathsf{R}_\hbar(Y(\mfg))\otimes_{\C[\hbar]} \mathsf{R}_\hbar(Y(\mfg))$. 

In \cite{GTLW19}, the universal $R$-matrix $\mathds{R}(z)$ of the Yangian $Y(\mfg)$ is constructed as a product 
\begin{equation*}
\mathds{R}(z)=\mathds{R}^+(z)\,\mathds{R}^0(z)\,\mathds{R}^-(z)\in 1+z^{-1}Y(\mfg)^{\otimes 2}[\![z^{-1}]\!],
\end{equation*}
where $\mathds{R}^+(z)=\mathds{R}^-_{21}(-z)^{-1}$ and the factors $\mathds{R}^-(z)$ and $\mathds{R}^0(z)$ are as in 
 Sections 4.1 and 6.6 of \cite{GTLW19}, respectively. 
In particular, $\mathds{R}^\pm(z)-1$, $\mathds{R}^0(z)-1$ and $\mathds{R}(z)-1$ lay in the subspace
\begin{equation*}
z^{-1} \prod_{n\in \N} \mathds{F}_n( Y(\mfg)^{\otimes 2})z^{-n}\subset Y(\mfg)^{\otimes 2}[\![z^{-1}]\!]
\end{equation*}
and hence $\mathds{R}^{\pm}(z/\hbar)$, $\mathds{R}^0(z/\hbar)$ and $\mathds{R}(z/\hbar)$ are elements of 
$\msR_\hbar(Y(\mfg))^{\otimes 2}[\![z^{-1}]\!]$. The definitions of $\mcR^\pm(z)$, $\mcR^0(z)$ and $\mcR(z)$ given in Section \ref{ssec:Yhg-R} are such that one has the equalities
\begin{equation}\label{R-homog}
\begin{gathered}
\upvarphi_\hbar^{\otimes 2}(\mcR^\pm(z))=\mathds{R}^{\pm}(z/\hbar), \quad \upvarphi_\hbar^{\otimes 2}(\mcR^0(z))=\mathds{R}^0(z/\hbar),\\ \upvarphi_\hbar^{\otimes 2}(\mcR(z))=\mathds{R}(z/\hbar).
\end{gathered}
\end{equation}
Using this fact and the results of \cite{GTLW19}, we can recover the below proposition, which is a restatement of Theorem \ref{T:Yhg-R}. 
\begin{proposition}\label{P:Yhg-R}
$\mcR(z)$ is the unique formal series in
$
1+z^{-1}\Yhg^{\otimes 2}[\![z^{-1}]\!]
$
satisfying the intertwiner equation 
\begin{equation*}
\tau_z\otimes \id \circ \op{\Delta}(x)= \mcR(z) \cdot \tau_z\otimes \id \circ \Delta(x) \cdot \mcR(z)^{-1} \quad \forall\; x\in \Yhg
\end{equation*}
in $\Yhg^{\otimes 2}[z;z^{-1}]\!]$, in addition to the cabling identities 
\begin{align*}
\Delta\otimes \id (\mcR(z))&= \mcR_{13}(z)\mcR_{23}(z)\\
\id\otimes \Delta (\mcR(z))&= \mcR_{13}(z)\mcR_{12}(z)
\end{align*}
in $\Yhg^{\otimes 3}[\![z^{-1}]\!]$. Moreover, $\mcR(z)$ satisfies the properties \eqref{Yhg-R:1}--\eqref{Yhg-R:3} of Theorem \ref{T:Yhg-R}. 
\end{proposition}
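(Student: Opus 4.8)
The strategy is to deduce Proposition \ref{P:Yhg-R} directly from the corresponding statement for $Y(\mfg)$ proven in \cite{GTLW19}*{\S7.4} by transporting it along the Rees algebra isomorphism $\upvarphi_\hbar$ together with the homogenization identities \eqref{R-homog}. First I would recall precisely what is known over $\C$: by \cite{GTLW19}*{Thm.~7.4}, $\mathds{R}(z)$ is the unique element of $1+z^{-1}Y(\mfg)^{\otimes 2}[\![z^{-1}]\!]$ satisfying the intertwiner equation with the shift $\upvarepsilon_z$ of $Y(\mfg)$ and the two cabling identities, and it additionally satisfies unitarity, the shift-covariance property, and the statement that $\mathds{R}(z)-1$ lies in $z^{-1}\prod_n \mathds{F}_n(Y(\mfg)^{\otimes 2})z^{-n}$ with the prescribed semiclassical limit. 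One then observes that the shift homomorphism $\tau_z$ on $\Yhg$ and the shift $\upvarepsilon_z$ on $Y(\mfg)$ are intertwined by $\upvarphi_\hbar$ after the rescaling $z\mapsto z/\hbar$; concretely, $\upvarphi_\hbar\circ \tau_z = (\upvarepsilon_{z/\hbar}\!\mid_{\msR_\hbar})\circ \upvarphi_\hbar$ as maps $\Yhg\to \msR_\hbar(Y(\mfg))[z]$, which is checked on the generators $x_{ir}^\pm,h_{ir}$ using \eqref{shift-c} and the definition of $\upvarphi_\hbar$.

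Next I would establish existence: since $\upvarphi_\hbar^{\otimes 2}$ is an algebra homomorphism and $\Delta$, $\op{\Delta}$ commute with $\upvarphi_\hbar^{\otimes 2}$ (as $\upvarphi_\hbar$ is a Hopf algebra isomorphism onto its image), applying $\upvarphi_\hbar^{\otimes 2}$ to the intertwiner and cabling equations for $\mathds{R}(z/\hbar)$ and using \eqref{R-homog} shows that $\mcR(z)$ satisfies exactly the analogous equations in $\Yhg^{\otimes 2}[z;z^{-1}]\!]$ and $\Yhg^{\otimes 3}[\![z^{-1}]\!]$. Here one must be slightly careful that the formal series spaces match up: $\upvarphi_\hbar^{\otimes 2}$ extends to $Y(\mfg)^{\otimes 2}[\![z^{-1}]\!]$-level maps only after the substitution $z\mapsto z/\hbar$ is applied, which is why \eqref{R-homog} is phrased with $z/\hbar$; but since $\mathds{R}(z/\hbar)-1$ lies in $\msR_\hbar(Y(\mfg))^{\otimes 2}[\![z^{-1}]\!]$ by the filtration estimate recalled above, everything lands back in $\Yhg^{\otimes n}[\![z^{-1}]\!]$ after applying $(\upvarphi_\hbar^{-1})^{\otimes n}$. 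Properties \eqref{Yhg-R:1}--\eqref{Yhg-R:3} are obtained the same way: unitarity and $(\tau_a\otimes \tau_b)\mcR(z)=\mcR(z+a-b)$ transport directly from their $Y(\mfg)$-counterparts, and the assertion that $\mcR(z)$ is a homogeneous degree-zero element of $\mathds{L}(\cYhg{\vphantom{\hYhg}}^{\scriptscriptstyle{(2)}}_z)$ with $\mcR(z)-1\in \hbar\mathds{L}(\cYhg{\vphantom{\hYhg}}^{\scriptscriptstyle{(2)}}_z)_{-1}$ follows from $\upvarphi_\hbar$ being graded together with the filtration containment of $\mathds{R}(z)-1$; the semiclassical limit computation is read off from the $\hbar\to 0$ limit of $\upvarphi_\hbar^{\otimes 2}$, which recovers $\mathrm{gr}_{\mathds{F}}Y(\mfg)\cong U(\tplus)$.

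For uniqueness I would argue by contradiction in the standard way: if $\mcR'(z)\in 1+z^{-1}\Yhg^{\otimes 2}[\![z^{-1}]\!]$ also satisfies the intertwiner and cabling identities, then $\upvarphi_\hbar^{\otimes 2}(\mcR'(z))$ — viewed after $z\mapsto z/\hbar$ — would be a second solution of the corresponding equations over $\C[\hbar]$; inverting $z\mapsto z/\hbar$ and specializing $\hbar=1$ yields a second solution over $\C$, contradicting the uniqueness in \cite{GTLW19}*{Thm.~7.4} unless it equals $\mathds{R}(z)$, whence $\mcR'(z)=\mcR(z)$. The one point requiring care is that specializing $\hbar=1$ is harmless here precisely because $\msR_\hbar(Y(\mfg))/(\hbar-1)\cong Y(\mfg)$ and the solutions in question have coefficients in $\msR_\hbar(Y(\mfg))^{\otimes 2}$; alternatively one can run the uniqueness argument entirely over $\C[\hbar]$ by noting that the equations are homogeneous in an appropriate bigrading. \textbf{The main obstacle} I anticipate is bookkeeping the interaction between the formal variable $z$ and the Rees parameter $\hbar$: one must consistently track that the natural map is $\upvarphi_\hbar$ composed with $z\mapsto z/\hbar$, verify it is compatible with completions on both the $z$-adic and $\hbar$-adic sides, and confirm that the intertwiner equation — which lives in the Laurent-type space $\Yhg^{\otimes 2}[z;z^{-1}]\!]$ rather than $\Yhg^{\otimes 2}[\![z^{-1}]\!]$ — is preserved under this substitution. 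Everything else is a formal transport of identities.
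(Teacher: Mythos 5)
The overall route — transporting the $Y(\mfg)$-statement of \cite{GTLW19}*{Thm.~7.4} to $\Yhg$ via the graded Hopf isomorphism $\upvarphi_\hbar$, the intertwining of shifts $\upvarphi_\hbar\circ\tau_z=\mathring{\tau}_{z/\hbar}\circ\upvarphi_\hbar$, and the identities \eqref{R-homog} — is exactly the paper's approach, and your existence argument and the derivation of properties \eqref{Yhg-R:1}--\eqref{Yhg-R:3} are essentially correct.

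The uniqueness step has a genuine gap. Setting $\hbar=1$ only tells you that $\upvarphi_\hbar^{\otimes 2}(\mcR'(z))$ and $\mathds{R}(z/\hbar)$ agree \emph{modulo} $(\hbar-1)$; since the coefficients of $\upvarphi_\hbar^{\otimes 2}(\mcR'(z))$ are honest polynomials in $\hbar$ with values in $Y(\mfg)^{\otimes 2}$, and the evaluation-at-$1$ map $\msR_\hbar(Y(\mfg))\onto Y(\mfg)$ has kernel $(\hbar-1)\msR_\hbar(Y(\mfg))$, you cannot conclude $\mcR'(z)=\mcR(z)$ from agreement at a single value of $\hbar$. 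In particular, your parenthetical assurance that ``specializing $\hbar=1$ is harmless'' is precisely where the argument breaks: that specialization is not injective on the Rees algebra unless one already knows a priori that the element being evaluated is concentrated in a single filtration degree — but homogeneity of a hypothetical $\mcR'(z)$ is part of what is being proved, not a hypothesis. The paper avoids this by specializing $\hbar$ to \emph{every} $\zeta\in\C^\times$: it shows that $\mathds{X}(z)|_{\hbar=\zeta}=\mathds{R}(z/\zeta)$ for all $\zeta$, which does pin down the polynomial coefficients (a nonzero element of $Y(\mfg)^{\otimes 2}[\hbar]$ cannot vanish at every $\zeta\in\C^\times$), and hence yields $\mcR'(z)=\mcR(z)$. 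Your alternative suggestion of running the argument ``over $\C[\hbar]$ with an appropriate bigrading'' would need to be made precise — it would amount to first establishing that $\mcR'(z)$ is forced to be homogeneous of degree zero, which would then make the $\hbar=1$ specialization injective, but as written that step is absent.
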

\begin{proof} That $\mcR(z)$ satisfies the properties \eqref{Yhg-R:1}--\eqref{Yhg-R:3} of Theorem \ref{T:Yhg-R} follows from \eqref{R-homog} and the corresponding properties of $\mathds{R}(z)$ established in (3)--(5) of \cite{GTLW19}*{Thm.~7.4}.
 Similarly, that $\mcR(z)$ satisfies the cabling identities in $\Yhg^{\otimes 3}[\![z^{-1}]\!]$ follows from the equality $\upvarphi_\hbar^{\otimes 2}(\mcR(z))=\mathds{R}(z/\hbar)$ and Theorem 7.4 (2) of \cite{GTLW19}, which asserts that $\mathds{R}(z)$ satisfies the cabling identities in $Y(\mfg)^{\otimes 3}[\![z^{-1}]\!]$. As for the intertwiner equation, upon applying the isomorphism $\upvarphi_\hbar^{\otimes 2}$ we deduce that it will hold provided $\mathds{R}(z)$ satisfies 
\begin{equation*}
\tau_z^\upvarphi\otimes \id \circ \op{\Delta}_{Y(\mfg)}(x)= \mathds{R}(z/\hbar) \cdot \tau_z^\upvarphi\otimes \id \circ \Delta_{Y(\mfg)}(x) \cdot \mathds{R}(z/\hbar)^{-1} \quad \forall\; x\in \mathsf{R}_\hbar(Y(\mfg))
\end{equation*}
 in $\mathsf{R}_\hbar(Y(\mfg))^{\otimes 2}[z;z^{-1}]\!]\subset (Y(\mfg)^{\otimes 2})[\hbar][z;z^{-1}]\!]$, where $\tau_z^\upvarphi=\upvarphi_\hbar \circ \tau_z \circ \upvarphi_\hbar^{-1}$. Since $\tau_z^\upvarphi$ is determined by 
 \begin{equation*}
 \tau_z^\upvarphi(x_i^\pm(u/\hbar))=x_i^\pm((u-z)/\hbar),\quad  \tau_z^\upvarphi(h_i(u/\hbar))=h_i((u-z)/\hbar) \quad \forall\; i\in \mbI
 \end{equation*}
 the above equality will hold provided that, for each $x\in Y(\mfg)$, and $\zeta\in \C^\times$, one has 
 \begin{equation}\label{inter-zeta}
\mathring{\tau}_{z/\zeta}\otimes \id \circ \op{\Delta}_{Y(\mfg)}(x)= \mathds{R}(z/\zeta) \cdot \mathring{\tau}_{z/\zeta}\otimes \id \circ \Delta_{Y(\mfg)}(x) \cdot \mathds{R}(z/\zeta)^{-1},
 \end{equation}
 where $\mathring{\tau}_z:Y(\mfg)\to Y(\mfg)[z]$ is obtained by specializing the algebra homomorphism $\tau_z$ defined in \eqref{shift-z}. 
 This equality is immediate from Part (1) of \cite{GTLW19}*{Thm.~7.4}. 
 
 As for the uniqueness assertion; the argument given in Appendix B of \cite{GTLW19} translates naturally to the formal setting. Alternatively, one can see this as consequence of the uniqueness of $\mathds{R}(z)$ itself, as proven therein. Indeed, if $\mathscr{R}(z)\in 1+z^{-1}\Yhg^{\otimes 2}[\![z^{-1}]\!]$ is another solution of the intertwiner equation satisfying the cabling identitites, then to see that $\mathscr{R}(z)=\mcR(z)$, it suffices to show that $\mathds{X}(z)|_{\hbar=\zeta}=\mathds{R}(z/\zeta)$ for each $\zeta\in \C^\times$, where 
 \begin{equation*}
 \mathds{X}(z):=\upvarphi^{\otimes 2}_\hbar(\mathscr{R}(z)) \in Y(\mfg)[\hbar][\![z^{-1}]\!]. 
 \end{equation*}
This follows from the fact that $\mathds{X}(z)|_{\hbar=\zeta}$ and $\mathds{R}(z/\zeta)$ both satisfy the cabling identities in $Y(\mfg)^{\otimes 3}[\![z^{-1}]\!]$ and the intertwiner equation \eqref{inter-zeta}, and so coincide by the uniqueness of $\mathds{R}(z)$, as established in Appendix B of \cite{GTLW19}. \qedhere
\end{proof}


\begin{bibdiv}
\begin{biblist}

\bib{Andrea-Valerio-18}{article}{
      author={Appel, A.},
      author={Toledano~Laredo, V.},
       title={A 2-categorical extension of {E}tingof-{K}azhdan quantisation},
        date={2018},
        ISSN={1022-1824},
     journal={Selecta Math. (N.S.)},
      volume={24},
      number={4},
       pages={3529\ndash 3617},
}

\bib{Andrea-Valerio-19}{article}{
      author={Appel, A.},
      author={Toledano~Laredo, V.},
       title={Coxeter categories and quantum groups},
        date={2019},
        ISSN={1022-1824},
     journal={Selecta Math. (N.S.)},
      volume={25},
      number={3},
       pages={Paper No. 44, 97},
}

\bib{ChPr1}{article}{
      author={Chari, V.},
      author={Pressley, A.},
       title={Fundamental representations of {Y}angians and singularities of
  {$R$}-matrices},
        date={1991},
        ISSN={0075-4102},
     journal={J. Reine Angew. Math.},
      volume={417},
       pages={87\ndash 128},
}

\bib{CPBook}{book}{
      author={Chari, V.},
      author={Pressley, A.},
       title={A guide to quantum groups},
   publisher={Cambridge University Press, Cambridge},
        date={1994},
        ISBN={0-521-43305-3},
}

\bib{Damiani98}{article}{
      author={Damiani, I.},
       title={La {$R$}-matrice pour les alg\`ebres quantiques de type affine
  non tordu},
        date={1998},
        ISSN={0012-9593},
     journal={Ann. Sci. \'{E}cole Norm. Sup. (4)},
      volume={31},
      number={4},
       pages={493\ndash 523},
}

\bib{Dr}{article}{
      author={Drinfel'd, V.},
       title={Hopf algebras and the quantum {Y}ang-{B}axter equation},
        date={1985},
     journal={Soviet Math. Dokl.},
      volume={32},
      number={1},
       pages={254\ndash 258},
}

\bib{DrQG}{inproceedings}{
      author={Drinfel'd, V.},
       title={Quantum groups},
        date={1987},
   booktitle={Proceedings of the {I}nternational {C}ongress of
  {M}athematicians, {V}ol. 1, 2 ({B}erkeley, {C}alif., 1986)},
   publisher={Amer. Math. Soc., Providence, RI},
       pages={798\ndash 820},
}

\bib{DrNew}{article}{
      author={Drinfel'd, V.},
       title={A new realization of {Y}angians and quantum affine algebras},
        date={1988},
     journal={Soviet Math. Dokl.},
      volume={36},
      number={2},
       pages={212\ndash 216},
}

\bib{Enriquez-Halbout-03}{article}{
      author={Enriquez, B.},
      author={Halbout, G.},
       title={An {$\hslash$}-adic valuation property of universal
  {$R$}-matrices},
        date={2003},
        ISSN={0021-8693},
     journal={J. Algebra},
      volume={261},
      number={2},
       pages={434\ndash 447},
}

\bib{EFK-Book}{book}{
      author={Etingof, P.},
      author={Frenkel, I.},
      author={Kirillov, A., Jr.},
       title={Lectures on representation theory and {K}nizhnik-{Z}amolodchikov
  equations},
      series={Mathematical Surveys and Monographs},
   publisher={American Mathematical Society, Providence, RI},
        date={1998},
      volume={58},
        ISBN={0-8218-0496-0},
}

\bib{Etingof-Kazhdan-I}{article}{
      author={Etingof, P.},
      author={Kazhdan, D.},
       title={Quantization of {L}ie bialgebras. {I}},
        date={1996},
        ISSN={1022-1824},
     journal={Selecta Math. (N.S.)},
      volume={2},
      number={1},
       pages={1\ndash 41},
}

\bib{Etingof-Kazhdan-III}{article}{
      author={Etingof, P.},
      author={Kazhdan, D.},
       title={Quantization of {L}ie bialgebras. {III}},
        date={1998},
        ISSN={1022-1824},
     journal={Selecta Math. (N.S.)},
      volume={4},
      number={2},
       pages={233\ndash 269},
}

\bib{FiTs19}{article}{
      author={Finkelberg, M.},
      author={Tsymbaliuk, A.},
       title={Shifted quantum affine algebras: integral forms in type {$A$}},
        date={2019},
        ISSN={2199-6792},
     journal={Arnold Math. J.},
      volume={5},
      number={2-3},
       pages={197\ndash 283},
}

\bib{FR92}{article}{
      author={Frenkel, I.},
      author={Reshetikhin, N.},
       title={Quantum affine algebras and holonomic difference equations},
        date={1992},
        ISSN={0010-3616},
     journal={Comm. Math. Phys.},
      volume={146},
      number={1},
       pages={1\ndash 60},
}

\bib{GTL1}{article}{
      author={Gautam, S.},
      author={Toledano~Laredo, V.},
       title={Yangians and quantum loop algebras},
        date={2013},
        ISSN={1022-1824},
     journal={Selecta Math. (N.S.)},
      volume={19},
      number={2},
       pages={271\ndash 336},
}

\bib{GTL3}{article}{
      author={Gautam, S.},
      author={Toledano~Laredo, V.},
       title={Meromorphic tensor equivalence for {Y}angians and quantum loop
  algebras},
        date={2017},
        ISSN={0073-8301},
     journal={Publ. Math. Inst. Hautes \'{E}tudes Sci.},
      volume={125},
       pages={267\ndash 337},
}

\bib{GTLW19}{article}{
      author={Gautam, S.},
      author={Toledano~Laredo, V.},
      author={Wendlandt, C.},
       title={The meromorphic {$R$}-matrix of the {Y}angian},
       book={
       	series={Progress in Mathematics},
      	 volume={340},
       	publisher={Birkh\"{a}user/Springer},
       	editor = {Alekseev, A.},
       	editor = {Frenkel, E.},
       	editor = {Rosso, M.},
       	editor = {Webster, B.},
       	editor = {Yakimov, M.},
      	address={Cham},
        date={2021},
        },
      pages = {201--269},
}

\bib{GWPoles}{unpublished}{
      author={Gautam, S.},
      author={Wendlandt, C.},
       title={Poles of finite-dimensional representations of {Y}angians},
        date={2020},
        note={\tt arXiv:2009.06427},
}

\bib{Gav02}{article}{
      author={Gavarini, F.},
       title={The quantum duality principle},
        date={2002},
        ISSN={0373-0956},
     journal={Ann. Inst. Fourier (Grenoble)},
      volume={52},
      number={3},
       pages={809\ndash 834},
}

\bib{Gav07}{article}{
      author={Gavarini, F.},
       title={The global quantum duality principle},
        date={2007},
        ISSN={0075-4102},
     journal={J. Reine Angew. Math.},
      volume={612},
       pages={17\ndash 33},
}

\bib{GNW}{article}{
      author={Guay, N.},
      author={Nakajima, H.},
      author={Wendlandt, C.},
       title={Coproduct for {Y}angians of affine {K}ac-{M}oody algebras},
        date={2018},
        ISSN={0001-8708},
     journal={Adv. Math.},
      volume={338},
       pages={865\ndash 911},
}

\bib{GRWEquiv}{article}{
      author={Guay, N.},
      author={Regelskis, V.},
      author={Wendlandt, C.},
       title={Equivalences between three presentations of orthogonal and
  symplectic {Y}angians},
        date={2019},
        ISSN={0377-9017},
     journal={Lett. Math. Phys.},
      volume={109},
      number={2},
       pages={327\ndash 379},
}

\bib{GRWvrep}{article}{
      author={Guay, N.},
      author={Regelskis, V.},
      author={Wendlandt, C.},
       title={Vertex representations for {Y}angians of {K}ac-{M}oody algebras},
        date={2019},
     journal={J. \'{E}c. polytech. Math.},
      volume={6},
       pages={665\ndash 706},
}

\bib{KacBook90}{book}{
      author={Kac, V.},
       title={Infinite-dimensional {L}ie algebras},
     edition={Third edition},
   publisher={Cambridge University Press, Cambridge},
        date={1990},
        ISBN={0-521-37215-1; 0-521-46693-8},
}

\bib{KWWY}{article}{
      author={Kamnitzer, J.},
      author={Webster, B.},
      author={Weekes, A.},
      author={Yacobi, O.},
       title={Yangians and quantizations of slices in the affine
  {G}rassmannian},
        date={2014},
        ISSN={1937-0652},
     journal={Algebra Number Theory},
      volume={8},
      number={4},
       pages={857\ndash 893},
}

\bib{KasBook95}{book}{
      author={Kassel, C.},
       title={Quantum groups},
      series={Graduate Texts in Mathematics},
   publisher={Springer-Verlag, New York},
        date={1995},
      volume={155},
        ISBN={0-387-94370-6},
}

\bib{KasTu00}{article}{
      author={Kassel, C.},
      author={Turaev, V.},
       title={Biquantization of {L}ie bialgebras},
        date={2000},
        ISSN={0030-8730},
     journal={Pacific J. Math.},
      volume={195},
      number={2},
       pages={297\ndash 369},
}

\bib{KST95}{article}{
      author={Khoroshkin, S.},
      author={Stolin, A.},
      author={Tolstoy, V.},
       title={Generalized {G}auss decomposition of trigonometric
  {$R$}-matrices},
        date={1995},
        ISSN={0217-7323},
     journal={Modern Phys. Lett. A},
      volume={10},
      number={19},
       pages={1375\ndash 1392},
}

\bib{KT91}{article}{
      author={Khoroshkin, S.},
      author={Tolstoy, V.},
       title={Universal {$R$}-matrix for quantized (super)algebras},
        date={1991},
        ISSN={0010-3616},
     journal={Comm. Math. Phys.},
      volume={141},
      number={3},
       pages={599\ndash 617},
}

\bib{KT93}{article}{
      author={Khoroshkin, S.},
      author={Tolstoy, V.},
       title={The {C}artan--{W}eyl basis and the universal {$R$}-matrix for
  quantum {K}ac--{M}oody algebras and superalgebras},
  conference={
        title={Proceedings of the {I}nternational {W}orkshop on {M}athematical {P}hysics},
        address={Clausthal},
        date={1991},       
      },
      book={
      	title={Quantum Symmetries},
      	editor={Doebner, H.-D.},
      	editor={Dobrev, V.},
        date={1993},
   publisher={World Sci. Publ.},
      address={River Edge, NJ},
       },
       pages={336\ndash 351},
}

\bib{KT94}{article}{
      author={Khoroshkin, S.},
      author={Tolstoy, V.},
       title={Twisting of quantum (super-) algebras},
  conference={
         title={{P}roceedings of the {I}nternational {S}ymposium on {M}athematical {P}hysics},
         address={Arnold Sommerfeld Institute, {C}lausthal},
         date={1993},
         },
       book={
         title={Generalized {S}ymmetries in {P}hysics},
         editor={Doebner, H.-D.},
      	 editor={Dobrev, V.},
      	 editor={Ushveridze, A.},
      	 publisher={World Sci. Publ.},
      address={River Edge, NJ},
      date={1994},
       },
     pages={42\ndash 54},
}

\bib{KT96}{article}{
      author={Khoroshkin, S.},
      author={Tolstoy, V.},
       title={Yangian double},
        date={1996},
        ISSN={0377-9017},
     journal={Lett. Math. Phys.},
      volume={36},
      number={4},
       pages={373\ndash 402},
}

\bib{KS-book}{book}{
      author={Klimyk, A.},
      author={Schm\"{u}dgen, K.},
       title={Quantum groups and their representations},
      series={Texts and Monographs in Physics},
   publisher={Springer-Verlag, Berlin},
        date={1997},
        ISBN={3-540-63452-5},
}

\bib{Majid-90a}{article}{
      author={Majid, S.},
       title={Physics for algebraists: noncommutative and noncocommutative
  {H}opf algebras by a bicrossproduct construction},
        date={1990},
        ISSN={0021-8693},
     journal={J. Algebra},
      volume={130},
      number={1},
       pages={17\ndash 64},
}

\bib{Majid-94}{article}{
      author={Majid, S.},
       title={Some remarks on the quantum double},
       journal = {Czechoslovak J. Phys.},
        date={1994},
      volume={44},
       pages={1059\ndash 1071},
}

\bib{Majid-book}{book}{
      author={Majid, S.},
       title={Foundations of quantum group theory},
   publisher={Cambridge University Press, Cambridge},
        date={1995},
        ISBN={0-521-46032-8},
}

\bib{Mont}{book}{
      author={Montgomery, S.},
       title={Hopf algebras and their actions on rings},
      series={CBMS Regional Conference Series in Mathematics},
   publisher={Published for the Conference Board of the Mathematical Sciences,
  Washington, DC; by the American Mathematical Society, Providence, RI},
        date={1993},
      volume={82},
        ISBN={0-8218-0738-2},
}

\bib{Naz20}{article}{
      author={Nazarov, M.},
       title={Double {Y}angian and the universal {$R$}-matrix},
        date={2020},
        ISSN={0289-2316},
     journal={Jpn. J. Math.},
      volume={15},
      number={1},
       pages={169\ndash 221},
}

\bib{Radford93}{article}{
      author={Radford, D.},
       title={Minimal quasitriangular {H}opf algebras},
        date={1993},
        ISSN={0021-8693},
     journal={J. Algebra},
      volume={157},
      number={2},
       pages={285\ndash 315},
}

\bib{Stukopin07}{article}{
      author={Stukopin, V.},
       title={The quantum double of the {Y}angian of a {L}ie superalgebra of
  type {$A(m,n)$} and the computation of the universal {$R$}-matrix},
        date={2005},
        ISSN={1560-5159},
     journal={Fundam. Prikl. Mat.},
      volume={11},
      number={2},
       pages={185\ndash 208},
}

\bib{WDYhg}{article}{
      author={Wendlandt, C.},
       title={{The Formal Shift Operator on the Yangian Double}},
        date={2021},
        ISSN={1073-7928},
     journal={Int. Math. Res. Not. IMRN},
         url={https://doi.org/10.1093/imrn/rnab026},
        note={doi:10.1093/imrn/rnab026},
}

\bib{YaGu3}{article}{
      author={Yang, Y.},
      author={Zhao, G.},
       title={Cohomological {H}all algebras and affine quantum groups},
        date={2018},
        ISSN={1022-1824},
     journal={Selecta Math. (N.S.)},
      volume={24},
      number={2},
       pages={1093\ndash 1119},
}

\end{biblist}
\end{bibdiv}

\end{document}